\newcommand{\Zint}{\mathbb {Z}}    
\newcommand{\Rea}{\mathbb {R}}      % Real number field
\newcommand{\Cplx}{\mathbb {C}}     % Complex  number field
\newcommand{\halmos}{\rule{5pt}{5pt}}
\numberwithin{equation}{section}
\newtheorem{df}{\bf Definition}
\newtheorem{prop}{\bf Proposition}[section]
\newtheorem{thm}[prop]{\bf Theorem}
\newtheorem{cor}[prop]{\bf Corollary}
\newtheorem{exa}{\bf Example}
\begin{document}

\title[Integral transformation of Heun's equation]
{Integral transformation of Heun's equation and some applications}
\author{Kouichi Takemura}
\address{Department of Mathematics, Faculty of Science and Technology, Chuo University, 1-13-27 Kasuga, Bunkyo-ku Tokyo 112-8551, Japan.}
\email{takemura@math.chuo-u.ac.jp}
\subjclass[2010]{34M35;33E10,34M55}
\keywords{Heun's differential equation, Euler's integral transformation, monodromy, Painlev\'e equation}
\begin{abstract}
It is known that the Fuchsian differential equation which produces the sixth Painlev\'e equation corresponds to the Fuchsian differential equation with different parameters via Euler's integral transformation, and Heun's equation also corresponds to Heun's equation with different parameters, again via Euler's integral transformation.
In this paper we study the correspondences in detail.
After investigating correspondences with respect to monodromy, it is demonstrated that the existence of polynomial-type solutions corresponds to 
apparency of a singularity.
For the elliptical representation of Heun's equation, correspondence with respect to monodromy implies isospectral symmetry.
We apply the symmetry to finite-gap potentials and express the monodromy of Heun's equation with parameters which have not yet been studied.
\end{abstract}

\maketitle

\section{Introduction}

The Gauss hypergeometric differential equation
\begin{equation}
z(1-z) \frac{d^2y}{dz^2} + \left( \gamma - (\alpha + \beta +1)z \right) \frac{dy}{dz} -\alpha \beta  y=0.
\label{Gauss}
\end{equation}
is very famous both in physics and especially so in mathematics; it is a canonical form of the second-order Fuchsian differential equation with three singularities on the Riemann sphere $\Cplx \cup \{ \infty \}$.
There are several important generalizations of Gauss hypergeometric differential equation.
We now treat two examples, Heun's equation and the sixth Painlev\'e equation.

Heun's differential equation (or Heun's equation) is a canonical form of a second-order Fuchsian equation with four singularities, which is given by
\begin{equation}
\frac{d^2y}{dz^2} + \left( \frac{\epsilon _0}{z}+\frac{\epsilon _1}{z-1}+\frac{\epsilon _t}{z-t}\right) \frac{dy}{dz} +\frac{\alpha \beta z -q}{z(z-1)(z-t)} y=0,
\label{Heun}
\end{equation}
with the condition 
$\epsilon _0 +\epsilon _1+\epsilon _t =\alpha +\beta +1$ (see \cite{Ron}).
The exponents at $z=0$ (resp. $z=1$, $z=t$, $z=\infty$) are $0$ and $1-\epsilon _0 $ (resp. $0$ and $1-\epsilon _1$, $0$ and $1-\epsilon _t $, $\alpha $ and $\beta$).

The sixth Painlev\'e system is a system of non-linear ordinary differential equations defined by
\begin{equation}
\frac{d\lambda }{dt} =\frac{\partial H}{\partial \mu}, \quad \quad
\frac{d\mu }{dt} =-\frac{\partial H}{\partial \lambda} ,
\label{eq:Psys}
\end{equation}
with the Hamiltonian 
\begin{align}
H = & \frac{1}{t(t-1)} \left\{ \lambda (\lambda -1) (\lambda -t) \mu^2 -\left\{ \theta _0 (\lambda -1) (\lambda -t)+\theta _1 \lambda (\lambda -t) \right. \right. \label{eq:P6Ham} \\
& \left. \left. \quad \quad \quad \quad \quad \quad \quad \quad  +(\theta _t -1) \lambda (\lambda -1) \right\} \mu +\kappa _1(\kappa _2 +1) (\lambda -t)\right\} .\nonumber
\end{align}
By eliminating $\mu $ in Eq.(\ref{eq:Psys}), we obtain the sixth Painlev\'e equation for $\lambda $ which is a non-linear ordinary differential equation of order two in the independent variable $t$.
It is known that the sixth Painlev\'e system is related to monodromy preserving deformations of certain Fuchsian differential equations.
Let $\lambda \not \in \{0,1,t,\infty \}$ and $D_{y_1}(\theta _0, \theta _1, \theta _t, \theta _{\infty}; \lambda ,\mu )$ be the second-order linear differential equation given by
\begin{align}
& \frac{d^2y_1(z)}{dz^2} + \left( \frac{1-\theta _0}{z}+\frac{1-\theta _1}{z-1}+\frac{1-\theta _t}{z-t}-\frac{1}{z-\lambda}  \right)  \frac{dy_1(z)}{dz} \label{eq:linP6} \\
&  \quad \quad + \left( \frac{\kappa _1(\kappa _2 +1)}{z(z-1)}+\frac{\lambda (\lambda -1)\mu}{z(z-1)(z-\lambda)}-\frac{t (t -1)H}{z(z-1)(z-t)}  \right) y_1(z)=0, \nonumber \\
& \kappa _1= (\theta _{\infty } -\theta _0 -\theta _1 -\theta _t)/2, \quad \kappa _2= -(\theta _{\infty } +\theta _0 +\theta _1 +\theta _t)/2, \nonumber
\end{align}
where $H$ is given as in Eq.(\ref{eq:P6Ham}). Then Eq.(\ref{eq:linP6}) is a Fuchsian differential equation with five singularities $\{ 0,1,t, \lambda ,\infty \}$ on the Riemann sphere.
The exponents at $z=p$ $(p \in \{0,1,t \})$ (resp. $z=\lambda $, $z=\infty$) are $0$ and $\theta _p$ (resp. $0$ and $2$, $\kappa_1 $ and $\kappa _2 +1$),
and it follows from the condition given by Eq.(\ref{eq:P6Ham}) that the singularity $z =\lambda $ is apparent.
The sixth Painlev\'e system (Eq.(\ref{eq:Psys})) is derived from a monodromy preserving deformation of Eq.(\ref{eq:linP6}) (for details, see \cite{IKSY}), and the function $y_1(z)$ is obtained from a first order $2\times 2$ Fuchsian differential system with four singularities $\{0,1,t,\infty \}$, denoted by $D_Y(\theta _0, \theta _1, \theta _t, \theta _{\infty}; \lambda ,\mu ;k )$ in \cite{TakI,TakM}.

Heun's equation and the Fuchsian differential equation $D_{y_1}(\theta _0, \theta _1, \theta _t, \theta _{\infty}; \lambda ,\mu )$ admit integral transformations.
We fix a base point $o $ for the integrals in the complex plane $\Cplx$ appropriately.
Let $f(z)$ be a holomorphic function locally defined around $z=o$ and $f^{\gamma }(z)$ be the function analytically continued along a cycle $\gamma $ whose base point is $o$.
Define
\begin{equation}
\langle \gamma , f\rangle _{\kappa } = \int _{\gamma } f(w) (z-w)^{\kappa } dw .
\label{eq:Etrans}
\end{equation}
This is called Euler's integral transformation (or an Euler transformation).
Let $p $ be an singularity of the function $f(w)$ in the Riemann sphere $\Cplx \cup \{\infty \}$, $\gamma _p$ be a cycle on the Riemann sphere with variable $w$ which starts from $w=o$, goes around $w=p$ in a counter-clockwise direction and ends at $w=o$, and $[\gamma _z ,\gamma _p] = \gamma _z \gamma _p \gamma _z ^{-1} \gamma _p ^{-1}$ be the Pochhammer contour.
The following proposition was obtained by Novikov \cite{Nov}, independently by Kazakov and Slavyanov \cite{KS1}, and also derived by considering an explicit form of a middle convolution of a $2 \times 2$ Fuchsian differential system \cite{TakM}: 
\begin{prop} $($\cite{Nov,KS1,TakM}$)$ \label{prop:Nov}
If $y_1(z)$ is a solution of $D_{y_1}(\theta _0, \theta _1, \theta _t,\theta _{\infty}; \lambda ,\mu )$,
then the function 
\begin{align}
\tilde{y} (z)  = \langle [\gamma _{z} ,\gamma _p] , y_1 \rangle _{\kappa _2 -1} = \int _{[\gamma _{z} ,\gamma _p]} y_1(w) (z-w)^{\kappa _2 -1} dw,
\label{eq:inttransDy1}
\end{align}
satisfies $D_{y_1}(\tilde{\theta }_0, \tilde{\theta }_1, \tilde{\theta }_t, \tilde{\theta }_{\infty} ; \tilde{\lambda } ,\tilde{\mu } )$ for $p \in \{ 0,1,t,\infty \}$, where
\begin{align}
& \kappa _2= -(\theta _{\infty } +\theta _0 +\theta _1 +\theta _t)/2, \quad \tilde{\theta }_p = \kappa_2 +\theta _p \quad (p=0,1,t,\infty ), \label{eq:tlamtmu} \\
& \tilde{\lambda} =\lambda -\frac{\kappa _2}{\mu -\frac{\theta _0}{\lambda }-\frac{\theta _1}{\lambda -1}-\frac{\theta _t}{\lambda -t}} ,\quad  \tilde{\mu} =\frac{\kappa _2 +\theta _0}{\tilde{\lambda }} +\frac{\kappa _2 +\theta _1}{\tilde{\lambda }-1} +\frac{\kappa _2 +\theta _t}{\tilde{\lambda }-t} +\frac{\kappa _2 }{\lambda - \tilde{\lambda }} .  \nonumber 
\end{align}
\end{prop}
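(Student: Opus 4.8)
The plan is to verify Eq.(\ref{eq:inttransDy1}) directly, by differentiating $\tilde y(z)$ under the integral sign, substituting the equation $D_{y_1}(\theta_0,\theta_1,\theta_t,\theta_\infty;\lambda,\mu)$ satisfied by $y_1$, and integrating by parts to transfer all $w$-derivatives onto the Euler kernel. The role of the Pochhammer contour $[\gamma_z,\gamma_p]=\gamma_z\gamma_p\gamma_z^{-1}\gamma_p^{-1}$ is decisive here: although the integrand $y_1(w)(z-w)^{\kappa_2-1}$ is multivalued near $w=p$ and $w=z$, the commutator structure of the contour forces the integrand to return to its initial branch, so every boundary term produced by integration by parts cancels. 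This lets me integrate by parts freely, which is exactly what a direct check requires.

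First I would record the two defining relations of the kernel $K(z,w)=(z-w)^{\kappa_2-1}$, namely $\partial_z K=-\partial_w K$ and $(z-w)\,\partial_z K=(\kappa_2-1)K$. Differentiating $\tilde y(z)=\int_{[\gamma_z,\gamma_p]}y_1(w)K(z,w)\,dw$ then gives $\tilde y'(z)=(\kappa_2-1)\int y_1(w)(z-w)^{\kappa_2-2}\,dw$ and $\tilde y''(z)=(\kappa_2-1)(\kappa_2-2)\int y_1(w)(z-w)^{\kappa_2-3}\,dw$. Writing $P$ and $Q$ for the first- and zeroth-order coefficient functions of Eq.(\ref{eq:linP6}), I would then assemble the candidate target operator $\tilde L$ (with coefficients built from $\tilde\theta_p$, $\tilde\lambda$, $\tilde\mu$) applied to $\tilde y$, clear denominators, and use $\partial_z K=-\partial_w K$ to convert the $z$-rational factors multiplying negative powers of $(z-w)$ into $w$-derivatives of the kernel.

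After integrating by parts, the substitution $y_1''(w)=-P(w)y_1'(w)-Q(w)y_1(w)$ eliminates the highest $w$-derivative, and a further integration by parts removes $y_1'(w)$ as well, so that $\tilde L\tilde y$ collapses to a single integral $\int R(z,w)\,y_1(w)\,K(z,w)\,dw$ with $R$ rational. The assertion of the proposition is equivalent to the statement that, for the parameter choices in Eq.(\ref{eq:tlamtmu}), this integrand vanishes identically in $w$; conversely, demanding that it vanish pins the shifted exponents to $\tilde\theta_p=\kappa_2+\theta_p$ (which come out immediately from the residues of $P$) and the accessory data to the stated $\tilde\lambda,\tilde\mu$.

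The main obstacle, I expect, is precisely this accessory-parameter bookkeeping together with the relocation of the apparent singularity. The exponent shift at $\{0,1,t,\infty\}$ is easy; the hard part is verifying that the removable singularity migrates from $z=\lambda$ to $z=\tilde\lambda$ with the companion value $\tilde\mu$, which requires resolving the partial-fraction expansion of the transformed zeroth-order coefficient and matching it against the canonical form (including the transformed Hamiltonian $\tilde H$). A cleaner but equivalent route, which is how I would likely organize the computation, is to pass to the associated $2\times2$ Fuchsian system $D_Y(\theta_0,\theta_1,\theta_t,\theta_\infty;\lambda,\mu;k)$, in which $z=\lambda$ is gauged away so that only the singularities $\{0,1,t,\infty\}$ remain, apply the middle convolution with parameter $\kappa_2$ (whose effect on the residue eigenvalues is explicit, as in \cite{TakM}), and then reconstruct the scalar second-order equation. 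The apparent singularity reappears automatically, and tracking its location through the convolution reproduces the rational expression for $\tilde\lambda$ and hence $\tilde\mu$; either way the computational heart is the same determinantal identity that produces the shift $\lambda\mapsto\tilde\lambda$.
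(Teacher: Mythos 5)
The paper does not prove Proposition \ref{prop:Nov}: it is imported from \cite{Nov,KS1,TakM}, so there is no internal argument to compare yours against. Your proposal is nonetheless a sound plan, and your two suggested organizations bracket the known derivations. The direct route --- differentiate under the integral, use $\partial_z(z-w)^{\kappa_2-1}=-\partial_w(z-w)^{\kappa_2-1}$ to trade $z$-rational factors times lower powers of $(z-w)$ for $w$-derivatives of the kernel, integrate by parts, and substitute the equation satisfied by $y_1$ --- is the classical Euler-transform verification, and your justification for discarding boundary terms is the right one: along $[\gamma_z,\gamma_p]$ the factor $(z-w)^{\kappa_2-1}$ contributes only a scalar monodromy around $w=z$, which commutes with the monodromy of $y_1$ around $w=p$, so the integrand returns to its initial branch after the commutator loop and exact forms integrate to zero. (One should add that although $\gamma_z$ depends on $z$, homotopy invariance lets you use a fixed contour for all $z$ in a small disc, so differentiation under the integral sign is legitimate.) Your alternative route via the $2\times 2$ system $D_Y$ and middle convolution with parameter $\kappa_2$ is precisely the derivation in \cite{TakM} that the paper cites. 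The one place where your write-up is a plan rather than a proof is the accessory-parameter computation: the exponent shifts $\tilde{\theta}_p=\kappa_2+\theta_p$ do come out immediately from the residues, but the formulas for $\tilde{\lambda}$ and $\tilde{\mu}$ --- and the verification that $z=\tilde{\lambda}$ is again an apparent singularity with exponents $0$ and $2$ --- carry essentially all the content of Eq.(\ref{eq:tlamtmu}) and are only described, not executed. Since that is exactly where the cited references do the work, the proposal is acceptable as a strategy but would need that final computation carried out in full to stand alone.
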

Kazakov and Slavyanov established an integral transformation for solutions of Heun's equation in \cite{KS}, and it was also obtained by taking suitable limits in Proposition \ref{prop:Nov}, which was discussed in \cite{TakM} by considering the relationship with the space of initial conditions of the sixth Painlev\'e equation.
\begin{prop} $($\cite{KS,TakM}$)$ \label{prop:Heunint}
Let $\alpha $, $\beta $, $\epsilon _0 $, $\epsilon _1 $, $\epsilon _t $ be the parameters in Heun's equation (\ref{Heun02}).
Set $\eta = \alpha \mbox{ or } \beta $ and 
\begin{align}
& \epsilon _0 '=\epsilon _0 -\eta+1 , \; \epsilon ' _1  =\epsilon _1-\eta +1, \; \epsilon _t '=\epsilon _t -\eta +1 , \label{eq:mualbe} \\
& \{ \alpha ' , \beta ' \} = \{ 2-\eta , \alpha +\beta -2\eta +1 \} , \; q'=q+(1-\eta )(\epsilon _t +\epsilon _1t+(\epsilon _0 -\eta ) (t+1)). \nonumber
\end{align}
Let $v(w)$ be a solution of 
\begin{equation}
\frac{d^2v}{dw^2} + \left( \frac{\epsilon _0 '}{w}+\frac{\epsilon _1'}{w-1}+\frac{\epsilon _t '}{w-t}\right) \frac{dv}{dw} +\frac{\alpha ' \beta ' w -q'}{w(w-1)(w-t)} v=0.
\label{Heun01}
\end{equation}
Then the function 
\begin{align}
& y(z)=\langle [\gamma _{z} ,\gamma _p] , v \rangle _{-\eta} = \int _{[\gamma _z ,\gamma _p]} v(w) (z-w)^{-\eta } dw 
\label{eq:inttransHeun}
\end{align}
is a solution of 
\begin{equation}
\frac{d^2y}{dz^2} + \left( \frac{\epsilon _0}{z}+\frac{\epsilon _1}{z-1}+\frac{\epsilon _t}{z-t}\right) \frac{dy}{dz} +\frac{\alpha \beta z -q}{z(z-1)(z-t)} y=0,
\label{Heun02}
\end{equation}
for $p \in \{ 0,1,t,\infty \}$.
\end{prop}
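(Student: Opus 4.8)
The plan is to verify directly that $y(z)$ is annihilated by the Heun operator of (\ref{Heun02}),
\[ L_z = \frac{d^2}{dz^2} + \left( \frac{\epsilon_0}{z} + \frac{\epsilon_1}{z-1} + \frac{\epsilon_t}{z-t} \right) \frac{d}{dz} + \frac{\alpha\beta z - q}{z(z-1)(z-t)}, \]
by moving $L_z$ inside the integral and trading $z$-derivatives of the kernel $K(z,w)=(z-w)^{-\eta}$ for $w$-derivatives. Writing $M_w$ for the Heun operator in (\ref{Heun01}) with the primed parameters and $M_w^{\ast}$ for its formal adjoint, the heart of the argument is an intertwining identity
\[ L_z K(z,w) = M_w^{\ast} K(z,w) + \frac{\partial}{\partial w} R(z,w), \]
where $R(z,w)$ is a rational multiple of $K$ (a bilinear concomitant). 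Granting this and differentiating under the integral sign, $L_z y(z)=\int_{[\gamma_z,\gamma_p]} v(w)\,L_zK\,dw$, and Lagrange's identity converts $\int v\cdot M_w^{\ast}K\,dw$ into $\int (M_w v)\,K\,dw$ plus a boundary term; since $M_w v=0$ the bulk contribution vanishes, leaving only boundary data.

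First I would establish the kernel identity. Because $K$ depends on $z$ and $w$ only through $z-w$, one has $\partial_zK=-\partial_wK$ and $\partial_z^2K=\partial_w^2K$, together with the Euler relation $(z-w)\partial_wK=\eta K$; these let one rewrite every $z$-dependent coefficient of $L_z$ as a $w$-dependent coefficient acting on $K$, after clearing the common factor $z(z-1)(z-t)$. Expanding $L_zK$ in partial fractions in $w$ and demanding that the outcome equal $M_w^{\ast}K$ modulo a total $w$-derivative forces the parameter dictionary: matching the simple poles at $w=0,1,t$ gives the shifts $\epsilon_p'=\epsilon_p-\eta+1$; matching the data at the point at infinity produces the condition $(\eta-\alpha)(\eta-\beta)=0$, which identifies $\eta$ with one of the exponents $\alpha,\beta$ at $z=\infty$ and fixes $\{\alpha',\beta'\}=\{2-\eta,\alpha+\beta-2\eta+1\}$; and the one residual constant that cannot be absorbed into a pole term pins down the accessory parameter $q'$ as in (\ref{eq:mualbe}). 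I expect this bookkeeping — in particular isolating the accessory-parameter term and checking that it is consistent rather than overdetermined — to be the main obstacle, since that is where the nontrivial rational identities must close up; a useful consistency check is that the candidate parameters satisfy the Fuchs relation $\epsilon_0'+\epsilon_1'+\epsilon_t'=\alpha'+\beta'+1$, which they do.

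It then remains to show that the boundary term vanishes, and this is exactly the purpose of the Pochhammer contour. The concomitant $R(z,w)$ inherits the multivaluedness of the integrand: it is multiplied by $e^{-2\pi i\eta}$ under continuation along $\gamma_z$ (from $(z-w)^{-\eta}$ encircling $w=z$) and by the local monodromy factor of $v$ under $\gamma_p$. On the commutator cycle $[\gamma_z,\gamma_p]=\gamma_z\gamma_p\gamma_z^{-1}\gamma_p^{-1}$ these two scalar factors cancel, so $R(z,w)$ returns to its initial value and $\oint \partial_wR\,dw=0$; the same vanishing of the multivalued boundary data is what makes $y(z)$ well defined and lets one differentiate under the integral in the first place. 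This is precisely why the contour $[\gamma_z,\gamma_p]$ is used and why taking $p\in\{0,1,t,\infty\}$, a singular point of $v$ so that $\gamma_p$ carries nontrivial monodromy, is the natural choice.

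Finally, as the excerpt indicates, the statement may alternatively be reached by degeneration from Proposition \ref{prop:Nov}: realizing Heun's equation as a suitable limit of $D_{y_1}(\theta_0,\theta_1,\theta_t,\theta_\infty;\lambda,\mu)$ in which the apparent singularity at $z=\lambda$ degenerates, so that the five singular points of $D_{y_1}$ reduce to the four of Heun's equation (the limit discussed in \cite{TakM} via the space of initial conditions of the sixth Painlev\'e equation). Under the matching $\theta_p=\eta-\epsilon_p$ and $\kappa_2=1-\eta$, the transformation kernel $(z-w)^{\kappa_2-1}$ becomes $(z-w)^{-\eta}$ and the shifts $\tilde\theta_p=\kappa_2+\theta_p$ become $1-\epsilon_p$, reproducing (\ref{eq:mualbe}). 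I would use the direct kernel computation as the primary route and this limit as an independent confirmation of the parameter correspondence.
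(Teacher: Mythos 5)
Your proposal is sound, but it takes a different route from the paper, which offers no direct verification at all: Proposition \ref{prop:Heunint} is stated as a cited result of Kazakov--Slavyanov \cite{KS}, and the only proof sketch the paper indicates is the degeneration from Proposition \ref{prop:Nov} (letting the apparent singularity $z=\lambda$ of $D_{y_1}$ disappear, as analysed in \cite{TakM} via the space of initial conditions of Painlev\'e VI). Your primary route --- the intertwining identity $L_zK=M_w^{\ast}K+\partial_wR$ for the kernel $K=(z-w)^{-\eta}$, Lagrange's identity, and the vanishing of the bilinear concomitant on the Pochhammer contour --- is the classical self-contained argument (essentially that of \cite{KS}), and it has the advantage of making visible exactly where each piece of Eq.~(\ref{eq:mualbe}) comes from: the pole-matching at $w=0,1,t$ forces $\epsilon_p'=\epsilon_p-\eta+1$, the behaviour at infinity forces $(\eta-\alpha)(\eta-\beta)=0$ and $\{\alpha',\beta'\}$, and the leftover constant fixes $q'$; your consistency check via the Fuchs relation does hold. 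The paper's route, by contrast, reuses the already-established Proposition \ref{prop:Nov} but must handle the confluence of the fifth (apparent) singularity, which is the delicate step deferred to \cite{TakM}; your dictionary $\theta_p=\eta-\epsilon_p=1-\epsilon_p'$, $\kappa_2=1-\eta$ is the correct parameter matching for that limit. One imprecision worth fixing: in the contour argument you describe the continuation of $v$ along $\gamma_p$ as multiplication by a ``scalar factor,'' but a generic solution $v$ is not an eigenvector of the local monodromy at $p$; what actually makes $\oint\partial_wR\,dw$ vanish is that the monodromy along $\gamma_z$ acts as the scalar $e^{-2\pi\sqrt{-1}\,\eta}$ on the $K$-part only, the monodromy along $\gamma_p$ acts (possibly non-scalarly) on the $v$-part only, these two actions commute, and hence the commutator cycle $[\gamma_z,\gamma_p]$ returns every branch of the concomitant to its initial value.
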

Note that the inverse correspondence of the parameters is given by setting $\eta = \alpha \mbox{ or } \beta $ and 
\begin{align}
& \eta =2-\eta ', \;  \epsilon _0 =\epsilon _0 ' -\eta '+1, \; \epsilon _1=\epsilon _1'-\eta '+1, \; \epsilon _t =\epsilon _t '-\eta '+1 , \label{eq:mualbe18} \\
& \{ \alpha , \beta \} = \{ 2-\eta ' , \alpha '+\beta '-2\eta '+1 \} , \; q=q'+(1-\eta ')(\epsilon _t '+\epsilon _1't+(\epsilon _0 '-\eta ') (t+1)). \nonumber
\end{align}

In this paper, Euler's integral transformations given by Eqs.(\ref{eq:inttransDy1}), (\ref{eq:inttransHeun}) are considered.
If we have a solution of the differential equation $D_{y_1}(\theta _0, \theta _1, \theta _t,\theta _{\infty}; \lambda ,\mu )$ (resp. Heun's differential equation (\ref{Heun01})), then we may study the solution of $D_{y_1}(\tilde{\theta }_0, \tilde{\theta }_1, \tilde{\theta }_t, \tilde{\theta }_{\infty} ; \tilde{\lambda } ,\tilde{\mu } )$ (resp. Eq.(\ref{Heun02})) by means of Euler's integral transformations in Eq.(\ref{eq:inttransDy1}) (resp. Eq.(\ref{eq:inttransHeun})).
We apply this strategy for the case where the differential equation $D_{y_1}(\theta _0, \theta _1, \theta _t,\theta _{\infty}; \lambda ,\mu )$ (resp. Eq.(\ref{Heun01})) has a polynomial-type solution.
Then it is shown that one of the singularities $\{ 0,1,t,\infty \}$ of the differential equation $D_{y_1}(\tilde{\theta }_0, \tilde{\theta }_1, \tilde{\theta }_t, \tilde{\theta }_{\infty} ; \tilde{\lambda } ,\tilde{\mu } )$ (resp. Eq.(\ref{Heun02})) turns out to be apparent, and the inverse statement also holds (see Theorems \ref{thm:nonlog-polyn} and \ref{thm:nonlog-polynHeun}).
As a by-product, we have integral representations of solutions of Heun's equation for which one of the singularities $\{ 0,1,t,\infty \}$ is apparent (see Theorems \ref{thm:exprnonlogsol} and \ref{thm:exprnonlogsol2}). 
We also investigate properties of monodromy by means of integral transformations, which are used for the study of solutions.

It is known that Heun's equation has an elliptical representation.
Let $\wp (x)$ be the Weierstrass doubly periodic function with periods $(2\omega _1, 2\omega _3)$, $\omega _0 (=0)$, $\omega _1$, $\omega _2(=-\omega _1 -\omega _3)$, $\omega _3$ be the half-periods and $e_i=\wp (\omega _i)$ $(i=1,2,3)$.
Heun's equation (\ref{Heun}) is transformed to
\begin{equation}
\left(-\frac{d^2}{dx^2} + \sum_{i=0}^3 l_i(l_i+1)\wp (x+\omega _i)-E\right) f(x)=0,
\label{InoEF0}
\end{equation}
by setting $z=(\wp (x) -e_1)/(e_2-e_1)$, $t=(e_3-e_1)/(e_2-e_1)$.
For details see section \ref{sec:ell}.   
Then the integral transformation of Eq.(\ref{eq:inttransHeun}) provides a correspondence of Eq.(\ref{InoEF0}) with a different parameter described in Proposition \ref{prop:ellipinttras}.
For the elliptical representation, the invariance of monodromy by the integral transformation with respect to the shift of a period is remarkable.  
For details see Theorem \ref{thm:pp0}.
We also obtain correspondences of solutions expressed by quasi-solvability (existence of a polynomial-type solution) and apparency of one of the singularities $\{0,\omega _1, \omega _2, \omega _3 \}$.
We apply the integral transformation for the case where Heun's equation has the finite-gap property, i.e. the case where $l_0,l_1,l_2,l_3 \in \Zint$.
For the case $l_0,l_1,l_2,l_3 \in \Zint$ we can calculate the monodomy in principle for all $E$ by means of hyperelliptic integrals \cite{Tak3} and by the Hermite-Krichever Ansatz \cite{Tak4}. 
By applying monodromy invariance, we can calculate the monodromy of Heun's equation for the case $l_0, l_1, l_2, l_3 \in \Zint +1/2$ and $l_0+l_1+l_2+l_3 \in 2\Zint +1$, which have not been studied previously.

This paper is organized as follows:
In section \ref{sec:MonInttr}, we investigate the transformation of the monodromy induced by Euler's integral  transformation. 
In section \ref{sec:solmon}, we obtain some properties of solutions and monodromy of the Fuchsian differential equations $D_{y_1}(\theta _0, \theta _1, \theta _t, \theta _{\infty} ; \lambda ,\mu )$, $D_{y_1}(\tilde{\theta }_0, \tilde{\theta }_1, \tilde{\theta }_t, \tilde{\theta }_{\infty} ; \tilde{\lambda } ,\tilde{\mu } )$ and Heun's equations (Eqs.(\ref{Heun01}), (\ref{Heun02})).
In section \ref{sec:polnonlog}, we have correspondences of polynomial-type solutions and solutions such that one of the singularities is apparent.
In section \ref{sec:QSnonlogHeun}, we obtain integral representations of solutions of Heun's equation which have apparent singularities by using polynomial-type solutions. 
In section \ref{sec:ell}, we translate the results to the elliptical representation of Heun's equation.
In section \ref{sec:HeunFGIT}, we review results on finite-gap potentials and calculate the monodromy of the elliptical representation of Heun's equation for the case $l_0, l_1, l_2, l_3 \in \Zint +1/2$ and $l_0+l_1+l_2+l_3 \in 2\Zint +1$.
In the appendix we provide the technical details.

\section{Monodromy and integral transformation} \label{sec:MonInttr}
In this section we investigate the transformation of the monodromy induced by Euler's integral transformation given by Eq.(\ref{eq:Etrans}). 

We review some facts about cycles in order to discuss the monodromy.
Let $a, b \in \Cplx \cup \{ \infty \}$ $(a\neq b)$
and $p _{ab}$ be a path linking $a$ and $b$.
We put the base point $o$ of integrals in Eq.(\ref{eq:Etrans}) on the left side of the path $p _{ab}$.
Let $z$ be a point on $p _{ab}$.
We consider deformations of the cycles $\gamma _a$, $\gamma _b$ and $\gamma _z$ in the $w$-plane as the point $z$ turns around the singularity $a$ or $b$ anti-clockwise.
As the point $z$ turns around the singularity $w=a$ anti-clockwise, the cycle $\gamma _a$ is deformed to $\gamma _a \gamma _z \gamma _a  \gamma _z ^{-1} \gamma_ a ^{-1}$, the cycle $\gamma _z$ is deformed to $\gamma _a \gamma _z \gamma _a ^{-1}$ and the cycle $\gamma _b$ is not deformed (see Figure 1).
As the point $z$ turns around the singularity $b$ anti-clockwise, the cycle $\gamma _b$ is deformed to $\gamma _z \gamma _b \gamma _z  ^{-1}$, the cycle $\gamma _z$ is deformed to $\gamma _z \gamma _b \gamma _z \gamma _b ^{-1}\gamma _z ^{-1}$ and the cycle $\gamma _a$ is not deformed (see also Figure 1).
\begin{center}
\begin{picture}(400,210)(0,0)
\qbezier(10,140)(50,170)(90,140)
\put(10,140){\circle*{3}}
\put(90,140){\circle*{3}}
\put(40,154){\circle*{3}}
\put(45,200){\circle*{3}}
\put(40,147){$z$}
\put(10,133){$a$}
\put(92,133){$b$}
\put(48,200){$o$}
\put(60,156){$p_{ab}$}
\put(105,110){\line(0,1){100}}
\put(105,110){\line(-1,0){110}}

\put(165,200){\circle*{3}}
\put(168,200){$o$}
\put(150,150){\circle*{3}}
\put(150,153){$a$}
\put(168,150){\circle*{3}}
\put(170,146){$z$}
\qbezier(165,200)(140,170)(140,150)
\put(140,150){\vector(0,-1){1}}
\put(150,150){\oval(20,20)[b]}
\qbezier(160,150)(160,170)(165,200)
\put(160,150){\vector(0,1){1}}
\put(150,128){$\gamma _a $}

\put(180,150){$\Rightarrow$}
\put(265,200){\circle*{3}}
\put(268,200){$o$}
\put(250,150){\circle*{3}}
\put(250,153){$a$}
\put(232,150){\circle*{3}}
\put(232,143){$z$}
\qbezier(265,200)(210,170)(210,150)
\put(210,150){\vector(0,-1){1}}
\qbezier(210,150)(210,115)(240,115)
\qbezier(240,115)(260,115)(260,150)
\put(250,150){\oval(20,20)[t]}
\put(232,150){\oval(16,42)[b]}
\put(224,150){\vector(0,1){1}}
\qbezier(224,150)(224,170)(265,200)
\qbezier[20](268,150)(268,168)(250,168)
\qbezier[20](232,150)(232,168)(250,168)
\put(250,168){\vector(-1,0){1}}

\put(285,150){$\Rightarrow$}
\put(375,200){\circle*{3}}
\put(378,200){$o$}
\put(360,150){\circle*{3}}
\put(360,153){$a$}
\put(378,150){\circle*{3}}
\put(380,146){$z$}
\qbezier(375,200)(320,170)(320,150)
\put(320,150){\vector(0,-1){1}}
\qbezier(320,150)(320,115)(360,115)
\qbezier(360,115)(395,115)(395,150)
\qbezier(395,150)(395,170)(375,195)
\qbezier(375,195)(350,160)(350,150)
\put(360,150){\oval(20,20)[b]}
\qbezier(386,150)(386,170)(375,190)
\qbezier(375,190)(370,160)(370,150)
\put(360,150){\oval(52,52)[b]}
\put(334,150){\vector(0,1){1}}
\qbezier(334,150)(334,170)(375,200)
\qbezier[20](378,150)(378,168)(360,168)
\qbezier[20](378,150)(378,132)(360,132)
\qbezier[20](342,150)(342,168)(360,168)
\qbezier[20](342,150)(342,132)(360,132)
\put(373,136){\vector(1,1){1}}
\put(325,103){$\gamma _a \gamma _z \gamma _a \gamma _z ^{-1} \gamma_ a ^{-1}$}

\put(95,90){\circle*{3}}
\put(98,90){$o$}
\put(80,50){\circle*{3}}
\put(80,53){$a$}
\put(98,50){\circle*{3}}
\put(100,46){$z$}
\qbezier(95,90)(54,70)(54,50)
\put(54,50){\vector(0,-1){1}}
\qbezier(95,90)(70,60)(70,50)
\put(80,50){\oval(20,20)[b]}
\qbezier(106,50)(106,70)(95,80)
\qbezier(95,80)(90,60)(90,50)
\put(80,50){\oval(52,52)[b]}
\put(70,50){\vector(0,1){1}}
\qbezier[20](98,50)(98,68)(80,68)
\qbezier[20](98,50)(98,32)(80,32)
\qbezier[20](62,50)(62,68)(80,68)
\qbezier[20](62,50)(62,32)(80,32)
\put(93,36){\vector(1,1){1}}
\put(55,3){$\gamma _a \gamma _z  \gamma_ a ^{-1}$}

\put(195,90){\circle*{3}}
\put(198,90){$o$}
\put(202,50){\circle*{3}}
\put(195,53){$z$}
\put(218,50){\circle*{3}}
\put(218,40){$b$}
\qbezier(195,90)(174,70)(174,50)
\put(174,50){\vector(0,-1){1}}
\qbezier(195,90)(190,60)(190,50)
\put(200,50){\oval(20,20)[b]}
\qbezier(226,50)(226,70)(210,80)
\qbezier(210,80)(210,60)(210,50)
\put(200,50){\oval(52,52)[b]}
\put(190,50){\vector(0,1){1}}
\qbezier(174,50)(174,70)(195,90)
\qbezier[28](238,50)(238,88)(220,88)
\qbezier[20](238,50)(238,32)(220,32)
\qbezier[28](202,50)(202,88)(220,88)
\qbezier[20](202,50)(202,32)(220,32)
\put(233,36){\vector(1,1){1}}
\put(185,3){$\gamma _z \gamma _b \gamma _z ^{-1}$}

\put(345,90){\circle*{3}}
\put(348,90){$o$}
\put(332,50){\circle*{3}}
\put(332,53){$z$}
\put(348,50){\circle*{3}}
\put(350,46){$b$}
\qbezier(345,90)(300,70)(300,50)
\put(300,50){\vector(0,-1){1}}
\qbezier(300,50)(300,15)(330,15)
\qbezier(330,15)(375,15)(375,50)
\qbezier(375,50)(375,75)(345,85)
\qbezier(345,85)(320,60)(320,50)
\put(330,50){\oval(20,20)[b]}
\qbezier(356,50)(356,70)(345,75)
\qbezier(345,75)(340,60)(340,50)
\put(333,50){\oval(46,52)[b]}
\put(310,50){\vector(0,1){1}}
\qbezier(310,50)(310,70)(345,90)
\qbezier[20](368,50)(368,78)(350,78)
\qbezier[20](368,50)(368,32)(350,32)
\qbezier[20](332,50)(332,78)(350,78)
\qbezier[20](332,50)(332,32)(350,32)
\put(363,36){\vector(1,1){1}}
\put(305,3){$\gamma _z \gamma _b \gamma _z \gamma _b ^{-1} \gamma_ z ^{-1}$}
\end{picture}
Figure 1. Deformation of the cycles.
\end{center}

The Euler's integral transformation by the Pochhammer contour admits the following expression,
\begin{align}
& \langle [\gamma _{z} ,\gamma _a] , f \rangle _{\kappa } = \int _{\gamma _{z} \gamma _a \gamma _{z} ^{-1} \gamma _a ^{-1}} f(w) (z-w)^{\kappa } dw = (e^{2\pi \sqrt{-1}\kappa }-1) \langle \gamma _a ,f \rangle _{\kappa }+ \langle \gamma _z , f- f^{\gamma _a} \rangle _{\kappa }, \label{eq:gzgidecom}
\end{align}
where we used the relations $0= \langle  \gamma _a , f \rangle + \langle  \gamma _a ^{-1} , f ^{\gamma _a} \rangle  $ and $0= \langle  \gamma _z , f \rangle +  e^{2\pi \sqrt{-1}\kappa } \langle  \gamma _z ^{-1} , f \rangle $. The formula obtained by replacing $a$ with $b$ also holds true.
Then analytic continuation of the transformed functions are calculated as follows;
\begin{prop}
\begin{align}
& \langle [\gamma _{z} ,\gamma _b] , f \rangle  ^{\gamma _a} = \langle [\gamma _{z} ,\gamma _b] , f \rangle + \langle [\gamma _{z} ,\gamma _a] , f ^{\gamma _b} \rangle - \langle [\gamma _{z} ,\gamma _a ],f  \rangle ,  \label{eq:gzgbga} \\
& \langle [\gamma _{z} ,\gamma _a] , f \rangle  ^{\gamma _a} = e^{2\pi \sqrt{-1}\kappa } \langle [\gamma _{z} ,\gamma _a] , f ^{\gamma _a} \rangle , \nonumber \\
& \langle [\gamma _{z} ,\gamma _a] , f \rangle  ^{\gamma _b} =
\langle [\gamma _{z} ,\gamma _a] , f \rangle +  e^{2\pi \sqrt{-1}\kappa } \langle [\gamma _{z} ,\gamma _b] , f ^{\gamma _a} \rangle -  e^{2\pi \sqrt{-1}\kappa } \langle [\gamma _{z} ,\gamma _b ],f  \rangle , \nonumber \\
& \langle [\gamma _{z} ,\gamma _b] , f \rangle  ^{\gamma _b} = e^{2\pi \sqrt{-1}\kappa } \langle [\gamma _{z} ,\gamma _b] , f ^{\gamma _b} \rangle . \nonumber 
\end{align}
\end{prop} 
\begin{proof}
We show the first equality.
It follows from Eq.(\ref{eq:gzgidecom}) that 
\begin{align}
& \langle [\gamma _{z} ,\gamma _b] , f \rangle  ^{\gamma _a} = \langle [\gamma _a \gamma _{z} \gamma _a ^{-1},\gamma _b] , f \rangle  = \langle \gamma _a ,f \rangle + \langle \gamma _z , f^{\gamma _a} \rangle -  e^{2\pi \sqrt{-1}\kappa } \langle \gamma _a , f \rangle  \\
& + e^{2\pi \sqrt{-1}\kappa } \langle \gamma _b ,f \rangle + e^{2\pi \sqrt{-1}\kappa } \langle \gamma _a ,f ^{\gamma _b}\rangle - \langle \gamma _z , f ^{\gamma _b \gamma _a} \rangle - \langle \gamma _a ,f^{\gamma _b} \rangle - \langle \gamma _b ,f \rangle . \nonumber 
\end{align}
By expanding the r.h.s. of the first equality, we have the same expression.
The second formula follows from
\begin{align}
& \langle [\gamma _{z} ,\gamma _a] , f \rangle  ^{\gamma _a} =  \langle \gamma _{a}\gamma _z [\gamma _{z} ,\gamma _a] (\gamma _{a}\gamma _z)^{-1}, f \rangle = e^{2\pi \sqrt{-1}\kappa } \langle [\gamma _{z} ,\gamma _a] , f ^{\gamma _a} \rangle . \nonumber
\end{align}
The other formulas are shown similarly.
\end{proof}
From now on, we assume that the function $y(w)$ is a solution of a second-order differential equation, the two points $a, b \in \Cplx $ are regular singularities of the differential equation whose exponents are $0$ and $\theta _a$, $0$ and $\theta _b$ respectively, where the case of Heun's equation and that of the differential equation $D_{y_1}(\theta _0, \theta _1, \theta _t, \theta _{\infty}; \lambda ,\mu )$ are included.
We put the base point $o$ of integrals in Eq.(\ref{eq:inttransDy1}) on the left side of the path $p _{ab}$.
Let $y^{(2)} (w)$ be a solution of the second-order differential equation such that the functions $y(w)$, $y^{(2)} (w)$ form a basis of solutions of the differential equation, and we denote the monodromy matrices around the singularity $w=a$ by
\begin{align}
(y(w)^{\gamma _a} , y^{(2)} (w) ^{\gamma _a})= (y(w) , y^{(2)}(w) )\left( 
\begin{array}{cc}
a'_{11} & a'_{12}\\
a'_{21} & a'_{22}
\end{array} \right) = (y (w) , y^{(2)} (w) )M'^{(a)}. \label{eq:monodAB} 
\end{align}
The eigenvalues of the monodromy matrix $M'^{(a)}$ in Eq.(\ref{eq:monodAB}) are $1$ and $e^{2\pi \sqrt{-1} \theta _a}$. 
If one of the exponents around the singularities $w=a$ is zero, then there exists a non-zero solution $f(w)$ that is holomorphic about $w=a$ and it follows from Eq.(\ref{eq:gzgidecom}) that the function $\langle [\gamma _{z} ,\gamma _a] , f \rangle _{\kappa } $ is zero.
Let $a'_1 y(w)+ a'_2y^{(2)}(w)$ be a non-zero holomorphic solution of the differential equation about $w=a$.
Then we have 
\begin{equation}
 a'_1 \langle [\gamma _{z} ,\gamma _a] , y \rangle _{\kappa }  + a'_2 \langle [\gamma _{z} ,\gamma _a] , y^{(2)} \rangle _{\kappa } =0. \label{eq:holp1p2}
\end{equation}
Since $a'_1 y_1(w)^{\gamma _a}+ a'_2 y^{(2)} (w)^{\gamma _a}=a'_1 y(w)+ a'_2 y^{(2)}(w)$, we have
\begin{align}
\left(
\begin{array}{cc}
a'_{11} -1& a'_{12}\\
a'_{21} & a'_{22} -1
\end{array} \right)
\left(
\begin{array}{cc}
a'_{1} \\
a'_{2}
\end{array} \right) 
=\left(
\begin{array}{cc}
0 \\
0
\end{array} \right) ,
\quad
\begin{array}{ll}
(a'_{11}-1)(a'_{22}-1)-a'_{12}a'_{21}=0,  \\
a'_{11} +a'_{22} =\mbox{tr} M'^{(a)} = 1+e^{2\pi \sqrt{-1} \theta _a}. 
\end{array}
\label{eq:pij} 
\end{align}
We use similar notations for the singularity $w=b$.
If $\langle [\gamma _{z} ,\gamma _a] , y \rangle _{\kappa } \neq 0$ and $\langle [\gamma _{z} ,\gamma _b] , y \rangle _{\kappa } \neq 0$, then $y(w)$ is not holomorphic about $w=a,b$ and $a'_2 \neq 0 \neq b'_2$.
We calculate the monodromy for the functions $\langle [\gamma _{z} ,\gamma _a] , y \rangle _{\kappa } $, $\langle [\gamma _{z} ,\gamma _b] , y \rangle _{\kappa } $ with respect to the cycles $\gamma _a$ and $\gamma _b$ for the case that $\langle [\gamma _{z} ,\gamma _a] , y \rangle _{\kappa } $, $\langle [\gamma _{z} ,\gamma _b] , y \rangle _{\kappa } $ are linearly independent.
Combining with Eqs.(\ref{eq:gzgbga}), we have
\begin{align}
& \langle [\gamma _{z} ,\gamma _a] , y  ^{\gamma _b}\rangle _{\kappa }
= \langle [\gamma _{z} ,\gamma _a] , b'_{11}y +b'_{21}y^{(2)}  \rangle _{\kappa }
= \langle [\gamma _{z} ,\gamma _a] , b'_{11}y -\frac{a'_1}{a'_2}b'_{21}y  \rangle _{\kappa } \label{eq:monodrels} \\
& \langle [\gamma _{z} ,\gamma _b] , y \rangle _{\kappa }  ^{\gamma _a} = \langle [\gamma _{z} ,\gamma _b] , y \rangle _{\kappa } + \langle [\gamma _{z} ,\gamma _a] , y ^{\gamma _b} \rangle _{\kappa } - \langle [\gamma _{z} ,\gamma _a ],y \rangle _{\kappa } ,\nonumber \\
& = \left( b'_{11} -1  -\frac{a'_1}{a'_2}b'_{21} \right) \langle [\gamma _{z} ,\gamma _a] , y \rangle _{\kappa } + \langle [\gamma _{z} ,\gamma _b] , y \rangle _{\kappa } ,\nonumber \\
& \langle [\gamma _{z} ,\gamma _a] , y\rangle _{\kappa }  ^{\gamma _a} = e^{2\pi \sqrt{-1}\kappa } \langle [\gamma _{z} ,\gamma _a] ,  a'_{11}y +a'_{21}y^{(2)} \rangle _{\kappa } \nonumber \\
& = e^{2\pi \sqrt{-1}\kappa } \langle [\gamma _{z} ,\gamma _a] ,  a'_{11}y -\frac{a'_1}{a'_2}a'_{21}y \rangle _{\kappa } = e^{2\pi \sqrt{-1}\kappa } (a'_{11}+a'_{22}-1 )\langle [\gamma _{z} ,\gamma _a] ,  y \rangle _{\kappa } ,\nonumber \\
& \langle [\gamma _{z} ,\gamma _a] , y  \rangle _{\kappa } ^{\gamma _b} =  \langle [\gamma _{z} ,\gamma _a] , y \rangle _{\kappa } + e^{2\pi \sqrt{-1}\kappa } \left( a'_{11} -1 -\frac{b'_{1}}{b'_{2}}a'_{21} \right) \langle [\gamma _{z} ,\gamma _b] , y \rangle _{\kappa } ,\nonumber \\
& \langle [\gamma _{z} ,\gamma _b ] , y \rangle _{\kappa } ^{\gamma _b} = e^{2\pi \sqrt{-1}\kappa } (b'_{11}+b'_{22}-1 )\langle [\gamma _{z} ,\gamma _b] ,  y \rangle _{\kappa } .\nonumber 
\end{align}
We denote the monodromy matrix on the cycle $\gamma _p$ with respect to the functions $\langle [\gamma _{z} ,\gamma _{a}] , y\rangle _{\kappa } ,  \langle [\gamma _{z} ,\gamma _{b}] , y\rangle _{\kappa } $ by $M^{(p)}_{a,b}$.
Then we have
\begin{align}
& M^{(a)}_{a,b}= \left( 
\begin{array}{cc}
e^{2\pi \sqrt{-1}\kappa } (a'_{11}+a'_{22}-1 ) &  b'_{11} -1 -\frac{a'_1}{a'_2}b'_{21} \\
0 & 1
\end{array} \right) , \label{eq:monodApBp} \\
&  M^{(b)}_{a,b} =\left( 
\begin{array}{cc}
1 &  0 \\
e^{2\pi \sqrt{-1}\kappa } \left( a'_{11} -1 -\frac{b'_{1}}{b'_{2}}a'_{21} \right) & e^{2\pi \sqrt{-1}\kappa } (b'_{11}+b'_{22}-1 )
\end{array} \right) , \nonumber
\end{align}
Thus tr$( M^{(a)}_{a,b}) =1+e^{2\pi \sqrt{-1} (\kappa +\theta _a)}$, tr$( M^{(b)}_{a,b}) =1+e^{2\pi \sqrt{-1} (\kappa +\theta _b)}$, and we have
\begin{align}
&  M^{(a)}_{a,b} M^{(b)}_{a,b}=e^{2\pi \sqrt{-1}\kappa } \left( 
\begin{array}{cc}
a'_{11} b'_{11}+a'_{12} b'_{21}+a'_{21} b'_{12}+a'_{22} b'_{22}\atop{-b'_{11}-b'_{22}+1} & (b'_{11}+b'_{22}-1)(b'_{11} -1 -\frac{a'_1}{a'_2}b'_{21}) \\
a'_{11} -1 -\frac{b'_1}{b'_2}a'_{21} & b'_{11}+b'_{22}-1 
\end{array}
\right) ,\\
& \mbox{tr}( M^{(a)}_{a,b} M^{(b)}_{a,b})=e^{2\pi \sqrt{-1}\kappa } \mbox{tr}(M'^{(a)}M'^{(b)}), \quad \mbox{det}( M^{(a)}_{a,b} M^{(b)}_{a,b})=e^{4\pi \sqrt{-1}\kappa } \mbox{det}(M'^{(a)}M'^{(b)}). \nonumber
\end{align}
Note that values of the trace and determinant are independent of the choice of basis.

We consider the case where $\langle [\gamma _{z} ,\gamma _a] , y \rangle _{\kappa }$, $\langle [\gamma _{z} ,\gamma _b] , y \rangle _{\kappa }$ are linearly dependent.
We further assume that the point $w=c$ is also a regular singularity, $\langle [\gamma _{z} ,\gamma _p] , y\rangle _{\kappa } \neq 0 $ for $p=a,b,c$ and $\langle [\gamma _{z} ,\gamma _a] , y \rangle _{\kappa }$, $\langle [\gamma _{z} ,\gamma _c] , y \rangle _{\kappa }$ are linearly independent.
Then $\langle [\gamma _{z} ,\gamma _b] , y\rangle _{\kappa } =d \langle [\gamma _{z} ,\gamma _a] , y\rangle _{\kappa } $ for some $d\neq 0$.
It follows from Eq.(\ref{eq:monodrels}) that $\langle [\gamma _{z} ,\gamma _a] , y\rangle _{\kappa }  ^{\gamma _a}= a_{11} \langle [\gamma _{z} ,\gamma _a] , y\rangle _{\kappa } = (1+a_{12}/d) \langle [\gamma _{z} ,\gamma _a] , y\rangle _{\kappa }$ and $\langle [\gamma _{z} ,\gamma _a] , y\rangle _{\kappa }  ^{\gamma _b}= b_{22} \langle [\gamma _{z} ,\gamma _a] , y\rangle _{\kappa } = (1+db_{21}) \langle [\gamma _{z} ,\gamma _a] , y\rangle _{\kappa }$, where  $a_{11}= e^{2\pi \sqrt{-1}\kappa } (a'_{11}+a'_{22}-1 )$, $a_{12}=  b'_{11} -1 -b'_{21} a'_1 / a'_2$, $b_{21} = e^{2\pi \sqrt{-1}\kappa } ( a'_{11} -1 -a'_{21}b'_{1}/b'_{2} )$, $b_{22}= e^{2\pi \sqrt{-1}\kappa } (b'_{11}+b'_{22}-1 )$.
Hence we have $a_{11} =1+a_{12}/d$, $b_{22} = 1+db_{21}$ and $a_{12}b_{21}=(a_{11}-1)(b_{22}-1)$.
By applying Eqs.(\ref{eq:pij}), (\ref{eq:monodApBp}), the relation $a_{12}b_{21}=(a_{11}-1)(b_{22}-1)$ can be written as
\begin{equation}
e^{4\pi \sqrt{-1}\kappa }\det (M'^{(a)}M'^{(b)})-e^{2\pi \sqrt{-1}\kappa }\mbox{tr}(M'^{(a)}M'^{(b)})+1=0.
\label{eq:detABtrAB}
\end{equation} 
Assume that the points $a$, $b$, $c$ are located anticlockwise with respect to the point $o$.
In a similar way as we obtained Eq.(\ref{eq:monodApBp}), we have 
\begin{align}
& ( \langle [\gamma _{z} ,\gamma _b] , y\rangle _{\kappa }  ^{\gamma _b} ,  \langle [\gamma _{z} ,\gamma _c] , y\rangle _{\kappa }  ^{\gamma _b} )= ( \langle [\gamma _{z} ,\gamma _b] , y\rangle _{\kappa } ,  \langle [\gamma _{z} ,\gamma _c] , y\rangle _{\kappa } ) \label{eq:monodBpCp} \\
& \quad \quad \quad \quad \quad \quad \quad \quad \quad \quad \left( 
\begin{array}{cc}
e^{2\pi \sqrt{-1}\kappa } (b'_{11}+b'_{22}-1 ) &  c'_{11} -1 -\frac{b'_1}{b'_2}c'_{21} \\
0 & 1
\end{array} \right) , \nonumber \\
& ( \langle [\gamma _{z} ,\gamma _b] , y\rangle _{\kappa }  ^{\gamma _c} ,  \langle [\gamma _{z} ,\gamma _c] , y\rangle _{\kappa }  ^{\gamma _c} )= ( \langle [\gamma _{z} ,\gamma _b] , y\rangle _{\kappa } ,  \langle [\gamma _{z} ,\gamma _c] , y\rangle _{\kappa } ) \nonumber \\
&\quad \quad \quad \quad \quad \left( 
\begin{array}{cc}
1 &  0 \\
e^{2\pi \sqrt{-1}\kappa } \left( b'_{11} -1 -\frac{c'_{1}}{c'_{2}}b'_{21} \right) & e^{2\pi \sqrt{-1}\kappa } (c'_{11}+c'_{22}-1 )
\end{array} \right) , \nonumber \\
& ( \langle [\gamma _{z} ,\gamma _c] , y\rangle _{\kappa }  ^{\gamma _c} ,  \langle [\gamma _{z} ,\gamma _a] , y\rangle _{\kappa }  ^{\gamma _c} )= ( \langle [\gamma _{z} ,\gamma _c] , y\rangle _{\kappa } ,  \langle [\gamma _{z} ,\gamma _a] , y\rangle _{\kappa } ) \nonumber \\
& \quad \quad \quad \quad \quad \quad \quad \quad \quad \quad \left( 
\begin{array}{cc}
e^{2\pi \sqrt{-1}\kappa } (c'_{11}+c'_{22}-1 ) &  a'_{11} -1 -\frac{c'_1}{c'_2}a'_{21} \\
0 & 1
\end{array} \right) , \nonumber \\
& ( \langle [\gamma _{z} ,\gamma _c] , y\rangle _{\kappa }  ^{\gamma _a} ,  \langle [\gamma _{z} ,\gamma _a] , y\rangle _{\kappa }  ^{\gamma _a} )= ( \langle [\gamma _{z} ,\gamma _c] , y\rangle _{\kappa } ,  \langle [\gamma _{z} ,\gamma _a] , y\rangle _{\kappa } ) \nonumber \\
&\quad \quad \quad \quad \quad \left( 
\begin{array}{cc}
1 &  0 \\
e^{2\pi \sqrt{-1}\kappa } \left( c'_{11} -1 -\frac{a'_{1}}{a'_{2}}c'_{21} \right) & e^{2\pi \sqrt{-1}\kappa } (a'_{11}+a'_{22}-1 )
\end{array} \right) . \nonumber
\end{align}
By applying $\langle [\gamma _{z} ,\gamma _b] , y\rangle _{\kappa } =d \langle [\gamma _{z} ,\gamma _a] , y\rangle _{\kappa } $, we obtain
\begin{align}
& ( \langle [\gamma _{z} ,\gamma _a] , y\rangle _{\kappa }  ^{\gamma _a} ,  \langle [\gamma _{z} ,\gamma _c] , y\rangle _{\kappa }  ^{\gamma _a} )= ( \langle [\gamma _{z} ,\gamma _a] , y\rangle _{\kappa } ,  \langle [\gamma _{z} ,\gamma _c] , y\rangle _{\kappa } )  \\
&\quad \quad \quad \quad \quad \left( 
\begin{array}{cc}
e^{2\pi \sqrt{-1}\kappa } (a'_{11}+a'_{22}-1 ) &  e^{2\pi \sqrt{-1}\kappa } \left( c'_{11} -1 -\frac{a'_{1}}{a'_{2}}c'_{21} \right) \\
0 & 1
\end{array} \right) , \nonumber \\
& ( \langle [\gamma _{z} ,\gamma _a] , y\rangle _{\kappa }  ^{\gamma _b} ,  \langle [\gamma _{z} ,\gamma _c] , y\rangle _{\kappa }  ^{\gamma _b} )= ( \langle [\gamma _{z} ,\gamma _a] , y\rangle _{\kappa } ,  \langle [\gamma _{z} ,\gamma _c] , y\rangle _{\kappa } ) \nonumber \\
& \quad \quad \quad \quad \quad \quad \quad \quad \quad \quad \left( 
\begin{array}{cc}
e^{2\pi \sqrt{-1}\kappa } (b'_{11}+b'_{22}-1 ) &  (c'_{11} -1 -\frac{b'_1}{b'_2}c'_{21})d \\
0 & 1
\end{array} \right) . \nonumber 
\end{align}
Then $\det (M^{(a)}_{a,c}M^{(b)}_{a,c})=e^{4\pi \sqrt{-1}\kappa }\det (M'^{(a)}M'^{(b)})$ and $\mbox{tr}(M^{(a)}_{a,c}M^{(b)}_{a,c}) = e^{4\pi \sqrt{-1}\kappa }\det (M'^{(a)}M'^{(b)}) +1$.
Combining with Eq.(\ref{eq:detABtrAB}), we have $\mbox{tr}(M^{(a)}_{a,c}M^{(b)}_{a,c}) = e^{2\pi \sqrt{-1}\kappa }\mbox{tr}(M'^{(a)}M'^{(b)})$, which is also shown for the case where the points $a$, $b$, $c$ are located clockwise with respect to the point $o$.

\section{Solutions and Monodromy} \label{sec:solmon}

In this section, we start with the proposition on the existence of a global simple solution of a second-order Fuchsian differential equation and the reducibility of the monodromy.
The monodromy representation of solutions of a Fuchsian differential equation is said to be reducible, iff the monodromy matrices $M^{\gamma }$ for fixed basis of solutions have a non-trivial invariant subspace which does not depend on the cycle $\gamma $, i.e. there exists a non-trivial subspace of the solutions which is invariant under analytic continuation along any cycle. 
\begin{prop} \label{prop:monoredpolynsol}
Let $Dy=0$ $(D=d^2/dw^2 +a_1(w)d/dw+a_2(w))$ be a second-order Fuchsian differential equation with singularities $\{ t_1, \dots , t_n , \infty (=t_{n+1}) \}$, and let $\theta _l ^{(1)}$ and  $\theta _l ^{(2)}$ be the exponents at the singularity $w=t_l$ $(l=1,\dots ,n+1)$.\\
(i) If the monodromy representation of solutions of the differential equation $Dy=0$ is reducible, there exists a non-zero solution $y(w)$ such that $y(w)= h(w) \prod _{l=1}^n (w-t_l) ^{\alpha _l} $, $h(w)$ is a polynomial in the variable $w$ and $ \prod _{l=1}^n h(t_l) \neq 0$.\\
(ii) If there exists a non-zero solution $y(w)$ of $Dy=0$ such that $y(w)= h(w) \prod _{l=1}^n (w-t_l) ^{\alpha _l} $, $h(w)$ is a polynomial of degree $k$ and $ \prod _{l=1}^n h(t_l) \neq 0$, then $\alpha _l \in \{ \theta _l ^{(1)}, \theta _l ^{(2)}\}$ for each $l =1,\dots ,n$ and $\theta _{n+1} ^{(1)}=-k-\sum _{l=1}^n \alpha _l $ or $\theta _{n+1} ^{(2)}=-k-\sum _{l=1}^n \alpha _l $.
\end{prop}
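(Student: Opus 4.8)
The plan is to treat both parts by the same local-to-global analysis of a single distinguished solution: in (i) the reducibility hypothesis is read as the existence of a common eigenvector for the monodromy, and in (ii) the indicial roots are read off directly from the explicit shape of the solution.

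For part (i), I would first translate reducibility of the two-dimensional monodromy representation into the existence of a one-dimensional invariant subspace, i.e. a nonzero solution $y(w)$ that is a simultaneous eigenvector of every monodromy matrix $M'^{(t_l)}$. Around each finite singularity $w=t_l$ this forces $y(w)^{\gamma_{t_l}}=c_l\,y(w)$ for a scalar $c_l$; since an eigenvector (as opposed to a generalized eigenvector) spans the non-logarithmic line even when $M'^{(t_l)}$ is a single Jordan block, $y$ has pure power-law behaviour $y(w)=(w-t_l)^{\alpha_l}\cdot(\text{holomorphic and nonvanishing at }t_l)$ with $\alpha_l\in\{\theta_l^{(1)},\theta_l^{(2)}\}$ and $c_l=e^{2\pi\sqrt{-1}\,\alpha_l}$. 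Then I would form
\[
g(w)=y(w)\prod_{l=1}^n (w-t_l)^{-\alpha_l}.
\]
By construction the monodromy factor $c_l$ of $y$ around $t_l$ is cancelled by that of $(w-t_l)^{-\alpha_l}$, while the remaining factors $(w-t_m)^{-\alpha_m}$ ($m\neq l$) are single-valued near $t_l$; hence $g$ has trivial monodromy around every finite singularity, and consequently also around $\infty$ since the loop at infinity is homotopic to the inverse product of the finite loops. Thus $g$ is single-valued on $\Cplx$, and the non-vanishing of the leading coefficient makes it holomorphic and nonzero at each $t_l$, so $g$ is entire. Because $y$ solves a Fuchsian equation it has moderate (tempered) growth near $\infty$, and being an eigenvector it carries no logarithmic term there; therefore $g$ grows at most polynomially, and an entire function of polynomial growth is a polynomial $h(w)$. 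This yields $y=h(w)\prod_{l=1}^n(w-t_l)^{\alpha_l}$ with $\prod_{l=1}^n h(t_l)\neq 0$, as claimed.

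For part (ii) the argument runs in reverse and is essentially local. Near $t_l$ the factor $h(w)\prod_{m\neq l}(w-t_m)^{\alpha_m}$ is holomorphic and nonzero, using $h(t_l)\neq 0$, so $y$ has local exponent exactly $\alpha_l$ at $t_l$; since $y$ is a solution of the Fuchsian equation, $\alpha_l$ must be an indicial root, giving $\alpha_l\in\{\theta_l^{(1)},\theta_l^{(2)}\}$ for each $l=1,\dots,n$. At $\infty$ one expands $y(w)=h(w)\prod_{l=1}^n(w-t_l)^{\alpha_l}\sim w^{\,k+\sum_{l}\alpha_l}$; in the convention that an exponent $\theta$ at $\infty$ corresponds to leading behaviour $w^{-\theta}$, this identifies $-k-\sum_{l}\alpha_l$ with one of $\theta_{n+1}^{(1)},\theta_{n+1}^{(2)}$.

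The main obstacle will be the careful bookkeeping of the logarithmic and growth behaviour in part (i): one must argue that the global eigenvector $y$ carries no $\log(w-t_l)$ term at any singularity, which is precisely the statement that it lies on the genuine (not merely generalized) eigenline of each $M'^{(t_l)}$, and then that its tempered growth at $\infty$, combined with single-valuedness, forces $-\theta_{n+1}^{(j)}-\sum_{l}\alpha_l$ to be a nonnegative integer $k=\deg h$, rather than a negative one (which would force $g\equiv 0$, contradicting $y\neq 0$) or a non-integer (incompatible with the single-valuedness of $g$ at $\infty$). Once these indicial and growth points are settled, both parts reduce to routine local expansions.
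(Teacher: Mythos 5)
Your proposal is correct and follows essentially the same route as the paper: reducibility yields a common monodromy eigenvector $y$, the quotient $y(w)\prod_{l}(w-t_l)^{-\alpha_l}$ is single-valued with at most a regular (non-branching) singularity at $\infty$ and is therefore a polynomial up to integer power shifts, and part (ii) is the same local reading of the indicial roots at $t_l$ and at $\infty$. The only difference is cosmetic: you fix $\alpha_l$ as the exact leading exponents from the start so the quotient is entire, whereas the paper first uses arbitrary $\tilde{\alpha}_l$ with the right monodromy and then absorbs the resulting integer pole orders $m_l$ afterwards.
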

\begin{proof}
Assume that the monodromy representation of solutions of the Fuchsian equation $Dy=0$ is reducible.
Let $\tilde{\gamma }_l$ $(l=1,\dots ,n+1)$ be a cycle on the Riemann sphere which traces a path around the singularity $w=t_l$ anti-clockwise.
Since the dimension of the space of solutions of the differential equation $Dy=0$ is two, it follows from reducibility that there exists a non-zero solution $y(w)$ such that $y^{\tilde{\gamma }_l}(w) =e^{2\pi \sqrt{-1} \tilde{\alpha }_l}y(w)$ for some constants $\tilde{\alpha }_l $ and $l=1,\dots ,n+1$.
The monodromy of the function $y(w) \prod_{l=1}^n (w-t_l)^{-\tilde{\alpha }_l}$ on $\Cplx$ is trivial, because $\tilde{\gamma }_{n+1}$ is written as products of $\tilde{\gamma }_l ^{-1}$ $(l=1,\dots ,n)$.
Since $y(w)$ satisfies the Fuchsian equation $Dy=0$, the function $y(w) \prod_{l=1}^n (w-t_l)^{-\tilde{\alpha }_l}$ does not have any singularities except for $\{t_1 \dots ,t_{n+1} \}$, and the singularity $w=\infty $ is regular at most.
Hence the function $y(w) \prod_{l=1}^n (w-t_l)^{-\tilde{\alpha }_l}$ may have poles at $w=t_l$ $(l=1,\dots ,n)$, holomorphic on $\Cplx \setminus \{ t_1, \dots ,t_n\}$ and the regular singurality $w=\infty $ is apparent.
Therefore we have $y(w) \prod_{l=1}^n (w-t_l)^{-\tilde{\alpha }_l} = p(w) \prod_{l=1}^n (w-t_l)^{-m_l}$ for some integers $m_1, \dots , m_n$ and a polynomial $p(w)$.
Thus $y(w)$ may be written as $y(w)= h(w) \prod_{l=1}^n (w-t_l)^{\alpha _l}$, where $h(w)$ is a polynomial in the variable $w$ and $\prod _{l=1}^n h(t_l) \neq 0$.
Therefore we have (i).

If $y(w)= h(w) \prod_{l=1}^n (w-t_l)^{\alpha _l}$ ($h(w)$: a polynomial of degree $k$, $\prod _{l=1}^n h(t_l) \neq 0$) satisfies the Fuchsian equation $Dy=0$, it follows from that the exponents at $w=l$ ($l \in \{1,\dots ,n \}$) are $\theta _l ^{(1)}, \theta _l ^{(2)}$ and $h(p)\neq 0$ that $\alpha _l \in \{\theta _l ^{(1)}, \theta _l ^{(2)} \}$.
Write $h(w) =c_k w^k +c_{k-1} w^{k-1} +\dots +c_0$ $(c_k \neq 0)$.
Then we have the expansion $y(w) = (1/w)^{-k-\sum _{l=1}^n \alpha _l} (c_k +(c_{k-1}-c_k \sum_{l=1}^n t_l \alpha _l)/w +\dots )$ around $w =\infty $ and the index $-k-\sum _{l=1}^n \alpha _l$ must coincide with one of the exponents at $w=\infty $, which are $\theta _{n+1} ^{(1)}$ and $\theta _{n+1} ^{(2)}$.
Therefore we have $\theta _{n+1} ^{(1)}=-k-\sum _{l=1}^n \alpha _l $ or $\theta _{n+1} ^{(2)}=-k-\sum _{l=1}^n \alpha _l $.
\end{proof}
Proposition \ref{prop:monoredpolynsol} is applicable to the Fuchsian equation $D_{y_1}(\theta _0, \theta _1, \theta _t,\theta _{\infty}; \lambda ,\mu ) $ and Heun's equation (\ref{Heun01}) readily.

We introduce a proposition which connects polynomial-type solutions (the ones described in Proposition \ref{prop:monoredpolynsol}) to the ones such that one of the singularities is apparent through the Euler's integral transformation.
A regular singularity $z=p$ of a second-order linear differential equation is called apparent, if and only if the monodromy matrix along the singularity $z=p$ is a scalar matrix. 
Note that it is equivalent that the difference of the exponents is an integer and the logarithmic solution along $z=p$ disappear, which means $A^{\langle p \rangle}=0 $ in Eq.(\ref{eq:expansfpgp}) in the case that one of the exponents is zero.

For a cycle $\gamma$, we set
\begin{align}
\langle \gamma , y \rangle =
\left\{
\begin{array}{ll}
\langle \gamma  , y \rangle _{\kappa _2-1}, & \mbox{if } y(w) \mbox{ satisfies } D_{y_1}(\theta _0, \theta _1, \theta _t,\theta _{\infty}; \lambda ,\mu ), \\
\langle \gamma , y \rangle _{-\eta }, & \mbox{if } y(w) \mbox{ satisfies Heun's equation (\ref{Heun01})}.
\end{array}
\right.
\end{align}
It follows from Proposition \ref{prop:Nov} (resp. Proposition \ref{prop:Heunint}) that if $y(w)$ is a solution of $D_{y_1}(\theta _0, \theta _1, \theta _t,\theta _{\infty}; \lambda ,\mu )$ (resp. Eq.(\ref{Heun01})) then $\langle [\gamma _{z} ,\gamma _p] , y \rangle $ $(p\in \{0,1,t,\infty \} )$ is a solution of $D_{y_1}(\tilde{\theta }_0, \tilde{\theta }_1, \tilde{\theta }_t, \tilde{\theta }_{\infty} ; \tilde{\lambda } ,\tilde{\mu } )$ (resp. Eq.(\ref{Heun02})).
The local expansion of the function $\langle [\gamma _{z} ,\gamma _p] , y \rangle $ $(p=0,1,t,\infty )$ about $z=p$ can be calculated using Eqs.(\ref{eq:gzgpexpa}), (\ref{eq:gzginftyexpa}) for the case $\kappa = \kappa _2 -1 $, $\theta _{\infty} ^{(1)}=\kappa _1$, $\theta _{\infty} ^{(2)}=\kappa _2 +1$ (resp.  $\theta _p=1-\epsilon ' _p$ $(p=0,1,t)$, $\kappa = -\eta $, $\theta _{\infty} ^{(1)}=\alpha +\beta -2\eta +1$, $\theta _{\infty} ^{(2)}=2-\eta $) by using the local expansion of solutions of $D_{y_1}(\theta _0, \theta _1, \theta _t,\theta _{\infty}; \lambda ,\mu )$ (resp. Eq.(\ref{Heun01})) (see Eqs.(\ref{eq:ywfg}), (\ref{eq:expansfpgp}), (\ref{eq:yinftyCD}), (\ref{eq:expansfinftyginfty})).
They are used to obtain the condition that the function $\langle [\gamma _{z} ,\gamma _p] , y \rangle $ $(p\in \{0,1,t,\infty \} )$ is identically zero for all solutions $y(w)$ to $D_{y_1}(\theta _0, \theta _1, \theta _t,\theta _{\infty}; \lambda ,\mu )$ (resp. Eq.(\ref{Heun01})).
\begin{prop} \label{prop:nonzero}
(i) Let $p\in \{0,1,t \} $ and assume $\kappa _2 \not \in \Zint$ (resp. $\eta \not \in \Zint$).
The function $\langle [\gamma _{z} ,\gamma _p] , y \rangle  $ is identically zero for all solutions $y (w)$ of $D_{y_1}(\theta _0, \theta _1, \theta _t,\theta _{\infty}; \lambda ,\mu )$ (resp. Eq.(\ref{Heun01})), if and only if $\theta _p \in \Zint _{\geq 0}$ (resp. $\epsilon '_p \in \Zint _{\leq 1}$) and the singularity $w=p$ is apparent (i.e. $A^{\langle p \rangle} =0$ in Eq.(\ref{eq:expansfpgp})), or $\theta _p +\kappa _2 \in \Zint _{\leq -1} $ (resp. $\epsilon _p \in \Zint _{\geq 2}$) and the differential equation $D_{y_1}(\theta _0, \theta _1, \theta _t,\theta _{\infty}; \lambda ,\mu )$ (resp. Eq.(\ref{Heun01})) has a solution of the form of a product of $(w-p) ^{\theta _p }$ (resp. $(w-p) ^{1- \epsilon '_p }$) and a non-zero polynomial of degree no more than $-\theta _p -\kappa _2 -1$ (resp. $\epsilon _p -2$).\\
(ii) Under the assumption $\kappa _2 \not \in \Zint$ (resp. $\eta \not \in \Zint$), the function $\langle [\gamma _{z} ,\gamma _{\infty }] , y \rangle  $ is identically zero for all solutions $y (w)$ of $D_{y_1}(\theta _0, \theta _1, \theta _t,\theta _{\infty}; \lambda ,\mu )$ (resp. Eq.(\ref{Heun01})), if and only if $\theta _{\infty } \in \Zint _{\geq 1}$ (resp. $\alpha +\beta -\eta \in \Zint _{\geq 1} $) and the singularity $w={\infty }$ is apparent, or $\kappa _1 \in \Zint _{\leq 0} $ (resp. $\alpha +\beta -2\eta \in \Zint _{\leq -1}$) and the differential equation $D_{y_1}(\theta _0, \theta _1, \theta _t,\theta _{\infty}; \lambda ,\mu )$ (resp. Eq.(\ref{Heun01})) has a non-zero polynomial in the variable $w$ of degree $-\kappa _1$ (resp. $2\eta -\alpha -\beta -1$).
\end{prop}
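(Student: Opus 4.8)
The plan is to reduce each of the two transforms to a single local solution at the relevant singularity and then read off the vanishing from the explicit Beta-function coefficients in the local expansions Eqs.~(\ref{eq:gzgpexpa}), (\ref{eq:gzginftyexpa}). First I would fix a basis $\{f_1,f_2\}$ of local solutions at $w=p$, where $f_1$ is the holomorphic solution (exponent $0$) and $f_2$ is the solution attached to the exponent $\theta_p$ (resp. $1-\epsilon'_p$). As was already observed around Eq.~(\ref{eq:holp1p2}), the holomorphic solution gives $\langle[\gamma_z,\gamma_p],f_1\rangle=0$, so by linearity the map $y\mapsto\langle[\gamma_z,\gamma_p],y\rangle$ annihilates every solution if and only if it annihilates $f_2$. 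Thus the entire statement reduces to deciding when $\langle[\gamma_z,\gamma_p],f_2\rangle\equiv 0$.

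Next I would expand $f_2=(w-p)^{\theta_p}\sum_{n\geq 0}c_n(w-p)^n$ (with $c_0\neq 0$ in the non-logarithmic case) and integrate term by term against $(z-w)^{\kappa}$ along the Pochhammer contour. After the substitution $w-p=(z-p)u$ each monomial yields a regularized Beta integral, and Eq.~(\ref{eq:gzgpexpa}) gives the coefficient of $(z-p)^{\theta_p+\kappa+1+n}$ as a constant multiple of
\begin{equation*}
(1-e^{2\pi\sqrt{-1}\theta_p})(1-e^{2\pi\sqrt{-1}\kappa})\frac{\Gamma(\theta_p+n+1)\Gamma(\kappa+1)}{\Gamma(\theta_p+\kappa+n+2)}\,c_n,
\end{equation*}
where $\kappa=\kappa_2-1$ (resp. $\kappa=-\eta$), so that the leading exponent $\theta_p+\kappa+1=\theta_p+\kappa_2=\tilde{\theta}_p$ of Eq.~(\ref{eq:tlamtmu}) matches the transformed equation. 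The hypothesis $\kappa_2\not\in\Zint$ (resp. $\eta\not\in\Zint$) guarantees $(1-e^{2\pi\sqrt{-1}\kappa})\neq 0$, so identical vanishing forces every coefficient to vanish through the remaining $\theta_p$-dependent factors.

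Then I would disentangle the two vanishing mechanisms. If $\theta_p\in\Zint_{\geq 0}$ (resp. $\epsilon'_p\in\Zint_{\leq 1}$) and $w=p$ is non-logarithmic, the prefactor $(1-e^{2\pi\sqrt{-1}\theta_p})$ is zero while each $\Gamma(\theta_p+n+1)$ stays finite, so $f_2$ is in fact holomorphic and the transform vanishes identically; this is the first alternative. If instead $\theta_p+\kappa_2\in\Zint_{\leq -1}$ (resp. $\epsilon_p\in\Zint_{\geq 2}$), then $\theta_p\not\in\Zint$, the prefactor is nonzero, and $1/\Gamma(\theta_p+\kappa+n+2)$ vanishes exactly for $n\leq -\theta_p-\kappa_2-1$, killing the low-order block; the tail coefficients $n\geq -\theta_p-\kappa_2$ then survive with nonzero Beta factors, so the integral vanishes precisely when $c_n=0$ for all such $n$, i.e. when $f_2$ is the product of $(w-p)^{\theta_p}$ (resp. $(w-p)^{1-\epsilon'_p}$) and a polynomial of degree at most $-\theta_p-\kappa_2-1$ (resp. $\epsilon_p-2$). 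Conversely, if neither integer condition holds, the $n=0$ coefficient is a nonzero product of $\Gamma$-factors, so $\langle[\gamma_z,\gamma_p],f_2\rangle\not\equiv 0$; this proves part (i). Part (ii) follows by the identical argument carried out at $z=\infty$ via Eq.~(\ref{eq:gzginftyexpa}), with the exponent data $0,\theta_p$ replaced by $\kappa_1,\kappa_2+1$; since those exponents differ by $\theta_\infty-1$, the two alternatives become $\theta_\infty\in\Zint_{\geq 1}$ (non-logarithmic) and $\kappa_1\in\Zint_{\leq 0}$ (a polynomial solution of degree $-\kappa_1$), with the analogous translations to $\alpha+\beta-\eta\in\Zint_{\geq 1}$ and $\alpha+\beta-2\eta\in\Zint_{\leq -1}$ for Heun's equation.

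The hard part will be the bookkeeping in the indeterminate $0\cdot\infty$ regime: when $\theta_p$ (or $\theta_\infty$) is an integer the prefactor $(1-e^{2\pi\sqrt{-1}\theta_p})$ vanishes simultaneously with poles of $\Gamma(\theta_p+n+1)$, and one must take the limit carefully to decide which coefficients genuinely survive. This is precisely where the non-logarithmic hypothesis is indispensable, since in the logarithmic case Eq.~(\ref{eq:gzgpexpa}) acquires an extra term in $\log(z-p)$ (with coefficient governed by $A^{\langle p\rangle}$ of Eq.~(\ref{eq:expansfpgp})) that must be tracked separately. Once this is settled, matching the surviving-tail condition to the stated degree bounds and confirming the parameter dictionaries $\theta_p=1-\epsilon'_p$, $\tilde{\theta}_p=1-\epsilon_p$ is routine.
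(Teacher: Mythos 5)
Your proposal is correct and follows essentially the same route as the paper's appendix proof: after reducing to the non-holomorphic local solution, you expand it at $w=p$ (resp.\ $w=\infty$), integrate term by term over the Pochhammer contour to get the regularized Beta coefficients $d_{\alpha,\beta}$ of Eqs.~(\ref{eq:dalbe})--(\ref{eq:gzgpexpa}), and read off the two vanishing mechanisms exactly as the paper does. The only cosmetic difference is that you isolate $f_2$ up front via linearity, whereas the paper carries the general combination $C^{\langle p\rangle}f^{\langle p\rangle}+D^{\langle p\rangle}g^{\langle p\rangle}$ and observes that only the $g^{\langle p\rangle}$-part contributes.
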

Proposition \ref{prop:nonzero} will be proved in the appendix.

We investigate a sufficient condition for the functions $\langle [\gamma _{z} ,\gamma _0] , y \rangle $, $\langle [\gamma _{z} ,\gamma _1] , y \rangle $, $\langle [\gamma _{z} ,\gamma _t] , y \rangle $ to span the two-dimensional space of solutions of $D_{y_1}(\tilde{\theta }_0, \tilde{\theta }_1, \tilde{\theta }_t, \tilde{\theta }_{\infty} ; \tilde{\lambda } ,\tilde{\mu } )$ (resp. Eq.(\ref{Heun02})).
\begin{prop} \label{prop:spansp0}
There exists a solution $y(w)$ of $D_{y_1}(\theta _0, \theta _1, \theta _t,\theta _{\infty}; \lambda ,\mu )$ (resp. Eq.(\ref{Heun01})) such that  $\langle [\gamma _{z} ,\gamma _0] , y \rangle \neq 0$, $\langle [\gamma _{z} ,\gamma _1] , y \rangle \neq 0$, $\langle [\gamma _{z} ,\gamma _t] , y \rangle \neq 0$ and the functions $\langle [\gamma _{z} ,\gamma _0] , y \rangle $, $\langle [\gamma _{z} ,\gamma _1] , y \rangle $, $\langle [\gamma _{z} ,\gamma _t] , y \rangle $ span the two-dimensional space of solutions of $D_{y_1}(\tilde{\theta }_0, \tilde{\theta }_1, \tilde{\theta }_t, \tilde{\theta }_{\infty} ; \tilde{\lambda } ,\tilde{\mu } )$ (resp. Eq.(\ref{Heun02})), 
if $\kappa _2 \not \in \Zint$ and $\theta _p , \tilde{\theta }_p \not \in \Zint$ for all $p \in \{ 0,1,t, \infty \}$ (resp. $\eta , \epsilon _0 , \epsilon _1, \epsilon _t , \alpha -\beta , \epsilon _0 ',\epsilon _1 ',\epsilon _t ' ,\alpha '-\beta ' \not \in \Zint$).
\end{prop}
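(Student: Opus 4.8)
The plan is to regard, for each $p\in\{0,1,t\}$, the Euler transformation $T_p\colon y\mapsto \langle [\gamma _{z},\gamma _p],y\rangle$ as a linear map from the two–dimensional solution space $V$ of $D_{y_1}(\theta _0,\theta _1,\theta _t,\theta _{\infty};\lambda ,\mu )$ (resp. Eq.(\ref{Heun01})) to the two–dimensional solution space $\tilde V$ of $D_{y_1}(\tilde{\theta }_0,\tilde{\theta }_1,\tilde{\theta }_t,\tilde{\theta }_{\infty};\tilde{\lambda },\tilde{\mu })$ (resp. Eq.(\ref{Heun02})), and to reduce the spanning assertion to a statement about three monodromy eigenlines in $\tilde V$. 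Throughout I use that $\tilde{\theta }_p=\kappa _2+\theta _p$.

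First I would check that none of $T_0,T_1,T_t$ is the zero map. By Proposition \ref{prop:nonzero}(i), the identical vanishing $T_p\equiv 0$ would force $\theta _p\in\Zint _{\geq 0}$ or $\theta _p+\kappa _2\in\Zint _{\leq -1}$ (resp. $\epsilon '_p\in\Zint _{\leq 1}$ or $\epsilon _p\in\Zint _{\geq 2}$); both are ruled out by $\theta _p\notin\Zint$ and $\tilde{\theta }_p\notin\Zint$ (resp. by $\epsilon '_p\notin\Zint$ and $\epsilon _p\notin\Zint$). Hence each kernel $\ker T_p$ is a proper subspace of the two–dimensional space $V$, i.e. a line or $\{0\}$.

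Next I would invoke the monodromy computation of Section \ref{sec:MonInttr}. The relation $\langle [\gamma _{z},\gamma _p],y\rangle ^{\gamma _p}=e^{2\pi \sqrt{-1}(\kappa +\theta _p)}\langle [\gamma _{z},\gamma _p],y\rangle$, read off from Eq.(\ref{eq:monodrels}) together with $\mathrm{tr}\,M'^{(p)}=1+e^{2\pi \sqrt{-1}\theta _p}$ from Eq.(\ref{eq:pij}), shows that whenever $T_p(y)\neq 0$ it is an eigenvector of the target monodromy $M^{(p)}$ with eigenvalue $e^{2\pi \sqrt{-1}\tilde{\theta }_p}$ (resp. $e^{-2\pi \sqrt{-1}\epsilon _p}$, using $\kappa =-\eta$ and $1-\epsilon '_p=\eta -\epsilon _p$). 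Since $\tilde{\theta }_p\notin\Zint$ (resp. $\epsilon _p\notin\Zint$) this eigenvalue is not $1$, so it is the non-trivial eigenvalue of $M^{(p)}$ and its eigenspace is a single line $L_p\subset\tilde V$; consequently $T_p(y)\in L_p$ for every $y$.

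The proof then splits according to whether the three lines $L_0,L_1,L_t$ coincide. If they do not all coincide, I would choose $y$ outside $\ker T_0\cup\ker T_1\cup\ker T_t$, which is possible since a two–dimensional space over $\Cplx$ is not a finite union of proper subspaces; then $T_0(y),T_1(y),T_t(y)$ are all non-zero, and because two of the lines $L_p$ are distinct the corresponding two functions are linearly independent and therefore span $\tilde V$. The hard part will be to exclude the degenerate case $L_0=L_1=L_t=:L$. Here $L$ is a common eigenline of $M^{(0)},M^{(1)},M^{(t)}$, hence of $M^{(\infty)}$, and from $M^{(0)}M^{(1)}M^{(t)}M^{(\infty)}=I$ (the apparent singularity contributing trivially) the scalar by which $M^{(\infty)}$ acts on $L$ equals $e^{-2\pi \sqrt{-1}(\tilde{\theta }_0+\tilde{\theta }_1+\tilde{\theta }_t)}$; this must be one of the two eigenvalues of $M^{(\infty)}$, namely $e^{2\pi \sqrt{-1}\tilde{\kappa }_1}$ or $e^{2\pi \sqrt{-1}\tilde{\kappa }_2}$. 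A short computation with the parameters (\ref{eq:tlamtmu}) gives $\tilde{\kappa }_1=\theta _\infty$ and $\tilde{\kappa }_2=-\kappa _2$, so matching the two eigenvalues forces $\theta _\infty\in\Zint$ or $\kappa _2\in\Zint$, contradicting the hypotheses (in the Heun case the same matching yields $\alpha\in\Zint$ or $\beta\in\Zint$, which combined with $\eta\in\{\alpha ,\beta\}\setminus\Zint$ and the relations (\ref{eq:mualbe}) gives $\alpha '-\beta '\in\Zint$, again excluded; this agrees with Proposition \ref{prop:monoredpolynsol} applied to the reducing solution $h(w)\prod _p (w-p)^{\tilde{\theta }_p}$). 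Therefore $L_0,L_1,L_t$ never all coincide, the first case always applies, and the desired solution $y$ exists.
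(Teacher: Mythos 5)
Your argument is correct, and it reaches the conclusion by a noticeably different route in the key step. The paper (Proposition \ref{prop:spansp} and Corollary \ref{cor:spansp} in the appendix) proceeds exactly as you do for the non-vanishing of each $T_p$ (Proposition \ref{prop:nonzero}(i) plus a generic linear combination), but to rule out total proportionality it works function-theoretically: from the local expansions (\ref{eq:gzgpexpa}) at $0,1,t$ it shows that a common value of the three transforms would have to be $z^{\tilde{\theta }_0}(z-1)^{\tilde{\theta }_1}(z-t)^{\tilde{\theta }_t}h(z)$ with $h$ a polynomial, and then Proposition \ref{prop:monoredpolynsol}(ii) turns this into the integrality constraint $\kappa _2\in \Zint$ or $\theta _{\infty }\in \Zint$ (resp. the corresponding Heun constraint). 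You replace that step by a purely representation-theoretic one: the image of $T_p$ lies in the eigenline $L_p$ of $M^{(p)}$ for the eigenvalue $e^{2\pi \sqrt{-1}\tilde{\theta }_p}\neq 1$, and coincidence of the three lines produces a common invariant line on which the product relation $M^{(0)}M^{(1)}M^{(t)}M^{(\infty )}=I$ (the apparent singularity being trivial) pins down the scalar of $M^{(\infty )}$, yielding the very same integrality contradiction. The two arguments are dual — a common eigenline is exactly the reducibility that Proposition \ref{prop:monoredpolynsol}(i) converts into a polynomial-type solution — so nothing is gained or lost logically; your version avoids the holomorphy-at-infinity bookkeeping, while the paper's version produces the explicit polynomial-type solution that it reuses elsewhere. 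One small presentational remark: the eigenvector relation $\langle [\gamma _z,\gamma _p],y\rangle ^{\gamma _p}=e^{2\pi \sqrt{-1}(\kappa +\theta _p)}\langle [\gamma _z,\gamma _p],y\rangle$ is most cleanly justified for \emph{all} $y$ by the expansion (\ref{eq:gzgpexpa}) in the appendix (valid here since $\theta _p\not\in \Zint$ and $\theta _p+\kappa \not\in \Zint _{\leq -2}$), rather than by the Section \ref{sec:MonInttr} computation, which is carried out under auxiliary non-degeneracy assumptions; and in the Heun degenerate case the matching gives $\alpha \in \Zint$ or $\beta \in \Zint$, which contradicts either $\eta \not\in \Zint$ or $\alpha '-\beta '\not\in \Zint$ depending on which of $\alpha ,\beta $ equals $\eta $ — both are excluded by hypothesis, so your contradiction stands.
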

We will prove Proposition \ref{prop:spansp0} in the appendix with a more detailed proposition.

By applying the results on the monodromy of integral representations in section \ref{sec:MonInttr}, we have the following theorem for monodromy matrices:
\begin{thm} \label{thm:monodm}
Let $a,b \in \{ 0,1,t \}$ $(a\neq b)$, $M'^{(p)}$ be a monodromy matrix of a certain basis of solutions of $D_{y_1}( \theta _0, \theta _1, \theta _t, \theta _{\infty} ; \lambda ,\mu ) $ (resp. Eq.(\ref{Heun01})) on the cycle $\gamma _p$ ($p\in \{a,b\} $) and $M^{(p)}$ be a monodromy matrix of a certain basis of solutions of $D_{y_1}(\tilde{\theta }_0, \tilde{\theta }_1, \tilde{\theta }_t, \tilde{\theta }_{\infty} ; \tilde{\lambda } ,\tilde{\mu } ) $ (resp. Eq.(\ref{Heun02})) on the cycle $\gamma _p$.
Then we have
\begin{align}
\mbox{\rm tr} ((M^{(a)} M^{(b)})^n) =e^{2\pi \sqrt{-1} n\kappa _2} \mbox{\rm tr} ((M'^{(a)}M'^{(b)})^n) , & \label{eq:trinv} \\
\mbox{\rm (resp. tr} ((M^{(a)} M^{(b)})^n) =e^{-2\pi \sqrt{-1} n\eta} \mbox{\rm tr} ((M'^{(a)}M'^{(b)})^n) & , \nonumber 
\end{align}
for $n\in \Zint $.
\end{thm}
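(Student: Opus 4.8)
The plan is to reduce the assertion for a general power $n$ to the case $n=1$ together with the analogous determinant relation, both of which have already been obtained in Section \ref{sec:MonInttr}; the passage to arbitrary $n$ is then a purely algebraic fact about $2\times 2$ matrices.

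First I would record the base case. Put $c=e^{2\pi\sqrt{-1}\kappa}$ with $\kappa=\kappa_2-1$ in the $D_{y_1}$ situation and $\kappa=-\eta$ in the Heun situation. Because the trace and determinant of a product of matrices are invariant under a change of basis, the computations in Section \ref{sec:MonInttr} give
\[
\mathrm{tr}(M^{(a)}M^{(b)})=e^{2\pi\sqrt{-1}\kappa}\,\mathrm{tr}(M'^{(a)}M'^{(b)}),\qquad \det(M^{(a)}M^{(b)})=e^{4\pi\sqrt{-1}\kappa}\,\det(M'^{(a)}M'^{(b)}).
\]
These identities are verified there in both possible configurations: when $\langle[\gamma_z,\gamma_a],y\rangle$ and $\langle[\gamma_z,\gamma_b],y\rangle$ are linearly independent (via the explicit matrices $M^{(a)}_{a,b}$, $M^{(b)}_{a,b}$ of Eq.(\ref{eq:monodApBp})), and when they are linearly dependent, where a third singularity $c$ and the relation Eq.(\ref{eq:detABtrAB}) are invoked. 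Since $\kappa$ differs from $\kappa_2$ (resp.\ from $-\eta$) by an integer, $e^{2\pi\sqrt{-1}\kappa}=e^{2\pi\sqrt{-1}\kappa_2}$ (resp.\ $=e^{-2\pi\sqrt{-1}\eta}$), so the trace identity is precisely the case $n=1$ of Eq.(\ref{eq:trinv}).

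The remaining step is the elementary lemma: if $A$, $A'$ are invertible $2\times 2$ matrices and $c\neq 0$ satisfies $\mathrm{tr}(A)=c\,\mathrm{tr}(A')$ and $\det(A)=c^2\det(A')$, then $\mathrm{tr}(A^n)=c^n\,\mathrm{tr}(A'^n)$ for every $n\in\Zint$. Letting $\nu_1,\nu_2$ denote the eigenvalues of $A'$, the characteristic polynomial of $A$ factors as
\[
x^2-\mathrm{tr}(A)\,x+\det(A)=x^2-c(\nu_1+\nu_2)x+c^2\nu_1\nu_2=(x-c\nu_1)(x-c\nu_2),
\]
so $A$ has eigenvalues $c\nu_1$ and $c\nu_2$; hence $\mathrm{tr}(A^n)=(c\nu_1)^n+(c\nu_2)^n=c^n(\nu_1^n+\nu_2^n)=c^n\,\mathrm{tr}(A'^n)$, which makes sense for negative $n$ as well since $c,\nu_1,\nu_2\neq 0$. (Alternatively one may induct on $n$ using the Cayley--Hamilton recursion $\mathrm{tr}(A^n)=\mathrm{tr}(A)\,\mathrm{tr}(A^{n-1})-\det(A)\,\mathrm{tr}(A^{n-2})$ and its counterpart for $A'$.) Applying this with $A=M^{(a)}M^{(b)}$, $A'=M'^{(a)}M'^{(b)}$ and $c=e^{2\pi\sqrt{-1}\kappa_2}$ (resp.\ $c=e^{-2\pi\sqrt{-1}\eta}$) completes the proof.

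The genuine obstacle lies entirely in securing the base case, not in the matrix algebra. One must know that the transformed functions $\langle[\gamma_z,\gamma_p],y\rangle$ actually form a basis of the solution space of the target equation, so that $M^{(a)}$, $M^{(b)}$ are bona fide monodromy matrices; this is exactly the role of the spanning statement in Proposition \ref{prop:spansp0}, combined with the case distinction in Section \ref{sec:MonInttr} that handles linearly dependent transformed functions by passing to a third singularity. Once the trace and determinant relations hold in one convenient basis, their conjugation invariance propagates the identity to all $n$ with no further analytic input.
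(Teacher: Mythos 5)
Your reduction of the general-$n$ statement to the $n=1$ trace and determinant relations via the eigenvalues of $M^{(a)}M^{(b)}$ is exactly the paper's argument, and your identification of Proposition \ref{prop:spansp0} together with the two case distinctions of Section \ref{sec:MonInttr} (linearly independent transformed functions, and linearly dependent ones handled via a third singularity and Eq.(\ref{eq:detABtrAB})) as the source of the base case also matches the paper. The matrix-algebra lemma is correct, including its validity for negative $n$.

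There is, however, one step you omit that the paper needs and supplies. Proposition \ref{prop:spansp0} only applies under the genericity hypotheses $\kappa_2\notin\Zint$ and $\theta_p,\tilde\theta_p\notin\Zint$ for all $p$ (resp.\ $\eta,\epsilon_p,\alpha-\beta,\epsilon_p',\alpha'-\beta'\notin\Zint$), whereas Theorem \ref{thm:monodm} is stated with no restriction on the parameters. Outside the generic range the construction can genuinely break down: by Proposition \ref{prop:nonzero} the functions $\langle[\gamma_z,\gamma_p],y\rangle$ may vanish identically for all solutions $y$, so there is no basis of transformed solutions and the Section \ref{sec:MonInttr} computations do not apply. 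The paper closes this gap by observing that the monodromy depends continuously on the coefficients of the equation, so the identities $\mathrm{tr}(M^{(a)}M^{(b)})=e^{2\pi\sqrt{-1}\kappa}\mathrm{tr}(M'^{(a)}M'^{(b)})$ and $\det(M^{(a)}M^{(b)})=e^{4\pi\sqrt{-1}\kappa}\det(M'^{(a)}M'^{(b)})$ extend from the generic parameter set to all parameters by taking limits. Your proof as written establishes the theorem only for generic parameters; you should add this continuity-and-limit argument (or some substitute) before invoking the eigenvalue lemma.
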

\begin{proof}
Set $\kappa =\kappa _2 -1 $ (resp. $\kappa =-\eta $) for the case of the differential equation $D_{y_1}( \theta _0, \theta _1, \theta _t, \theta _{\infty} ; \lambda ,\mu ) $ (resp. Eq.(\ref{Heun01})).
Firstly we show that $\mbox{\rm tr} (M^{(a)} M^{(b)}) =e^{2\pi \sqrt{-1} \kappa } \mbox{\rm tr} (M'^{(a)}M'^{(b)}) $ and $\mbox{\rm det} (M^{(a)} M^{(b)}) =e^{4\pi \sqrt{-1} \kappa } \mbox{\rm det} (M'^{(a)}M'^{(b)}) $ under the assumption of Proposition \ref{prop:spansp0}.
It follows from Proposition \ref{prop:spansp0} that there exists a solution $y(w)$ of $D_{y_1}(\theta _0, \theta _1, \theta _t,\theta _{\infty}; \lambda ,\mu )$ (resp. Eq.(\ref{Heun01})) such that $\langle [\gamma _{z} ,\gamma _0] , y \rangle \neq 0$, $\langle [\gamma _{z} ,\gamma _1] , y \rangle \neq 0$, $\langle [\gamma _{z} ,\gamma _t] , y \rangle \neq 0$ and the functions $\langle [\gamma _{z} ,\gamma _0] , y \rangle $, $\langle [\gamma _{z} ,\gamma _1] , y \rangle $, $\langle [\gamma _{z} ,\gamma _t] , y \rangle $ span the two-dimensional space of solutions of $D_{y_1}(\tilde{\theta }_0, \tilde{\theta }_1, \tilde{\theta }_t, \tilde{\theta }_{\infty} ; \tilde{\lambda } ,\tilde{\mu } )$ (resp. Eq.(\ref{Heun02})).
Let $c$ be the element in $\{0,1,t \}$ which is different from $a$ and $b$.
Then $\langle [\gamma _{z} ,\gamma _a] , y \rangle $, $\langle [\gamma _{z} ,\gamma _b] , y \rangle $ are linearly independent or  $\langle [\gamma _{z} ,\gamma _a] , y \rangle $, $\langle [\gamma _{z} ,\gamma _c] , y \rangle $ are linearly independent and  $\langle [\gamma _{z} ,\gamma _a] , y \rangle  =d \langle [\gamma _{z} ,\gamma _b] , y \rangle $ for some $d (\neq 0)$.
Hence it follows from the calculations of monodromy in section \ref{sec:MonInttr} that if $\kappa _2 \not \in \Zint$ and $\theta _p , \tilde{\theta }_p \not \in \Zint$ for all $p \in \{ 0,1,t, \infty \}$ (resp. $\eta , \epsilon _0 , \epsilon _1, \epsilon _t , \alpha -\beta , \epsilon _0 ',\epsilon _1',\epsilon _t ' ,\alpha '-\beta ' \not \in \Zint$) then $\mbox{\rm tr} (M^{(a)} M^{(b)}) =e^{2\pi \sqrt{-1} \kappa } \mbox{\rm tr} (M'^{(a)}M'^{(b)}) $ and $\mbox{\rm det} (M^{(a)} M^{(b)}) =e^{4\pi \sqrt{-1} \kappa } \mbox{\rm det} (M'^{(a)}M'^{(b)}) $.
It is known that continuity of the coefficients of the differential equation with respect to a parameter implies continuity of solutions of the differential equation and monodromy with respect to the parameter.
Hence we have $\mbox{tr}(M^{(a)} M^{(b)}) = e^{2\pi \sqrt{-1}\kappa }\mbox{tr}(M'^{(a)}M'^{(b)})$ and $\mbox{\rm det} (M^{(a)} M^{(b)}) =e^{4\pi \sqrt{-1} \kappa } \mbox{\rm det} (M'^{(a)}M'^{(b)}) $ for all cases by taking a limit from the case $\kappa _2 \not \in \Zint$ and $\theta _p , \tilde{\theta }_p \not \in \Zint$ for all $p \in \{ 0,1,t, \infty \}$ (resp. $\eta , \epsilon _0 , \epsilon _1, \epsilon _t , \alpha -\beta , \epsilon _0 ', \epsilon _1',\epsilon _t ' ,\alpha '-\beta ' \not \in \Zint$).
Let $l '_1$, $l '_2$ (resp. $l _1$, $l _2$) be the solutions of the quadratic equation $x^2 -\mbox{tr}(M'^{(a)} M'^{(b)}) x+\mbox{\rm det} (M'^{(a)} M'^{(b)})=0$ (resp. $x^2 -\mbox{tr}(M^{(a)} M^{(b)}) x +\mbox{\rm det} (M^{(a)} M^{(b)})=0$).
Then we have $\{ l _1, l _2 \} =\{ e^{2\pi \sqrt{-1}\kappa } l '_1, e^{2\pi \sqrt{-1}\kappa } l '_2 \} $ and $\mbox{\rm tr} ((M^{(a)} M^{(b)})^n) = (l _1)^n+(l _2)^n = e^{2\pi \sqrt{-1} n\kappa }( (l '_1)^n+(l '_2)^n) = e^{2\pi \sqrt{-1} n\kappa } \mbox{\rm tr} ((M'^{(a)}M'^{(b)})^n) $ for $n \in \Zint$.
\end{proof}
It follows from the relations $M'^{(0)}M'^{(1)}M'^{(t)}M'^{(\infty )} =1 $ and $M^{(0)}M^{(1)}M^{(t)}M^{(\infty )} =1 $ that $\mbox{\rm tr} ((M^{(p)} M^{(\infty )})^n) =e^{-2\pi \sqrt{-1} n\kappa _2} \mbox{\rm tr} ((M'^{(p)}M'^{(\infty )})^n)$ (resp. $\mbox{\rm tr} ((M^{(p)} M^{(\infty )})^n) =e^{2\pi \sqrt{-1} n\eta} \mbox{\rm tr} ((M'^{(p)}M'^{(\infty )})^n)$) for $p\in \{0,1,t\}$ and $n\in \Zint $.
It seems that we do not have a simple formula connecting tr$(M^{(a)}(M^{(b)})^{-1})$ and tr$(M'^{(a)}(M'^{(b)})^{-1})$ for $a,b \in \{ 0,1,t \}$, $a\neq b$.
Note that Eq.(\ref{eq:trinv}) can be written as tr$((M^{(a)}M^{(b)})^n)=$tr$((M'^{(a)}M'^{(b)})^n)$ for a $2\times 2$ $sl_2$-Fuchsian system with four singularities, and it was obtained by Inaba-Iwasaki-Saito \cite{IIS} and Boalch \cite{Boa}.

\section{Correspondence between polynomial-type solutions and apparency of a singularity} \label{sec:polnonlog}

In this section, we establish correspondences between polynomial-type solutions and apparency of a singularity, which are induced by integral transformations.

Let $y(w)$ be a solution of $D_{y_1}(\theta _0, \theta _1, \theta _t,\theta _{\infty}; \lambda ,\mu )$ (resp. Eq.(\ref{Heun01})), $p\in \{0,1,t\} $, and consider the local expansion of the solution about $w=p$ as Eqs.(\ref{eq:ywfg}), (\ref{eq:expansfpgp}) by setting $\kappa = \kappa _2 -1 $ (resp.  $\theta _p=1-\epsilon ' _p$, $\kappa = -\eta $).
It follows from the local expansion of $\langle [\gamma _{z} ,\gamma _p] , y \rangle $ about $z=p$ (see Eq.(\ref{eq:gzgpexpa}) for the case $\theta _p \in \Zint _{\leq -1}, \theta _p + \kappa \not \in \Zint _{\leq -2}$) that if $\theta _p \in \Zint _{\leq -1}$, $\kappa _2 \not \in \Zint$ (resp. $\epsilon '_p \in \Zint _{\geq 2}$, $\eta \not \in \Zint$) and the singularity $w=p$ of the differential equation $D_{y_1}(\theta _0, \theta _1, \theta _t,\theta _{\infty}; \lambda ,\mu )$ (resp. Eq.(\ref{Heun01})) is apparent, then $A^{\langle p \rangle} =0$ in Eqs.(\ref{eq:expansfpgp}), (\ref{eq:gzgpexpa}), the function $\langle [\gamma _{z} ,\gamma _p] , y \rangle $ is non-zero and it is a product of $ (z-p) ^{\theta _p +\kappa _2}$ (resp. $(z-p) ^{2-\epsilon ' _p -\eta }$) and a polynomial of degree no more than $-\theta _p-1$ (resp. $\epsilon ' _p -2$). 
Since $\langle [\gamma _{z} ,\gamma _p] , y \rangle $ satisfies $D_{y_1}(\tilde{\theta }_0, \tilde{\theta }_1, \tilde{\theta }_t, \tilde{\theta }_{\infty} ; \tilde{\lambda } ,\tilde{\mu } )$ (resp. Eq.(\ref{Heun02})), we have the following proposition:
\begin{prop} \label{prop:nonlog-polyn01}
Let $p \in \{ 0,1,t \} $.
If $\theta _p \in \Zint _{\leq -1}$, $\kappa _2 \not \in \Zint$ (resp. $\epsilon '_p \in \Zint _{\geq 2}$, $\eta \not \in \Zint$) and the singularity $w=p$ of the differential equation $D_{y_1}(\theta _0, \theta _1, \theta _t,\theta _{\infty}; \lambda ,\mu )$ (resp. Eq.(\ref{Heun01})) is apparent, then there exists a non-zero solution of $D_{y_1}(\tilde{\theta }_0, \tilde{\theta }_1, \tilde{\theta }_t, \tilde{\theta }_{\infty} ; \tilde{\lambda } ,\tilde{\mu } )$ (resp. Eq.(\ref{Heun02})) which can be written as $(z-p) ^{\theta _p +\kappa _2}  h(z) $ (resp. $(z-p) ^{1- \epsilon _p}  h(z) $) where $h(z)$ is a polynomial of degree no more than $-\theta _p-1$ (resp. $\epsilon ' _p -2$).
\end{prop}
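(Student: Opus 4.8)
The plan is to read off the local behaviour of $\langle [\gamma_z, \gamma_p], y\rangle$ at $z=p$ directly from the decomposition in Eq.~(\ref{eq:gzgidecom}), and then invoke Proposition~\ref{prop:Nov} (resp. Proposition~\ref{prop:Heunint}) to guarantee that the resulting function solves the transformed equation. Throughout I set $\kappa=\kappa_2-1$ (resp. $\kappa=-\eta$) and write the exponents of the original equation at $w=p$ as $0$ and $\theta_p$ (resp. $0$ and $1-\epsilon'_p$), so the two cases are treated uniformly, the hypothesis reading $\theta_p\in\Zint_{\leq -1}$ and $\kappa\notin\Zint$. First I would fix a solution $y(w)$ whose local expansion at $w=p$, as in Eqs.~(\ref{eq:ywfg})--(\ref{eq:expansfpgp}), has a non-trivial component along the local solution with exponent $\theta_p$; such a $y$ exists since the exponents $0$ and $\theta_p$ span the local solution space.

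The first reduction is that the holomorphic (exponent $0$) part of $y$ contributes nothing: as already observed from Eq.~(\ref{eq:gzgidecom}), a solution $f$ holomorphic at $w=p$ satisfies $f^{\gamma_p}=f$ and $\langle\gamma_p,f\rangle_\kappa=0$, since the integrand is then single-valued and holomorphic near $w=p$; hence $\langle[\gamma_z,\gamma_p],f\rangle_\kappa=0$. Only the exponent-$\theta_p$ component $g_p(w)$ matters. The non-logarithmic hypothesis is exactly the vanishing of the logarithmic coefficient $A^{\langle p\rangle}$ in Eq.~(\ref{eq:expansfpgp}); combined with $\theta_p\in\Zint_{\leq -1}$, this makes $g_p$ single-valued (meromorphic) about $w=p$, so $g_p^{\gamma_p}=g_p$ and Eq.~(\ref{eq:gzgidecom}) collapses to $\langle[\gamma_z,\gamma_p],g_p\rangle_\kappa=(e^{2\pi\sqrt{-1}\kappa}-1)\langle\gamma_p,g_p\rangle_\kappa$.

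Next I would evaluate $\langle\gamma_p,g_p\rangle_\kappa$ as a residue at $w=p$: expanding $(z-w)^\kappa$ about $w=p$ and extracting the coefficient of $(w-p)^{-1}$ from $g_p(w)(z-w)^\kappa$, only the Laurent coefficients of $g_p$ with index in $\{\theta_p,\theta_p+1,\dots,-1\}$ survive, these being the $-\theta_p$ terms able to pair with a non-negative power of $(w-p)$ to form a simple pole. This produces $(z-p)^{\theta_p+\kappa+1}$ times a polynomial in $(z-p)$ whose powers run from $0$ up to $(-1)-\theta_p$, i.e. a function of the claimed form $(z-p)^{\theta_p+\kappa_2}h(z)$ (resp. $(z-p)^{1-\epsilon_p}h(z)$) with $\deg h\leq -\theta_p-1$ (resp. $\epsilon'_p-2$); this is precisely the content of Eq.~(\ref{eq:gzgpexpa}). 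Non-vanishing follows because the leading Laurent coefficient of $g_p$ (index $\theta_p$) is non-zero and $e^{2\pi\sqrt{-1}\kappa}-1\neq 0$ thanks to $\kappa\notin\Zint$.

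Finally, since $\langle[\gamma_z,\gamma_p],y\rangle$ differs from a non-zero multiple of $\langle[\gamma_z,\gamma_p],g_p\rangle_\kappa$ only by the vanishing holomorphic contribution, it is a non-zero function of the stated form, and by Proposition~\ref{prop:Nov} (resp. Proposition~\ref{prop:Heunint}) it solves $D_{y_1}(\tilde\theta_0,\tilde\theta_1,\tilde\theta_t,\tilde\theta_\infty;\tilde\lambda,\tilde\mu)$ (resp. Eq.~(\ref{Heun02})). I expect the main obstacle to be the careful bookkeeping in the residue computation, specifically verifying that the series genuinely terminates into a polynomial of the asserted degree and that the non-logarithmic condition is equivalent to $A^{\langle p\rangle}=0$; this is exactly what the appendix formula Eq.~(\ref{eq:gzgpexpa}) supplies, and once that expansion is granted the argument reduces to reading off the leading and trailing powers of $(z-p)$.
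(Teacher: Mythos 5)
Your argument is correct and follows the paper's own route: the paper proves this proposition by reading off the local expansion of $\langle[\gamma_z,\gamma_p],y\rangle$ about $z=p$ from the appendix formula (Eq.~(\ref{eq:gzgpexpa}) in the case $\theta_p\in\Zint_{\leq -1}$ with $A^{\langle p\rangle}=0$) and then citing Proposition~\ref{prop:Nov} (resp.\ Proposition~\ref{prop:Heunint}), exactly as you do. The only cosmetic difference is that you re-derive that expansion by collapsing the Pochhammer contour to $(e^{2\pi\sqrt{-1}\kappa}-1)\langle\gamma_p,\cdot\rangle_{\kappa}$ and computing a residue at $w=p$, whereas the appendix obtains it via the substitution $w=p+(z-p)s$ and the beta-type constants $d_{\alpha,\beta}$; the two computations are equivalent and your non-vanishing check (the $j=0$ coefficient survives because $\kappa\not\in\Zint$) matches the paper's.
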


The following theorem asserts various correspondences between polynomial-type solutions and apparency of a singularity for Heun's equation.

\begin{thm} \label{thm:nonlog-polynHeun}
Let $a,b,c$ be elements of $\{0,1,t \}$ such that $a \neq b \neq c \neq a$ and $\eta , \alpha ,\beta ,\epsilon _0 ,\epsilon _1 ,\epsilon _t,  \alpha ',\beta ',\epsilon '_0 ,\epsilon '_1 ,\epsilon '_t$ be the parameters defined in Eq.(\ref{eq:mualbe}) or Eq.(\ref{eq:mualbe18}).\\
(i) If $\epsilon ' _a \in \Zint _{\geq 2}$, $\eta \not \in \Zint$ and the singularity $w=a$ of Eq.(\ref{Heun01}) is apparent, then there exists a non-zero solution of Eq.(\ref{Heun02}) which can be written as $(z-a) ^{1- \epsilon _a}  h(z) $ where $h(z)$ is a polynomial of degree no more than $\epsilon ' _a -2$.
Moreover if $\alpha ' ,\beta ' \not \in \Zint$, then $\deg _E h(z)= \epsilon ' _a -2$.
\\
(ii) If $\epsilon '_a \in \Zint _{\leq 0}$, $ \eta  \not \in \Zint$, $\alpha ', \beta ' , \epsilon _b  , \epsilon _c \not \in \Zint $ and the singularity $w=a$ of Eq.(\ref{Heun01}) is apparent, then there exists a non-zero solution of Eq.(\ref{Heun02}) which can be written as $(z-b) ^{1-\epsilon _b}(z-c) ^{1-\epsilon _c}  h(z) $ where $h(z)$ is a polynomial with $\deg h(z)= -\epsilon ' _a$.\\
(iii) If $\epsilon _a \in \Zint _{\geq 2}$, $\alpha ,\beta  \not \in \Zint$ and there exists a non-zero solution of Eq.(\ref{Heun01}) which can be written as a product of $(w-a)^{1-\epsilon '_a}$ and a polynomial, then the singularity $z=a$ of Eq.(\ref{Heun02}) is apparent.\\
(iv) If $\epsilon _a \in \Zint _{\leq 0}$, $\alpha , \beta , \epsilon '_b , \epsilon '_c \not \in \Zint $ and there exists a non-zero solution of Eq.(\ref{Heun01}) which can be written as $(w-b) ^{1-\epsilon '_b}(w-c) ^{1-\epsilon '_c}h(w)$ where $h(w)$ is a polynomial, then the singularity $z=a$ of Eq.(\ref{Heun02}) is apparent.\\
(v) If $\alpha +\beta -\eta \in \Zint _{\leq 0}$, $\eta \not \in \Zint$ and the singularity $w=\infty $ of Eq.(\ref{Heun01}) is apparent, then there exists a non-zero solution of Eq.(\ref{Heun02}) which can be written as a polynomial of degree $\eta -\alpha -\beta $.\\
(vi) If $ \alpha +\beta -\eta \in \Zint _{\geq 2}$, $\eta ,\epsilon _0 ,\epsilon _1 ,\epsilon _t \not \in \Zint$ and the singularity $w=\infty $ of Eq.(\ref{Heun01}) is apparent, then there exists a non-zero solution of Eq.(\ref{Heun02}) which can be written as $z ^{1-\epsilon _0 } (z-1) ^{1-\epsilon _1} (z-t) ^{1-\epsilon _t}  h(z)$ where $h(z)$ is a polynomial of degree $\alpha +\beta -\eta -2$.\\
(vii) If $\alpha +\beta -2\eta \in \Zint _{\leq -1}$, $\eta ,\epsilon '_0 \not \in \Zint$ and there exists a non-zero solution of Eq.(\ref{Heun01}) written as a polynomial in $w$, then the singularity $z=\infty $ of Eq.(\ref{Heun02}) is apparent.\\
(viii) If $\alpha +\beta -2\eta \in \Zint _{\geq 1}$, $\eta ,\epsilon '_0 , \epsilon '_1 ,\epsilon '_t \not \in \Zint$ and there exists a non-zero solution of Eq.(\ref{Heun01}) which can be written as $w^{1-\epsilon ' _0} (w-1) ^{1-\epsilon '_1 }(w-t) ^{1-\epsilon ' _t }  h(w)$ where $h(w)$ is a polynomial, then the singularity $z=\infty $ of Eq.(\ref{Heun02}) is apparent.
\end{thm}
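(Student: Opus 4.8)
The plan is to organize the eight assertions into the four ``dual pairs'' (i)/(iii), (ii)/(iv), (v)/(vii), (vi)/(viii), in which one member passes from an apparent (non-logarithmic) singularity of Eq.(\ref{Heun01}) to a polynomial-type solution of Eq.(\ref{Heun02}) and the other runs in the opposite direction. The two engines are Proposition \ref{prop:nonlog-polyn01}, together with its analogue at $w=\infty$ obtained from the expansion Eq.(\ref{eq:gzginftyexpa}), which manufactures the polynomial-type solutions, and the ``if and only if'' vanishing criterion of Proposition \ref{prop:nonzero}, which drives the reverse implications. The reverse members are, in each pair, the converses of the inverse-transform versions of the forward members, so they cannot be read off formally and genuinely require the vanishing analysis below. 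Everywhere the exponents and degrees are read off from the local expansions Eqs.(\ref{eq:gzgpexpa}), (\ref{eq:gzginftyexpa}) of $\langle [\gamma_z,\gamma_p],y\rangle$, and the arithmetic dictionary between the two Heun equations is Eqs.(\ref{eq:mualbe}), (\ref{eq:mualbe18}).

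For the forward assertions (i), (ii), (v), (vi) I would argue directly. Assertion (i) is, up to the degree count, Proposition \ref{prop:nonlog-polyn01} with $p=a$; to upgrade the bound $\deg h\le\epsilon'_a-2$ to an equality I would match the growth of $(z-a)^{1-\epsilon_a}h(z)$ at $z=\infty$ against the exponents $\alpha,\beta$ there, as in Proposition \ref{prop:monoredpolynsol}(ii), the hypothesis $\alpha',\beta'\notin\Zint$ being exactly what forces $\alpha-\beta\notin\Zint$ and hence keeps the leading coefficient from dropping. In (ii), the range $\epsilon'_a\in\Zint_{\le0}$ places us in the first alternative of Proposition \ref{prop:nonzero}(i), so $\langle[\gamma_z,\gamma_a],y\rangle\equiv0$; one then builds the required solution from a surviving transform along $\gamma_b$, $\gamma_c$ or $\gamma_\infty$, whose local exponents at $z=b,c$ computed from Eq.(\ref{eq:gzgpexpa}) are the branching exponents $1-\epsilon_b$, $1-\epsilon_c$, while the exponent at $\infty$ fixes $\deg h=-\epsilon'_a$. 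Assertions (v) and (vi) are these same two computations carried out at $w=\infty$, using Proposition \ref{prop:nonzero}(ii) and Eq.(\ref{eq:gzginftyexpa}) in place of Eq.(\ref{eq:gzgpexpa}).

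For the reverse assertions (iii), (iv), (vii), (viii) the common mechanism is to produce, at the distinguished singularity, two solutions of Eq.(\ref{Heun02}) invariant under the corresponding cycle, whence the monodromy there is the identity and the singularity is non-logarithmic. In (iii) the hypothesis $\epsilon_a\in\Zint_{\ge2}$ together with the assumed solution $(w-a)^{1-\epsilon'_a}\times(\text{polynomial})$ is the second alternative of Proposition \ref{prop:nonzero}(i), so $\langle[\gamma_z,\gamma_a],y\rangle\equiv0$ for every solution $y$; feeding this into Eqs.(\ref{eq:gzgbga}), (\ref{eq:gzgagb}) makes the transforms $\langle[\gamma_z,\gamma_b],y_1\rangle$, $\langle[\gamma_z,\gamma_b],y_2\rangle$ (for a basis $y_1,y_2$) invariant under $\gamma_a$. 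In (iv), where apparency at $a$ is not assumed, the polynomial-type solution $y_*=(w-b)^{1-\epsilon'_b}(w-c)^{1-\epsilon'_c}h(w)$ is holomorphic at $w=a$, so by the consequence of Eq.(\ref{eq:gzgidecom}) that a solution holomorphic at $w=a$ has vanishing Euler transform along $\gamma_a$, its own transform $\langle[\gamma_z,\gamma_a],y_*\rangle$ vanishes, as does $\langle[\gamma_z,\gamma_a],y_*^{\gamma_b}\rangle$ because $y_*^{\gamma_b}$ is a scalar multiple of $y_*$; hence $\langle[\gamma_z,\gamma_b],y_*\rangle$ and $\langle[\gamma_z,\gamma_c],y_*\rangle$ are both $\gamma_a$-invariant. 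Parts (vii) and (viii) repeat these two arguments at $w=\infty$, the distinguished transform $\langle[\gamma_z,\gamma_\infty],y\rangle$ vanishing for all solutions (via Proposition \ref{prop:nonzero}(ii)) in (vii) and for the distinguished $y_*$ in (viii), with the relation $M^{(0)}M^{(1)}M^{(t)}M^{(\infty)}=1$ transferring this into triviality of the $\gamma_\infty$-monodromy.

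The hard part is neither of these skeletons but the bookkeeping hidden in ``reading off the local exponents'': the expansions Eqs.(\ref{eq:gzgpexpa}), (\ref{eq:gzginftyexpa}) degenerate precisely in the integer-exponent regimes in which every case of the theorem lives, so one must separate the subcases in which a transform vanishes, in which a logarithm could enter, and in which the two candidate invariant solutions fail to be linearly independent. This last point is the genuine obstacle for the reverse direction, because the integrality $\epsilon_a\in\Zint$ (resp. the integrality of $\alpha+\beta-2\eta$) lies outside the genericity hypotheses of Proposition \ref{prop:spansp0}; establishing independence there requires the refined rank statement proved alongside it in the appendix, supplemented where necessary by the continuity argument already used for Theorem \ref{thm:monodm}. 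The degree equalities ($\epsilon'_a-2$, $-\epsilon'_a$, $\eta-\alpha-\beta$, $\alpha+\beta-\eta-2$) then reduce to checking that certain leading coefficients are non-zero, which is exactly what the non-integrality side conditions are there to guarantee.
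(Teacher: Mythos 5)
Your forward halves (i), (ii), (v), (vi) follow the paper's line in outline, though note one hidden step: the exponent of $\langle[\gamma_z,\gamma_b],y\rangle$ at the \emph{other} finite singularity $z=c$ cannot be read off from Eq.(\ref{eq:gzgpexpa}), which only expands a transform about its own base point; one must first show that the $\gamma_b$- and $\gamma_c$-transforms are proportional (the paper deduces this from $1-\epsilon_a\notin\Zint$, since otherwise the monodromy at $z=a$ would be the identity), and separately dispose of the subcase in which one of these transforms vanishes identically, which the paper does by extracting a forbidden polynomial-type solution of Eq.(\ref{Heun01}). The genuine gap is in the reverse assertions (iii), (iv), (vii), (viii). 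Your mechanism --- produce two $\gamma_a$-invariant solutions of Eq.(\ref{Heun02}) and conclude that the local monodromy is the identity --- requires those two solutions to be linearly independent, and none of the tools you invoke can deliver that here. Proposition \ref{prop:spansp0} and the refined Proposition \ref{prop:spansp} explicitly hypothesize the \emph{absence} of exactly the polynomial-type solutions of Eq.(\ref{Heun01}) that (iii) and (viii) assume to exist, so they are inapplicable; and a continuity argument cannot establish apparency, since the condition $A^{\langle a\rangle}=0$ is not a continuous function of the parameters in the way the traces in Theorem \ref{thm:monodm} are. Worse, in (iv) and (viii) the two invariant transforms of the single solution $y_*$ are in general \emph{not} independent: the paper's own argument shows that if the singularity were logarithmic they would be forced to be dependent, and that dependence is the engine of the proof rather than an obstacle to be removed.

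The paper's actual route is different on precisely these four parts. Parts (iii) and (vii) are proved by contraposition combined with the duality between Eq.(\ref{eq:mualbe}) and Eq.(\ref{eq:mualbe18}): one shows that a logarithmic solution at $w=a$ (resp. $w=\infty$) of Eq.(\ref{Heun01}), in the absence of the relevant polynomial solution, is sent by the transform to a function whose expansion Eq.(\ref{eq:gzgpexpa}) (resp. Eq.(\ref{eq:gzginftyexpa})) retains infinitely many non-zero terms from the $A^{\langle a\rangle}$-part and hence cannot be of polynomial type. Parts (iv) and (viii) assume a logarithm at $z=a$ (resp. $z=\infty$) for contradiction, deduce that the transforms of $y_*$ along the two remaining cycles are linearly dependent, show the common function is a product of powers and a polynomial solving Eq.(\ref{Heun02}), and then derive a contradiction because Proposition \ref{prop:monoredpolynsol}(ii) forces its degree to be $\eta-2$ or $\alpha+\beta-\eta-2$, neither of which is an integer under the stated side conditions. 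You would need to supply one of these dependence or contraposition arguments; as written, the reverse half of the theorem is not proved.
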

We will prove Theorem \ref{thm:nonlog-polynHeun} in the appendix with a more detailed proposition.

Correspondences between polynomial-type solutions and apparency of a singularity for the differential equations $D_{y_1}(\theta _0, \theta _1, \theta _t,\theta _{\infty}; \lambda ,\mu )$ and $D_{y_1}(\tilde{\theta }_0, \tilde{\theta }_1, \tilde{\theta }_t, \tilde{\theta }_{\infty} ; \tilde{\lambda } ,\tilde{\mu } )$ can also be described as follows:
\begin{thm} \label{thm:nonlog-polyn}
Set $\kappa _1= (\theta _{\infty } -\theta _0 -\theta _1 -\theta _t)/2$, $\kappa _2= -(\theta _{\infty } +\theta _0 +\theta _1 +\theta _t)/2$, $\tilde{\theta } _p= \kappa _2 + \theta _p$ $(p=0,1,t,\infty )$ and
\begin{align}
& \tilde{\lambda} =\lambda -\frac{\kappa _2}{\mu -\frac{\theta _0}{\lambda }-\frac{\theta _1}{\lambda -1}-\frac{\theta _t}{\lambda -t}} ,\quad  \tilde{\mu} =\frac{\kappa _2 +\theta _0}{\tilde{\lambda }} +\frac{\kappa _2 +\theta _1}{\tilde{\lambda }-1} +\frac{\kappa _2 +\theta _t}{\tilde{\lambda }-t} +\frac{\kappa _2 }{\lambda - \tilde{\lambda }} . \label{eq:tlamtmu2}
\end{align}
Let $a,b,c$ be elements of $\{0,1,t \}$ such that $a \neq b \neq c \neq a$.
Assume that $\lambda , \tilde{\lambda} \not \in \{ 0,1,t,\infty \}$.\\
(i) If $\theta _a \in \Zint _{\leq -1}$, $\kappa _2 \not \in \Zint$ and the singularity $w=a$ of the differential equation $D_{y_1}(\theta _0, \theta _1, \theta _t,\theta _{\infty}; \lambda ,\mu )$ in the variable $w$ is apparent, then there exists a non-zero solution of the differential equation $D_{y_1}(\tilde{\theta }_0, \tilde{\theta }_1, \tilde{\theta }_t, \tilde{\theta }_{\infty} ; \tilde{\lambda } ,\tilde{\mu } )$ in the variable $z$ which can be written as $(z-a) ^{\tilde{\theta }_a}  h(z) $ where $h(z)$ is a polynomial of degree no more than $-\theta _a-1$.
Moreover if $\kappa _1 \not \in \Zint$, then $\deg _E h(z)= -\theta _a-1$.
\\
(ii) If $\theta _a \in \Zint _{\geq 0}$, $\kappa _1 , \kappa _2 , \tilde{\theta }_b , \tilde{\theta }_c \not \in \Zint$ and the singularity $w=a$ of the differential equation $D_{y_1}(\theta _0, \theta _1, \theta _t,\theta _{\infty}; \lambda ,\mu )$ is apparent, then there exists a non-zero solution of the differential equation $D_{y_1}(\tilde{\theta }_0, \tilde{\theta }_1, \tilde{\theta }_t, \tilde{\theta }_{\infty} ; \tilde{\lambda } ,\tilde{\mu } )$ which can be written as $(z-b) ^{\tilde{\theta }_b}(z-c) ^{\tilde{\theta }_c}  h(z) $ where $h(z)$ is a polynomial with $\deg h(z)= \theta _a$.\\
(iii) If $\tilde{\theta }_a \in \Zint _{\leq 0}$, $\kappa _{2 } , \theta _{\infty } \not \in \Zint $ and there exists a non-zero solution of $D_{y_1}(\theta _0, \theta _1, \theta _t,\theta _{\infty}; \lambda ,\mu )$ which can be written as a product of $(w-a)^{\theta _a}$ and a polynomial, then the singularity $z=a$ of $D_{y_1}(\tilde{\theta }_0, \tilde{\theta }_1, \tilde{\theta }_t, \tilde{\theta }_{\infty} ; \tilde{\lambda } ,\tilde{\mu } )$ is apparent.\\
(iv) If $\tilde{\theta }_a \in \Zint _{\geq 1}$, $\kappa _2 ,\theta _{\infty }, \theta _b, \theta _c \not \in \Zint$ and there exists a non-zero solution of $D_{y_1}(\theta _0, \theta _1, \theta _t,\theta _{\infty}; \lambda ,\mu )$ which can be written as $(w-b) ^{\theta _b}(w-c) ^{\theta _c}h(w)$ where $h(w)$ is a polynomial, then the singularity $z=a$ of $D_{y_1}(\tilde{\theta }_0, \tilde{\theta }_1, \tilde{\theta }_t, \tilde{\theta }_{\infty} ; \tilde{\lambda } ,\tilde{\mu } )$ is apparent.\\
(v) If $\theta _{\infty } \in \Zint _{\leq 0}$, $\kappa _2 \not \in \Zint$ and the singularity $w=\infty $ of $D_{y_1}(\theta _0, \theta _1, \theta _t,\theta _{\infty}; \lambda ,\mu )$ is apparent, then there exists a non-zero solution of $D_{y_1}(\tilde{\theta }_0, \tilde{\theta }_1, \tilde{\theta }_t, \tilde{\theta }_{\infty} ; \tilde{\lambda } ,\tilde{\mu } )$ which can be written as a polynomial of degree $-\theta _{\infty }$.\\
(vi) If $\theta _{\infty } \in \Zint _{\geq 1}$, $\kappa _2 ,\tilde{\theta }_0, \tilde{\theta }_1, \tilde{\theta }_t \not \in \Zint $ and the singularity $w=\infty $ of $D_{y_1}(\theta _0, \theta _1, \theta _t,\theta _{\infty}; \lambda ,\mu )$ is apparent, then there exists a non-zero solution of $D_{y_1}(\tilde{\theta }_0, \tilde{\theta }_1, \tilde{\theta }_t, \tilde{\theta }_{\infty} ; \tilde{\lambda } ,\tilde{\mu } )$ which can be written as $z ^{\tilde{\theta }_0 } (z-1) ^{\tilde{\theta }_1} (z-t) ^{\tilde{\theta }_t}  h(z)$ where $h(z)$ is a polynomial of degree $\theta _{\infty }-1$.\\
(vii) If $\kappa _1 \in \Zint _{\leq 0}$, $\kappa _2 ,\theta _0 \not \in \Zint $  and there exists a non-zero solution of $D_{y_1}(\theta _0, \theta _1, \theta _t,\theta _{\infty}; \lambda ,\mu )$ written as a polynomial in $w$, then the singularity $z=\infty $ of $D_{y_1}(\tilde{\theta }_0, \tilde{\theta }_1, \tilde{\theta }_t, \tilde{\theta }_{\infty} ; \tilde{\lambda } ,\tilde{\mu } )$ is apparent.\\
(viii) If $\kappa _1 \in \Zint _{\geq 1}$, $\kappa _2 , \theta _0 ,\theta _1 ,\theta _t \not \in \Zint$ and there exists a non-zero solution of $D_{y_1}(\theta _0, \theta _1, \theta _t,\theta _{\infty}; \lambda ,\mu )$ which can be written as $w^{\theta _0} (w-1) ^{\theta _1 }(w-t) ^{\theta _t }  h(w)$ where $h(w)$ is a polynomial, then the singularity $z=\infty $ of $D_{y_1}(\tilde{\theta }_0, \tilde{\theta }_1, \tilde{\theta }_t, \tilde{\theta }_{\infty} ; \tilde{\lambda } ,\tilde{\mu } )$ is apparent.
\end{thm}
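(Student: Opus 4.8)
The plan is to transport local data through the Euler transformation of Proposition~\ref{prop:Nov} and read it against the exponent dictionary $\tilde{\theta}_p=\kappa_2+\theta_p$ $(p=0,1,t,\infty)$, keeping in mind that the exponents of $D_{y_1}(\theta_0,\theta_1,\theta_t,\theta_\infty;\lambda,\mu)$ at $z=\infty$ are $\kappa_1$ and $\kappa_2+1$. The eight assertions fall into a forward group (i), (ii), (v), (vi), in which a non-logarithmic local solution of the $\theta$-equation produces a polynomial-type solution of the $\tilde{\theta}$-equation, and a reverse group (iii), (iv), (vii), (viii), in which a polynomial-type solution of the $\theta$-equation forces non-logarithmicity of a singularity of the $\tilde{\theta}$-equation. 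All of them rest on the same two ingredients: the local expansions of $\langle[\gamma_z,\gamma_p],y\rangle$ about $z=p$ and about $z=\infty$ collected in the appendix (Eqs.~(\ref{eq:gzgpexpa}), (\ref{eq:gzginftyexpa})), and the characterisation of polynomial-type solutions in Proposition~\ref{prop:monoredpolynsol}.

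For the forward group I would first record part (i), which is precisely Proposition~\ref{prop:nonlog-polyn01}: a non-logarithmic solution with $\theta_a\in\Zint_{\leq -1}$ transforms, via the expansion of $\langle[\gamma_z,\gamma_a],y\rangle$ about $z=a$, into a solution of the shape $(z-a)^{\theta_a+\kappa_2}h(z)$ with $\deg h\leq -\theta_a-1$. To sharpen the bound to $\deg_E h=-\theta_a-1$ under $\kappa_1\notin\Zint$, I would examine the same solution at $z=\infty$: genericity of $\kappa_1$ keeps the leading coefficient there from vanishing, so $h$ cannot drop in degree. Parts (v) and (vi) are the identical computation carried out with the expansion of $\langle[\gamma_z,\gamma_\infty],y\rangle$ at infinity, for $\theta_\infty\in\Zint_{\leq 0}$ and $\theta_\infty\in\Zint_{\geq 1}$ respectively; the prefactor $z^{\tilde{\theta}_0}(z-1)^{\tilde{\theta}_1}(z-t)^{\tilde{\theta}_t}$ in (vi) appears because the surviving transform is holomorphic at each finite singularity, so its only branching sits at $0,1,t$.

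Part (ii) is the genuinely different case in the forward group: when $\theta_a\in\Zint_{\geq 0}$ and $w=a$ is apparent, the transform $\langle[\gamma_z,\gamma_a],y\rangle$ vanishes identically by Proposition~\ref{prop:nonzero}(i), so I would instead feed the apparent solution into $\langle[\gamma_z,\gamma_b],y\rangle$ and $\langle[\gamma_z,\gamma_c],y\rangle$. Apparency at $a$ makes these holomorphic both at $z=a$ and at $z=\infty$, leaving a global solution branching only at $b,c$, i.e. of the stated form $(z-b)^{\tilde{\theta}_b}(z-c)^{\tilde{\theta}_c}h(z)$, with $\deg h=\theta_a$ fixed by the exponent count at infinity through Proposition~\ref{prop:monoredpolynsol}(ii) (this is where $\kappa_1,\tilde{\theta}_b,\tilde{\theta}_c\notin\Zint$ enter). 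The reverse group (iii), (iv), (vii), (viii) runs the same local analysis in the opposite direction: a polynomial-type solution of the $\theta$-equation is, by Proposition~\ref{prop:monoredpolynsol}, a common monodromy eigenvector with a known local exponent at the active singularity, and its Euler transform supplies a logarithm-free local solution of the $\tilde{\theta}$-equation at the integer exponent $\tilde{\theta}_a$ at $z=a$ (in (iii), (iv)), respectively at the corresponding integer exponent at $z=\infty$ (in (vii), (viii)); the existence of this logarithm-free solution is exactly the non-logarithmicity of the singularity. The involution $\kappa_2\mapsto-\kappa_2$, which exchanges $\theta\leftrightarrow\tilde{\theta}$, serves as an organising symmetry rather than a shortcut, since it relates each forward statement to the forward statement for the inverse transformation, not to its converse.

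The main obstacle is the bookkeeping inside these resonant expansions. Because every exponent difference in play is an integer, I must check in each case that the logarithmic coefficient is absent (the condition $A^{\langle p\rangle}=0$ of Eq.~(\ref{eq:expansfpgp})), that the advertised leading coefficient of the Euler integral does not accidentally cancel, and that the non-resonant exponents elsewhere---controlled by the genericity hypotheses $\kappa_1,\kappa_2,\theta_p,\tilde{\theta}_p\notin\Zint$---keep the transformed functions nonzero and of the exact stated degree. Pinning the exact degrees in (i), (ii) and (vi), as opposed to mere upper bounds, is the delicate point, since it demands the precise interplay between the behaviour at the active singularity and the compensating behaviour at $z=\infty$; the remaining steps reduce to the contour identity Eq.~(\ref{eq:gzgidecom}) together with the expansion lemmas of the appendix.
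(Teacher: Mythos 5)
Your treatment of the forward group (i), (ii), (v), (vi) follows the paper's route: local expansions of $\langle[\gamma_z,\gamma_p],y\rangle$ from the appendix, the vanishing criteria of Proposition \ref{prop:nonzero}, and the degree count of Proposition \ref{prop:monoredpolynsol}(ii). Two details are compressed in (ii): the expansion Eq.~(\ref{eq:gzgpexpa}) only controls $\langle[\gamma_z,\gamma_b],y\rangle$ near $z=b$, not near $z=c$, so before you can assert the global form $(z-b)^{\tilde{\theta}_b}(z-c)^{\tilde{\theta}_c}h(z)$ you need the paper's dichotomy that $\langle[\gamma_z,\gamma_b],y\rangle$ and $\langle[\gamma_z,\gamma_c],y\rangle$ are linearly \emph{dependent} (if they were independent the local monodromy at $z=a$ would be the identity, forcing $\tilde{\theta}_a=\kappa_2+\theta_a\in\Zint$, contradicting $\kappa_2\notin\Zint$); and the degenerate subcase where one of these transforms vanishes identically must be excluded via Proposition \ref{prop:nonzero}(i) and the genericity hypotheses. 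Also, these transforms are not ``holomorphic at $z=\infty$'' — the exponents there are $\kappa_1-\kappa_2$ and $1-\kappa_2$ — only non-branching, which is what the polynomiality argument actually uses.

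The genuine gap is in the reverse group (iii), (iv), (vii), (viii). Your mechanism — transform the polynomial-type solution and observe that it ``supplies a logarithm-free local solution at the integer exponent'' — fails for two concrete reasons. First, when both exponents at the singularity are integers, a logarithm-free Frobenius solution at the \emph{larger} exponent always exists; exhibiting one proves nothing about apparency unless you produce one at the smaller exponent or show the local monodromy is trivial. Second, and worse, in exactly these parameter regimes the relevant Euler transform of the polynomial-type solution is \emph{identically zero}, so there is nothing to examine: in (iii), Proposition \ref{prop:monoredpolynsol}(ii) together with $\theta_\infty\notin\Zint$ forces $\deg h=-\tilde{\theta}_a-1=-\theta_a-\kappa_2-1$, which is precisely the threshold in Proposition \ref{prop:nonzero}(i) making $\langle[\gamma_z,\gamma_a],y\rangle\equiv 0$; in (vii) the polynomial solution has degree $-\kappa_1$, which is precisely the condition in Proposition \ref{prop:nonzero}(ii) making $\langle[\gamma_z,\gamma_\infty],y\rangle\equiv 0$. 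The paper's actual argument is a proof by contradiction: assume the target singularity \emph{is} logarithmic; the vanishing of the $\gamma_a$- (or $\gamma_\infty$-) transform forces the remaining transforms to be linearly dependent, from which one extracts a polynomial-type solution of the \emph{target} equation of a specific shape; Proposition \ref{prop:monoredpolynsol}(ii) then pins its degree to a value excluded by the hypotheses $\kappa_2,\theta_\infty,\dots\notin\Zint$. Equivalently, one proves the implication ``logarithmic singularity of the source $\Rightarrow$ no polynomial-type solution of the target'' for the inverse transformation and contraposes — so the duality you set aside as a mere organising symmetry is in fact load-bearing, applied to the contrapositive of a different forward statement rather than to the converse of the one you proved.
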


\section{Quasi-solvability and apparency of a singularity for Heun's equation} \label{sec:QSnonlogHeun}

We recall the quasi-solvability of Heun's equation.
\begin{prop} $($\cite{Ron,Tak2} etc.$)$ \label{prop:findim0}
Let $\nu _j \in \{ 0,1-\epsilon '_j \}$ for $j=0,1,t$, $\eta ' \in \{ \alpha ', \beta ' \} $ and assume that $-(\eta ' + \nu _0 +\nu _1 +\nu _t ) \in \Zint _{\geq 0}$.
Set $n= -(\eta  ' +\nu _0 +\nu _1 +\nu _t )$.
Then there exists a polynomial $P(q')$ of degree $n+1$ in the variable $q'$ such that if $q'$ satisfies $P(q')=0$ then there exists a solution of Eq.(\ref{Heun01}) written as $w^{\nu _0 }(w-1)^{\nu _1}(w-t)^{\nu _t} p(w)$, where $p(w)$ is a polynomial of degree no more than $n$ in the variable $w$. 
\end{prop}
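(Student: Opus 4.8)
The plan is to reduce the assertion to the existence of \emph{polynomial} solutions of a Heun equation by means of a gauge transformation, and then to extract a three-term recurrence whose termination is governed exactly by the hypothesis $n=-(\eta'+\nu_0+\nu_1+\nu_t)\in\Zint_{\geq 0}$. First I would write $v(w)=w^{\nu_0}(w-1)^{\nu_1}(w-t)^{\nu_t}u(w)$ and substitute into Eq.(\ref{Heun01}). Because each $\nu_p$ lies in $\{0,1-\epsilon'_p\}$, i.e.\ is a local exponent at $w=p$, the resulting equation for $u$ is again Fuchsian with the same singular set $\{0,1,t,\infty\}$ and with the exponent $0$ restored at every finite singularity; hence it is again a Heun equation, say with parameters $\hat\epsilon_p,\hat\alpha,\hat\beta,\hat q$. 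The only features of these parameters I will need are that the accessory parameter transforms affinely, $\hat q=q'+(\text{const})$, the constant depending only on $\nu_p,\epsilon'_p,t$, and that the exponents at infinity become $\{\alpha'+N,\beta'+N\}$ with $N=\nu_0+\nu_1+\nu_t$. Since $\eta'\in\{\alpha',\beta'\}$ and $\eta'+N=-n$, one of the exponents of $u$ at infinity equals $-n$, so the branch $u\sim w^{n}$ is admissible: looking for $v$ of the stated form is exactly looking for a polynomial solution $u=p(w)$ of degree at most $n$.

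Next I would insert $u=\sum_{k\geq 0}c_k w^k$ into the transformed Heun equation in its cleared form $w(w-1)(w-t)u''+[\cdots]u'+(\hat\alpha\hat\beta w-\hat q)u=0$ and collect the coefficient of $w^m$. A direct computation, using the Fuchs relation $\hat\epsilon_0+\hat\epsilon_1+\hat\epsilon_t=\hat\alpha+\hat\beta+1$, yields the three-term recurrence
\[
t(m+1)(m+\hat\epsilon_0)\,c_{m+1}+\bigl(d_m-\hat q\bigr)c_m+(m-1+\hat\alpha)(m-1+\hat\beta)\,c_{m-1}=0,
\]
where $d_m$ is a quadratic polynomial in $m$ independent of $\hat q$. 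The decisive point is the subdiagonal coefficient $(m-1+\hat\alpha)(m-1+\hat\beta)$: since one of $\hat\alpha,\hat\beta$ equals $-n$, it vanishes at $m=n+1$. Consequently, if one imposes $c_k=0$ for $k<0$ and for $k>n$, the relations for $m\geq n+1$ hold automatically, and the recurrence closes into the finite system indexed by $m=0,1,\dots,n$.

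Finally I would record this finite system as $M(\hat q)\mathbf{c}=\mathbf{0}$, where $M(\hat q)$ is the $(n+1)\times(n+1)$ tridiagonal matrix with diagonal entries $d_m-\hat q$, superdiagonal entries $t(m+1)(m+\hat\epsilon_0)$ and subdiagonal entries $(m-1+\hat\alpha)(m-1+\hat\beta)$. A nonzero solution $\mathbf{c}$ exists if and only if $\det M(\hat q)=0$. Expanding the tridiagonal determinant, the product of the diagonal entries contributes $\prod_{m=0}^{n}(d_m-\hat q)=(-1)^{n+1}\hat q^{\,n+1}+\cdots$, while every other term in the expansion replaces an adjacent diagonal pair by an off-diagonal product and so has strictly lower degree in $\hat q$; hence $\det M(\hat q)$ has degree exactly $n+1$ in $\hat q$. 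Since $\hat q=q'+(\text{const})$, setting $P(q')=\det M(\hat q(q'))$ gives a polynomial of degree $n+1$ in $q'$ with the required property: whenever $P(q')=0$ there is a nonzero $\mathbf{c}$, hence a polynomial $u=p(w)$ of degree at most $n$ solving the transformed equation, and $v=w^{\nu_0}(w-1)^{\nu_1}(w-t)^{\nu_t}p(w)$ solves Eq.(\ref{Heun01}).

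The main obstacle, and the step deserving the most care, is verifying that the recurrence genuinely closes, i.e.\ that the equations for $m\geq n+1$ impose no further condition. This rests on the precise value $n=-(\eta'+\nu_0+\nu_1+\nu_t)$ forcing the subdiagonal coefficient to vanish at $m=n+1$, which in turn requires the bookkeeping of the exponent shift at infinity under the gauge transformation to be carried out correctly. A secondary point is confirming that the leading coefficient of $\det M$ in $\hat q$ does not degenerate, so that $P$ has degree exactly $n+1$; this is guaranteed by the tridiagonal structure, for which the diagonal product is the unique term of top degree.
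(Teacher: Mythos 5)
Your argument is correct. Note that the paper itself gives no proof of this proposition --- it is stated as a known result with references to \cite{Ron,Tak2} --- and your proof is precisely the standard quasi-exact-solvability argument found there: gauge away the chosen exponents $\nu_p$ to reduce to polynomial solutions, observe that the subdiagonal coefficient $(m-1+\hat\alpha)(m-1+\hat\beta)$ of the three-term Heun recurrence vanishes at $m=n+1$ because one exponent at infinity has become $-n$, and take $P(q')$ to be the determinant of the resulting $(n+1)\times(n+1)$ tridiagonal system, whose degree in $q'$ is exactly $n+1$ since the accessory parameter enters only on the diagonal and shifts affinely under the gauge transformation.
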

\begin{exa} \label{exa:QES}
We investigate polynomial-type solutions of Heun's equation for the case $\epsilon '_0 - \beta ' + 2=0$.
Set $\nu _0= 0$, $\nu_1 = 1-\epsilon '_1$, $\nu_t = 1-\epsilon '_t$ and $\eta ' = \alpha ' $ in Proposition \ref{prop:findim0}.
Then $n= -(\alpha ' + 2- \epsilon '_1 - \epsilon '_t) = -(\epsilon '_0 -\beta ' +1)=1$.
We look for a solution of Eq.(\ref{Heun01}) of the form $(w-1)^{ 1-\epsilon '_1}(w-t)^{ 1-\epsilon '_t} (c+w)$.
By substituting it into Eq.(\ref{Heun01}), we have
\begin{align}
& c(q'-\beta ' \epsilon ' _1 t+\beta ' t+2\epsilon ' _1 t-2t-2+\beta '  -\beta ' \epsilon ' _t+2\epsilon ' _t) +2t-\beta '  t=0 , \\
& c(\beta ' -\epsilon ' _t -\epsilon ' _1 +2) +q' +\epsilon ' _1 t+2\beta ' t-2t-\beta ' \epsilon ' _1 t+2\beta '  -2-\beta '  \epsilon ' _t+\epsilon ' _t =0 . \nonumber
\end{align}
Hence 
\begin{align}
& c=\frac{q'-(\beta ' -1)((\epsilon ' _1-2)t+\epsilon ' _t-2)}{\epsilon ' _t +\epsilon ' _1 -\beta '  -2}, \label{eq:cqp} \\
& (q')^2+((-2\beta '  \epsilon ' _1+ 3\beta '  +3\epsilon ' _1-4)t+(-2\beta ' \epsilon ' _t+3\beta ' +3\epsilon ' _t -4))q' \label{eq:qpquad} \\
& \; + (\beta '  -2)[ (\epsilon ' _1-1)(\epsilon ' _1-2)(\beta '  -1)t^2t \nonumber \\
& \qquad + \{ (\beta ' -1) (2\epsilon ' _t\epsilon ' _1-3\epsilon ' _1-3\epsilon ' _t+5)-\epsilon ' _1-\epsilon ' _t +3 \} +(\epsilon ' _t-1)(\epsilon ' _t-2)(\beta '  -1)] =0. \nonumber
\end{align}
Therefore, if $q'$ satisfies the quadratic equation in Eq.(\ref{eq:qpquad}), then the function  $(w-1)^{ 1-\epsilon '_1}(w-t)^{ 1-\epsilon '_t} (c+w)$ satisfies Eq.(\ref{Heun01}) where $c$ is chosen as Eq.(\ref{eq:cqp}).
\end{exa}

We are going to obtain explicit expressions for solutions of Heun's equation which have an apparent singularity by using solutions which are expressed by quasi-solvability.

\begin{thm} \label{thm:exprnonlogsol}
Let $a,b,c$ be elements of $\{0,1,t \}$ such that $a \neq b \neq c \neq a$ and $\eta , \alpha ,\beta ,\epsilon _0 ,\epsilon _1 ,\epsilon _t,  \alpha ',\beta ',\epsilon '_0 ,\epsilon '_1 ,\epsilon '_t$ be the parameters defined in Eq.(\ref{eq:mualbe}) or Eq.(\ref{eq:mualbe18}).\\
(i) If $\epsilon _a \in \Zint _{\leq 0}$, $\alpha , \beta , \epsilon '_b , \epsilon '_c \not \in \Zint $ and the singularity $z=a$ of Eq.(\ref{Heun02}) is apparent, then there exists a non-zero solution of Eq.(\ref{Heun01}) which can be written as $(w-b) ^{1-\epsilon '_b}(w-c) ^{1-\epsilon '_c}h(w)$ where $h(w)$ is a polynomial of degree $-\epsilon _a $ and the functions
\begin{align}
& \int _{[\gamma _z ,\gamma _p]} (w-b) ^{1-\epsilon '_b}(w-c) ^{1-\epsilon '_c}h(w) (z-w)^{-\eta } dw , \quad (p=b,c),
\label{eq:intpbc}
\end{align}
are non-zero solutions of Eq.(\ref{Heun02}).\\
(ii) If $\alpha +\beta -2\eta \in \Zint _{\geq 1}$, $\eta ,\epsilon '_0 , \epsilon '_1 ,\epsilon '_t \not \in \Zint$ and the singularity $z=\infty $ of Eq.(\ref{Heun02}) is apparent, then there exists a non-zero solution of Eq.(\ref{Heun01}) which can be written as $w^{1-\epsilon ' _0} (w-1) ^{1-\epsilon '_1 }(w-t) ^{1-\epsilon ' _t }  h(w)$ where $h(w)$ is a polynomial of degree $\alpha +\beta -2\eta -1$ and the functions  
\begin{align}
& \int _{[\gamma _z ,\gamma _p]} w^{1-\epsilon ' _0} (w-1) ^{1-\epsilon '_1 }(w-t) ^{1-\epsilon ' _t }  h(w) (z-w)^{-\eta } dw , \quad (p=0,1,t),
\label{eq:intp01t}
\end{align}
are non-zero solutions of Eq.(\ref{Heun02}).
\end{thm}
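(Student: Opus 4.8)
The plan is to recognise part (i) as the converse of Theorem~\ref{thm:nonlog-polynHeun}(iv) and part (ii) as the converse of Theorem~\ref{thm:nonlog-polynHeun}(viii), supplemented by the explicit integral representations. In each case I must carry out two independent tasks: first produce, from the apparency hypothesis on Eq.(\ref{Heun02}), a factored polynomial-type solution of Eq.(\ref{Heun01}); and second verify that the Euler transforms in Eq.(\ref{eq:intpbc}) (resp. Eq.(\ref{eq:intp01t})) are genuine non-zero solutions of Eq.(\ref{Heun02}).

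For the first task I would invoke Theorem~\ref{thm:nonlog-polynHeun}(ii) (resp. (vi)) read with the roles of Eq.(\ref{Heun01}) and Eq.(\ref{Heun02}) interchanged. This is legitimate because, by the symmetric reformulation Eq.(\ref{eq:mualbe18}), the Euler transformation of Proposition~\ref{prop:Heunint} is the same type of map run in reverse with $\eta$ replaced by $\eta'=2-\eta$; hence any statement proved for the forward transform applies verbatim to the backward one after relabelling primed and unprimed parameters. Under this relabelling the hypotheses $\epsilon_a\in\Zint_{\leq 0}$ and $\alpha,\beta,\epsilon'_b,\epsilon'_c\not\in\Zint$, together with apparency of $z=a$ in Eq.(\ref{Heun02}), match exactly the hypotheses of Theorem~\ref{thm:nonlog-polynHeun}(ii) (note $\eta\not\in\Zint$ follows automatically from $\alpha,\beta\not\in\Zint$ and $(\eta-\alpha)(\eta-\beta)=0$), and its conclusion is precisely a solution $v(w)=(w-b)^{1-\epsilon'_b}(w-c)^{1-\epsilon'_c}h(w)$ of Eq.(\ref{Heun01}) with $\deg h=-\epsilon_a$. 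The explicit shape of $h$, and the count $\deg h=-\epsilon_a$, can be double-checked against quasi-solvability: in Proposition~\ref{prop:findim0} take $\nu_a=0$, $\nu_b=1-\epsilon'_b$, $\nu_c=1-\epsilon'_c$ and $\eta'=\alpha+\beta-2\eta+1$, whence $n=-(\eta'+\nu_b+\nu_c)=-\epsilon_a$ after substituting $1-\epsilon'_p=\eta-\epsilon_p$ and $\epsilon_0+\epsilon_1+\epsilon_t=\alpha+\beta+1$; for part (ii) one instead takes $\nu_p=1-\epsilon'_p$ for all $p$ and $\eta'=2-\eta$, giving $n=\alpha+\beta-2\eta-1$.

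For the second task, that the functions in Eq.(\ref{eq:intpbc}) solve Eq.(\ref{Heun02}) is immediate from Proposition~\ref{prop:Heunint} once $v$ is known. To prove they are non-zero I would examine the local expansion of $\langle[\gamma_z,\gamma_p],v\rangle$ about $z=p$ for $p=b,c$ via the formula Eq.(\ref{eq:gzgpexpa}) with $\kappa=-\eta$. Because $\epsilon'_b\not\in\Zint$, near $w=b$ the solution $v$ lies entirely in the branch $(w-b)^{1-\epsilon'_b}\cdot(\text{analytic, of non-integral exponent})$, so the Pochhammer-contour integration yields a leading term proportional to $(z-b)^{\,1-\epsilon'_b-\eta+1}=(z-b)^{1-\epsilon_b}$ whose coefficient carries the factor $(e^{2\pi\sqrt{-1}(1-\epsilon'_b)}-1)(e^{-2\pi\sqrt{-1}\eta}-1)$; this is non-zero precisely because $\epsilon'_b,\eta\not\in\Zint$, so the transform does not vanish identically. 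The argument at $z=c$ is identical using $\epsilon'_c,\eta\not\in\Zint$, and part (ii) is handled the same way at each of $z=0,1,t$ using $\epsilon'_0,\epsilon'_1,\epsilon'_t,\eta\not\in\Zint$, the leading exponent at $z=p$ being $1-\epsilon_p$.

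I expect the non-vanishing step to be the main obstacle, since it is where the non-integrality hypotheses are genuinely used: one must confirm that $v$ occupies only the non-integral Frobenius branch at each relevant singularity, so that the Pochhammer beta-type factor cannot accidentally cancel and the leading coefficient is provably non-zero. If one prefers to derive existence from quasi-solvability rather than from the interchanged Theorem~\ref{thm:nonlog-polynHeun}(ii), the delicate point instead becomes a counting argument: one checks that apparency of $z=a$ in Eq.(\ref{Heun02}) is cut out by a polynomial in $q$ of the same degree $-\epsilon_a+1$ as the quasi-solvability polynomial $P(q')$, so that the root inclusion furnished by Theorem~\ref{thm:nonlog-polynHeun}(iv) together with equality of degrees forces the two conditions to coincide, the coincidence of multiplicities being secured by the continuity/limiting argument already used in the proof of Theorem~\ref{thm:monodm}.
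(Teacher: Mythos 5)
Your existence step and your derivation that the integrals solve Eq.(\ref{Heun02}) coincide with the paper's proof: the paper likewise obtains the factored polynomial-type solution of Eq.(\ref{Heun01}) by applying Theorem \ref{thm:nonlog-polynHeun} (ii) (resp.\ (vi)) with primed and unprimed parameters interchanged via the duality of Eqs.(\ref{eq:mualbe}) and (\ref{eq:mualbe18}), and then invokes Proposition \ref{prop:Heunint}; the quasi-solvability detour you sketch at the end is not used and not needed.

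The genuine gap is in the non-vanishing step. You claim the transform at $z=b$ has leading term proportional to $(z-b)^{1-\epsilon_b}$ with coefficient non-zero because the factor $(e^{2\pi\sqrt{-1}(1-\epsilon'_b)}-1)(e^{-2\pi\sqrt{-1}\eta}-1)$ is non-zero. But by Eq.(\ref{eq:dalbe}) the coefficient is $\tilde c_0\,d_{2-\epsilon'_b,\,1-\eta}$, and $d_{\alpha,\beta}$ also carries the quotient $\Gamma(\alpha)\Gamma(\beta)/\Gamma(\alpha+\beta)$ with $\alpha+\beta=3-\epsilon'_b-\eta=2-\epsilon_b$. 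The hypotheses only force $\epsilon'_b,\eta\notin\Zint$; they do not prevent $\epsilon_b=\epsilon'_b+\eta-1$ from lying in $\Zint_{\geq 2}$ (e.g.\ $\epsilon'_b=1/2$, $\eta=5/2$ is compatible with every assumption of part (i)). In that case $1/\Gamma(2-\epsilon_b)=0$, the putative fractional leading term disappears, and one is actually in the second case of Eq.(\ref{eq:gzgpexpa}) (namely $\theta_b+\kappa=-\epsilon_b\in\Zint_{\leq -2}$): the transform is holomorphic at $z=b$ and equals $\sum_{j}\tilde c_{j+\epsilon_b-1}\,d_{j+\eta,\,1-\eta}(z-b)^{j}$ up to a non-zero constant, so no single coefficient of fixed index decides non-vanishing. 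The paper closes exactly this hole by first noting that the expansion $\sum_j\tilde c_j(w-b)^j$ of $(w-c)^{1-\epsilon'_c}h(w)$ about $w=b$ has infinitely many non-zero coefficients (it is not a polynomial in $w-b$ because $\epsilon'_c\notin\Zint$), while $d_{j+\eta,1-\eta}\neq 0$ for all $j\geq 0$ since $\eta\notin\Zint$; this argument works uniformly in all cases of Eq.(\ref{eq:gzgpexpa}). You need this (or an equivalent) supplement, and the same issue recurs at $z=c$ and, in part (ii), at each of $z=0,1,t$, where $\epsilon_p$ may be an integer even though $\epsilon'_p$ and $\eta$ are not.
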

\begin{proof}
By Theorem \ref{thm:nonlog-polynHeun} (ii) (resp. Theorem \ref{thm:nonlog-polynHeun} (vi)) and the duality of the parameters $(\alpha , \beta , \epsilon _0, \epsilon _1, \epsilon _t, \eta )$ and $(\alpha ', \beta ', \epsilon '_0, \epsilon '_1, \epsilon '_t, \eta ')$ in Eqs.(\ref{eq:mualbe}), (\ref{eq:mualbe18}), we obtain the existence of a non-zero solution of Eq.(\ref{Heun01}) which can be written as $(w-b) ^{1-\epsilon '_b}(w-c) ^{1-\epsilon '_c}h(w)$ (resp. $w^{1-\epsilon ' _0} (w-1) ^{1-\epsilon '_1 }(w-t) ^{1-\epsilon ' _t }  h(w)$) where $h(w)$ is a polynomial of degree $-\epsilon _a $ (resp. $\alpha +\beta -2\eta -1$).
It follows from Proposition \ref{prop:Heunint} that Eq.(\ref{eq:intpbc}) (resp. Eq.(\ref{eq:intp01t})) is a solution of Eq.(\ref{Heun02}).
We show that Eq.(\ref{eq:intpbc}) for $p=b$ is non-zero.
We expand $(w-c) ^{1-\epsilon '_c}h(w) $ about $w=b$ as $\sum _{j=0} ^{\infty } \tilde{c} _j (w-b)^j$.
Then there are infinitely many terms such that $\tilde{c} _j \neq 0$, because $\epsilon '_c \not \in \Zint$.
Eq.(\ref{eq:intpbc}) for $p=b$ is written as Eq.(\ref{eq:gzgpexpa}) for the case $\theta _b = 1- \epsilon ' _b \not \in \Zint _{\leq 0}$, $\theta _b +\kappa  +1 = 2- \epsilon '_b -\eta = 1- \epsilon _b \in \Zint _{\leq -1}$, and it is not identically zero.
It is also shown that Eq.(\ref{eq:intpbc}) for $p=c$ and Eq.(\ref{eq:intp01t}) for $p=0,1,t$ are not identically zero.\\
\end{proof}
\begin{exa}
We investigate solutions of Eq.(\ref{Heun02}) for the case $\epsilon _0 =-1$ and the singularity $z=0$ of Eq.(\ref{Heun02}) is apparent.
The condition that the singularity $z=0$ is apparent is described as an algebraic equation of $q$ by following the method in the appendix, and it is written as
\begin{equation}
q^2+(\epsilon _1 t+\epsilon _t-t-1)q+\alpha \beta t=0, \label{eq:e0-1nonlog}
\end{equation}
which is equivalent to Eq.(\ref{eq:qpquad}) by applying Eq.(\ref{eq:mualbe}) for $\eta =\beta $.
If Eq.(\ref{eq:e0-1nonlog}) is satified, then the function $(w-1)^{ 1-\epsilon '_1}(w-t)^{ 1-\epsilon '_t} (w+q/\alpha )$ satisfies Eq.(\ref{Heun01}), which follows from Example \ref{exa:QES}.
By applying the integral transformation, the functions
\begin{align}
& \int _{[\gamma _z ,\gamma _p]} (w-1) ^{1-\epsilon '_1 }(w-t) ^{1-\epsilon ' _t } \left( w+\frac{q}{\alpha } \right) (z-w)^{-\beta } dw ,
\end{align}
are solutions of Eq.(\ref{Heun02}), if $q$ satisfies Eq.(\ref{eq:e0-1nonlog}).
\end{exa}

If $\epsilon _a \in \Zint _{\geq 2}$, $\alpha ,\beta  \not \in \Zint$ and the singularity $z=a$ of Eq.(\ref{Heun02}) is apparent, then there exists a non-zero solution of Eq.(\ref{Heun01}) which can be written as $(w-a)^{1-\epsilon '_a} h(w)$ where $h(w)$ is a polynomial of degree $\epsilon _a -2$, and the functions
\begin{align}
& \int _{[\gamma _z ,\gamma _p]} (w-a)^{1-\epsilon '_a} h(w) (z-w)^{-\eta } dw , \quad (p=0,1,t,\infty ),
\label{eq:intwaezero}
\end{align}
are solutions of Eq.(\ref{Heun02}). But it is shown that Eq.(\ref{eq:intwaezero}) is identically zero for $p=0,1,t,\infty$. (For the case $p=a$, it follows from Eq.(\ref{eq:gzgpexpa}) on the case $\theta _p \not \in \Zint  $, $\theta _p +\kappa  \in \Zint _{\leq -2}$. For the case $p=b,c$, it follows from holomorphy of $(w-a)^{1-\epsilon '_a} h(w) $ about $p=b,c$. For the case $p=\infty $, it follows from $\gamma _0 \gamma _1 \gamma _t \gamma _{\infty }=1$.)
We have a similar situation for the case $\alpha +\beta -2\eta \in \Zint _{\leq -1}$, $\eta ,\epsilon '_0 \not \in \Zint$.
To obtain non-vanishing expressions of integrals, we apply the following proposition.
\begin{prop} \label{prop:Heunintother}
Let $\eta , \epsilon _0 ,\epsilon _1 ,\epsilon _t ,\epsilon '_0 ,\epsilon '_1 ,\epsilon '_t$ be the parameters defined in Eq.(\ref{eq:mualbe}) or Eq.(\ref{eq:mualbe18}) and $v(w)$ be a solution of Eq.(\ref{Heun01}).
Then the function 
\begin{align}
& y(z)=z^{1- \epsilon _0 } (z-1)^{1- \epsilon _1 } (z-t)^{1- \epsilon _t }\int _{[\gamma _z ,\gamma _p]} w^{\epsilon _0 '-1} (w-1)^{\epsilon _1 '-1} (w-t)^{\epsilon _t '-1} v(w) (z-w)^{\eta -2} dw 
\end{align}
is a solution of Eq.(\ref{Heun02}) for $p \in \{ 0,1,t,\infty \}$.
\end{prop}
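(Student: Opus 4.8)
The plan is to factor the transformation in the statement as a composition of three maps, each of which sends solutions of one Heun equation to solutions of another with explicitly computable parameters: an index (gauge) transformation of $v$, followed by the Euler integral transformation of Proposition \ref{prop:Heunint}, followed by an index transformation of the resulting function. The point of this factorization is that all the analytic content (the contour $[\gamma_z,\gamma_p]$, the Pochhammer cancellation, convergence) is already packaged in Proposition \ref{prop:Heunint}, while the gauge factors $w^{\epsilon_0'-1}(w-1)^{\epsilon_1'-1}(w-t)^{\epsilon_t'-1}$ and $z^{1-\epsilon_0}(z-1)^{1-\epsilon_1}(z-t)^{1-\epsilon_t}$ are elementary.

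First I would set $u(w)=w^{\epsilon_0'-1}(w-1)^{\epsilon_1'-1}(w-t)^{\epsilon_t'-1}v(w)$. Since $v$ solves Eq.(\ref{Heun01}), whose exponents at $w=p$ $(p=0,1,t)$ are $0,\,1-\epsilon_p'$ and at $w=\infty$ are $\alpha',\beta'$, multiplying by $(w-p)^{\epsilon_p'-1}$ shifts the local exponents so that $u$ solves a Heun equation with $\epsilon_p''=2-\epsilon_p'$ $(p=0,1,t)$, $\{\alpha'',\beta''\}=\{2-\alpha',2-\beta'\}$, and some accessory parameter $q''$; here the Fuchs relation $\epsilon_0'+\epsilon_1'+\epsilon_t'=\alpha'+\beta'+1$ gives $\epsilon_0''+\epsilon_1''+\epsilon_t''=\alpha''+\beta''+1$, so this is again a Heun equation.

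Next I would apply Proposition \ref{prop:Heunint} to $u$ with transformation parameter $\eta''=2-\eta$, so that the kernel power is $-\eta''=\eta-2$, matching the integrand in the statement; note $\eta''=\eta'$ by Eq.(\ref{eq:mualbe18}). To invoke Proposition \ref{prop:Heunint} I must check that the double-primed parameters of $u$ and the target values $\hat\epsilon_p=2-\epsilon_p$, $\{\hat\alpha,\hat\beta\}=\{2-\alpha,2-\beta\}$ satisfy Eq.(\ref{eq:mualbe}): one has $(\eta''-\hat\alpha)(\eta''-\hat\beta)=0$ since $\eta''=2-\eta$ with $\eta\in\{\alpha,\beta\}$; one has $\epsilon_p''=\hat\epsilon_p-\eta''+1$ because $2-\epsilon_p'=(2-\epsilon_p)-(2-\eta)+1$ follows from $\epsilon_p'=\epsilon_p-\eta+1$; and $\{\alpha'',\beta''\}=\{2-\eta'',\hat\alpha+\hat\beta-2\eta''+1\}=\{\eta,\,1-\alpha-\beta+2\eta\}$ matches $\{2-\alpha',2-\beta'\}$. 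Proposition \ref{prop:Heunint} then yields that $\hat y(z)=\int_{[\gamma_z,\gamma_p]}u(w)(z-w)^{\eta-2}\,dw$ solves the Heun equation with parameters $(\hat\epsilon_p,\hat\alpha,\hat\beta,\hat q)$, for $p\in\{0,1,t,\infty\}$. Finally, multiplying by $z^{1-\epsilon_0}(z-1)^{1-\epsilon_1}(z-t)^{1-\epsilon_t}$ shifts the exponents of $\hat y$ at $z=p$ from $0,\,\epsilon_p-1$ back to $0,\,1-\epsilon_p$, and (using $\epsilon_0+\epsilon_1+\epsilon_t=\alpha+\beta+1$) the exponents at $\infty$ from $2-\alpha,\,2-\beta$ back to $\alpha,\,\beta$, so the composite $y$ has exactly the exponent data of Eq.(\ref{Heun02}).

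The one genuinely computational point, and the place I expect to be the main obstacle, is the accessory parameter: the exponent bookkeeping above determines $y$ only up to the value of $q$, so I must confirm that the composite carries $q'$ to precisely the $q$ of Eq.(\ref{eq:mualbe}). This means computing the $q$-shift of each index transformation explicitly (substituting $y=\phi(z)Y$, $\phi=\prod_p(z-p)^{s_p}$, into a Heun operator produces the new zeroth-order coefficient $2(\phi'/\phi)^2-(\phi'/\phi)'-P(\phi'/\phi)+Q$, from which the new $\alpha\beta$-product and the new $q$ are read off) and composing these two shifts with the relation $q''=\hat q+(1-\eta'')(\hat\epsilon_t+\hat\epsilon_1 t+(\hat\epsilon_0-\eta'')(t+1))$ of Proposition \ref{prop:Heunint}. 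After substituting Eqs.(\ref{eq:mualbe}), (\ref{eq:mualbe18}) and the Fuchs relations I expect this to collapse to an identity, but it is where an algebra slip is most likely. A route that avoids the bookkeeping is to verify the claim directly: apply the Heun operator $L_z$ of Eq.(\ref{Heun02}) under the integral sign to the kernel $z^{1-\epsilon_0}(z-1)^{1-\epsilon_1}(z-t)^{1-\epsilon_t}(z-w)^{\eta-2}$ times $w^{\epsilon_0'-1}(w-1)^{\epsilon_1'-1}(w-t)^{\epsilon_t'-1}v(w)$, use Eq.(\ref{Heun01}) to eliminate $v''$, and show the outcome is a total $w$-derivative whose boundary contributions cancel around the closed Pochhammer contour $[\gamma_z,\gamma_p]$.
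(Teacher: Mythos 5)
Your proposal is correct and is essentially the paper's own proof: the paper likewise factors the map as the gauge transformation $v\mapsto w^{\epsilon_0'-1}(w-1)^{\epsilon_1'-1}(w-t)^{\epsilon_t'-1}v$, then Proposition \ref{prop:Heunint} with transformation parameter $2-\eta$ (kernel $(z-w)^{\eta-2}$), then the gauge factor $z^{1-\epsilon_0}(z-1)^{1-\epsilon_1}(z-t)^{1-\epsilon_t}$. The accessory-parameter bookkeeping you flag is handled in the paper simply by recording the shifted values $\tilde q'=q'-(\epsilon_0'+\epsilon_t'-2)-(\epsilon_0'+\epsilon_1'-2)t$ and $\tilde q=\tilde q'+(1-\eta)\{2-\epsilon_t'+(2-\epsilon_1')t+(2-\epsilon_0'-\eta)(t+1)\}$ at each stage, exactly as you propose.
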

\begin{proof}
Let $v(w)$ be a solution of Eq.(\ref{Heun01}).
Then the function $\tilde{v}(w)=  w^{\epsilon _0 '-1} (w-1)^{\epsilon _1 '-1} (w-t)^{\epsilon _t '-1} v(w)$ is a solution of 
\begin{align}
& \frac{d^2\tilde{v}}{dw^2} + \left( \frac{2-\epsilon _0 '}{w}+\frac{2-\epsilon _1'}{w-1}+\frac{2-\epsilon _t '}{w-t}\right) \frac{d\tilde{v}}{dw} +\frac{(2-\alpha ')(2- \beta ' )w -\tilde{q}'}{w(w-1)(w-t)} \tilde{v}=0, \\
& \tilde{q}'=q'-(\epsilon _0 '+\epsilon _t '-2)-(\epsilon _0 '+\epsilon _1 '-2)t . \nonumber 
\end{align}
It follows from Proposition \ref{prop:Heunint} that the function $\tilde{y}(z)= \int _{[\gamma _z ,\gamma _p]} \tilde{v}(w) (z-w)^{-(2-\eta )} dw $ $(p=0,1,t,\infty)$ is a solution of 
\begin{align}
& \frac{d^2\tilde{y}}{dz^2} + \left( \frac{2-\epsilon _0}{z}+\frac{2-\epsilon _1}{z-1}+\frac{2-\epsilon _t}{z-t}\right) \frac{d\tilde{y}}{dz} +\frac{(2-\alpha )(2-\beta )z -\tilde{q}}{z(z-1)(z-t)} \tilde{y}=0, \\
& \tilde{q}=\tilde{q}' +(1-\eta )\left\{ 2-\epsilon _t '+(2-\epsilon _1 ')t +(2-\epsilon _0 '-\eta ) (t+1) \right\} . \nonumber 
\end{align}
By setting $y(z)= z^{1- \epsilon _0 } (z-1)^{1- \epsilon _1 } (z-t)^{1- \epsilon _t }\tilde{y}(z)$, it follows that $y(z)$ is a solution of Eq.(\ref{Heun02}).
\end{proof}
\begin{thm} \label{thm:exprnonlogsol2}
Let $a,b,c$ be elements of $\{0,1,t \}$ such that $a \neq b \neq c \neq a$ and $\eta , \alpha ,\beta ,\epsilon _0 ,\epsilon _1 ,\epsilon _t,\epsilon '_0 ,\epsilon '_1 ,\epsilon '_t$ be the parameters defined in Eq.(\ref{eq:mualbe}) or Eq.(\ref{eq:mualbe18}).\\
(i) If $\epsilon _a \in \Zint _{\geq 2}$, $\alpha ,\beta ,\epsilon _b ' ,\epsilon _c ' \not \in \Zint$ and the singularity $z=a$ of Eq.(\ref{Heun02}) is apparent, then there exists a non-zero solution of Eq.(\ref{Heun01}) which can be written as $(w-a)^{1-\epsilon '_a} h(w)$ where $h(w)$ is a polynomial of degree $\epsilon _a -2$, and the functions
\begin{align}
& z^{1- \epsilon _0 } (z-1)^{1- \epsilon _1 } (z-t)^{1- \epsilon _t }\int _{[\gamma _z ,\gamma _p]} (w-b) ^{\epsilon '_b -1}(w-c) ^{\epsilon '_c -1}h(w) (z-w)^{\eta -2} dw , \quad (p=b,c),
\label{eq:intpbcD}
\end{align}
are non-zero solutions of Eq.(\ref{Heun02}).\\
(ii) If $\alpha +\beta -2\eta \in \Zint _{\leq -1}$, $\eta ,\epsilon '_0 , \epsilon '_1 ,\epsilon '_t \not \in \Zint$ and the singularity $z=\infty $ of Eq.(\ref{Heun02}) is apparent, then there exists a non-zero solution of Eq.(\ref{Heun01}) which can be written as $ h(w)$ where $h(w)$ is a polynomial of degree $2\eta -\alpha -\beta -1$ and the functions  
\begin{align}
& z^{1- \epsilon _0 } (z-1)^{1- \epsilon _1 } (z-t)^{1- \epsilon _t }\int _{[\gamma _z ,\gamma _p]} w^{\epsilon _0 '-1} (w-1)^{\epsilon _1 '-1} (w-t)^{\epsilon _t '-1} h(w) (z-w)^{\eta -2} dw ,
\label{eq:intp01tD}
\end{align}
$(p=0,1,t)$ are non-zero solutions of Eq.(\ref{Heun02}).
\end{thm}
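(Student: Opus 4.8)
The plan is to imitate the proof of Theorem~\ref{thm:exprnonlogsol}, replacing the transformation of Proposition~\ref{prop:Heunint} by the gauge-twisted one of Proposition~\ref{prop:Heunintother}; the latter is forced because, as observed before Proposition~\ref{prop:Heunintother}, the plain transform (cf.\ Eq.(\ref{eq:intwaezero})) of these polynomial-type solutions vanishes identically. First I would produce the claimed solution of Eq.(\ref{Heun01}). For (i) I would read Theorem~\ref{thm:nonlog-polynHeun}~(i) under the symmetry between $(\alpha,\beta,\epsilon_0,\epsilon_1,\epsilon_t,\eta)$ and $(\alpha',\beta',\epsilon'_0,\epsilon'_1,\epsilon'_t,\eta')$ of Eqs.(\ref{eq:mualbe}), (\ref{eq:mualbe18}), i.e.\ with the two Heun equations interchanged: since $\eta\in\{\alpha,\beta\}$ the hypotheses $\epsilon_a\in\Zint_{\geq 2}$ and $\alpha,\beta\not\in\Zint$ give $\eta\not\in\Zint$ (hence $\eta'\not\in\Zint$), and together with non-branching of $z=a$ of Eq.(\ref{Heun02}) this yields a solution of Eq.(\ref{Heun01}) of the form $(w-a)^{1-\epsilon'_a}h(w)$, the degree being exactly $\epsilon_a-2$ by the ``moreover'' clause under $\alpha,\beta\not\in\Zint$. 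Part (ii) is the same device applied to Theorem~\ref{thm:nonlog-polynHeun}~(v): using $\{\alpha',\beta'\}=\{2-\eta,\alpha+\beta-2\eta+1\}$ and $\eta'=2-\eta$ one computes $\alpha'+\beta'-\eta'=\alpha+\beta-2\eta+1$ and $\eta'-\alpha'-\beta'=2\eta-\alpha-\beta-1$, so that under $\eta\not\in\Zint$ the dualized hypothesis is $\alpha+\beta-2\eta\in\Zint_{\leq -1}$ and the output is a polynomial solution $h(w)$ of Eq.(\ref{Heun01}) of degree $2\eta-\alpha-\beta-1$.

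Next I would confirm that Eqs.(\ref{eq:intpbcD}), (\ref{eq:intp01tD}) solve Eq.(\ref{Heun02}), which is immediate once their integrands are matched with Proposition~\ref{prop:Heunintother}. Taking $v(w)=(w-a)^{1-\epsilon'_a}h(w)$ in (i), the factor $(w-a)^{\epsilon'_a-1}$ inserted by Proposition~\ref{prop:Heunintother} cancels the $(w-a)^{1-\epsilon'_a}$ of $v$, leaving exactly $(w-b)^{\epsilon'_b-1}(w-c)^{\epsilon'_c-1}h(w)$; taking $v(w)=h(w)$ in (ii) reproduces the integrand of Eq.(\ref{eq:intp01tD}) verbatim. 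As $z^{1-\epsilon_0}(z-1)^{1-\epsilon_1}(z-t)^{1-\epsilon_t}$ is a nonvanishing function, the products in Eqs.(\ref{eq:intpbcD}), (\ref{eq:intp01tD}) are solutions of Eq.(\ref{Heun02}).

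The substantive step, and where I expect the real difficulty, is to show these solutions are not identically zero. For $p=b$ in (i) I would factor the integrand as $(w-b)^{\epsilon'_b-1}$ times $(w-c)^{\epsilon'_c-1}h(w)$ and expand the second factor about $w=b$; since $\epsilon'_c\not\in\Zint$ it is non-rational, so infinitely many of its Taylor coefficients are nonzero. The integral then falls under Eq.(\ref{eq:gzgpexpa}) with local exponent $\theta_b=\epsilon'_b-1\not\in\Zint$ and $\kappa=\eta-2\not\in\Zint$, exactly the configuration used in the proof of Theorem~\ref{thm:exprnonlogsol}, and the resulting expansion about $z=b$ cannot cancel; hence the integral is nonzero. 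The cases $p=c$ in (i) and $p=0,1,t$ in (ii) are the same with the branch points permuted, invoking $\epsilon'_p\not\in\Zint$ for applicability at $w=p$ and the non-integrality of one of the remaining exponents for the infinitude of nonzero coefficients. The point requiring care is that $\theta_b+\kappa+1=\epsilon_b-1$ may be an integer, so non-vanishing cannot be read off a single leading coefficient; it is precisely the infinitude of nonzero Taylor coefficients, guaranteed by a non-integer exponent, that excludes total cancellation in Eq.(\ref{eq:gzgpexpa}). Multiplying by the nonvanishing prefactor then completes the argument.
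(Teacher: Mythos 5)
Your proposal is correct and follows essentially the same route as the paper: existence of the polynomial-type solution from Theorem \ref{thm:nonlog-polynHeun} (i) (resp.\ (v)) under the duality of Eqs.(\ref{eq:mualbe}), (\ref{eq:mualbe18}), the solution property from Proposition \ref{prop:Heunintother}, and non-vanishing by the expansion argument of Theorem \ref{thm:exprnonlogsol} via Eq.(\ref{eq:gzgpexpa}). Your explicit parameter computations and the remark on why a non-integer exponent forces infinitely many nonzero Taylor coefficients simply flesh out details the paper leaves implicit.
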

\begin{proof}
By Theorem \ref{thm:nonlog-polynHeun} (i) (resp. Theorem \ref{thm:nonlog-polynHeun} (v)) and the duality of the parameters in Eqs.(\ref{eq:mualbe}), (\ref{eq:mualbe18}), we obtain the existence of a non-zero solution of Eq.(\ref{Heun01}) which can be written as $(w-a) ^{1-\epsilon '_a}h(w)$ (resp. $h(w)$) where $h(w)$ is a polynomial of degree $\epsilon _a -2$ (resp. $2\eta -\alpha -\beta -1$).
It follows from Proposition \ref{prop:Heunintother} that Eq.(\ref{eq:intpbcD}) (resp. Eq.(\ref{eq:intp01tD})) is a solution of Eq.(\ref{Heun02}).
It can be shown by a similar argument to that in the proof of Theorem \ref{thm:exprnonlogsol} that Eq.(\ref{eq:intpbcD}) for $p=b, c$ and Eq.(\ref{eq:intp01tD}) for $p=0,1,t$ are not identically zero.
\end{proof}

\section{Elliptical representation of Heun's equation} \label{sec:ell}

Heun's differential equation has an elliptical representation as we mentioned in the introduction.
In this section, we rewrite several results on the integral transformation of Heun's equation to the elliptical representation form.

We review the elliptical representation of Heun's differential equation.
Set
\begin{equation}
H^{(l'_0,l'_1,l'_2,l'_3)}= -\frac{d^2}{dx^2} + \sum_{i=0}^3 l'_i(l'_i+1)\wp (x+\omega_i).
\label{Ino}
\end{equation}
Let $\alpha '_i$ be a number such that $\alpha '_i= -l'_i$ or $\alpha '_i= l'_i+1$ for each $i\in \{ 0,1,2,3\} $.
By setting 
\begin{align}
& z=\frac{\wp (x) -e_1}{e_2-e_1}, \quad  t=\frac{e_3-e_1}{e_2-e_1}, \quad f(x)= v z^{\frac{\alpha '_1}{2}}(z-1)^{\frac{\alpha '_2}{2}}(z-t)^{\frac{\alpha '_3}{2}};
\label{eq:zxtrans}
\end{align}
the equation
\begin{equation}
\frac{d^2v}{dz^2} + \left( \frac{\epsilon _0 '}{z}+\frac{\epsilon _1'}{z-1}+\frac{\epsilon _t '}{z-t}\right) \frac{dv}{dz} +\frac{\alpha ' \beta ' z -q'}{z(z-1)(z-t)} v=0.
\label{Heun01-}
\end{equation}
is transformed to
\begin{equation}
H^{(l'_0,l'_1,l'_2,l'_3)} f(x) = E' f(x),
\label{InoEF0+}
\end{equation}
where
\begin{align}
& \{ \alpha ' , \: \beta ' \} = \{ (\alpha '_1 +\alpha '_2 +\alpha '_3 +\alpha '_0)/2, \: (\alpha '_1 +\alpha '_2 +\alpha '_3 +1 - \alpha '_0)/2 \} , \label{eq:zxtransparam} \\
& \epsilon '_0= \alpha '_1+1/2, \quad \epsilon '_1= \alpha '_2 +1/2, \quad \epsilon '_t =\alpha '_3 +1/2, \nonumber \\
& q'=- E'/(4(e_2-e_1))+ ( -(\alpha '-\beta ')^2 +2(\epsilon '_0 )^2  -4\epsilon '_0 +1) (t+1)/12\nonumber \\
& \quad +
(6\epsilon '_0 \epsilon '_t +2(\epsilon '_t )^2 -4\epsilon '_t  -(\epsilon '_1)^2 +2\epsilon '_1)/12 +(6\epsilon '_0 \epsilon '_1+2 (\epsilon '_1 )^2  -4\epsilon '_1-(\epsilon '_t) ^2+2\epsilon '_t)t/12. \nonumber
\end{align}

We investigate a correspondence of cycles on the Riemann sphere and the torus.
For the transformation $z=(\wp (x) -e_1)/(e_2-e_1)$, the path from $x$ to $-x$ (resp. $-x+2\omega _1$, $-x+2\omega _2$, $-x+2\omega _3$) which traces a semicircle around $\omega _0$ (resp. $\omega _1$, $\omega _2$, $\omega _3$) %anticlockwise 
corresponds to a cycle which surrounds $\infty $ (resp. $0$, $1$, $t$) on the Riemann sphere $\Cplx \cup \{ \infty \}$ whose coordinate is $z$.
Let $\gamma _0$, (resp. $\gamma _1$, $\gamma _t$, $\gamma _{\infty}$) be a cycle on the Riemann sphere which surrounds the point $z=0$ (resp. $z=1$, $z=t$, $z=\infty $) anticlockwise.
We choose the cycles so that $\gamma _0 \gamma _1 \gamma _t \gamma _{\infty} \sim \mbox{id}$.
Then the shift of the period $x \rightarrow x+2\omega _1$  corresponds to a cycle which is homotopic to $\gamma _t \gamma _1 $, $\gamma _1 \gamma _t $, $\gamma _t ^{-1} \gamma _1 ^{-1}$ or $\gamma _1 ^{-1} \gamma _t ^{-1}$ on the punctured Riemann sphere, whose choice is dependent on specifying the point $x$ and the zone where the shift $x \rightarrow x+2\omega _1 $ passes (see Figure 2).
\begin{center}
\begin{picture}(400,60)(0,0)
\put(30,40){\circle*{3}}
\put(90,40){\circle*{3}}
\put(150,40){\circle*{3}}
\put(25,48){$\omega _3$}
\put(75,48){$\omega _1 +\omega _3$}
\put(125,48){$2\omega _1 +\omega _3$}
\put(30,40){\oval(20,20)[b]}
\put(90,40){\oval(20,20)[b]}
\put(40,40){\line(1,0){40}}
\put(100,40){\line(1,0){40}}
\put(60,40){\vector(1,0){1}}
\put(120,40){\vector(1,0){1}}
\put(31,30){\vector(1,0){1}}
\put(91,30){\vector(1,0){1}}
\put(30,10){\circle*{3}}
\put(90,10){\circle*{3}}
\put(150,10){\circle*{3}}
\put(25,0){$0(=\omega _0)$}
\put(85,0){$\omega _1$}
\put(145,0){$2\omega _1$}
\qbezier(17,37)(20,40)(23,43)
\qbezier(23,37)(20,40)(17,43)
\qbezier(137,37)(140,40)(143,43)
\qbezier(143,37)(140,40)(137,43)
\put(180,50){\line(0,1){10}}
\put(180,50){\line(1,0){10}}
\put(183,53){$x$}

\put(200,25){$\Rightarrow$}
\put(180,10){$z=\frac{\wp (x) -e_1}{e_2-e_1}$}

\put(280,20){\circle*{3}}
\put(370,20){\circle*{3}}
\put(278,10){$t $}
\put(368,10){$1$}
\qbezier(295,22)(295,35)(280,35)
\qbezier(265,20)(265,35)(280,35)
\qbezier(295,18)(295,5)(280,5)
\qbezier(265,20)(265,5)(280,5)
\qbezier(355,22)(355,35)(370,35)
\qbezier(385,20)(385,35)(370,35)
\qbezier(355,18)(355,5)(370,5)
\qbezier(385,20)(385,5)(370,5)
\put(295,22){\line(1,0){60}}
\put(295,18){\line(1,0){60}}
\qbezier(292,19)(295,22)(298,24)
\qbezier(298,19)(295,22)(292,24)
\put(265,18){\vector(0,-1){1}}
\put(385,22){\vector(0,1){1}}
\put(320,18){\vector(1,0){1}}
\put(330,22){\vector(-1,0){1}}
\put(290,35){$\gamma _t$}
\put(347,35){$\gamma _1$}
\put(390,45){\line(0,1){10}}
\put(390,45){\line(1,0){10}}
\put(393,48){$z$}
\end{picture}
Figure 2. Correspondence of cycles.
\end{center}
It is also shown that the shift of the period $x \rightarrow x+2\omega _3$  corresponds to the cycle which is homotopic to $\gamma _0 \gamma _1 $, $\gamma _1 \gamma _0 $, $\gamma _0 ^{-1} \gamma _1 ^{-1}$ or $\gamma _1 ^{-1} \gamma _0 ^{-1}$ on the punctured Riemann sphere, whose choice is dependent on specifying the point $x$ and the zone where the shift $x \rightarrow x+2\omega _3 $ passes.

We rewrite the integral transformation of Heun's equation (i.e. Proposition \ref{prop:Heunint}) in elliptical representation form, which was announced in \cite{TakID}.
It is remarkable that the eigenvalue $E$ is unchanged by the integral transformation.
\begin{thm} \label{thm:ellipinttras0}
Let $\sigma (x)$ be the Weierstrass sigma function, $\sigma _i (x)$ $(i=1,2,3)$ be the Weierstrass co-sigma function which has a zero at $x=\omega _i$, and $I_i$ $(i=0,1,2,3)$ be the cycle on the complex plane with the variable $\xi $ such that points $\xi =x$ and $\xi =-x +2\omega _i$ are contained and the half-periods $\Zint \omega _1 +\Zint \omega _3$ are not contained inside the cycle.
Let $\alpha '_i$ be a number such that $\alpha' _i= -l'_i$ or $\alpha '_i= l'_i+1$ for each $i\in \{ 0,1,2,3\} $.
Set $d=-\sum_{i=0}^3 \alpha '_i /2$ and $\eta =d+2$.
If $\tilde{f}(x)$ satisfies 
\begin{equation}
H^{(l'_0,l'_1,l'_2,l'_3)} \tilde{f}(x) =E \tilde{f}(x), \label{InoEF00}
\end{equation} 
then the functions 
\begin{align}
& f(x)=\sigma (x) ^{\alpha '_0 +d+1 } \sigma _1 (x) ^{\alpha '_1 +d+1 } \sigma _2 (x) ^{\alpha '_2 +d+1} \sigma _3 (x) ^{\alpha '_3 +d+1} \cdot \label{eq:Hintell2} \\
& \quad \quad \int _{I_i } \tilde{f} (\xi ) \sigma (\xi ) ^{1 -\alpha '_0}  \sigma _1 (\xi ) ^{1 -\alpha '_1} \sigma _2 (\xi ) ^{1 -\alpha '_2} \sigma _3 (\xi ) ^{1 -\alpha' _3} (\sigma (x+\xi ) \sigma (x-\xi ) )^{-\eta } d\xi \nonumber 
\end{align}
$(i \in \{ 0,1,2,3 \})$ satisfy
\begin{equation}
H^{(\alpha '_0 +d ,\alpha '_1 +d ,\alpha '_2 +d ,\alpha '_3 +d )}  f(x) =E f(x). 
\label{eq:HalfEf}
\end{equation}
\end{thm}
\begin{proof}
Let $\tilde{f}(x)$ be a solution of $H^{(l'_0,l'_1,l'_2,l'_3)} \tilde{f}(x) =E' \tilde{f}(x) $.
By the transformation given by Eq.(\ref{eq:zxtrans}), the function $ \underline{f}(w) = \tilde{f}(\tilde{\wp}^{-1}(w)) w^{\frac{-\alpha '_1}{2}}(w-1)^{\frac{-\alpha '_2}{2}}(w-t)^{\frac{-\alpha '_3}{2}}$ is a solution of Eq.(\ref{Heun01-}) where $\tilde{\wp}^{-1}(w) $ is the inverse function of $w= \tilde{\wp }(\xi ) = (\wp (\xi ) -e_1)/(e_2-e_1 ) $, the parameters are given by Eq.(\ref{eq:zxtransparam}) and we choose $\beta '= (\alpha '_1 +\alpha '_2 +\alpha '_3 +\alpha '_0)/2=-d $.
Next we apply Proposition \ref{prop:Heunint} with the parameter $\eta = 2-\beta '$.
Then the functions $\int _{[\gamma _z ,\gamma _p]}  \underline{f}(w) (z-w)^{-\eta } dw  $ $(p=0,1,t,\infty )$ are solutions of Eq.(\ref{Heun02}) and we have $\epsilon _0  = \epsilon _0 '-\eta '+1 = \alpha '_1+ d+ 3/2$, $\epsilon _1 = \alpha '_2+ d+ 3/2$, $\epsilon _t  = \alpha '_3+ d+ 3/2$ and $ \{ \alpha , \beta \} = \{ 2-\beta ' , -\alpha '+\beta ' +1 \} =  \{ 2+d , \alpha _0 '+1/2 \}$.
The value $q$ is expressed in term of $E'$ and other parameters.
We set $\alpha _i= \alpha '_i+ d +1 (\in \{ -(\alpha '_i+ d ),  \alpha '_i+ d +1 \} ) $ $(i=0,1,2,3)$ and transform to the elliptical form by Eqs.(\ref{eq:zxtrans}), (\ref{eq:zxtransparam}) where the prime $(')$ is omitted.
It is shown by a direct calculation that the value $E$ coincides with the original value $E'$, and the functions
\begin{align}
& f(x)= z^{\frac{\alpha '_1+ d +1}{2}}(z-1)^{\frac{\alpha '_2+ d +1}{2}}(z-t)^{\frac{\alpha '_3+ d +1}{2}} \cdot \label{eq:Hintell0} \\
& \qquad \int _{[\gamma _z ,\gamma _p]} \tilde{f}(\tilde{\wp}^{-1}(w)) w^{\frac{-\alpha '_1}{2}}(w-1)^{\frac{-\alpha '_2}{2}}(w-t)^{\frac{-\alpha '_3}{2}}  (z-w)^{-\eta } dw \nonumber
\end{align}
are solutions of $H^{(\alpha '_0 +d ,\alpha '_1 +d ,\alpha '_2 +d ,\alpha '_3 +d )}  f(x) =E' f(x)$ for $p \in \{ 0,1,t,\infty \}$, where $z=(\wp (x) -e_1)/(e_2-e_1)$.
For the transformation $w=(\wp (\xi ) -e_1)/(e_2-e_1)$, the cycles $[\gamma _z ,\gamma _{\infty}]$, $[\gamma _z ,\gamma _{0}]$, $[\gamma _z ,\gamma _{1}]$, $[\gamma _z ,\gamma _{t}]$ correspond to the cycles $I_0$, $I_1$, $I_2$, $I_3$.
By changing the variable as $w=(\wp (\xi ) -e_1)/(e_2-e_1)$ in Eq.(\ref{eq:Hintell0}) and applying the relations $\sqrt{\wp (\xi ) -e_i} =\sigma _i(\xi )/\sigma (\xi )$ $(i=1,2,3)$, $\wp (x)- \wp (\xi )= -\sigma (x+\xi ) \sigma (x-\xi ) / (\sigma (x) \sigma (\xi ))^2$ and $\wp'(\xi )=-2\sigma _1 (\xi ) \sigma _2 (\xi ) \sigma _3 (\xi ) /\sigma (\xi ) ^3$,
we obtain the proposition.
\end{proof}
\begin{prop} \label{prop:ellipinttras}
Set 
\begin{align}
& \alpha _0 \in \{ -l_0, l_0 +1 \} , \; l'_0=\frac{-\alpha _0-l_1-l_2-l_3}{2}-1, \; l'_1=\frac{\alpha _0+l_1-l_2-l_3}{2}-1,  \label{eq:l'alpha} \\
& l'_2=\frac{\alpha _0 -l_1 +l_2-l_3}{2}-1, \; l'_3=\frac{\alpha _0 -l_1-l_2+l_3}{2}-1, \; \eta =\frac{\alpha _0 -l_1-l_2-l_3}{2} . \nonumber 
\end{align}
If $\tilde{f}(x)$ satisfies $H^{(l'_0,l'_1,l'_2,l'_3)} \tilde{f}(x) =E \tilde{f}(x)$, then the functions
\begin{align}
& f(x)=\sigma (x) ^{\alpha _0} \sigma _1 (x) ^{-l _1} \sigma _2 (x) ^{-l _2} \sigma _3 (x) ^{-l _3} \cdot \label{eq:Hintell} \\
& \; \int _{I_i } \tilde{f}(\xi ) \sigma (\xi ) ^{l' _0 +1}  \sigma _1 (\xi ) ^{l' _1 +1} \sigma _2 (\xi ) ^{l' _2 +1} \sigma _3 (\xi ) ^{l' _3 +1} (\sigma (x+\xi ) \sigma (x-\xi ) )^{-\eta } d\xi \nonumber 
\end{align}
($i \in \{ 0,1,2,3 \}$) satisfy 
\begin{equation}
H^{(l_0,l_1,l_2,l_3)} f(x) =E f(x) .
\label{InoEF}
\end{equation}
\end{prop}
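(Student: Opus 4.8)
The plan is to obtain Proposition \ref{prop:ellipinttras} as the specialization of Proposition \ref{prop:ellipinttras0} in which the free sign choices of the latter are fixed by $\alpha'_i = -l'_i$ for every $i \in \{0,1,2,3\}$, with $l'_0,l'_1,l'_2,l'_3$ the quantities defined in Eq.(\ref{eq:l'alpha}). Since Proposition \ref{prop:ellipinttras0} is valid for each admissible choice $\alpha'_i \in \{-l'_i,\, l'_i+1\}$, this uniform choice is legitimate, and the argument reduces to checking that, under it, each ingredient of Proposition \ref{prop:ellipinttras0} turns into the corresponding ingredient of Proposition \ref{prop:ellipinttras}. The cycles $I_i$ and the eigenvalue $E$ are literally the same objects in both statements, so nothing needs to be verified there.

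First I would compute $d = -\sum_{i=0}^3 \alpha'_i/2 = \frac{1}{2}\sum_{i=0}^3 l'_i$. A direct computation from the four formulas in Eq.(\ref{eq:l'alpha}) gives $\sum_i l'_i = \alpha_0 - l_1 - l_2 - l_3 - 4$, hence $d = (\alpha_0 - l_1 - l_2 - l_3)/2 - 2$ and $\eta = d + 2 = (\alpha_0 - l_1 - l_2 - l_3)/2$, which is exactly the value of $\eta$ in Eq.(\ref{eq:l'alpha}). Next I would match the three parameter-dependent parts of the integral formula Eq.(\ref{eq:Hintell2}) against Eq.(\ref{eq:Hintell}): the integrand exponents $1-\alpha'_i = l'_i + 1$ coincide; the kernel exponent $-\eta$ agrees by the previous computation; and the prefactor exponents $\alpha'_i + d + 1$ become $\alpha_0$ when $i=0$ and $-l_i$ when $i=1,2,3$ after substituting the explicit $l'_i$, reproducing the prefactor $\sigma(x)^{\alpha_0}\sigma_1(x)^{-l_1}\sigma_2(x)^{-l_2}\sigma_3(x)^{-l_3}$.

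Finally I would identify the output operator. Proposition \ref{prop:ellipinttras0} produces a solution of $H^{(\alpha'_0+d,\alpha'_1+d,\alpha'_2+d,\alpha'_3+d)}f = Ef$, and substituting the chosen $\alpha'_i$ yields the index tuple $(\alpha_0-1,\,-l_1-1,\,-l_2-1,\,-l_3-1)$. The one step that is more than bookkeeping is to invoke the invariance of the potential term $l(l+1)\wp(x+\omega_i)$ under $l \mapsto -l-1$: because $(-l_i-1)(-l_i) = l_i(l_i+1)$ for $i=1,2,3$ and $(\alpha_0-1)\alpha_0 = l_0(l_0+1)$ for either admissible value $\alpha_0 \in \{-l_0,\, l_0+1\}$, the operator $H^{(\alpha_0-1,-l_1-1,-l_2-1,-l_3-1)}$ equals $H^{(l_0,l_1,l_2,l_3)}$. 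Since the input operator $H^{(l'_0,l'_1,l'_2,l'_3)}$ and $E$ are carried over unchanged, Eq.(\ref{InoEF}) follows. I do not expect a genuine obstacle, as the statement is a reparametrization of the preceding proposition; the only points needing care are the sign bookkeeping in the exponents and the use of the $l \leftrightarrow -l-1$ symmetry when matching the target Hamiltonian.
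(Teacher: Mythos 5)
Your proposal is correct and follows essentially the same route as the paper: the paper's proof applies Proposition \ref{prop:ellipinttras0} with $\alpha'_0=1+(\alpha_0+l_1+l_2+l_3)/2$, etc., which is precisely the uniform choice $\alpha'_i=-l'_i$ you make, and then invokes $H^{(-l_0-1,-l_1-1,-l_2-1,-l_3-1)}=H^{(l_0,l_1,l_2,l_3)}$ exactly as you do. Your bookkeeping of $d$, $\eta$, and the exponents all checks out against Eqs.(\ref{eq:l'alpha}) and (\ref{eq:Hintell2}).
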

\begin{proof}
We obtain the proposition by applying Theorem \ref{thm:ellipinttras0} for $\alpha '_0= 1+ (\alpha _0+l_1+l_2+l_3)/2$,  $\alpha '_1= 1+(-\alpha _0-l_1+l_2+l_3)/2$, $\alpha '_2= 1+(-\alpha _0 +l_1 -l_2+l_3)/2$, $\alpha '_3= 1+(-\alpha _0 +l_1+l_2-l_3)/2$.
Note that $H^{(-l_0-1, -l_1-1, -l_2-1, -l_3-1 )}=H^{(l_0, l_1, l_2, l_3 )}$.
\end{proof}

We review an aspect of the monodromy of a differential equation with periodic potential. 
Let $q(x)$ be a periodic function with a period $T$ and $\{ f_ 1(x)$, $f_2 (x) \}$ be a basis of solutions of the differential equation
\begin{equation} 
\left( -\frac{d^2}{dx^2} +q(x)  \right) f(x)= Ef(x).
\end{equation}
Then $ f_ 1(x+T)$ and $f_2 (x+T)$ are also solutions.
Let $M_T$ be a monodromy matrix for the shift $x \rightarrow x+T$ with respect to the basis $\{ f_ 1(x)$, $f_2 (x) \}$, i.e.
\begin{equation}
(f_ 1(x+T), f_2 (x+T)) = (f_ 1(x), f_2 (x))M_T= (f_ 1(x), f_2 (x))
\left(
\begin{array}{ll}
m_{11} & m_{12} \\
m_{21} & m_{22} 
\end{array}
\right) .
\end{equation}
Then we have $\det M_T= 1$.
\begin{proof}
Since $f'' _i(x) = (q(x) -E)f_i(x)$ $(i=1,2)$, we have $(f'_1(x)f_2(x)-f_1(x)f'_2(x))'=0$. Hence $f'_1(x)f_2(x)-f_1(x)f'_2(x) =C$ for some constant $C\neq 0$ which follows from the linear independence of $f_ 1(x), f_2 (x)$.
We have 
\begin{align}
& C= f'_1(x+T)f_2(x+T)-f_1(x+T)f'_2(x+T) \\
& =(m_{11} m_{22} -m_{12} m_{21})(f'_1(x)f_2(x)-f_1(x)f'_2(x)) =(m_{11} m_{22} -m_{12} m_{21}) C. \nonumber
\end{align}
Hence $\det M_T= 1$.
\end{proof}
Note that this situation is applicable to the elliptical representation of Heun's equation by setting $T= 2\omega _1$ or $2\omega _3 $ (or any period of the elliptic function $\wp (x)$).
If $\mbox {tr}M_T >2 $ or $\mbox {tr}M_T <-2 $ (resp. $-2<\mbox {tr}M_T <2 $), then there exists a basis of solutions $f_+(x) , f_-(x) $ such that $f _{\pm }(x+T) =e^{\pm \nu} f _{\pm }(x)$ (resp. $f _{\pm }(x+T) =e^{\pm \sqrt{-1} \nu} f _{\pm }(x))$ for some $\nu \in \Rea$ such that $e^{2 \nu} - (\mbox {tr}M_T )e^{ \nu} +1=0$ (resp. $e^{2 \sqrt{-1} \nu} - (\mbox {tr}M_T) e^{ \sqrt{-1} \nu} +1=0$).
If $\mbox {tr}M_T =2 $ (resp. $\mbox {tr}M_T =-2 $), then there exists a non-zero periodic (anti-periodic) solution, i.e. a solution $f(x)$ such that $f(x+T)=f(x)$ (resp. $f(x+T)=-f(x)$).
It does not simply follow from $\mbox {tr}M_T =2 $ (resp. $\mbox {tr}M_T =-2 $) that every solution is periodic (resp. anti-periodic).
Whether this is the case is determined by the Jordan normal form of $M_T$.

\begin{thm}  \label{thm:pp0}
Let $k \in \{1,3\}$ and $M_{2\omega _k} ^{(l_0,l_1,l_2,l_3)}(E)$ be the monodromy matrix by the shift of the period $x \rightarrow x+2\omega _k$ with respect to a certain basis of solutions to $H^{(l_0,l_1,l_2,l_3)}f(x)= E f(x)$.
Let $\alpha '_i$ be a number such that $\alpha '_i= -l'_i$ or $\alpha '_i= l'_i+1$ for each $i\in \{ 0,1,2,3\} $ and set $d=-\sum_{i=0}^3 \alpha '_i /2$.
Then 
\begin{equation}
\mbox{\rm{tr}}M_{2\omega _k} ^{(l'_0,l'_1,l'_2,l'_3)} (E)= \mbox{\rm{tr}}M _{2\omega _k} ^{(\alpha '_0 +d ,\alpha '_1 +d ,\alpha '_2 +d ,\alpha '_3 +d )}(E).
\label{eq:trM}
\end{equation}
\end{thm}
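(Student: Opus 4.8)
The plan is to transfer the problem to Heun's equation, where Theorem \ref{thm:monodm} already controls traces of products of monodromy matrices, and then to account for the scalar distortion produced by the gauge factor relating the elliptical and algebraic pictures. Under the coordinate change $z=(\wp(x)-e_1)/(e_2-e_1)$ of Eq.(\ref{eq:zxtrans}), the operator $H^{(l'_0,l'_1,l'_2,l'_3)}$ at energy $E$ is carried to Heun's equation (\ref{Heun01}) by the gauge factor $P'(x)=z^{\alpha'_1/2}(z-1)^{\alpha'_2/2}(z-t)^{\alpha'_3/2}$, and $H^{(\alpha'_0+d,\alpha'_1+d,\alpha'_2+d,\alpha'_3+d)}$ is carried to Heun's equation (\ref{Heun02}) by $P(x)=z^{(\alpha'_1+d+1)/2}(z-1)^{(\alpha'_2+d+1)/2}(z-t)^{(\alpha'_3+d+1)/2}$; by the computation in the proof of Proposition \ref{prop:ellipinttras0} these two Heun equations are exactly the source and target of the transformation of Proposition \ref{prop:Heunint} with $\eta=d+2$. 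By the correspondence of cycles in Figure 2, the shift $x\to x+2\omega_1$ (resp. $x\to x+2\omega_3$) corresponds, up to conjugation and the choice of zone, to the cycle $\gamma_t\gamma_1$ (resp. $\gamma_0\gamma_1$); I write $\{a,b\}=\{t,1\}$ (resp. $\{0,1\}$) and let $M'^{(p)}$, $M^{(p)}$ denote the monodromy matrices of (\ref{Heun01}), (\ref{Heun02}) along $\gamma_p$.

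First I would check that each gauge factor only rescales by a constant under the period shift. Expressing $P'$ and $P$ through $\sqrt{\wp(x)-e_i}=\sigma_i(x)/\sigma(x)$, the exponents of $\sigma_1,\sigma_2,\sigma_3$ and of $\sigma$ sum to zero in both cases, so the $x$-dependent quasi-period exponentials $e^{2\eta_k(x+\omega_k)}$ cancel and $P'(x+2\omega_k)=\chi'P'(x)$, $P(x+2\omega_k)=\chi P(x)$ for constants $\chi',\chi$. Hence, in the induced bases, $M^{(l'_0,l'_1,l'_2,l'_3)}_{2\omega_k}=\chi'M'^{(a)}M'^{(b)}$ and $M^{(\alpha'_0+d,\dots)}_{2\omega_k}=\chi M^{(a)}M^{(b)}$, so that $\mbox{tr}\,M^{(l'_0,\dots)}_{2\omega_k}=\chi'\,\mbox{tr}(M'^{(a)}M'^{(b)})$ and $\mbox{tr}\,M^{(\alpha'_0+d,\dots)}_{2\omega_k}=\chi\,\mbox{tr}(M^{(a)}M^{(b)})$ (a different zone replaces the cycle by a conjugate or by its inverse and simultaneously alters the constant, leaving these trace identities intact). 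Moreover $P/P'=[z(z-1)(z-t)]^{(d+1)/2}$ is a constant multiple of $(\wp'(x))^{d+1}$, and since $\wp'$ is elliptic its value returns after the shift, so $\chi/\chi'=e^{2\pi\sqrt{-1}(d+1)n}$, where $n\in\Zint$ is the winding number of $\wp'$ about $0$ along the period path and is independent of $\alpha'_0,\dots,\alpha'_3$.

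The two scalars are then pinned down by a determinant count. Because $H^{(l'_0,\dots)}$ and $H^{(\alpha'_0+d,\dots)}$ are Schr\"odinger operators with doubly periodic potential, the computation of $\det M_T$ preceding the theorem gives $\det M_{2\omega_k}=1$ in both cases, whence $(\chi')^2\det(M'^{(a)}M'^{(b)})=1$ and $\chi^2\det(M^{(a)}M^{(b)})=1$. The determinant relation in the proof of Theorem \ref{thm:monodm} (with $\kappa=-\eta$) supplies $\det(M^{(a)}M^{(b)})=e^{-4\pi\sqrt{-1}\eta}\det(M'^{(a)}M'^{(b)})$, so $\chi^2/(\chi')^2=e^{4\pi\sqrt{-1}\eta}$. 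Comparing this with $\chi/\chi'=e^{2\pi\sqrt{-1}(d+1)n}$ and using $\eta=d+2$ forces $e^{4\pi\sqrt{-1}(d+1)n}=e^{4\pi\sqrt{-1}(d+1)}$ identically in $d$; since $n$ is a fixed integer, this gives $n=1$ and therefore $\chi/\chi'=e^{2\pi\sqrt{-1}(d+1)}=e^{2\pi\sqrt{-1}\eta}$, with no residual sign ambiguity.

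Finally I would combine this with the trace part of Theorem \ref{thm:monodm}, namely $\mbox{tr}(M^{(a)}M^{(b)})=e^{-2\pi\sqrt{-1}\eta}\mbox{tr}(M'^{(a)}M'^{(b)})$, to obtain
\[
\mbox{tr}\,M^{(\alpha'_0+d,\dots)}_{2\omega_k}=\chi\,e^{-2\pi\sqrt{-1}\eta}\mbox{tr}(M'^{(a)}M'^{(b)})=\chi'\,\mbox{tr}(M'^{(a)}M'^{(b)})=\mbox{tr}\,M^{(l'_0,\dots)}_{2\omega_k},
\]
which is Eq.(\ref{eq:trM}) for $k=1$; the case $k=3$ is identical with $\gamma_0\gamma_1$ replacing $\gamma_t\gamma_1$. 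The step I expect to be most delicate is the gauge-factor bookkeeping of the second paragraph --- the exact cancellation of the $\sigma$ quasi-periods, the zone-independence of the resulting trace, and the fact that $\chi/\chi'$ is a genuine parameter-independent integer power of the $\wp'$-monodromy --- since it is precisely this input that, together with the determinant identity, removes the sign ambiguity inherent in passing between the unimodular elliptical monodromy $\det M_{2\omega_k}=1$ and the non-unimodular Heun monodromies.
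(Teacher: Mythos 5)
Your argument is correct and follows the paper's route: gauge-transform both elliptic equations to the Heun pair (\ref{Heun01})--(\ref{Heun02}), identify $x\to x+2\omega_1$ with $\gamma_t\gamma_1$ (and $x\to x+2\omega_3$ with $\gamma_0\gamma_1$), and let the factor $e^{-2\pi\sqrt{-1}\eta}$ from Theorem \ref{thm:monodm} cancel against the ratio of the gauge multipliers, which equals $e^{2\pi\sqrt{-1}(d+1)}=e^{2\pi\sqrt{-1}\eta}$. The one place you genuinely diverge is in evaluating that ratio: the paper simply continues $(z-1)^{\alpha'_2/2}(z-t)^{\alpha'_3/2}$ (and its counterpart with $\alpha'_i$ replaced by $\alpha'_i+d+1$) once around $\gamma_1$ and $\gamma_t$, obtaining the multipliers $e^{\pi\sqrt{-1}(\alpha'_2+\alpha'_3)}$ and $e^{\pi\sqrt{-1}(\alpha'_2+\alpha'_3+2d+2)}$ outright, with no square root taken and hence no sign to resolve. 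Your detour through $\det M_{2\omega_k}=1$ and the determinant identity in the proof of Theorem \ref{thm:monodm} only pins down $(\chi/\chi')^2$, and your repair --- writing $P/P'$ as a constant times $(\wp')^{d+1}$ and varying $d$ against the fixed winding number $n$ to force $n=1$ --- is sound (since $n$ depends only on the lattice and the path while $d$ may be irrational), but it is extra labor to recover a constant that the local exponents of the gauge factors at $z=1$ and $z=t$ give for free.
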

\begin{proof}
We prove the case $k=1$ such that the shift of the period $x \rightarrow x+2\omega _1$  corresponds to a cycle which is homotopic to $\gamma _t \gamma _1 $.
Let $\tilde{f}(x)$ (resp. $f(x)$) be a solution of Eq.(\ref{InoEF00}) (resp. Eq.(\ref{eq:HalfEf})).
Then the function 
$\tilde{f}(\tilde{\wp}^{-1}(w)) w^{-\alpha '_1/2}(w-1)^{-\alpha '_2/2}(w-t)^{-\alpha '_3/2}$ (resp. $f(\tilde{\wp}^{-1}(z)) z^{-(\alpha ' 1+d+1)/2}(z-1)^{-(\alpha ' 2+d+1)/2}(z-t)^{-(\alpha ' 3+d+1)/2}$) is a solution of Eq.(\ref{Heun01}) (resp. Eq.(\ref{Heun02})).
Thus we have $\exp (-2\pi \sqrt{-1} (\alpha ' _2+\alpha ' _3 )/2 ) \mbox{\rm{tr}} M_{2\omega _1} ^{(l'_0,l'_1,l'_2,l'_3)} (E) =  \mbox{\rm{tr}} M'^{(t)} M'^{(1)}$ and $\exp (-2\pi \sqrt{-1} (\alpha ' _2+\alpha ' _3 +2d+2)/2 ) \mbox{\rm{tr}} M _{2\omega _1} ^{(\alpha '_0 +d ,\alpha '_1 +d ,\alpha '_2 +d ,\alpha '_3 +d )} (E) = \mbox{\rm{tr}} M^{(t)} M^{(1)}$.
It follows from Theorem \ref{thm:monodm} that $\mbox{\rm tr} (M^{(t)} M^{(1)}) =\exp (- 2\pi \sqrt{-1} \eta ) \mbox{\rm tr} (M'^{(t)}M'^{(1)})$.
Combining these relations with the relation $\eta =d+2$ in the proof of Theorem \ref{thm:ellipinttras0}, we obtain Eq.(\ref{eq:trM}).
The other cases can be proved similarly.
\end{proof}
\begin{cor} \label{thmcor:pp}
Assume that the parameters $l_0$, $l_1$, $l_2$, $l_3$, $l'_0$, $l'_1$, $l'_2$, $l'_3$ satisfy Eq.(\ref{eq:l'alpha}).
Let $k \in \{1,3\}$.
Then 
\begin{equation}
\mbox{\rm{tr}}M_{2\omega _k} ^{(l'_0,l'_1,l'_2,l'_3)} (E)= \mbox{\rm{tr}}M _{2\omega _k} ^{(l_0 ,l_1 ,l_2  ,l_3  )}(E).
\label{eq:trM0}
\end{equation}
\end{cor}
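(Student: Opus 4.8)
The plan is to recognize that Corollary \ref{thmcor:pp} is nothing but Theorem \ref{thm:pp0} re-expressed in the parametrization of Proposition \ref{prop:ellipinttras}; accordingly the whole task reduces to matching parameters. First I would invoke Theorem \ref{thm:pp0} with the specific choice $\alpha'_i=-l'_i$ for every $i\in\{0,1,2,3\}$, where $l'_0,\dots,l'_3$ are given by Eq.(\ref{eq:l'alpha}). A short check confirms this is a legitimate choice: since $-l'_0=\frac{\alpha_0+l_1+l_2+l_3}{2}+1$ and similarly for the other indices, the numbers $\alpha'_i=-l'_i$ coincide with the values $\alpha'_0=1+(\alpha_0+l_1+l_2+l_3)/2$, $\alpha'_1=1+(-\alpha_0-l_1+l_2+l_3)/2$, $\alpha'_2=1+(-\alpha_0+l_1-l_2+l_3)/2$, $\alpha'_3=1+(-\alpha_0+l_1+l_2-l_3)/2$ already used to pass from Proposition \ref{prop:ellipinttras0} to Proposition \ref{prop:ellipinttras}.

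Next I would compute $d=-\sum_{i=0}^3\alpha'_i/2$ for this choice, obtaining $d=-2+(\alpha_0-l_1-l_2-l_3)/2$ (equivalently $\eta=d+2$ agrees with the $\eta$ of Eq.(\ref{eq:l'alpha})), and then evaluate the shifted indices $\alpha'_i+d$ appearing on the right-hand side of Theorem \ref{thm:pp0}. A direct calculation gives $\alpha'_0+d=\alpha_0-1$, $\alpha'_1+d=-l_1-1$, $\alpha'_2+d=-l_2-1$, $\alpha'_3+d=-l_3-1$. Thus Theorem \ref{thm:pp0} yields $\mbox{tr}M_{2\omega_k}^{(l'_0,l'_1,l'_2,l'_3)}(E)=\mbox{tr}M_{2\omega_k}^{(\alpha_0-1,\,-l_1-1,\,-l_2-1,\,-l_3-1)}(E)$.

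Finally I would remove the remaining discrepancy using the fact that the operator $H^{(l_0,l_1,l_2,l_3)}$ depends on each $l_i$ only through the coefficient $l_i(l_i+1)$, which is invariant under the reflection $l_i\mapsto -l_i-1$. Hence $H^{(\alpha_0-1,\,-l_1-1,\,-l_2-1,\,-l_3-1)}$ is literally the same operator as $H^{(l_0,l_1,l_2,l_3)}$ in both admissible cases: if $\alpha_0=l_0+1$ then the first index is $l_0$, while if $\alpha_0=-l_0$ then it is $-l_0-1$, and the reflection identifies the index vector with $(l_0,l_1,l_2,l_3)$ either way. Since identical operators have identical monodromy matrices, their traces agree, which gives Eq.(\ref{eq:trM0}). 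The cases $k=1$ and $k=3$ are handled uniformly, exactly as in the proof of Theorem \ref{thm:pp0}.

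The computation is entirely routine; the only point demanding care is the final step, where one must track which of the two values of $\alpha_0$ is in play and invoke the reflection invariance of $l_i(l_i+1)$ to absorb the sign, rather than expecting the shifted indices to equal $(l_0,l_1,l_2,l_3)$ on the nose.
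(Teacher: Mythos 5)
Your proposal is correct and is essentially the argument the paper intends: Corollary \ref{thmcor:pp} is obtained by specializing Theorem \ref{thm:pp0} to $\alpha'_i=-l'_i$ (the same choice made in the proof of Proposition \ref{prop:ellipinttras}), computing $\alpha'_i+d=(\alpha_0-1,-l_1-1,-l_2-1,-l_3-1)$, and using the invariance $H^{(-l_i-1)}=H^{(l_i)}$ to identify the resulting operator with $H^{(l_0,l_1,l_2,l_3)}$ for either admissible value of $\alpha_0$. Your parameter checks ($\eta=d+2$ and the shifted indices) all agree with the paper.
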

\begin{cor}  \label{cor:pp0}
We keep the notations in Theorem \ref{thm:pp0}.
Let $k \in \{1,3\}$.
If there exists a non-zero solution $\tilde{f}(x,E)$ of $(H^{(l'_0,l'_1,l'_2,l'_3)} -E) \tilde{f}(x,E)=0$ such that $\tilde{f}(x+2\omega _k,E) =C_k(E) \tilde{f}(x,E) $, then there exists a non-zero solution $f(x,E)$ of $(H^{(\alpha '_0 +d ,\alpha '_1 +d ,\alpha '_2 +d ,\alpha '_3 +d )}-E) f(x,E)=0$ such that $f(x+2\omega _k,E) =C_k(E) f(x,E)$.
In other word, periodicity is preserved by the integral transformation.
\end{cor}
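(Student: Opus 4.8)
The plan is to deduce the corollary purely from the trace identity of Theorem \ref{thm:pp0} together with the fact that the relevant monodromy matrices have determinant one, reducing everything to an elementary statement about $2 \times 2$ matrices. First I would reinterpret the hypothesis: expanding the given solution $\tilde{f}$ in the basis $\{f_1,f_2\}$ of solutions of $H^{(l'_0,l'_1,l'_2,l'_3)}\tilde{f}=E\tilde{f}$ that was used to define $M_{2\omega_k}^{(l'_0,l'_1,l'_2,l'_3)}(E)$, the Bloch relation $\tilde{f}(x+2\omega_k,E)=C_k(E)\tilde{f}(x,E)$ is equivalent to the coefficient vector of $\tilde{f}$ being an eigenvector of $M_{2\omega_k}^{(l'_0,l'_1,l'_2,l'_3)}(E)$ with eigenvalue $C_k(E)$. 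Thus the hypothesis says precisely that $C_k(E)$ is an eigenvalue of the source monodromy matrix, and since $\tilde{f}\not\equiv 0$ we have $C_k(E)\neq 0$.

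Next I would use the determinant computation carried out just before Theorem \ref{thm:pp0}, which applies because the potential of $H^{(l'_0,l'_1,l'_2,l'_3)}$ is $2\omega_k$-periodic, to conclude $\det M_{2\omega_k}^{(l'_0,l'_1,l'_2,l'_3)}(E)=1$. Hence the two eigenvalues multiply to $1$, so they are $C_k(E)$ and $C_k(E)^{-1}$, giving $\mbox{tr}\,M_{2\omega_k}^{(l'_0,l'_1,l'_2,l'_3)}(E)=C_k(E)+C_k(E)^{-1}$.

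Now I would invoke Theorem \ref{thm:pp0}, which gives $\mbox{tr}\,M_{2\omega_k}^{(\alpha'_0+d,\alpha'_1+d,\alpha'_2+d,\alpha'_3+d)}(E)=\mbox{tr}\,M_{2\omega_k}^{(l'_0,l'_1,l'_2,l'_3)}(E)=C_k(E)+C_k(E)^{-1}$. Since the target operator is also $2\omega_k$-periodic, the same determinant argument yields $\det M_{2\omega_k}^{(\alpha'_0+d,\alpha'_1+d,\alpha'_2+d,\alpha'_3+d)}(E)=1$, so the characteristic polynomial of the target monodromy matrix is $x^2-(C_k(E)+C_k(E)^{-1})x+1=(x-C_k(E))(x-C_k(E)^{-1})$. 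Therefore $C_k(E)$ is an eigenvalue of $M_{2\omega_k}^{(\alpha'_0+d,\alpha'_1+d,\alpha'_2+d,\alpha'_3+d)}(E)$, and taking any associated eigenvector and forming the corresponding combination $f$ of basis solutions of $(H^{(\alpha'_0+d,\ldots,\alpha'_3+d)}-E)f=0$ produces a non-zero solution with $f(x+2\omega_k,E)=C_k(E)f(x,E)$, as required.

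The argument is essentially routine once Theorem \ref{thm:pp0} is available, so I do not expect a serious obstacle; the only point requiring a moment's care is the degenerate case $C_k(E)=C_k(E)^{-1}$, i.e. $C_k(E)=\pm 1$, where the target matrix may fail to be diagonalizable. This causes no difficulty for the present existence statement, since an eigenvalue of a matrix always admits at least one eigenvector irrespective of its Jordan structure; the Jordan form is relevant only to whether \emph{every} solution is Bloch, which is not claimed here.
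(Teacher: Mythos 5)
Your proposal is correct and follows essentially the same route as the paper: both arguments combine the trace identity of Theorem \ref{thm:pp0} with the fact that $\det M_{2\omega_k}=1$ for periodic potentials, so that the two monodromy matrices share the characteristic polynomial $x^2-(\mbox{tr}M)x+1$ and hence the eigenvalue $C_k(E)$, whose eigenvector yields the desired Bloch solution. Your explicit remark that an eigenvalue always admits an eigenvector regardless of Jordan structure is a minor (and welcome) clarification the paper leaves implicit.
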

\begin{proof}
Let $t'_k$ (resp. $t_k$) be a solution of the quadratic equation $(t'_k)^2-\mbox{\rm{tr}}M_{2\omega _k} ^{(l'_0,l'_1,l'_2,l'_3)} (E)t'_k +1=0$ (resp. $t_k^2-\mbox{\rm{tr}}M _{2\omega _k} ^{(\alpha '_0 +d ,\alpha '_1 +d ,\alpha '_2 +d ,\alpha '_3 +d )}(E)t_k +1=0$).
Since $\det M_{2\omega _k} ^{(l'_0,l'_1,l'_2,l'_3)} (E) =\det M _{2\omega _k} ^{(\alpha '_0 +d ,\alpha '_1 +d ,\alpha '_2 +d ,\alpha '_3 +d )}(E) =1$, the value $t'_k$ (resp. $t_k$) is an eigenvalue of the monodromy matrix $M_{2\omega _k} ^{(l'_0,l'_1,l'_2,l'_3)} (E)$ (resp. $M _{2\omega _k} ^{(\alpha '_0 +d ,\alpha '_1 +d ,\alpha '_2 +d ,\alpha '_3 +d )}(E)$).
Thus Corollary \ref{cor:pp} follows from $\mbox{\rm{tr}}M_{2\omega _k} ^{(l'_0,l'_1,l'_2,l'_3)} (E)= \mbox{\rm{tr}}M _{2\omega _k} ^{(\alpha '_0 +d ,\alpha '_1 +d ,\alpha '_2 +d ,\alpha '_3 +d )}(E) $.
\end{proof}
\begin{cor} \label{cor:pp}
Assume that the parameters $l_0$, $l_1$, $l_2$, $l_3$, $l'_0$, $l'_1$, $l'_2$, $l'_3$ satisfy Eq.(\ref{eq:l'alpha}).
Let $k \in \{1,3\}$.
If there exists a non-zero solution $\tilde{f}(x,E)$ of  $(H^{(l'_0,l'_1,l'_2,l'_3)} -E) \tilde{f}(x,E)=0$ such that $\tilde{f}(x+2\omega _k,E) =C_k(E) \tilde{f}(x,E) $ such that $\tilde{f}(x+2\omega _k,E) =C_k(E) \tilde{f}(x,E)$, then there exists a non-zero solution $f(x,E)$ of Eq.(\ref{InoEF}) such that $f(x+2\omega _k,E) =C_k(E) f(x,E)$.
\end{cor}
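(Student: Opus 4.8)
The plan is to deduce this corollary directly from Corollary \ref{cor:pp0} by specializing the free parameters $\alpha'_i$ to the choice dictated by Eq.(\ref{eq:l'alpha}), in exactly the same way that Proposition \ref{prop:ellipinttras} was obtained from Proposition \ref{prop:ellipinttras0} and that Corollary \ref{thmcor:pp} was obtained from Theorem \ref{thm:pp0}. Corollary \ref{cor:pp0} already asserts that, for an admissible choice of $\alpha'_i$, any solution $\tilde{f}(x,E)$ of $(H^{(l'_0,l'_1,l'_2,l'_3)}-E)\tilde{f}=0$ with $\tilde{f}(x+2\omega_k,E)=C_k(E)\tilde{f}(x,E)$ gives rise to a solution of $(H^{(\alpha'_0+d,\alpha'_1+d,\alpha'_2+d,\alpha'_3+d)}-E)f=0$ with the same eigenvalue $C_k(E)$. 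It therefore suffices to show that, under Eq.(\ref{eq:l'alpha}), one can pick the $\alpha'_i$ so that $H^{(\alpha'_0+d,\alpha'_1+d,\alpha'_2+d,\alpha'_3+d)}$ coincides with $H^{(l_0,l_1,l_2,l_3)}$, whereupon Eq.(\ref{eq:HalfEf}) becomes Eq.(\ref{InoEF}) and the conclusion of Corollary \ref{cor:pp0} is precisely the present assertion.

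Concretely, I would set $\alpha'_i=-l'_i$ for each $i\in\{0,1,2,3\}$, which is an admissible choice in the sense of Theorem \ref{thm:pp0} since there $\alpha'_i=-l'_i$ or $\alpha'_i=l'_i+1$. Writing $d=-\sum_{i=0}^3\alpha'_i/2$ and inserting the expressions for $l'_0,l'_1,l'_2,l'_3$ from Eq.(\ref{eq:l'alpha}), a short linear computation gives $d=\eta-2$, and then $\alpha'_0+d=\alpha_0-1\in\{-l_0-1,\,l_0\}$ together with $\alpha'_i+d=-l_i-1$ for $i=1,2,3$. The key observation is that the operator in Eq.(\ref{Ino}) depends on each index $a_i$ only through the combination $a_i(a_i+1)$, which is invariant under $a_i\mapsto -a_i-1$; this is the identity $H^{(-l_0-1,-l_1-1,-l_2-1,-l_3-1)}=H^{(l_0,l_1,l_2,l_3)}$ already used in the proof of Proposition \ref{prop:ellipinttras}. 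Consequently $H^{(\alpha'_0+d,\alpha'_1+d,\alpha'_2+d,\alpha'_3+d)}=H^{(l_0,l_1,l_2,l_3)}$, so Corollary \ref{cor:pp0} applies verbatim and yields a non-zero solution $f(x,E)$ of Eq.(\ref{InoEF}) with $f(x+2\omega_k,E)=C_k(E)f(x,E)$.

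There is no genuine obstacle here: all the analytic content is carried by Theorem \ref{thm:pp0} (equivalently by the trace identity $\mbox{tr}\,M_{2\omega_k}^{(l'_0,l'_1,l'_2,l'_3)}(E)=\mbox{tr}\,M_{2\omega_k}^{(l_0,l_1,l_2,l_3)}(E)$ of Corollary \ref{thmcor:pp}), which together with $\det M_{2\omega_k}=1$ forces the two characteristic polynomials, and hence the eigenvalue $C_k(E)$, to coincide. The only step requiring attention is the bookkeeping verification that the admissible choice $\alpha'_i=-l'_i$ indeed reproduces Eq.(\ref{eq:l'alpha}) and yields $\alpha'_i+d\in\{l_i,-l_i-1\}$; this is the same routine computation performed in the proof of Proposition \ref{prop:ellipinttras}, and once it is recorded the corollary follows immediately.
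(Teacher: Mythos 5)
Your proposal is correct and follows the paper's (implicit) route exactly: Corollary \ref{cor:pp} is obtained from Corollary \ref{cor:pp0} by the same specialization $\alpha'_i=-l'_i$, $d=\eta-2$, $\alpha'_0+d=\alpha_0-1\in\{-l_0-1,l_0\}$, $\alpha'_i+d=-l_i-1$ ($i=1,2,3$) that the paper uses to derive Proposition \ref{prop:ellipinttras} from Proposition \ref{prop:ellipinttras0}, together with the invariance $H^{(-l-1)}=H^{(l)}$. The bookkeeping you record is the same computation the paper performs there, so nothing further is needed.
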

If $\omega _1 \in \Rea _{\neq 0}$ and $\omega _3 \in \sqrt{-1}\Rea _{\neq 0}$, then the potential $\sum_{i=0}^3 l_i(l_i+1)\wp (x+\omega _i)$ in Eq.(\ref{InoEF0}) is real-valued for $x \in \Rea$.
From the viewpoint of quantum mechanics, we are interested in finding square-integrable eigenstates in a suitable Hilbert space for the elliptical representation of Heun's equation, and periodicity with respect to the shift $x \rightarrow x+2\omega _1$ is related to square-integrable eigenstates (see \cite{Tak2,Tak3}).
Ruijsenaars \cite{Rui} established that the spectrum of Eq.(\ref{InoEF00}) coincides with that of Eq.(\ref{InoEF}) by investigating a certain Hilbert-Schmidt operator.
Theorem \ref{thm:pp0} can be regarded as a complex-functional version of Ruijsenaars' result.
Khare and Sukhatme \cite{KhS} earlier made a conjecture about correspondences between quasi-solvable solutions of Eq.(\ref{InoEF00}) and those of Eq.(\ref{InoEF}), and Corollary \ref{cor:pp0} gives an approach for a reformulation of their conjecture in terms of monodromy.

For elliptical representations, quasi-solvability is described as follows:
\begin{prop} \label{findim} $($\cite[Proposition 5.1]{Tak2}$)$
Let $\beta '_i$ be a number such that $\beta '_i= -l'_i$ or $\beta '_i= l'_i+1$ for each $i\in \{ 0,1,2,3\} $, and set $\tilde{d}=-\sum_{i=0}^3 \beta '_i /2$.
Suppose that $\tilde{d} \in \Zint_{\geq 0}$, and let $V_{\beta '_0, \beta '_1, \beta '_2, \beta '_3}$ be the $\tilde{d}+1$-th dimensional space spanned by 
\begin{equation}
\left\{ \widehat{\Phi}(\wp (x)) \wp(x)^n\right\} _{n=0, \dots ,\tilde{d}}, \label{eq:Vqsol}
\end{equation} 
where $\widehat{\Phi}(z)=(z-e_1)^{\beta '_1/2}(z-e_2)^{\beta '_2/2}(z-e_3)^{\beta '_3/2}$.
Then the operator $H^{(l'_0,l'_1,l'_2, l'_3)}$ preserves the space $V_{\beta '_0, \beta '_1, \beta '_2, \beta '_3}$.
\end{prop}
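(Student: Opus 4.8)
The plan is to reduce the assertion to a purely algebraic statement about a second-order differential operator with polynomial coefficients acting on the space of polynomials of degree at most $\tilde{d}$. First I would change variables from $x$ to $z=\wp(x)$. Setting $P(z)=(z-e_1)(z-e_2)(z-e_3)$ we have $\wp'(x)^2=4P(z)$ and $\wp''(x)=2P'(z)$, so that $d^2/dx^2=4P(z)\,d^2/dz^2+2P'(z)\,d/dz$. Using the half-period addition formula $\wp(x+\omega_i)=e_i+(e_i-e_j)(e_i-e_k)/(\wp(x)-e_i)$ for $\{i,j,k\}=\{1,2,3\}$ together with $\wp(x+\omega_0)=\wp(x)=z$, the potential $\sum_{i=0}^3 l'_i(l'_i+1)\wp(x+\omega_i)$ becomes an explicit rational function $Q(z)$ whose only poles are simple poles at $z=e_1,e_2,e_3$ and which behaves as $l'_0(l'_0+1)z+\mathrm{const}+O(1/z)$ at infinity. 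Because any $F(x)=\widehat{\Phi}(\wp(x))g(\wp(x))$ is a function of $z$ composed with $\wp$, one gets $H^{(l'_0,l'_1,l'_2,l'_3)}F=\bigl[-4P\Psi''-2P'\Psi'+Q\Psi\bigr]_{z=\wp(x)}$ with $\Psi=\widehat{\Phi}g$, so the problem genuinely lives on the $z$-line.

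Next I would conjugate by $\widehat{\Phi}$. Writing $\rho(z)=\widehat{\Phi}'/\widehat{\Phi}=\sum_{i=1}^3(\beta'_i/2)/(z-e_i)$, the gauge-transformed operator $\mathcal{L}=\widehat{\Phi}^{-1}\circ H^{(l'_0,l'_1,l'_2,l'_3)}\circ\widehat{\Phi}$ expressed in $z$ is
\[
\mathcal{L}g=-4P\,g''-(8P\rho+2P')\,g'+\bigl(-4P(\rho'+\rho^2)-2P'\rho+Q\bigr)g.
\]
The coefficient of $g''$ is the degree-$3$ polynomial $-4P$, and the coefficient of $g'$ equals $-8\sum_i(\beta'_i/2)\prod_{j\neq i}(z-e_j)-2P'$, a polynomial of degree at most $2$. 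Since $H^{(l'_0,l'_1,l'_2,l'_3)}(\widehat{\Phi}\,g)=\widehat{\Phi}\,\mathcal{L}g$, it suffices to show $\mathcal{L}$ maps polynomials of degree $\le\tilde{d}$ into themselves, which reduces to two claims: that the coefficient of $g$ is a polynomial of degree at most $1$, and that the degree-$(\tilde{d}+1)$ contributions cancel.

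I would then carry out these two verifications, using throughout the identity $l'_i(l'_i+1)=\beta'_i(\beta'_i-1)$, valid for both choices $\beta'_i\in\{-l'_i,l'_i+1\}$. For the first claim I would compute the residue of the coefficient of $g$ at $z=e_1$; combining the three terms it equals $(e_1-e_2)(e_1-e_3)$ times $-4\bigl((\beta'_1)^2/4-\beta'_1/2\bigr)-\beta'_1+\beta'_1(\beta'_1-1)$, which is identically zero, and by symmetry the residues at $e_2,e_3$ vanish as well, so the coefficient of $g$ is a polynomial; an expansion at $z=\infty$ then shows it has degree at most $1$. For the second claim, taking $g=z^{\tilde{d}}+\cdots$ and collecting the coefficient of $z^{\tilde{d}+1}$ in $\mathcal{L}g$ from all three terms, I would obtain $-4(\tilde{d}+s)^2-2(\tilde{d}+s)+\beta'_0(\beta'_0-1)$ where $s=(\beta'_1+\beta'_2+\beta'_3)/2$; the hypothesis $\tilde{d}=-\sum_{i=0}^3\beta'_i/2$ gives $\tilde{d}+s=-\beta'_0/2$, and substituting makes the expression vanish identically. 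Hence $\mathcal{L}g$ has degree $\le\tilde{d}$, so $H^{(l'_0,l'_1,l'_2,l'_3)}$ preserves $V_{\beta'_0,\beta'_1,\beta'_2,\beta'_3}$.

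The main obstacle I expect is bookkeeping these two cancellations correctly and uniformly over the choices of sign $\beta'_i\in\{-l'_i,l'_i+1\}$: both the vanishing of the residues at $e_1,e_2,e_3$ and the vanishing of the leading coefficient are precisely the algebraic identities that characterize quasi-exactly-solvable potentials, and they depend on the facts that $l'_i(l'_i+1)$ is a function of $\beta'_i(\beta'_i-1)$ only and that $\tilde{d}$ takes the exact value stipulated in the hypothesis. Getting the residue computation and the leading-order count to close demands care with the double poles of $\rho'+\rho^2$ and with the asymptotics of $Q$, but it requires no new idea beyond the change of variable and gauge conjugation.
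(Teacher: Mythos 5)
Your argument is correct: the change of variable $z=\wp(x)$, the gauge conjugation by $\widehat{\Phi}$, and the two cancellations (vanishing residues at $z=e_i$ via $l'_i(l'_i+1)=\beta'_i(\beta'_i-1)$, and vanishing of the $z^{\tilde d+1}$ coefficient via $\tilde d+s=-\beta'_0/2$) all check out. The paper itself gives no proof, citing \cite[Proposition 5.1]{Tak2}, and your computation is essentially the standard quasi-exact-solvability argument used there, so nothing further is needed.
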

To find eigenvalues of the operator $H^{(l'_0,l'_1,l'_2, l'_3)}$ on the space $V_{\beta '_0, \beta '_1, \beta '_2, \beta '_3}$, we obtain an algebraic equation of order $\tilde{d}+1$ in the variable $E$, which is related to $ P(q')=0$ in Proposition \ref{prop:findim0} for the case $\nu _0 = (\beta '_1 -\alpha '_1)/2$, $\nu _1 = (\beta '_2 -\alpha '_2)/2$, $\nu _t = (\beta '_3 -\alpha '_3)/2$,
where $\alpha '_i \in \{  -l'_i , l'_i+1 \}$ and the transformation between Proposition \ref{prop:findim0} and Proposition \ref{findim} is determined by Eq.(\ref{eq:zxtrans}).
The eigenvector corresponding to the eigenvalue $E$ can be written as a product of $\widehat{\Phi}(\wp (x)) $ and the polynomial in the variable $\wp (x)$ of degree no more than $\tilde{d}$.

Since the functions $\wp (x+2\omega _i)$ $(i=0,1,2,3)$ are even and doubly periodic, the solutions of Eq.(\ref{InoEF}) about $x=\omega _i$ $(i=0,1,2,3)$ can be expanded as
\begin{align}
& f (x)= \left\{ 
\begin{array}{ll}
\displaystyle C ^{\langle i \rangle} \sum _{j=0} ^{\infty } c^{(i)} _j (x-\omega _i)^{-l_i+ 2j}  +D ^{\langle i \rangle} \sum _{j=0} ^{\infty } \tilde{c}^{(i)} _j (x-\omega _i)^{l_i +1 + 2j}, & l_i \not \in 1/2 +\Zint , \\
\displaystyle C ^{\langle i \rangle} \sum _{j=0} ^{\infty } c^{(i)} _j (x-\omega _i)^{|l_i +1/2|+ 1/2+ 2j}  +D ^{\langle i \rangle} \cdot & l_i  \in 1/2 +\Zint , \\
\displaystyle  \quad \left( \sum _{j=0} ^{\infty } \tilde{c}^{(i)} _j (x-\omega _i)^{-|l_i +1/2| + 1/2 +2j} \right. \left. +A^{\langle i \rangle} \sum _{j=0} ^{\infty } c^{(i)} _j (x-\omega _i)^{|l_i +1/2|+1/2+2j} \right) ,  \!  \! \! \! \! \! \! \! \! \! \! \! \! \! \! \! \! \! \! \! \! \! \! \! 
\end{array}
\right.
\label{eq:expell}
\end{align}
where $C ^{\langle i \rangle}$ and $D ^{\langle i \rangle}$ are constants, $c^{(i)} _0= \tilde{c}^{(i)} _0=1$, and $c^{(i)} _j$ and $\tilde{c}^{(i)} _j$  $(j=1,2,\dots )$ are determined recursively.
If $l_i  \in 1/2 + \Zint $ and $A^{\langle i \rangle} \neq 0$ (resp. $A^{\langle i \rangle} = 0)$, then the singularity $x=\omega _i$ is non-apparent (resp. apparent).
Note that if $l_i= -1/2$, then the singularity $x=\omega _i$ is always logarithmic, i.e. $A^{\langle i \rangle} \neq 0$.
By the transformation given by Eq.(\ref{eq:zxtrans}), the condition that $l_0 \in 1/2 + \Zint $ (resp. $l_1 \in 1/2 +\Zint $, $l_2 \in 1/2 +\Zint $, $l_3 \in 1/2 +\Zint $) and the singularity $x=0 $ (resp. $x=\omega _1$, $x=\omega _2$, $x=\omega _3$) is (non-)apparent is equivalent to that $\alpha -\beta \in \Zint$ (resp. $\epsilon _0 \in \Zint$, $\epsilon _1 \in \Zint$, $\epsilon _t \in \Zint$) and the singularity $z=\infty $ (resp. $z=0$, $z=1$, $z=t$) is (non-)apparent.
The condition that the singularity $x=\omega _i$ $(i\in \{0,1,2,3 \})$ is apparent (i.e. $A^{\langle i \rangle} = 0$) for the case $l_i  \in -1/2 + \Zint _{\neq 0}$ is described as follows:
Set $j_0= -|l_i +1/2| +1/2$, $\tilde{c}^{(i)} _0=1$, $f(x)= \sum _{j=0} ^{\infty } \tilde{c}^{(i)} _j (x-\omega _i)^{j_0 +2j}$.
By substituting $f(x)$ into Eq.(\ref{InoEF}) and expanding Eq.(\ref{InoEF}) as a series in $x- \omega _i$, we obtain an equation for $\tilde{c}^{(i)} _0, \tilde{c}^{(i)} _1, \dots \tilde{c}^{(i)} _{j}$ for the coefficients of $(x-\omega _i)^{j_0 +2j-2}$.
We determine $\tilde{c}^{(i)} _{j'}$ $(j'=1,\dots , |l_i +1/2| -1 )$ by solving the equation for $\tilde{c}^{(i)} _0, \tilde{c}^{(i)} _1, \dots \tilde{c}^{(i)} _{j'}$ recursively for each $j'$ and we have $\deg _E \tilde{c}^{(i)} _{j'} =j'$.
For the coefficient of $(x-\omega _i)^{j_0 + |2l_i +1|-2}$, the term concerned with $\tilde{c}^{(i)} _{|l_i +1/2|} $ disappears and we have an algebraic equation of degree $|l_i +1/2|$ with respect to the variable $E$, which we denote by $P^{(i)}(E)=0$, where $P^{(i)}(E)$ is monic.
Then the condition that the singularity $x=\omega _i$ $(i\in \{0,1,2,3 \})$ is apparent is equivalent to the eigenvalue $E$ satisfying $P^{(i)}(E)=0$.
The following proposition can be proved by rewriting Theorem \ref{thm:nonlog-polynHeun} in its elliptical form.
\begin{prop} \label{prop:nonlog-polynell}
Let $\alpha '_i$ be a number such that $\alpha' _i= -l'_i$ or $\alpha '_i= l'_i+1$ for each $i\in \{ 0,1,2,3\} $.
Set $d=-\sum_{i=0}^3 \alpha '_i /2$.\\
(i) If $\alpha '_0 \in 3/2 + \Zint _{\geq 0}$ (resp. $\alpha ' _1 \in 3/2 +\Zint _{\geq 0}$, $\alpha ' _2 \in 3/2 +\Zint _{\geq 0}$, $\alpha ' _3 \in 3/2 +\Zint _{\geq 0}$), $d \not \in \Zint$ and the singularity $x=0$ (resp. $x=\omega _1$, $x=\omega _2$, $x=\omega _3$) of Eq.(\ref{InoEF00}) is apparent, then there exists a non-zero solution of Eq.(\ref{eq:HalfEf})
 which belongs to the space $V_{-\alpha '_0 -d, \alpha '_1 +d + 1,  \alpha '_2 +d + 1 ,  \alpha '_3 +d + 1 }$ (resp. $V_{ \alpha '_0 +d + 1 ,  -\alpha '_1 - d,  \alpha '_2 +d + 1 ,  \alpha '_3 +d + 1 }$, $V_{ \alpha '_0 +d + 1 ,  \alpha '_1 +d + 1,  -\alpha '_2 -d  ,  \alpha '_3 +d + 1 }$, $V_{ \alpha '_0 +d + 1 ,  \alpha '_1 +d + 1,  \alpha '_2 +d + 1,  -\alpha '_3 -d  }$).\\
(ii) If $\alpha '_0 \in -1/2 + \Zint _{\leq 0}$ (resp. $\alpha ' _1 \in -1/2 +\Zint _{\leq 0}$, $\alpha ' _2 \in -1/2 +\Zint _{\leq 0}$, $\alpha ' _3 \in -1/2 +\Zint _{\leq 0}$), $\alpha '_1 +d, \alpha '_2 +d, \alpha '_3 +d \not \in 1/2+ \Zint $ (resp. $\alpha '_0 +d ,\alpha '_2 +d,\alpha '_3 +d \not \in 1/2+ \Zint $, $\alpha '_0 +d, \alpha '_1 +d, \alpha '_3 +d  \not \in 1/2+ \Zint $, $\alpha '_0 +d, \alpha '_1 +d, \alpha '_2 +d \not \in 1/2+ \Zint $), $d \not \in \Zint$ and the singularity $x=0$ (resp. $x=\omega _1$, $x=\omega _2$, $x=\omega _3$) of Eq.(\ref{InoEF00}) is apparent,
 then there exists a non-zero solution of Eq.(\ref{eq:HalfEf}) which belongs to the space $V_{\alpha '_0 +d +1, -\alpha '_1 -d ,  -\alpha '_2 -d ,  -\alpha '_3 -d  }$ (resp. $V_{ -\alpha '_0 -d  ,  \alpha '_1 + d +1,  -\alpha '_2 -d  , -\alpha '_3 -d }$, $V_{ -\alpha '_0 -d  ,  -\alpha '_1 -d ,  \alpha '_2 +d +1 ,  -\alpha '_3 -d }$, $V_{ -\alpha '_0 -d  ,  -\alpha '_1 -d ,  -\alpha '_2 -d , \alpha '_3 +d +1 }$).\\
(iii) If $\alpha '_0 +d \in 1/2 +\Zint _{\geq 0} $ (resp. $\alpha '_1 +d \in 1/2 +\Zint _{\geq 0}$, $\alpha '_2 +d \in 1/2 +\Zint _{\geq 0}$, $\alpha '_3 +d \in 1/2 +\Zint _{\geq 0} $), $l'_1 , l'_2 , l'_3 \not \in 1/2+ \Zint $ (resp. $l '_0 , l'_2 , l'_3 \not \in 1/2 +\Zint$, $l '_0 , l'_1, l'_3 \not \in 1/2+ \Zint$, $l '_0 , l'_1, l'_2 \not \in 1/2 +\Zint$), $d \not \in \Zint$ and there exists a non-zero solution of Eq.(\ref{InoEF00}) which belongs to the space $V_{1- \alpha '_0, \alpha '_1, \alpha '_2 , \alpha '_3 }$ (resp. $V_{\alpha '_0 , 1-\alpha '_1 , \alpha '_2, \alpha '_3 }$, $V_{\alpha '_0 , \alpha '_1, 1-\alpha '_2 , \alpha '_3 }$, $V_{\alpha '_0 , \alpha '_1, \alpha '_2, 1-\alpha '_3 }$), then the singularity $x=0$ (resp. $x=\omega _1$, $x=\omega _2$, $x=\omega _3$) of Eq.(\ref{eq:HalfEf}) is apparent.\\
(iv)  If $\alpha '_0 +d \in -3/2 +\Zint _{\leq 0} $ (resp. $\alpha '_1 +d \in -3/2 +\Zint _{\leq 0}$, $\alpha '_2 +d \in -3/2 +\Zint _{\leq 0}$, $\alpha '_3 +d \in -3/2 +\Zint _{\leq 0} $), $l'_1, l'_2 , l'_3 \not \in 1/2 + \Zint$ (resp. $l '_0 , l'_2 , l'_3 \not \in 1/2 + \Zint$, $l '_0 , l'_1, l'_3 \not \in 1/2 + \Zint$, $l '_0 , l'_1, l'_2 \not \in 1/2 + \Zint$), $d \not \in \Zint$ and there exists a non-zero solution of Eq.(\ref{InoEF00}) which belongs to the space $V_{ \alpha '_0 , 1-\alpha '_1 , 1-\alpha '_2 , 1-\alpha '_3 }$ (resp. $V_{1-\alpha '_0 , \alpha '_1, 1-\alpha '_2 , 1-\alpha '_3 }$, $V_{1- \alpha '_0 , 1-\alpha '_1 , \alpha '_2 , 1-\alpha '_3 }$, $V_{1-\alpha '_0 , 1-\alpha '_1 , 1-\alpha '_2 , \alpha '_3 }$), then the singularity $x=0$ (resp. $x=\omega _1$, $x=\omega _2$, $x=\omega _3$) of Eq.(\ref{eq:HalfEf}) is apparent.
\end{prop}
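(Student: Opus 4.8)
The plan is to obtain Proposition \ref{prop:nonlog-polynell} by transporting each of the eight assertions of Theorem \ref{thm:nonlog-polynHeun} through the substitution $z=(\wp (x)-e_1)/(e_2-e_1)$ of Eqs.(\ref{eq:zxtrans})--(\ref{eq:zxtransparam}). First I would record the parameter dictionary exactly as in the proof of Proposition \ref{prop:ellipinttras0}: the source operator $H^{(l'_0,l'_1,l'_2,l'_3)}$ becomes Heun's equation (\ref{Heun01}) with $\epsilon '_0=\alpha '_1+1/2$, $\epsilon '_1=\alpha '_2+1/2$, $\epsilon '_t=\alpha '_3+1/2$ and $\eta '=\beta '=-d$, while the transformed operator $H^{(\alpha '_0+d,\alpha '_1+d,\alpha '_2+d,\alpha '_3+d)}$ becomes (\ref{Heun02}) with $\epsilon _0=\alpha '_1+d+3/2$, $\epsilon _1=\alpha '_2+d+3/2$, $\epsilon _t=\alpha '_3+d+3/2$ and $\eta =d+2$, the exponents $\{ \alpha ,\beta \}$ at $z=\infty $ being fixed by the Fuchs relation. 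The singular points then match as $x=\omega _1,\omega _2,\omega _3,0 \leftrightarrow z=0,1,t,\infty $ for both equations, and the equivalence of (non-)logarithmic behaviour recorded just before Eq.(\ref{eq:expell}) carries every hypothesis and conclusion about apparency across the substitution; since $\eta \not\in \Zint \Leftrightarrow d\not\in \Zint $, the standing assumption $d\not\in \Zint $ matches the hypotheses $\eta \not\in \Zint $ of Theorem \ref{thm:nonlog-polynHeun}.

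With the dictionary fixed, I would match, for each of the parts (i)--(iv), the three finite ``resp.'' cases (at $x=\omega _1,\omega _2,\omega _3$, i.e. $z=0,1,t$) to the assertions (i)--(iv) of Theorem \ref{thm:nonlog-polynHeun}, and the $x=0$ case (i.e. $z=\infty $) to the assertions (v)--(viii) respectively. For the finite cases the translation is immediate from $\epsilon '_p=\alpha '_i+1/2$ and $\epsilon _p=\alpha '_i+d+3/2$ (with $p=0,1,t$ indexing $i=1,2,3$); for example the $\omega _1$ branch of (i) reads $\epsilon '_0=\alpha '_1+1/2\in \Zint _{\geq 2}$, which is exactly the hypothesis of Theorem \ref{thm:nonlog-polynHeun}(i) with $a=0$. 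For the $z=\infty $ cases I would use the identities $\alpha +\beta -\eta =3/2-\alpha '_0$ and $\alpha +\beta -2\eta =-(\alpha '_0+d)-1/2$, which convert $\alpha '_0\in 3/2+\Zint _{\geq 0}$ (resp. $-1/2+\Zint _{\leq 0}$) into $\alpha +\beta -\eta \in \Zint _{\leq 0}$ (resp. $\Zint _{\geq 2}$), matching (v) (resp. (vi)), and $\alpha '_0+d\in 1/2+\Zint _{\geq 0}$ (resp. $-3/2+\Zint _{\leq 0}$) into $\alpha +\beta -2\eta \in \Zint _{\leq -1}$ (resp. $\Zint _{\geq 1}$), matching (vii) (resp. (viii)). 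The ``$\not\in \Zint $'' side conditions align in the same way; for instance the source exponent $\alpha '=-(\alpha '_0+d)+1/2$ gives $\alpha '\not\in \Zint \Leftrightarrow \alpha '_0+d\not\in 1/2+\Zint $, which reproduces the conditions displayed in (ii) and (iv).

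The remaining and most delicate step is to identify each of the advertised spaces $V_{\beta '_0,\beta '_1,\beta '_2,\beta '_3}$. A solution of (\ref{Heun02}) of the shape $(z-p)^{1-\epsilon _p}\cdot (\mbox{polynomial})$, once multiplied by the transform factor $z^{\alpha _1/2}(z-1)^{\alpha _2/2}(z-t)^{\alpha _3/2}$ and rewritten through $(\wp -e_1,\wp -e_2,\wp -e_3)\propto (z,z-1,z-t)$, becomes $\widehat{\Phi }(\wp (x))P(\wp (x))$ as in Proposition \ref{findim}; reading off the local exponent at each $\omega _j$ fixes the index $\beta '_j$, and the factor $(z-p)^{1-\epsilon _p}$ is precisely what flips a single index from $\alpha '_j+d+1$ to $-\alpha '_j-d$ (dually, on the source side, the identification of Proposition \ref{findim} with Proposition \ref{prop:findim0} spelled out after Proposition \ref{findim} turns a $V$-space solution into a polynomial-type solution of (\ref{Heun01})). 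A dimension count then confirms consistency: e.g. in part (i) at $\omega _1$ the space $V_{\alpha '_0+d+1,-\alpha '_1-d,\alpha '_2+d+1,\alpha '_3+d+1}$ has $\tilde{d}=\alpha '_1-3/2=\epsilon '_0-2$, matching the degree bound $\deg h\leq \epsilon '_0-2$ of Theorem \ref{thm:nonlog-polynHeun}(i). The main obstacle is exactly this bookkeeping: one must choose the correct branch $\alpha _j=-l_j$ or $l_j+1$ at each of the four singularities, track the unordered pair $\{ \alpha ,\beta \}$ at $z=\infty $ consistently with the Fuchs relation, and check that all the ``$\not\in \Zint $'' side conditions transcribe correctly, so that the function produced by the integral transformation lands in the stated space rather than in a conjugate one. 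Once this is settled, each of the sixteen cases reduces to a direct appeal to Theorem \ref{thm:nonlog-polynHeun}.
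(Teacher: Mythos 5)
Your proposal is correct and follows exactly the route the paper intends: the paper's entire ``proof'' is the remark that the proposition can be obtained by rewriting Theorem \ref{thm:nonlog-polynHeun} in elliptical form, and your parameter dictionary, your matching of the $\omega_1,\omega_2,\omega_3$ branches of (i)--(iv) to Theorem \ref{thm:nonlog-polynHeun}(i)--(iv) and of the $x=0$ branches to (v)--(viii), and your identification of the spaces $V_{\beta'_0,\beta'_1,\beta'_2,\beta'_3}$ supply precisely the bookkeeping the paper omits. I note that your value $\{\alpha,\beta\}=\{2+d,\,3/2-\alpha'_0\}$ is the one consistent with the Fuchs relation and with the advertised spaces and degree counts, and all the ``$\not\in\Zint$'' side conditions transcribe as you claim.
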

With respect to the elliptical representation of Heun's equation, Theorems \ref{thm:exprnonlogsol} and \ref{thm:exprnonlogsol2} can be rewritten as follows:
\begin{prop} \label{prop:exprnonlogsolell}
Let $\alpha _0 \in \{ -l_0, l_0 +1 \}$ and $ l'_0, l'_1, l'_2, l'_3, \eta $ be the parameters defined in Eq.(\ref{eq:l'alpha}).\\
(i) If $-\alpha _0 \in 1/2 + \Zint _{\geq 0}$ (resp. $l _1 \in 1/2 + \Zint _{\geq 0}$, $l _2 \in 1/2 + \Zint _{\geq 0}$, $l _3 \in 1/2 + \Zint _{\geq 0}$), $l'_1, l'_2 , l'_3 \not \in 1/2 + \Zint$ (resp. $l '_0 , l'_2 , l'_3 \not \in 1/2 + \Zint$, $l '_0 , l'_1, l'_3 \not \in 1/2 + \Zint$, $l'_0 , l'_1, l'_2 \not \in 1/2 + \Zint$), $\eta \not \in \Zint$ and the singularity $x=0$ (resp. $x=\omega _1$, $x=\omega _2$, $x=\omega _3$) of Eq.(\ref{InoEF}) is apparent, then there exists a non-zero solution $\tilde{f}(x)$ of Eq.(\ref{InoEF00}) which belongs to the space $V_{-l '_0 , l'_1 +1, l'_2 +1, l'_3 +1}$ (resp. $V_{l '_0 +1, -l'_1, l'_2 +1, l'_3 +1}$, $V_{l '_0 +1, l'_1 +1, -l'_2 , l'_3 +1 }$, $V_{l'_0 +1, l'_1 +1, l'_2 +1, -l'_3 }$) and the functions 
\begin{align}
& f(x)=\sigma (x) ^{\alpha _0} \sigma _1 (x) ^{-l _1} \sigma _2 (x) ^{-l _2} \sigma _3 (x) ^{-l _3} \cdot \\
& \quad \quad \int _{I_i } \tilde{f}(y) \sigma (y) ^{l' _0 +1}  \sigma _1 (y) ^{l' _1 +1} \sigma _2 (y) ^{l' _2 +1} \sigma _3 (y) ^{l' _3 +1} (\sigma (x+y) \sigma (x-y) )^{-\eta } dy \nonumber 
\end{align}
for $i=1,2,3$ (resp. $i=2,3$, $i=1,3$, $i=1,2$) are non-zero solutions of Eq.(\ref{InoEF}).\\
(ii) 
If $-\alpha _0 \in -3/2 + \Zint _{\leq 0}$ (resp. $l _1 \in -3/2 +\Zint _{\leq 0}$, $l _2 \in -3/2 +\Zint _{\leq 0}$, $l _3 \in -3/2 +\Zint _{\leq 0}$), $l'_1, l'_2 , l'_3 \not \in 1/2 + \Zint$ (resp. $l'_0 , l'_2 , l'_3 \not \in 1/2 + \Zint$, $l'_0 , l'_1, l'_3 \not \in 1/2 + \Zint$, $l'_0 , l'_1, l'_2 \not \in 1/2 + \Zint$), $\eta \not \in \Zint$ 
 and the singularity $x=0$ (resp. $x=\omega _1$, $x=\omega _2$, $x=\omega _3$) of Eq.(\ref{InoEF}) is apparent, then there exists a non-zero solution $\tilde{f}(x)$ of Eq.(\ref{InoEF00}) which belongs to the space $V_{l '_0 +1, -l'_1, -l'_2 , -l'_3 }$ (resp. $V_{-l'_0 , l'_1+1, -l'_2, -l'_3 }$, $V_{-l'_0 , -l'_1, l'_2 +1, -l'_3 }$, $V_{-l '_0 , -l'_1, -l'_2, l'_3 +1 }$) and the functions 
\begin{align}
& f(x)=\sigma (x) ^{-\alpha _0 +1} \sigma _1 (x) ^{l _1 +1} \sigma _2 (x) ^{l _2 +1} \sigma _3 (x) ^{l _3 +1} \cdot \\
& \quad \quad \int _{I_i } \tilde{f}(y) \sigma (y) ^{-l' _0 }  \sigma _1 (y) ^{-l' _1 } \sigma _2 (y) ^{-l' _2 } \sigma _3 (y) ^{-l' _3 } (\sigma (x+y) \sigma (x-y) )^{\eta -2} dy \nonumber 
\end{align}
for $i=1,2,3$ (resp. $i=2,3$, $i=1,3$, $i=1,2$) are non-zero solutions of Eq.(\ref{InoEF}).\\
\end{prop}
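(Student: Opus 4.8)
The plan is to deduce Proposition~\ref{prop:exprnonlogsolell} from Theorems~\ref{thm:exprnonlogsol} and~\ref{thm:exprnonlogsol2} by transporting those statements to the elliptical picture through the substitution $z=(\wp(x)-e_1)/(e_2-e_1)$, $t=(e_3-e_1)/(e_2-e_1)$ of Eq.(\ref{eq:zxtrans}), exactly as in the proof of Proposition~\ref{prop:ellipinttras0}. Part~(i), whose kernel is $(\sigma(x+y)\sigma(x-y))^{-\eta}$, corresponds to Theorem~\ref{thm:exprnonlogsol} (the transformation of Proposition~\ref{prop:Heunint}), while Part~(ii), whose kernel is $(\sigma(x+y)\sigma(x-y))^{\eta-2}$, corresponds to Theorem~\ref{thm:exprnonlogsol2} (the transformation of Proposition~\ref{prop:Heunintother}). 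Throughout, the parameters $l_0,l_1,l_2,l_3,l'_0,l'_1,l'_2,l'_3,\eta$ and the Heun parameters $\alpha,\beta,\epsilon_p,\epsilon'_p$ are tied together by Eqs.(\ref{eq:l'alpha}), (\ref{eq:zxtransparam}) and the identifications $\alpha'_i=-l'_i$ recorded in the proof of Proposition~\ref{prop:ellipinttras}.

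First I would translate the hypotheses via the dictionary established just before Proposition~\ref{prop:nonlog-polynell}: non-logarithmicity at $x=\omega_1$ (resp. $\omega_2$, $\omega_3$) with $l_1\in 1/2+\Zint$ (resp. $l_2$, $l_3$) is equivalent to non-branching at $z=0$ (resp. $1$, $t$) with $\epsilon_0\in\Zint$ (resp. $\epsilon_1$, $\epsilon_t$), while $x=0$ with $l_0\in 1/2+\Zint$ corresponds to $z=\infty$. A direct computation with Eqs.(\ref{eq:l'alpha}), (\ref{eq:zxtransparam}) fixes the integer ranges: one finds $\epsilon_0=1/2-l_1$ and $\alpha+\beta-2\eta=1/2-\alpha_0$, so $l_1\in 1/2+\Zint_{\geq 0}$ becomes $\epsilon_0\in\Zint_{\leq 0}$ and $-\alpha_0\in 1/2+\Zint_{\geq 0}$ becomes $\alpha+\beta-2\eta\in\Zint_{\geq 1}$, precisely the hypotheses of Theorem~\ref{thm:exprnonlogsol}(i),(ii) needed for Part~(i); likewise $l_1\in -3/2+\Zint_{\leq 0}$ and $-\alpha_0\in -3/2+\Zint_{\leq 0}$ become $\epsilon_0\in\Zint_{\geq 2}$ and $\alpha+\beta-2\eta\in\Zint_{\leq -1}$, the hypotheses of Theorem~\ref{thm:exprnonlogsol2}. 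The genericity conditions $l'_0,l'_2,l'_3\not\in 1/2+\Zint$ and $\eta\not\in\Zint$ translate into $\alpha,\beta,\epsilon'_1,\epsilon'_t\not\in\Zint$: by Eq.(\ref{eq:zxtransparam}) one has $\epsilon'_1,\epsilon'_t\not\in\Zint$ iff $l'_2,l'_3\not\in 1/2+\Zint$, while $\alpha,\beta\not\in\Zint$ amounts to $\eta\not\in\Zint$ together with $l'_0\not\in 1/2+\Zint$.

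Next I would run the relevant theorem and identify the quasi-solvable space in which the source solution sits. For Part~(i) at $x=\omega_1$, Theorem~\ref{thm:exprnonlogsol}(i) yields a solution $(w-1)^{1-\epsilon'_1}(w-t)^{1-\epsilon'_t}h(w)$ of Eq.(\ref{Heun01}) with $\deg h=-\epsilon_0$; gauging by $z^{\alpha'_1/2}(z-1)^{\alpha'_2/2}(z-t)^{\alpha'_3/2}$ as in Eq.(\ref{eq:zxtrans}) and using Eq.(\ref{eq:zxtransparam}) rewrites it as $\widehat{\Phi}(\wp(x))$ times a polynomial in $\wp(x)$, i.e. an element of $V_{l'_0+1,-l'_1,l'_2+1,l'_3+1}$; the count $\tilde d=-\tfrac12\sum_i\beta'_i=l_1-1/2=\deg h$ confirms the dimension. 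The integral representation of Part~(i) is then the formula of Proposition~\ref{prop:ellipinttras} applied to this solution, obtained after the change of variable $w=(\wp(\xi)-e_1)/(e_2-e_1)$ using the identities $\sqrt{\wp(\xi)-e_i}=\sigma_i(\xi)/\sigma(\xi)$, $\wp(x)-\wp(\xi)=-\sigma(x+\xi)\sigma(x-\xi)/(\sigma(x)\sigma(\xi))^2$, $\wp'(\xi)=-2\sigma_1(\xi)\sigma_2(\xi)\sigma_3(\xi)/\sigma(\xi)^3$ and the cycle correspondence $[\gamma_z,\gamma_0]\leftrightarrow I_1$, $[\gamma_z,\gamma_1]\leftrightarrow I_2$, $[\gamma_z,\gamma_t]\leftrightarrow I_3$. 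Non-vanishing of $f(x)$ for each admissible $i$ is inherited verbatim from the non-vanishing of the integrals in Theorem~\ref{thm:exprnonlogsol}. Part~(ii) follows the same route after first recording the elliptical form of Proposition~\ref{prop:Heunintother}, in which the gauge factor $z^{1-\epsilon_0}(z-1)^{1-\epsilon_1}(z-t)^{1-\epsilon_t}$ becomes the prefactor $\sigma(x)^{1-\alpha_0}\sigma_1(x)^{l_1+1}\sigma_2(x)^{l_2+1}\sigma_3(x)^{l_3+1}$, and then invoking Theorem~\ref{thm:exprnonlogsol2}.

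The main obstacle is organizational rather than conceptual: carrying the four-fold ``resp.'' bookkeeping consistently across the two parameter sets. Concretely, for each of the four singularities one must check which branch $\beta'_i=-l'_i$ or $\beta'_i=l'_i+1$ yields the space named in the statement, and in Part~(ii) one must pin down the sigma-function exponents of the prefactor by correctly combining the gauge factor of Proposition~\ref{prop:Heunintother} with the Jacobian of the change of variables. These are finite explicit computations of exactly the type carried out in the proof of Proposition~\ref{prop:ellipinttras0}, so no new difficulty arises; the only point requiring care is that the non-vanishing assertions survive multiplication by the prefactors, which is immediate because those $\sigma$-factors are units along each cycle $I_i$.
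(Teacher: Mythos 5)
Your proposal is correct and coincides with the paper's own route: the paper offers no separate proof of Proposition \ref{prop:exprnonlogsolell}, presenting it exactly as the rewriting of Theorems \ref{thm:exprnonlogsol} and \ref{thm:exprnonlogsol2} into the elliptical representation via Eq.(\ref{eq:zxtrans}), following the computation in the proof of Proposition \ref{prop:ellipinttras0}. Your parameter dictionary ($\epsilon_0=1/2-l_1$, $\alpha+\beta-2\eta=1/2-\alpha_0$, the translation of the genericity conditions, the identification of the spaces $V_{\beta'_0,\beta'_1,\beta'_2,\beta'_3}$ via $\tilde d=\deg h$, and the cycle correspondence $[\gamma_z,\gamma_p]\leftrightarrow I_i$) checks out.
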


\section{Finite-gap potentials and integral transformations} \label{sec:HeunFGIT}

We now review the definitions of a finite-gap potential and its properties.
\begin{df} \label{def:fin}
Assume $q(x)$ is real-valued and continuous for $x \in \Rea$.
We set $H=-d^2/dx^2+q(x)$. Let $\sigma _b(H)$ be the set such that 
$$
E \in \sigma _b(H) \; \Leftrightarrow \mbox{ All solutions of }(H-E)f(x)=0 \mbox{ are bounded on }x \in \Rea ,
$$
and $\overline{\sigma _b(H) }$ is the topological closure of $ \sigma _b(H)$ in $\Rea $.
If the set $\Rea \setminus \overline{\sigma _b(H) }$ can be written as
\begin{equation}
\Rea \setminus \overline{\sigma _b(H)}= (-\infty,E_{0} ) \cup (E_{1},E_{2}) \cup \dots \cup (E_{2g-1}, E_{2g}) ,
\end{equation}
with $E_0<E_{1}<\cdots <E_{2g}$
then $q(x)$ is called a finite-gap ($g$-gap) potential.
\end{df}
If $q(x)$ is real-valued and continuous for $x \in \Rea$ and periodic with period $T(>0)$, then $| \mbox {tr}M_T | >2 \Rightarrow  E \not \in \sigma _b(H)$ and $| \mbox {tr}M_T | <2  \Rightarrow  E \in \sigma _b(H)$, where $M_T$ is a monodromy matrix for the shift $x \rightarrow x+T$ with eigenvalue $E$.

\begin{df} \label{def:algfin}
If there exists an odd-order differential operator 
$A= \left( d/dx \right)^{2g+1} +\sum_{j=0}^{2g-1} b_j(x)\left( d/dx \right)^{2g-1-j}$ such that $[A, -d^2/dx^2+q(x)]=0$, then $q(x)$ is called an algebro-geometric finite-gap potential.
\end{df}
Note that the equation  $[A, -d^2/dx^2+q(x)]=0$ is equivalent to the function $q(x)$ being a solution of some stationary higher-order KdV equation.
It is known that if $q(x) $ is real-holomorphic on $\Rea$ and $q(x+T)=q(x)$, then $q(x)$ is a finite-gap potential if and only if $q(x)$ is an algebro-geometric finite-gap potential (see \cite{Nov0}).

For the elliptical representation of Heun's equation, the following theorem is known.
\begin{thm} $($\cite{TV}$)$
The potential $\sum_{i=0}^3 l'_i(l'_i+1)\wp (x+\omega_i)$ is algebro-geometric finite-gap, if and only if $l'_i \in \Zint$ for $i =0,1,2,3$.
\end{thm}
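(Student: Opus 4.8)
The plan is to reduce the theorem to a purely local statement about single-valuedness of solutions, using the characterization of elliptic algebro-geometric finite-gap potentials as exactly those meromorphic elliptic potentials for which, for \emph{every} $E\in\Cplx$, all solutions of $(-d^2/dx^2+q(x))f=Ef$ are meromorphic in $x$ (the Picard property). Granting this characterization, the problem becomes: every solution is meromorphic near each singular point for all $E$ if and only if $l'_i\in\Zint$. Since $q(x)=\sum_{i=0}^3 l'_i(l'_i+1)\wp(x+\omega_i)$ is elliptic and its only singularities modulo the period lattice are the four half-periods $x=\omega_i$ (the double pole of $\wp(x+\omega_i)$ sits at $x=\omega_i$ because $2\omega_i$ is a period, while the remaining three terms are holomorphic there), it suffices to analyze the local solutions at each $x=\omega_i$.

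First I would carry out the local (Frobenius) analysis at $x=\omega_i$. The leading behavior of $q$ is $l'_i(l'_i+1)(x-\omega_i)^{-2}$, so the indicial equation is $\rho(\rho-1)=l'_i(l'_i+1)$, with exponents $\rho=-l'_i$ and $\rho=l'_i+1$ and local monodromy eigenvalues $e^{\mp 2\pi\sqrt{-1}\,l'_i}$. For the \emph{only if} direction this is already decisive: if $q$ is algebro-geometric finite-gap then all solutions are single-valued around $x=\omega_i$, which forces $e^{2\pi\sqrt{-1}\,l'_i}=1$, i.e. $l'_i\in\Zint$. The half-integer case $l'_i\in 1/2+\Zint$ instead yields eigenvalue $-1$ and genuinely multivalued, hence non-meromorphic, solutions, as already reflected in the second branch of Eq.(\ref{eq:expell}).

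For the \emph{if} direction, suppose $l'_i\in\Zint$ for all $i$. The two exponents $-l'_i$ and $l'_i+1$ are then integers differing by the \emph{odd} integer $2l'_i+1$, so a priori the resonance between them could force a logarithmic second solution. The key observation is that each $\wp(x+\omega_j)$ is even about $x=\omega_i$ (because $\wp$ is even and $2\omega_i$ is a period), hence $q$ is even about $x=\omega_i$; consequently the Frobenius recursion couples coefficients only in steps of two, exactly as recorded in the first branch of Eq.(\ref{eq:expell}). Starting from the smaller exponent $-l'_i$ one generates only the powers $(x-\omega_i)^{-l'_i+2j}$, and the resonant power $(x-\omega_i)^{l'_i+1}=(x-\omega_i)^{-l'_i+(2l'_i+1)}$ lies an odd number of steps away and is therefore never produced. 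The logarithmic obstruction never appears, so both solutions are single-valued Laurent/Taylor series, i.e. meromorphic, for every $E$. By the Picard characterization this makes $q$ algebro-geometric finite-gap.

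The main obstacle is the \emph{if} direction, and within it two points deserve care. The delicate analytic point is the suppression of logarithms at the resonant singularities; the parity argument above settles it, but one must check rigorously that the even symmetry decouples the even- and odd-index coefficients of the Frobenius expansion (equivalently, that only $\wp$ and its even-order derivatives enter the recursion at $x=\omega_i$). The deeper structural point is that I am importing the equivalence between the Picard (meromorphy) property and the existence of the commuting odd-order operator $A$ of Definition \ref{def:algfin}; a fully self-contained treatment would instead follow the original algebro-geometric route of Treibich and Verdier, constructing the hyperelliptic spectral curve as a tangential (elliptic soliton) cover of the elliptic curve $\Cplx/(2\omega_1\Zint+2\omega_3\Zint)$, which is where the genuine difficulty of the theorem resides.
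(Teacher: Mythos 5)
This theorem is not proved in the paper at all --- it is quoted from Treibich--Verdier \cite{TV} --- so there is no internal proof to compare against; what you propose is the route of Gesztesy and Weikard (the ``Picard potential'' characterization; cf.\ \cite{GW} and their subsequent Acta Math.\ paper), which is genuinely different from the original tangential-cover construction of \cite{TV}. On its own terms your argument is sound: the indicial roots at $x=\omega_i$ are indeed $-l'_i$ and $l'_i+1$, so for the ``only if'' direction a non-integral $l'_i$ produces local monodromy eigenvalue $e^{\pm2\pi\sqrt{-1}\,l'_i}\neq 1$ and hence a non-meromorphic solution for every $E$; and for the ``if'' direction the evenness of the potential about each half-period (valid because $2\omega_j$ is a period and $\wp$ is even) decouples the Frobenius coefficients by parity, so the resonance at the odd offset $2l'_i+1$ is never excited and no logarithm can appear --- exactly what the first branch of Eq.(\ref{eq:expell}) records. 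The one thing you should state more bluntly is that the entire weight of the theorem is now carried by the imported equivalence between the Picard property and the existence of the commuting odd-order operator of Definition \ref{def:algfin}: the half you actually need for ``if'' (all solutions meromorphic for all $E$ implies algebro-geometric) is the hard half, and is a theorem of comparable depth to Treibich--Verdier itself, so your proof is a correct \emph{reduction} rather than a self-contained argument. You do acknowledge this, which is the right instinct. As a trade-off: your route yields a clean local single-valuedness criterion (and, for ``only if'', could even be replaced by the more elementary observation that a stationary KdV solution must have local Laurent behaviour $n(n+1)(x-x_0)^{-2}$ with $n\in\Zint$), whereas the original \cite{TV} route constructs the hyperelliptic spectral curve explicitly as a cover of the torus, which is the structure the rest of section \ref{sec:HeunFGIT} of the paper actually exploits.
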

The function $ \sum_{i=0}^3 l'_i(l'_i+1)\wp (x+\omega_i)$ is called the Treibich-Verdier potential. 
Subsequently several other researchers have produced results on this subject (see \cite{GW,Smi,Tak1,Tak2,Tak3,Tak4,Tak5}).
If $l'_0=l'_1=0$, $\omega _1 \in \Rea _{\neq 0}$ and $\omega _3 \in \sqrt{-1}  \Rea _{\neq 0}$, then the potential is real-valued and holomorphic on $\Rea $, and we have the following corollary:
\begin{cor}
If $\omega _1 \in \Rea _{\neq 0}$, $\omega _3 \in \sqrt{-1}  \Rea _{\neq 0}$ and $l'_2, l'_3 \in \Zint$, then the potential $ l'_2(l'_2+1)\wp (x+ \omega _2) +l'_3(l'_3+1)\wp (x+\omega _3)$ is a finite-gap potential.
\end{cor}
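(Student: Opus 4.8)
The plan is to deduce the corollary from the Treibich--Verdier theorem stated immediately above, together with the equivalence between finite-gap and algebro-geometric finite-gap potentials recorded earlier from \cite{Nov0}. First I would observe that choosing $l'_0=l'_1=0$ makes $l'_0(l'_0+1)=l'_1(l'_1+1)=0$, so the full Treibich--Verdier potential $\sum_{i=0}^3 l'_i(l'_i+1)\wp (x+\omega _i)$ collapses to exactly the potential appearing in the statement, namely $l'_2(l'_2+1)\wp (x+\omega _2)+l'_3(l'_3+1)\wp (x+\omega _3)$. Since then all four parameters $l'_0,l'_1,l'_2,l'_3$ lie in $\Zint$, the cited theorem yields that this potential is algebro-geometric finite-gap.

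Next I would verify the analytic hypotheses needed to invoke the equivalence. The remark preceding the corollary already records that, under $\omega _1\in\Rea _{\neq 0}$ and $\omega _3\in\sqrt{-1}\Rea _{\neq 0}$ with $l'_0=l'_1=0$, the potential is real-valued and holomorphic on $\Rea$. Concretely, the poles of $\wp (x+\omega _2)$ and $\wp (x+\omega _3)$ sit at lattice translates of $-\omega _2$ and $-\omega _3$, each of which has nonzero imaginary part because $\omega _3$ is purely imaginary while $\omega _1$ is real, so none of these poles meet the real axis. Moreover, since $2\omega _1$ is a period of $\wp$, both $\wp (x+\omega _2)$ and $\wp (x+\omega _3)$ are invariant under $x\mapsto x+2\omega _1$, whence the potential is periodic with the real period $T=2\omega _1>0$.

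Finally I would apply the equivalence from \cite{Nov0}: for a potential that is real-holomorphic on $\Rea$ and periodic, being a finite-gap potential is equivalent to being an algebro-geometric finite-gap potential. Having established in the two steps above that our potential is real-holomorphic, periodic with real period $2\omega _1$, and algebro-geometric finite-gap, this equivalence immediately gives that it is a finite-gap potential in the sense of Definition \ref{def:fin}, which completes the proof.

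I do not expect a substantive obstacle here: the content is entirely a matter of checking that the hypotheses of the two cited results are simultaneously met. The only point requiring a little care is confirming the existence of a genuine positive real period together with the absence of real poles, both of which rest on the specific configuration $\omega _1\in\Rea$, $\omega _3\in\sqrt{-1}\Rea$; once the lattice is seen to be stable under complex conjugation, these facts are routine.
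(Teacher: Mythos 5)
Your proposal is correct and follows exactly the route the paper intends: the sentence immediately preceding the corollary sets $l'_0=l'_1=0$ and notes real-holomorphy on $\Rea$, so that the Treibich--Verdier theorem gives the algebro-geometric finite-gap property and the equivalence from \cite{Nov0} (for real-holomorphic periodic potentials) upgrades this to the finite-gap property. Your extra checks on the location of the poles and on the real period $2\omega_1$ are sound and merely make explicit what the paper leaves implicit.
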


We review a method for calculating the monodromy for the elliptical representation of Heun's equation for the case $l'_0, l'_1, l'_2, l'_3 \in \Zint$.
Note that Eq.(\ref{InoEF00}) is invariant under the change $l'_i \leftrightarrow -l'_i-1$ for each $i \in \{0,1,2,3 \}$.

Let $h(x)$ be  the product of any pair of solutions of the elliptical representation of Heun's equation. Then the function $h(x)$ satisfies the following third-order differential equation:
\begin{align}
& \left( \frac{d^3}{dx^3}-4\left( \sum_{i=0}^3 l'_i(l'_i+1)\wp (x+\omega_i)-E\right)\frac{d}{dx}-2\left(\sum_{i=0}^3 l'_i(l'_i+1)\wp '(x+\omega_i)\right) \right) h (x)=0.
\label{prodDE} 
\end{align}
It is known that if $l'_0, l'_1, l'_2, l'_3 \in \Zint $ then Eq.(\ref{prodDE}) has a non-zero doubly periodic solution for all $E$.
\begin{prop} $($\cite[Proposition 3.5]{Tak1}$)$ \label{prop:prod}
If $l'_0, l'_1, l'_2, l'_3 \in \Zint $, then Eq.(\ref{prodDE}) has a non-zero doubly periodic solution $\Xi (x,E)$, which has the expansion
\begin{equation}
\Xi (x,E)=c_0(E)+\sum_{i=0}^3 \sum_{j=0}^{\max(l'_i,-l'_i-1) -1} b^{(i)}_j (E)\wp (x+\omega_i)^{\max(l'_i,-l'_i-1) -j},
\label{Fx}
\end{equation}
where the coefficients $c_0(E)$ and $b^{(i)}_j(E)$ are polynomials in $E$, they do not have common divisors and the polynomial $c_0(E)$ is monic.
We set $g=\deg_E c_0(E)$. Then the coefficients satisfy $\deg _E b^{(i)}_j(E)<g$ for all $i$ and $j$.
\end{prop}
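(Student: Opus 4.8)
The plan is to construct $\Xi(x,E)$ as a normalisation of the product of the two Bloch solutions, to pin down its shape by a local analysis at the singular points, and finally to extract polynomiality in $E$ together with the degree bounds from a large-$E$ expansion. Throughout I may assume, replacing $l'_i$ by $-l'_i-1$ wherever convenient (which leaves Eq.(\ref{InoEF00}), and hence the combinations $l'_i(l'_i+1)$ appearing in Eq.(\ref{prodDE}), unchanged), that $l'_i\ge 0$, and I write $L_i=\max(l'_i,-l'_i-1)\ge 0$, so that $L_i=l'_i$ after this reduction.

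First I would record the local exponent structure. Near $x=\omega_i$ the potential behaves as $l'_i(l'_i+1)(x-\omega_i)^{-2}+O(1)$, so the indicial exponents of the Schr\"odinger equation $(-d^2/dx^2+q-E)f=0$ are $-l'_i$ and $l'_i+1$. Since Eq.(\ref{prodDE}) is the symmetric square of this equation, its solutions are spanned by the three products $\psi_a\psi_b$ of a basis $\{\psi_1,\psi_2\}$ of Schr\"odinger solutions, and its indicial exponents at $\omega_i$ are the pairwise sums $-2l'_i$, $1$ and $2l'_i+2$. Consequently any meromorphic doubly periodic solution of Eq.(\ref{prodDE}) has poles only at the points congruent to some $\omega_i$, of order at most $2L_i$. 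Because $q(x)$ is even about each $\omega_i$, the reflection $x-\omega_i\mapsto-(x-\omega_i)$ carries Bloch solutions to Bloch solutions with reciprocal multipliers; hence the product of the two Bloch solutions is, after normalisation, even about every $\omega_i$, its Laurent expansion there contains only even powers, and the exponent-$1$ branch does not occur. An even doubly periodic function whose only poles are at the $\omega_i$ of order $\le 2L_i$ is precisely a constant plus, for each $i$, a polynomial without constant term of degree $L_i$ in $\wp(x+\omega_i)$; collecting the constants into one term $c_0$ gives exactly the shape of Eq.(\ref{Fx}).

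For existence I would argue as follows. The monodromy matrices $M_1,M_3\in SL_2(\Cplx)$ of the Schr\"odinger equation for the two period shifts commute, since the fundamental group of the torus is abelian, and therefore share an eigenvector; the corresponding common Bloch solutions $\psi_\pm$ satisfy $\psi_\pm(x+2\omega_k)=\rho_k^{\pm 1}\psi_\pm(x)$, so $\Xi:=\psi_+\psi_-$ is invariant under both shifts and hence doubly periodic. That $\Xi$ is moreover a genuine elliptic function with poles only at the $\omega_i$, rather than a transcendental Floquet product, is exactly the finite-gap (algebro-geometric) property, which holds because $l'_i\in\Zint$ by the Treibich--Verdier theorem \cite{TV} and Definition \ref{def:algfin}. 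Equivalently, one substitutes the ansatz Eq.(\ref{Fx}) into Eq.(\ref{prodDE}) and checks that matching the principal parts at each $\omega_i$ yields a homogeneous linear system in the $1+\sum_i L_i$ coefficients whose entries are affine-linear in $E$ and whose solution space is one-dimensional for generic $E$, extended to all $E$ by continuity and polynomiality. This last step, controlling the rank of the coefficient matrix so that its kernel is exactly one-dimensional, is the main obstacle: the top principal-part coefficients cancel automatically by the indicial analysis above, but the remaining rank is precisely where integrality of the $l'_i$ is indispensable. Uniqueness up to a scalar then lets me take each coefficient to be the corresponding maximal minor of the matrix, hence a polynomial in $E$; normalising so that $c_0(E)$ is monic and dividing through by the greatest common divisor of all the coefficients produces polynomials $c_0(E),b^{(i)}_j(E)$ with no common divisor.

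Finally, for the degree bounds I would compare powers of $E$. A direct Wronskian computation shows that $2\Xi\Xi''-(\Xi')^2-4(q-E)\Xi^2$ is constant in $x$, since $W=\psi_+'\psi_--\psi_+\psi_-'$ is a Wronskian and thus $x$-independent; calling this constant $-R(E)$, it is a polynomial in $E$ whose $x$-independence couples the various $b^{(i)}_j(E)$ and $c_0(E)$. Note that the constant $c_0$ drops out of $\Xi''$ and $\Xi'$, so the only term contributing the top $E$-power through $c_0$ is $4E\,\Xi^2$, whose $x$-independent part carries $4E\,c_0(E)^2$. The assertion $\deg_E b^{(i)}_j<g=\deg_E c_0$ is therefore equivalent to the statement that $c_0(E)$ strictly dominates every pole coefficient in $E$-degree, and this I would prove by a large-$E$ (WKB) expansion of the Bloch product $\Xi=\psi_+\psi_-$: to leading order in $E$ this product is $x$-independent, and each successive correction that carries the $x$-dependence $\wp(x+\omega_i)^{L_i-j}$ enters at a strictly lower power of $E$. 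This yields $\deg_E b^{(i)}_j<\deg_E c_0=g$ for all $i,j$, completing the proof.
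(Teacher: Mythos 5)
First, a point of reference: the paper does not prove this proposition here — it is imported verbatim from \cite[Proposition 3.5]{Tak1}, where it is established by substituting the ansatz (\ref{Fx}) directly into Eq.(\ref{prodDE}) and solving the resulting triangular recursion for the coefficients; that single computation delivers existence, polynomiality in $E$, and the degree bounds simultaneously. Your qualitative groundwork is correct and matches the standard picture: the indicial exponents $-2l'_i$, $1$, $2l'_i+2$ of the symmetric square, the evenness of the Bloch product about each $\omega_i$, and the resulting shape (\ref{Fx}). But the statements that carry the actual content of the proposition are not established. For existence of an \emph{elliptic} solution with coefficients \emph{polynomial} in $E$, you either appeal to Treibich--Verdier — which is a heavier result whose analytic proofs (e.g.\ \cite{GW}, \cite{Tak1}) go through precisely this proposition, so the appeal is at best non-self-contained and at worst circular, and which in any case does not by itself yield polynomiality of $c_0, b^{(i)}_j$ in $E$ or the coprimality/monic normalization — or you propose the rank computation for the linear system, which you yourself flag as ``the main obstacle'' and do not carry out. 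You also do not verify that the singularities $\omega_i$ are apparent for $l'_i\in\Zint$, which is needed before $\psi_+\psi_-$ can be called meromorphic; this is true (the exponent difference $2l'_i+1$ is odd while the potential has only even Laurent coefficients at $\omega_i$, so the logarithmic obstruction in the Frobenius recursion never arises), but it has to be said rather than assumed.

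The degree bound $\deg_E b^{(i)}_j< g=\deg_E c_0$ is supported only by a large-$E$ WKB heuristic. Even granting that the Bloch product tends to a constant as $E\to\infty$ in some normalization, this does not transfer to the polynomial normalization (monic $c_0$, no common divisor) without first knowing the coefficients are polynomials and controlling the asymptotics uniformly in $x$ and in integer powers of $E$; as written it is an expectation, not an argument. In the cited proof these bounds fall out of the recursion, in which $b^{(i)}_0$ is a constant and each successive $b^{(i)}_j$ gains exactly one degree in $E$ before $c_0$ is determined last (compare the example $l'=(2,0,0,0)$ in Section \ref{sec:HeunFGIT}, where $b^{(0)}_0=9$, $b^{(0)}_1=3E$, $c_0=E^2-9g_2/4$). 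I would replace the Bloch-product and WKB scaffolding by the direct substitution argument: it is exactly at the two places you leave open that the integrality of the $l'_i$ does its work.
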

Set
\begin{align}
 & Q(E)=  \Xi (x,E)^2\left( E- \sum_{i=0}^3 l'_i(l'_i+1)\wp (x+\omega_i)\right) +\frac{1}{2}\Xi (x,E)\frac{d^2\Xi (x,E)}{dx^2}-\frac{1}{4}\left(\frac{d\Xi (x,E)}{dx} \right)^2. \label{const}
\end{align}
Then $Q(E)$ is independent of $x$ and it is a monic polynomial in $E$ of degree $2g+1$ (see \cite{Tak1}). 
Solutions of Heun's equations can be written using $\Xi (x,E)$ and $Q(E)$.
\begin{prop} $($\cite[Proposition 3.7]{Tak1}$)$
The functions 
\begin{equation}
\Lambda (x,E)=\sqrt{\Xi (x,E)}\exp \int \frac{\sqrt{-Q(E)}dx}{\Xi (x,E)}
\label{eqn:Lam}
\end{equation}
and $\Lambda (-x,E) $ are solutions of Eq.(\ref{InoEF00}).
\end{prop}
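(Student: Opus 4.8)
The plan is to verify directly that $\Lambda(x,E)$ satisfies Eq.(\ref{InoEF00}) by passing to logarithmic derivatives and checking the associated Riccati equation, exploiting the fact---already recorded in the proposition preceding Eq.(\ref{const})---that $Q(E)$ is independent of $x$, so that $\sqrt{-Q(E)}$ is a genuine constant. Write $u(x)=\sum_{i=0}^3 l'_i(l'_i+1)\wp(x+\omega_i)$, so that Eq.(\ref{InoEF00}) reads $\Lambda''=(u-E)\Lambda$.

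First I would compute, from the definition Eq.(\ref{eqn:Lam}),
\[
\frac{\Lambda'}{\Lambda}=\frac{\Xi'}{2\Xi}+\frac{\sqrt{-Q}}{\Xi},
\]
where the second term is exactly the derivative of $\int \sqrt{-Q}\,\Xi^{-1}dx$ precisely because $\sqrt{-Q}$ is constant in $x$. Since $\Lambda''/\Lambda=(\Lambda'/\Lambda)'+(\Lambda'/\Lambda)^2$, I would expand and observe that the two cross terms proportional to $\sqrt{-Q}\,\Xi'/\Xi^2$ cancel, leaving
\[
\frac{\Lambda''}{\Lambda}=\frac{\Xi''}{2\Xi}-\frac{(\Xi')^2}{4\Xi^2}-\frac{Q}{\Xi^2}.
\]
Then I would substitute the definition Eq.(\ref{const}), rewritten as $-Q/\Xi^2=(u-E)-\Xi''/(2\Xi)+(\Xi')^2/(4\Xi^2)$; the first two terms on the right of the displayed identity are cancelled exactly by the last two terms of this expression, so that $\Lambda''/\Lambda=u-E$. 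This is the desired equation $\Lambda''=(u-E)\Lambda$, i.e.\ Eq.(\ref{InoEF00}).

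For the second function $\Lambda(-x,E)$ I would argue by symmetry. Since $\wp$ is even and $2\omega_1,2\omega_2,2\omega_3$ are periods of $\wp$, each term satisfies $\wp(-x+\omega_i)=\wp(x-\omega_i)=\wp(x+\omega_i)$, so $u(-x)=u(x)$. Hence the operator $H^{(l'_0,l'_1,l'_2,l'_3)}$ is invariant under $x\mapsto -x$, and setting $g(x)=\Lambda(-x,E)$ one has $g''(x)=\Lambda''(-x,E)=(u(-x)-E)\Lambda(-x,E)=(u(x)-E)g(x)$; thus $\Lambda(-x,E)$ solves Eq.(\ref{InoEF00}) as well.

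There is no serious obstacle: the entire content is the algebraic Riccati identity above, and the single ingredient that makes it work---the constancy of $Q(E)$ in $x$---is already furnished by the proposition stated just before Eq.(\ref{const}) (and ultimately follows from $\Xi$ solving the third-order equation Eq.(\ref{prodDE}), since $dQ/dx=\tfrac12\Xi\,[\,\Xi'''-4(u-E)\Xi'-2u'\Xi\,]$ and the bracket is exactly the left-hand side of Eq.(\ref{prodDE}) applied to $\Xi$). The only point demanding care is the bookkeeping of the fractional-power and exponential factors when forming $\Lambda'/\Lambda$ and $\Lambda''/\Lambda$, which is routine.
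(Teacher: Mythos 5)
Your verification is correct, and there is nothing in the paper to compare it against: the proposition is stated as a citation of \cite[Proposition 3.7]{Tak1} with no proof given here. Your Riccati computation is sound --- the cross terms in $(\Lambda'/\Lambda)'+(\Lambda'/\Lambda)^2$ do cancel, and substituting Eq.(\ref{const}) in the form $-Q/\Xi^2=(u-E)-\Xi''/(2\Xi)+(\Xi')^2/(4\Xi^2)$ yields $\Lambda''/\Lambda=u-E$ exactly as you say; your identification of $dQ/dx$ with $\tfrac12\Xi$ times the left side of Eq.(\ref{prodDE}) applied to $\Xi$ correctly justifies the constancy of $Q$, and the evenness of the potential (using that $2\omega_i$ is a period for each $i$, including $\omega_2=-\omega_1-\omega_3$) correctly handles $\Lambda(-x,E)$. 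The only caveat, which you flag as routine and which indeed is, concerns working away from the zeros of $\Xi(x,E)$ when forming $\sqrt{\Xi}$ and $1/\Xi$; the identity $\Lambda''=(u-E)\Lambda$ holds on any simply connected domain avoiding these, which suffices.
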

Write
\begin{equation}
\Xi (x,E)=c(E)+\sum_{i=0}^3 \sum_{j=0}^{\max(l'_i,-l'_i-1)-1 } a^{(i)}_j (E)\left( \frac{d}{dx} \right) ^{2j} \wp (x+\omega_i),
\label{FFx}
\end{equation}
and set
\begin{equation}
a(E)=\sum _{i=0}^3 a^{(i)} _0 (E).
\label{polaE}
\end{equation}
Then the monodromy with respect to the shift of a period can be written in terms of a hyperelliptic integral.
\begin{prop} $($\cite{Tak3,Tak4}$)$ \label{prop:Tak3} 
Assume $l'_0, l'_1, l'_2, l'_3\in \Zint$.\\
(i) If $Q(E_0)=0$, then there exists $q_k \in \{0,1\}$ such that $\Lambda (x+2\omega _k,E_0)=(-1)^{q_k} \Lambda (x,E_0)$ for each $k \in \{ 1,3\}$.\\
(ii) 
If $Q(E) \neq 0$, then the functions $ \Lambda (x,E)$ and $ \Lambda (-x,E)$ are linearly independent and we have 
\begin{equation}
\Lambda (\pm (x+2\omega _k),E)=(-1)^{q_k} \Lambda (\pm x,E) \exp \left( \mp \int_{E_0}^{E}\frac{\omega _k c(\tilde{E}) -\eta _k a(\tilde{E}) }{\sqrt{-Q(\tilde{E})}} d\tilde{E}\right) .
\label{hypellint}
\end{equation}
\end{prop}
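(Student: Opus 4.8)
The plan is to treat the two parts separately, establishing (i) first and then using it to fix the constant of integration in (ii). For part (i), when $Q(E_0)=0$ the exponential factor in \eqref{eqn:Lam} is constant, so $\Lambda(x,E_0)$ is a scalar multiple of $\sqrt{\Xi(x,E_0)}$. Since $\Xi(x,E_0)$ is doubly periodic by Proposition \ref{prop:prod}, we have $\Xi(x+2\omega_k,E_0)=\Xi(x,E_0)$, while its square root, continued along the segment from $x$ to $x+2\omega_k$, returns to itself up to a sign $(-1)^{q_k}$ with a well-defined $q_k\in\{0,1\}$; this sign is a topological datum of $\Xi$ and hence locally constant in $E$, which lets the same $q_k$ reappear in (ii). For the linear independence in (ii), I would first record that the potential $\sum_i l'_i(l'_i+1)\wp(x+\omega_i)$ is even, because each $\omega_i$ is a half-period and so $\wp(-x+\omega_i)=\wp(x+\omega_i)$; consequently $\Xi(x,E)$ is even in $x$ and the exponential factors of $\Lambda(x,E)$ and $\Lambda(-x,E)$ are reciprocal. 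A direct Wronskian computation then gives $W(\Lambda(x,E),\Lambda(-x,E))=-2\sqrt{-Q(E)}$, which is non-zero exactly when $Q(E)\neq0$, proving linear independence.

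The heart of the proof is the Floquet-multiplier formula. Writing $F_k(E)=\sqrt{-Q(E)}\int_x^{x+2\omega_k}\Xi(s,E)^{-1}\,ds$, periodicity of $\Xi$ makes this integral independent of $x$, and, since the sign acquired by $\sqrt{\Xi}$ under $x\mapsto x+2\omega_k$ is the $E$-independent $(-1)^{q_k}$ of part (i), \eqref{eqn:Lam} yields $\Lambda(x+2\omega_k,E)=(-1)^{q_k}\Lambda(x,E)\exp F_k(E)$. I would prove $F_k(E)=-\int_{E_0}^E(\omega_k c(\tilde E)-\eta_k a(\tilde E))/\sqrt{-Q(\tilde E)}\,d\tilde E$ by showing that both sides vanish at $E=E_0$ — which is precisely the content of part (i), the multiplier there being $(-1)^{q_k}$ — and that their $E$-derivatives coincide.

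For the derivative identity I would differentiate the defining relation \eqref{const} of $Q(E)$ with respect to $E$ (writing $\Theta:=\partial_E\Xi$ and $'=d/ds$) and substitute into the numerator $N:=-\tfrac12 Q'(E)\Xi+Q(E)\Theta+\tfrac12\Xi^3$. All the terms containing $E-V$ and the $\Xi^3$ terms cancel, leaving
\[
\frac{N}{\Xi^2}=\frac14\,\frac{d}{ds}\!\left(\frac{\Xi'\Theta-\Xi\Theta'}{\Xi}\right),
\]
whose right-hand side is the $s$-derivative of a doubly periodic function and so has vanishing period integral. Combining this with $\int_x^{x+2\omega_k}(-\tfrac12\Xi)\,ds=\eta_k a(E)-\omega_k c(E)$ — which follows from $\int_x^{x+2\omega_k}\wp(s+\omega_i)\,ds=-2\eta_k$, $\int_x^{x+2\omega_k}ds=2\omega_k$, the vanishing of the period integrals of the higher derivatives $(d/ds)^{2j}\wp(s+\omega_i)$ for $j\geq1$, and the expansion \eqref{FFx} together with \eqref{polaE} — gives $\sqrt{-Q(E)}\,F_k'(E)=\eta_k a(E)-\omega_k c(E)$, the desired derivative formula. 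The opposite sign in the $\Lambda(-x,E)$ case follows from the evenness of $\Xi$, which reverses the exponential, accounting for the $\mp$ in \eqref{hypellint}.

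The \emph{main obstacle} I anticipate is analytic rather than algebraic. The elliptic function $\Phi:=(\Xi'\Theta-\Xi\Theta')/\Xi$ has poles at the zeros of $\Xi(s,E)$, which may lie on or near the integration path, so justifying $\int_x^{x+2\omega_k}(d\Phi/ds)\,ds=\Phi(x+2\omega_k)-\Phi(x)=0$ requires choosing the path to avoid these zeros, or a residue–continuity argument; the same zeros are the branch points of $\Lambda$ that must be tracked to keep the sign $(-1)^{q_k}$ uniform in $E$. Similar care is needed at $E=E_0$, where $\Xi(s,E_0)$ acquires a double zero so that $\int\Xi^{-1}\,ds$ may diverge: there I would avoid evaluating the integral directly and instead invoke continuity of the multiplier in $E$ together with part (i) to pin down $F_k(E_0)=0$, after which integration of the derivative identity from $E_0$ to $E$ completes the proof.
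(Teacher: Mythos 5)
The paper does not prove Proposition \ref{prop:Tak3}; it is quoted without proof from \cite{Tak3,Tak4}, so there is no in-paper argument to compare against. Your reconstruction is correct and follows the same strategy as the cited source: the identity $N=-\tfrac12 Q'(E)\Xi+Q(E)\partial_E\Xi+\tfrac12\Xi^3=\tfrac14\,\Xi^2\,\frac{d}{ds}\bigl((\Xi'\partial_E\Xi-\Xi\,\partial_E\Xi')/\Xi\bigr)$ checks out upon differentiating Eq.~(\ref{const}) in $E$, the Wronskian $W(\Lambda(x,E),\Lambda(-x,E))=-2\sqrt{-Q(E)}$ is right, and the period computation $\int_x^{x+2\omega_k}(-\tfrac12\Xi)\,ds=\eta_k a(E)-\omega_k c(E)$ together with $F_k(E_0)=0$ (pinned down by continuity and part (i), as you say) yields Eq.~(\ref{hypellint}) with the correct signs. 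The only genuine delicacies — the possible change of the sign $(-1)^{q_k}$ as zeros of $\Xi(\cdot,E)$ cross the integration path, and the degeneration at $E=E_0$ — are exactly the ones you already identify and handle by deforming the contour and arguing by continuity, so I see no gap.
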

We introduce another expression of monodromy arising from the Hermite-Krichever Ansatz \cite{Tak4}.
Set
\begin{equation}
\Phi _i(x,\alpha )= \frac{\sigma (x+\omega _i -\alpha ) }{ \sigma (x+\omega _i )} \exp (\zeta( \alpha )x), \quad \quad (i=0,1,2,3),
\label{Phii}
\end{equation}
where $\sigma (x)$ (resp. $\zeta (x)$) is the Weierstrass sigma (resp. zeta) function.
\begin{prop} $($\cite{Tak4}$)$ \label{prop:Tak4}
Assume $l'_0, l'_1, l'_2, l'_3\in \Zint$.
There exist polynomials $P_1(E), \dots ,P_6 (E)$ such that, if the eigenvalue $E$ satisfies $P_2(E) \neq 0$, then the function $\Lambda (x,E)$ in Eq.(\ref{eqn:Lam}) can be written as
\begin{align}
& \Lambda (x,E) = \exp \left( \kappa x \right) \left( \sum _{i=0}^3 \sum_{j=0}^{|l'_i+1/2|-3/2} \tilde{b} ^{(i)}_j \left( \frac{d}{dx} \right) ^{j} \Phi _i(x, \alpha ) \right) ,
\label{LalphaFG}
\end{align}
and the values $\alpha $ and $\kappa $ can be expressed as
\begin{equation}
 \wp (\alpha ) =\frac{P_1 (E)}{P_2 (E)}, \; \; \; \wp ' (\alpha ) =\frac{P_3 (E)}{P_4 (E)} \sqrt{-Q(E)} , \; \; \kappa  =\frac{P_5 (E)}{P_6 (E)} \sqrt{-Q(E)}.
\label{P1P6}
\end{equation}
The periodicity of the function $\Lambda (\pm x,E)  $ in  Eq.(\ref{LalphaFG}) is described as 
\begin{align}
& \Lambda (\pm (x+2\omega _k),E) = \exp (\pm (2\omega _k ( \zeta (\alpha ) +2 \kappa ) -2\eta _k \alpha )) \Lambda (\pm x,E) , \quad  (k=1,3). \label{ellintFG} 
\end{align}
If $P_2(E) = 0$, then the function $\Lambda (x,E)$ in Eq.(\ref{eqn:Lam}) can be expressed as a product of an exponential function and a doubly periodic function.
\end{prop}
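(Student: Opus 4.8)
The plan is to characterise $\Lambda (x,E)$ intrinsically as a Bloch (Baker--Akhiezer) eigenfunction and then to invoke the classical Hermite decomposition of meromorphic functions of the third kind on the elliptic curve. First I would show that, away from the degenerate locus, $\Lambda (x,E)$ is a \emph{single-valued} meromorphic function of $x$ on $\Cplx$ whose only poles lie at $x\equiv \omega _i$ $(i=0,1,2,3)$ modulo the period lattice. The delicate point is the apparent branching of $\Lambda $ caused by the factor $\sqrt{\Xi (x,E)}$ and by the logarithmic contribution $\int \sqrt{-Q(E)}\,dx/\Xi (x,E)$ at the zeros of $\Xi (x,E)$. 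I would resolve this using the defining identity Eq.(\ref{const}): at a simple zero $x_0$ of $\Xi (x,E)$ it forces $\Xi '(x_0)^2=-4Q(E)$, so the residue of $\sqrt{-Q(E)}/\Xi (x,E)$ at $x_0$ equals $\pm 1/2$; adding the half-integer exponent coming from $\sqrt{\Xi }$ makes the total local exponent of $\Lambda $ an integer, whence $x_0$ is not a branch point. Thus the only genuine singularities are the poles of $\Xi (x,E)$, which by Proposition \ref{prop:prod} sit at $x\equiv \omega _i$, and the local analysis of Eq.(\ref{eq:expell}) bounds their orders by $\max (l'_i,-l'_i-1)$.

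Next, from Eq.(\ref{eqn:Lam}) the logarithmic derivative $\tfrac{d}{dx}\log \Lambda =\Xi '/(2\Xi )+\sqrt{-Q}/\Xi $ is an elliptic function, so $\Lambda (x+2\omega _k,E)$ is a constant multiple of $\Lambda (x,E)$; that is, $\Lambda $ is a function of the third kind, whose Bloch multiplier is moreover computed by the hyperelliptic integral of Proposition \ref{prop:Tak3}. With the two facts now in hand---prescribed poles at the $\omega _i$ of the orders above, plus quasi-periodicity---I would apply Hermite's theory of elliptic functions of the third kind: any such function can be written as $e^{\kappa x}\sum _{i,j}\tilde b^{(i)}_j(d/dx)^j\Phi _i(x,\alpha )$ for suitable $\alpha ,\kappa $. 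The building blocks $\Phi _i(x,\alpha )$ of Eq.(\ref{Phii}) are exactly the elementary third-kind functions with a single pole at $x\equiv \omega _i$, their derivatives up to order $\max (l'_i,-l'_i-1)-1$ span the allowed principal parts (which produces the summation range $|l'_i+1/2|-3/2$), and the prefactor $e^{\kappa x}$ carries the remaining quasi-periodicity. This yields Eq.(\ref{LalphaFG}).

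It then remains to pin down $\alpha $ and $\kappa $. Using the Legendre-type transformation $\Phi _i(x+2\omega _k,\alpha )=e^{2\omega _k\zeta (\alpha )-2\eta _k\alpha }\Phi _i(x,\alpha )$ together with the prefactor $e^{\kappa x}$ and comparing the resulting multiplier with the Bloch multiplier of $\Lambda $ supplied by Proposition \ref{prop:Tak3}, I obtain the periodicity relation Eq.(\ref{ellintFG}). To extract the explicit rational expressions Eq.(\ref{P1P6}), I would expand both sides of Eq.(\ref{LalphaFG}) at the poles $x=\omega _i$ and at a generic point and match them against the known data $\Xi (x,E)$ and $Q(E)$: the coefficients $\tilde b^{(i)}_j$ and the quantities $\wp (\alpha ),\wp '(\alpha ),\kappa $ are then the solution of a finite algebraic system with entries polynomial in $E$. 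Solving it exhibits $\wp (\alpha )$ as a rational function of $E$ while $\wp '(\alpha )$ and $\kappa $ acquire a factor $\sqrt{-Q(E)}$, the odd part reflecting the sign change under $x\mapsto -x$ (the hyperelliptic involution). Finally, the case $P_2(E)=0$ is the degeneration $\wp (\alpha )=\infty $, i.e. $\alpha $ tending to a lattice point; there the Hermite building blocks collapse and $\Lambda (x,E)$ reduces to a product of an exponential and a doubly periodic function, which I would read off directly from Eq.(\ref{eqn:Lam}).

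I expect the main obstacle to be the first step: the local analysis that simultaneously controls the branching at the zeros of $\Xi (x,E)$ through the identity Eq.(\ref{const}) and the pole orders at $x=\omega _i$ through Eq.(\ref{eq:expell}). Everything downstream depends on $\Lambda $ being genuinely meromorphic with exactly the pole data matching the summation ranges in Eq.(\ref{LalphaFG}), so the bookkeeping of exponents---especially the half-integer cancellation that removes the would-be square-root branch points---is where the real work lies.
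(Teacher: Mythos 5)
This proposition is not proved in the paper at all: it is imported verbatim from \cite{Tak4}, so there is no in-paper argument to compare against. Your outline is, however, essentially the argument of that reference: the residue computation at the zeros of $\Xi(x,E)$ via Eq.(\ref{const}) (giving residues $\pm\tfrac12$ that cancel the half-integer exponent of $\sqrt{\Xi}$), ellipticity of $\tfrac{d}{dx}\log\Lambda$ to get constant Bloch multipliers, Hermite's decomposition into the third-kind blocks $\Phi_i$ and their derivatives with the pole orders dictated by Proposition \ref{prop:prod}, and the parity of $\sqrt{-Q(E)}$ under the hyperelliptic involution to obtain Eq.(\ref{P1P6}). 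One step you should make explicit rather than assume: all zeros of $\Xi(\cdot,E)$ are automatically simple when $Q(E)\neq 0$, precisely because Eq.(\ref{const}) gives $Q(E)=-\tfrac14\Xi'(x_0)^2$ at any zero $x_0$; without this your case analysis "at a simple zero" is incomplete. (Also note that your multiplier computation yields $\zeta(\alpha)+\kappa$ rather than the $\zeta(\alpha)+2\kappa$ printed in Eq.(\ref{ellintFG}); the paper itself uses $\zeta(\alpha)+\kappa$ in Eq.(\ref{eq:monodHKint}), so this appears to be a normalization discrepancy in the statement rather than an error on your part.)
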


We review a relationship between the polynomial $Q(E)$ and finite-dimensional invariant subspaces. 
We define a vector space $V$ by
\begin{align}
& V=
\left\{
\begin{array}{r}
U_{-l'_0,-l'_1,-l'_2,-l'_3}\oplus U_{-l'_0 ,-l'_1,l'_2+1 ,l'_3+1}\oplus U_{-l'_0,l'_1+1,-l'_2,l'_3+1}\oplus U_{-l'_0 ,l'_1+1,l'_2+1,-l'_3} \\
( l'_0 +l'_1 +l'_2 +l'_3 : \mbox{ even}); \\
U_{-l'_0,-l'_1,-l'_2,l'_3+1}\oplus U_{-l'_0 ,-l'_1,l'_2+1,-l'_3} \oplus U_{-l'_0 ,l'_1+1,-l'_2,-l'_3} \oplus U_{l'_0 +1,-l'_1,-l'_2,-l'_3} \\
 (l'_0 +l'_1 +l'_2 +l'_3 : \mbox{ odd}),
\end{array}
\right. \label{sp:V}
\end{align}
where $ U_{\alpha _0, \alpha _1, \alpha _2, \alpha _3}$ are defined by
\begin{align}
& U_{\alpha _0, \alpha _1, \alpha _2, \alpha _3}=
\left\{
\begin{array}{ll}
 V_{\alpha _0, \alpha _1, \alpha _2, \alpha _3}, & \sum_{i=0}^3 \alpha _i/2 \in \Zint_{\leq 0}; \\
V_{1-\alpha _0, 1-\alpha _1, 1-\alpha _2, 1-\alpha _3}, & \sum_{i=0}^3 \alpha _i /2\in \Zint_{\geq 2};\\
\{ 0 \} , & \mbox{otherwise} ,
\end{array}
\right. 
\end{align}
Then $H ^{(l'_0, l'_1, l'_2 , l'_3)}\cdot V \subset V$ and it can be shown that if $l'_0, l'_1, l'_2 , l'_3 \in \Zint $ then $V$ is the maximum finite-dimensional $H$-invariant subspace of the space spanned by the function $f(x)$ such that $f(x+2\omega _k )/f(x) \in \{\pm 1 \}$ for $k =1,3$.
Let $P(E)$ be the monic characteristic polynomial of the operator $H^{(l'_0, l'_1, l'_2 , l'_3)}$
on the space $V$, i.e. $P(E)=\det _V (E\cdot 1 -H^{(l'_0, l'_1, l'_2 , l'_3)})$. 
\begin{prop} $($\cite{Tak5}$)$ \label{thm:dist}
We have $P(E)=Q(E)$.
\end{prop}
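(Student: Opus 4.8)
The plan is to use that $P(E)$ and $Q(E)$ are both monic — the former by definition, the latter by Proposition~\ref{prop:prod} and the remark following (\ref{const}) — so that the statement reduces to the two assertions $\deg P=\deg Q=2g+1$ and $Q(E)\mid P(E)$ in $\Cplx[E]$; together with monicity these force $P=Q$. The object linking the two polynomials is the product $\Xi(x,E)=\Lambda(x,E)\,\Lambda(-x,E)$ of the two solutions in (\ref{eqn:Lam}): because each $\wp(x+\omega_i)$ is even in $x$, the function $\Xi(x,E)$ is even and doubly periodic, in accordance with its explicit shape (\ref{Fx}), and at a zero $E_0$ of $Q$ the exponential in (\ref{eqn:Lam}) degenerates, so that $\Lambda(x,E_0)=\Lambda(-x,E_0)=\sqrt{\Xi(x,E_0)}$ becomes a single doubly-$(\pm1)$-periodic eigenfunction.

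First I would settle the degree. By Proposition~\ref{findim} each summand $V_{\beta'_0,\beta'_1,\beta'_2,\beta'_3}$ entering the definition (\ref{sp:V}) of $V$ has dimension $\tilde d+1$ with $\tilde d=-\sum_{i}\beta'_i/2$, so $\dim_{\Cplx}V$ is a completely explicit sum of four such terms. I would evaluate this sum for $l'_0,l'_1,l'_2,l'_3\in\Zint$ — using the symmetry $l'_i\leftrightarrow -l'_i-1$ to reduce to $l'_i\ge 0$ — and compare it with $2g+1$, where $g=\deg_E c_0(E)$ is read off from (\ref{Fx}). The required equality is a finite combinatorial identity tying the orders $\max(l'_i,-l'_i-1)$ that govern the length of the expansion of $\Xi$ to the dimensions of the quasi-solvable blocks, and splits into the even and odd cases of $l'_0+l'_1+l'_2+l'_3$ already separated in (\ref{sp:V}).

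Next I would prove $Q\mid P$. If $E_0$ is a zero of $Q$, Proposition~\ref{prop:Tak3}(i) furnishes $q_1,q_3\in\{0,1\}$ with $\Lambda(x+2\omega_k,E_0)=(-1)^{q_k}\Lambda(x,E_0)$ for $k=1,3$, so $\Lambda(\cdot,E_0)$ is a nonzero eigenfunction of $H^{(l'_0,l'_1,l'_2,l'_3)}$ at $E_0$ satisfying $f(x+2\omega_k)/f(x)\in\{\pm1\}$; since $V$ is the maximal finite-dimensional $H$-invariant subspace of functions with this periodicity, $\Lambda(\cdot,E_0)\in V$ and hence $P(E_0)=0$. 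For a simple zero this already gives $\mathrm{ord}_{E_0}P\ge 1=\mathrm{ord}_{E_0}Q$. For a zero of higher order $r$, my plan is to build a Jordan chain inside $V$ by differentiating the Bloch solution in the spectral parameter: from $(H-E)\Lambda=0$ one gets $(H-E_0)\partial_E^{j}\Lambda|_{E_0}=j\,\partial_E^{j-1}\Lambda|_{E_0}$, and the point is that the iterated $E$-derivatives of $\Lambda$ remain doubly-$(\pm1)$-periodic, hence remain in $V$, exactly as long as the monodromy multiplier (\ref{hypellint}) is stationary to the corresponding order at $E_0$.

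The hard part will be this last multiplicity bookkeeping, which is where the clean ``one band edge $=$ one simple eigenvalue'' picture can fail at closed gaps. I would control it through a local analysis at $E_0$ of the hyperelliptic integral in (\ref{hypellint}): its integrand carries $\sqrt{-Q(\tilde E)}$ in the denominator, so the order (and parity) of the zero of $Q$ at $E_0$ dictates the branching of $\sqrt{-Q}$ there, hence both the Jordan type of the $x$-monodromy $M_{2\omega_k}$ (a nontrivial block versus a scalar $\pm I$) and the order to which the $(\pm1)$-multipliers are stationary in $E$. Translating this into the number of independent doubly-$(\pm1)$-periodic generalized eigenfunctions produced above yields $\mathrm{ord}_{E_0}Q\le\dim\ker((H-E_0)^{\dim V}|_V)=\mathrm{ord}_{E_0}P$, and therefore $Q\mid P$. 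Combined with the degree count, monicity gives $P(E)=Q(E)$.
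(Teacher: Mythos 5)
The paper does not actually prove Proposition \ref{thm:dist} here: it is imported verbatim from \cite{Tak5}, so there is no in-text argument to match your proposal against. Your skeleton (both polynomials monic, equal degrees, zero sets agreeing with multiplicity) is the natural one and is in the spirit of the cited proof, but as written it has two genuine gaps. First, the degree identity $\deg P=\dim V=2g+1$ is the quantitative heart of the statement and you only assert it as ``a finite combinatorial identity''; it amounts to showing that the sum of the four dimensions $\tilde d+1$ from Proposition \ref{findim} over the summands in Eq.(\ref{sp:V}) equals $2\deg_E c_0(E)+1$ with $c_0$ from Eq.(\ref{Fx}), and this computation (essentially Proposition \ref{prop:genus} run backwards, with separate even/odd cases and the absolute values $\max(l'_i,-l'_i-1)$) cannot be waved away — it is where the two definitions of $g$ are reconciled.

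Second, the Jordan-chain step would fail as described. The function $\Lambda(x,E)$ of Eq.(\ref{eqn:Lam}) contains $\sqrt{-Q(E)}$ explicitly, so it is branched in $E$ precisely at the zeros $E_0$ of $Q$ where you propose to form $\partial_E^{j}\Lambda|_{E_0}$; these derivatives are singular there, and the identity $(H-E_0)\partial_E^{j}\Lambda|_{E_0}=j\,\partial_E^{j-1}\Lambda|_{E_0}$ presupposes smoothness in $E$ that you do not have. The correct object is single-valued only on the spectral curve $\nu^2=-Q(E)$, and the symmetric and antisymmetric combinations $\Lambda(x,E)\pm\Lambda(-x,E)$ have opposite parity in the local parameter $\nu$, so the multiplicity bookkeeping is structurally different from what you sketch. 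Note also that the paper's assertion that $\Gamma:\nu^2=-Q(E)$ has genus $g$ while $\deg Q=2g+1$ forces the roots of $Q$ to be simple, in which case the whole Jordan-chain apparatus is unnecessary: once you have $\dim V=2g+1$ and the set-theoretic inclusion (each zero of $Q$ is an eigenvalue of $H$ on $V$, via $\sqrt{\Xi(x,E_0)}\in V$ and Proposition \ref{prop:Tak3}(i)), the $2g+1$ distinct roots of $Q$ already exhaust the spectrum of $H|_V$ with multiplicity one and monicity gives $P=Q$. Either justify simplicity of the roots of $Q$ or repair the multiplicity argument on the curve; as it stands the proof is incomplete.
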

The curve $\Gamma \! : \nu ^2=-Q(E)$ is called the spectral curve, which plays an important role in Eqs.(\ref{eqn:Lam}), (\ref{hypellint}).
It follows from Proposition \ref{thm:dist} that edges of the hyperelliptic curve $\Gamma $ are eigenvalues of the operator $H^{(l'_0, l'_1, l'_2 , l'_3)}$ on the invariant space $V$.
The genus of the curve $\Gamma $ is $g$, where $g$ is defined in Proposition \ref{prop:prod}. 

Let us consider the case $Q(E)=0$. 
Let $E_0$ be a zero of $Q(E)$.
Then we have $P(E_0)=0$, $\Lambda (x,E_0)= \sqrt{\Xi(x,E_0)} \in V$ and the functions $ \Lambda (x,E_0) $ and $\Lambda (-x,E_0 ) $ are linearly dependent.
Another solution of Eq.(\ref{InoEF00}) can be derived as  $\sqrt{\Xi(x,E_0)}\int \frac{dx}{\Xi(x,E_0)} (= \Lambda _2 (x,E_0))$.
The monodromy with respect to the shift of a period was calculated in \cite{TakH} and it can be written as
\begin{align}
& (\Lambda (x+2\omega _k ,E_0) , \Lambda _2(x+2\omega _k,E_0) )= \\
& (-1)^{q_k}  (\Lambda (x,E_0) , \Lambda _2(x,E_0) )\left( 
\begin{array}{cc}
1 & \left. \frac{2\omega _k c(E)  -2\eta _k a(E)}{\frac{d}{dE}Q(E)} \right|_{E \rightarrow E_0} \\
 0 & 1
\end{array}
\right) . \nonumber
\end{align}

\begin{exa}
The case $l'_0 =2$, $l'_1=l'_2=l'_3=0$.
The doubly periodic function $\Xi (x,E)$ which satisfies Eq.(\ref{prodDE}) and the polynomial $Q(E)$ are evaluated as
\begin{align}
& \Xi (x,E)= 9\wp (x)^2 +3E \wp (x) +E^2 -9g_2/4  ,\\
& Q(E)=(E^2-3g_2)\prod _{i=1}^3 (E-3e_i) .
\end{align}
The function $\Lambda (x,E)$ defined by Eq.(\ref{eqn:Lam}) is a solution of Eq.(\ref{InoEF00}).
For the monodromy with respect to the shift $x \rightarrow x+2\omega _k$ $(k=1,3)$, we have a formula described by a hyperelliptic integral of genus two.
\begin{align}
& \Lambda (x+2\omega _k,E)= \Lambda (x,E) \exp \left( -\frac{1}{2} \int_{\sqrt{3g_2}}^{E}\frac{ \omega _k (2\tilde{E}^2-3g_2)-6\eta _k \tilde{E}  }{\sqrt{-(\tilde{E}^2-3g_2) \prod _{i=1}^3 (\tilde{E}-3e_i)}} d\tilde{E}\right) .
\label{eq:HE2000}
\end{align}
The function $\Lambda (x,E)$ can be expressed in the form of the Hermite-Krichever Ansatz 
\begin{align}
& \Lambda (x,E) = \exp \left( \kappa x \right) \left(  \tilde{b} ^{(0)}_0  \Phi _0(x, \alpha ) + \tilde{b} ^{(0)}_1 \left( \frac{d}{dx} \right)  \Phi _0(x, \alpha ) \right) ,
\end{align}
and $\alpha $, $\kappa $ satisfy
\begin{align}
& \wp( \alpha )= e_1 -\frac{(E-3e_1)(E+6e_1)^2}{9(E^2-3g_2)}, \quad \kappa =\frac{2}{3(E^2-3g_2)}\sqrt{-Q(E)}.
\label{eq:HK2000}
\end{align}
Set 
\begin{align}
& V= V_{-2,0,0,0}\oplus V_{0,-1,-1,0}\oplus V_{0,-1,0,-1}\oplus V_{0,0, -1,-1}.
\end{align}
Then $\dim V= 2+1+1+1=5$ and $Q(E)$ is the characteristic polynomial of $H^{(2,0,0,0)}$ on the space $V$.
The characteristic polynomial of $H^{(2,0,0,0)}$ on $V_{-2,0,0,0}$ (resp. $V_{0,-1,-1,0}$, $V_{0,-1,0,-1}$, $V_{0,0, -1,-1}$) is $E^2-3g_2$ (resp. $E-3e_3$, $E-3e_2$, $E-3e_1$).
\end{exa}

By applying integral transformation to the case of a finite-gap potential (i.e. applying Theorem \ref{thm:ellipinttras0} for the case $l'_0, l'_1 , l'_2 ,l'_3 \in \Zint $ while choosing $\alpha ' _0  \in \{ -l'_0 ,l'_0 +1 \} $ to be $\eta \in 1/2 +\Zint $), we obtain Heun's equation for the case $l_0, l_1 , l_2 ,l_3 \in \Zint +1/2$ and $l_0+ l_1 + l_2 +l_3 \in 2\Zint +1$.
Conversely we can express solutions and monodromy for the case $l_0, l_1 , l_2 ,l_3 \in \Zint +1/2$ and $l_0+ l_1 + l_2 +l_3 \in 2\Zint +1$ by using solutions and monodromy calculated by the finite-gap potential method for the case $l'_0, l'_1 , l'_2 ,l'_3 \in \Zint $.
The following proposition is obtained by combining Proposition \ref{prop:ellipinttras}, Corollary \ref{thmcor:pp}, Propositions \ref{prop:Tak3} and \ref{prop:Tak4}.
\begin{prop} \label{prop:ellipinttrasfingap}
Let $\alpha _0 \in \{ -l_0, l_0 +1 \}$ and set
\begin{align}
& \eta =\frac{-\alpha _0 -l_1-l_2-l_3+1}{2}, \; l'_0=\frac{-\alpha _0+l_1+l_2+l_3+1}{2},  \label{eq:l'alpha1} \\
& l'_1=\frac{-\alpha _0+l_1-l_2-l_3-1}{2}, \; l'_2=\frac{-\alpha _0 -l_1+l_2-l_3-1}{2}, \; l'_3=\frac{-\alpha _0 -l_1-l_2+l_3-1}{2}. \nonumber 
\end{align}
If $l_0, l_1, l_2, l_3 \in \Zint +1/2 $ and  $l_0+ l_1 + l_2 +l_3 \in 2\Zint +1$, then we have $\eta \in \Zint +1/2 $ and $l'_0, l'_1 , l'_2 ,l'_3 \in \Zint $.
Let $M_{2\omega _k}$ $(k=1,3)$ be a monodromy matrix of solutions of Eq.(\ref{InoEF0}) with respect to the shift $x \rightarrow x+2\omega _k$ for the parameters $l_0, l_1, l_2, l_3 ,E$.
Then we have
\begin{align}
& {\rm tr} M_{2\omega _k} = 2(-1)^{q_k} \cos \left( \int_{E_0}^{E}\frac{\omega _k c(\tilde{E}) -\eta _k a(\tilde{E}) }{\sqrt{Q(\tilde{E})}} d\tilde{E}\right) ,
\label{eq:monodHEint}
\end{align}
where $c(E)$ and $a(E)$ are defined in Eqs.(\ref{FFx}), (\ref{polaE}) and $E_0$ is a zero of $Q(E)$ for the parameters $l'_0, l'_1, l'_2, l'_3 ,E$ such that $\Lambda (x+2\omega _k,E_0)=(-1)^{q_k} \Lambda (x,E_0)$ for each $k \in \{ 1,3\}$.
We also have 
\begin{align}
& {\rm tr} M_{2\omega _k} = 2\cos \left( \sqrt{-1} (2\omega _k ( \zeta (\alpha ) + \kappa ) -2\eta _k \alpha ) \right) ,
\label{eq:monodHKint}
\end{align}
where $\alpha $ and $\kappa $ are determined by Eq.(\ref{P1P6}) for the parameters $l'_0, l'_1, l'_2, l'_3 ,E$.
\end{prop}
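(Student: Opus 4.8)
The plan is to stitch together the four cited results, reducing the half-integer case to the integer (finite-gap) case where explicit monodromy formulas already exist. First I would settle the arithmetic assertion. Writing $l_i=m_i+1/2$ with $m_i\in\Zint$, the hypothesis $l_0+l_1+l_2+l_3\in 2\Zint+1$ is equivalent to $m_0+m_1+m_2+m_3$ being odd, while $\alpha_0\in\{-l_0,l_0+1\}\subset\Zint+1/2$. Substituting into Eq.(\ref{eq:l'alpha1}) shows that the numerators defining $\eta$ and each $l'_j$ are integers; the odd parity of $m_0+m_1+m_2+m_3$ then forces the numerator of $\eta$ to be odd and each numerator of $l'_j$ to be even, so $\eta\in\Zint+1/2$ and $l'_0,l'_1,l'_2,l'_3\in\Zint$, independently of which of the two values of $\alpha_0$ is chosen. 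This places us in the regime where the finite-gap propositions apply.

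Next I would reduce the trace to the integer parameters. The parametrization of Eq.(\ref{eq:l'alpha1}) is obtained from that of Eq.(\ref{eq:l'alpha}) by replacing $\alpha_0$ with the other admissible value $1-\alpha_0\in\{-l_0,l_0+1\}$ and then applying $l'_0\mapsto -l'_0-1$; since Eq.(\ref{InoEF00}) is invariant under $l'_i\leftrightarrow -l'_i-1$, the operator $H^{(l'_0,l'_1,l'_2,l'_3)}$ and all of its monodromy matrices are unchanged by this adjustment. Hence Corollary \ref{thmcor:pp} applies and gives $\mbox{tr}\,M_{2\omega_k}^{(l_0,l_1,l_2,l_3)}=\mbox{tr}\,M_{2\omega_k}^{(l'_0,l'_1,l'_2,l'_3)}$ with $l'_0,l'_1,l'_2,l'_3\in\Zint$; this corollary is the specialization of Theorem \ref{thm:pp0}, where Proposition \ref{prop:ellipinttras} (the elliptical integral transformation) and the trace identity Theorem \ref{thm:monodm} enter.

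It then remains to evaluate $\mbox{tr}\,M_{2\omega_k}^{(l'_0,l'_1,l'_2,l'_3)}$ by the finite-gap machinery. For $Q(E)\neq 0$, Proposition \ref{prop:Tak3}(ii) (Eq.(\ref{hypellint})) gives linearly independent solutions $\Lambda(x,E)$ and $\Lambda(-x,E)$ of Eq.(\ref{InoEF00}) with monodromy multipliers $(-1)^{q_k}\exp(\mp\int_{E_0}^{E}(\omega_k c(\tilde{E})-\eta_k a(\tilde{E}))/\sqrt{-Q(\tilde{E})}\,d\tilde{E})$ under $x\mapsto x+2\omega_k$, so the trace is the sum $2(-1)^{q_k}\cosh(\int_{E_0}^{E}(\omega_k c(\tilde{E})-\eta_k a(\tilde{E}))/\sqrt{-Q(\tilde{E})}\,d\tilde{E})$; rewriting $\sqrt{-Q}=\sqrt{-1}\,\sqrt{Q}$ and using $\cosh(\sqrt{-1}\,u)=\cos u$ turns this into Eq.(\ref{eq:monodHEint}). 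Feeding the multipliers of Proposition \ref{prop:Tak4} (Eq.(\ref{ellintFG})) through the same $2\cosh(v)=2\cos(\sqrt{-1}\,v)$ identity yields Eq.(\ref{eq:monodHKint}). The degenerate locus $Q(E)=0$ is handled by continuity: at such an $E_0$, Proposition \ref{prop:Tak3}(i) makes both multipliers equal to $(-1)^{q_k}$, matching the vanishing of the integral at its base point.

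The main obstacle will be the bookkeeping in the reduction step: checking that the passage from Eq.(\ref{eq:l'alpha1}) to the hypotheses of Corollary \ref{thmcor:pp} is legitimate, and, more delicately, that the same base point $E_0$, the same sign $(-1)^{q_k}$, and the same branch of $\sqrt{-Q}$ are used throughout, so that the two output formulas Eq.(\ref{eq:monodHEint}) and Eq.(\ref{eq:monodHKint}) hold literally rather than merely up to sign. The analytic conversions between $\cosh$ and $\cos$ and between $\sqrt{-Q}$ and $\sqrt{Q}$ are routine once a branch is fixed, but they must be pinned down carefully to make the displayed equalities exact.
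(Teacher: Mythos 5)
Your proposal is correct and follows essentially the same route as the paper, which proves this proposition simply by combining Proposition \ref{prop:ellipinttras}, Corollary \ref{thmcor:pp}, and Propositions \ref{prop:Tak3} and \ref{prop:Tak4}; your write-up fills in exactly the details the paper leaves implicit (the parity check, the observation that Eq.(\ref{eq:l'alpha1}) arises from Eq.(\ref{eq:l'alpha}) via $\alpha_0\mapsto 1-\alpha_0$ followed by $l'_0\mapsto -l'_0-1$, and the $\cosh$-to-$\cos$ conversion). The only caveat, which is an inconsistency in the paper rather than in your argument, is that feeding Eq.(\ref{ellintFG}) literally through your identity produces $2\omega_k(\zeta(\alpha)+2\kappa)$ rather than the $2\omega_k(\zeta(\alpha)+\kappa)$ displayed in Eq.(\ref{eq:monodHKint}).
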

Note that Heun's equation in Proposition \ref{prop:ellipinttrasfingap} for the parameter $l'_0$, $l'_1$, $l'_2$, $l'_3$ for the case $\alpha _0=-l_0$ is isomonodromic to the one for the parameter $l'_0$, $l'_1$, $l'_2$, $l'_3$ for the case $\alpha _0=l_0 +1$, and they are linked by the generalized Darboux transformation described in \cite{Tak5}.
If we replace the definition of the set $\overline{\sigma _b(H)}$ by the following; $E \in \overline{\sigma _b(H)} \Leftrightarrow -2 \leq \mbox {tr}M_{2\omega _1} \leq 2 $,
then the set $\Rea \setminus \overline{\sigma _b(H)}$ for the case $l_0, l_1 , l_2 ,l_3 \in \Zint +1/2$, $l_0+ l_1 + l_2 +l_3 \in 2\Zint +1 $ and $\omega _1 , \sqrt{-1} \omega _3 \in \Rea _{\neq 0}$ has finite gaps, which coincides with the one for the case $l'_0$, $l'_1$, $l'_2$, $l'_3$ in Proposition \ref{prop:ellipinttrasfingap}.
But the potential for the case  $l_0, l_1 , l_2 ,l_3 \in \Zint +1/2$ and $l_0+ l_1 + l_2 +l_3 \in 2\Zint +1 $ is not an algebro-geometric finite-gap potential.

It follows from Propositions \ref{prop:nonlog-polynell} and \ref{prop:exprnonlogsolell} that the eigenvalues of the four spaces for $l'_0$, $l'_1$, $l'_2$, $l'_3$ in Eq.(\ref{sp:V}) corresponds to eigenvalues such that one of the singularities $\{ 0,\omega _1, \omega _2, \omega _3 \}$ is apparent. 
By combining these remarks with Proposition \ref{thm:dist}, we have the following proposition:
\begin{prop} \label{prop:lplQE}
Let $\alpha _0 \in \{ -l_0, l_0 +1 \}$ and define the numbers $l'_0, l'_1 , l'_2 ,l'_3 $ by Eq.(\ref{eq:l'alpha1}).
Assume  $l_0, l_1, l_2, l_3 \in \Zint +1/2 $, $l_0+ l_1 + l_2 +l_3 \in 2\Zint +1$ and let $Q(E)$ be the polynomial in Eq.(\ref{const}) for the parameters $l'_0, l'_1 , l'_2 ,l'_3 (\in \Zint )$.\\
(i) The condition $Q(E_0)=0$ is equivalent to the condition that there exists $i \in \{ 0,1,2,3 \}$ such that the singularity $x =\omega _i$ is apparent in Eq.(\ref{InoEF}) for the parameters $l_0, l_1 , l_2 ,l_3 $.\\
(ii) If $l'_0+ l'_1 + l'_2 +l'_3 $ is even, then the characteristic polynomial of the operator $H^{(l'_0,l'_1,l'_2,l'_3)}$ on the space $U_{-l'_0,-l'_1,-l'_2,-l'_3}$ (resp. $U_{-l'_0 ,-l'_1,l'_2+1 ,l'_3+1}$, $U_{-l'_0,l'_1+1,-l'_2,l'_3+1}$, $U_{-l'_0 ,l'_1+1,l'_2+1,-l'_3}$) coincides with the polynomial $P^{(0)} (E)$ (resp. $P^{(1)} (E)$, $P^{(2)} (E)$, $P^{(3)} (E)$) for the parameters $l_0, l_1 , l_2 ,l_3 $ which are defined between Eq.(\ref{eq:expell}) and Proposition \ref{prop:exprnonlogsolell}.\\
(iii) If $l'_0 + l'_1 + l'_2 +l'_3 $ is odd, then the characteristic polynomial of the operator $H^{(l'_0,l'_1,l'_2,l'_3)}$ on the space $U_{-l'_0,-l'_1,-l'_2,l'_3+1}$ (resp. $U_{-l'_0 ,-l'_1,l'_2+1,-l'_3}$, $U_{-l'_0 ,l'_1+1,-l'_2,-l'_3}$, $U_{l'_0 +1,-l'_1,-l'_2,-l'_3}$) coincides with the polynomial $P^{(3)} (E)$ (resp. $P^{(2)} (E)$, $P^{(1)} (E)$, $P^{(0)} (E)$) for the parameters $l_0, l_1 , l_2 ,l_3 $.\\
(iv) We have $Q(E) =P^{(0)}(E) P^{(1)}(E) P^{(2)}(E) P^{(3)}(E) $.
\end{prop}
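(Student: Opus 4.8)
The plan is to deduce all four assertions from the single identity $Q(E)=P(E)$ furnished by Proposition \ref{thm:dist}, where $P(E)=\det_V(E\cdot 1-H^{(l'_0,l'_1,l'_2,l'_3)})$, by decomposing $V$ and recognising each resulting factor as one of the non-logarithmic polynomials $P^{(i)}(E)$. First I would fix $\alpha_0\in\{-l_0,l_0+1\}$ together with the integers $l'_0,l'_1,l'_2,l'_3$ of Eq.(\ref{eq:l'alpha1}), noting that under the stated hypotheses $\eta\in\Zint+1/2$ and $l'_i\in\Zint$, so that $Q(E)$ and the decomposition Eq.(\ref{sp:V}) are available. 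Since $V$ is by construction a direct sum of four spaces $U_{\ast}$, each of which is either a quasi-solvable space $V_{\beta'_0,\beta'_1,\beta'_2,\beta'_3}$ invariant under $H^{(l'_0,l'_1,l'_2,l'_3)}$ (Proposition \ref{findim}) or $\{0\}$, its characteristic polynomial factors as the product of the four characteristic polynomials on the summands. It therefore suffices to identify each factor, which is exactly the content of (ii) and (iii); then (iv) is the product identity $Q(E)=P^{(0)}(E)P^{(1)}(E)P^{(2)}(E)P^{(3)}(E)$, and (i) is immediate from (iv) and the defining property $P^{(i)}(E_0)=0\Leftrightarrow x=\omega_i$ non-logarithmic.

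To identify a single factor, say the one attached to $x=\omega_0$, I would match roots, degrees and leading coefficients. The roots match through the correspondences already at hand: by Proposition \ref{prop:nonlog-polynell}(iii)--(iv) an eigenvalue $E_0$ of $H^{(l'_0,l'_1,l'_2,l'_3)}$ on the relevant summand yields a solution of Eq.(\ref{InoEF}) for which $x=\omega_0$ is non-logarithmic, so $P^{(0)}(E_0)=0$, while conversely by Proposition \ref{prop:exprnonlogsolell}(i)--(ii) non-logarithmicity of $x=\omega_0$ produces a nonzero solution of Eq.(\ref{InoEF00}) lying precisely in that summand, so that $E_0$ is an eigenvalue there. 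The degrees agree by a direct computation from Eq.(\ref{eq:l'alpha1}): one finds $l'_0+l'_1+l'_2+l'_3=-2\alpha_0-1$, whence $\dim U_{-l'_0,-l'_1,-l'_2,-l'_3}=|l_0+1/2|=\deg P^{(0)}(E)$, the two choices of $\alpha_0$ merely interchanging the two branches in the definition of $U$ (and the case $l_0=-1/2$, where $U=\{0\}$ and $P^{(0)}\equiv 1$, matching the remark that $x=\omega_0$ is then always logarithmic). Both polynomials are monic. The assignment of the four summands of Eq.(\ref{sp:V}) to $P^{(0)},P^{(1)},P^{(2)},P^{(3)}$ — and its reversal in the odd case recorded in (iii) — is read off from which factor $V_{\ast}$ occurs in the statements of Propositions \ref{prop:nonlog-polynell} and \ref{prop:exprnonlogsolell}.

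The main obstacle is to upgrade equality of root sets to equality of polynomials, that is, to control multiplicities, since equal degree, the same roots and monicity do not by themselves force two polynomials to coincide. I would circumvent root-counting by identifying both sides with one and the same quasi-solvability determinant. On one side, the characteristic polynomial of $H^{(l'_0,l'_1,l'_2,l'_3)}$ on $U_{\ast}=V_{\beta'_0,\beta'_1,\beta'_2,\beta'_3}$ is, after the substitution Eq.(\ref{eq:zxtrans}), the algebraic equation of order $\tilde{d}+1$ in $E$ attached to $P(q')=0$ of Proposition \ref{prop:findim0}, with $E$ and $q'$ affinely related through Eq.(\ref{eq:zxtransparam}). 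On the other side, $P^{(0)}(E)$ is, written in the variable $q$, the algebraic condition for $x=\omega_0$ to be non-logarithmic in Eq.(\ref{Heun02}); the parameter duality of Eqs.(\ref{eq:mualbe}), (\ref{eq:mualbe18}) together with the affine relation between $q$ and $q'$ turns this condition into the very same quasi-solvability equation, exactly as exhibited by Example \ref{exa:QES} and Eq.(\ref{eq:e0-1nonlog}). Since both descriptions are monic polynomials in $E$ obtained from a common determinant by an invertible affine change of the spectral variable, they agree with multiplicity, and this yields (ii) and (iii), hence (iv) and (i). Should one wish to avoid the duality bookkeeping, the same equality of polynomials follows instead by a continuity argument in the moduli $(g_2,g_3)$ of the elliptic curve, reducing to a generic curve with simple spectrum, in the spirit of the limiting argument used in the proof of Theorem \ref{thm:monodm}.
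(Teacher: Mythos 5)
Your proposal follows the same route the paper itself takes: the paper derives this proposition in the single sentence immediately preceding its statement, by combining the eigenvalue/non-logarithmicity correspondences of Propositions \ref{prop:nonlog-polynell} and \ref{prop:exprnonlogsolell} with the identity $Q(E)=P(E)=\det _V(E\cdot 1-H^{(l'_0,l'_1,l'_2,l'_3)})$ of Proposition \ref{thm:dist} and the direct-sum decomposition of $V$ in Eq.(\ref{sp:V}), exactly as you do. Your extra bookkeeping (the degree count $\dim U_{\ast}=|l_i+1/2|=\deg P^{(i)}(E)$ and the attention to root multiplicities, which the paper passes over in silence) only strengthens that sketch; of your two proposed ways to control multiplicities, the continuity argument in the moduli $(g_2,g_3)$ is the more reliable, since the claim that both polynomials come from ``one and the same'' determinant is essentially the assertion being proved.
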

It was shown in \cite{Tak1} that if $l'_0, l'_1 , l'_2 ,l'_3 \in \Zint $, then any two spaces of the four spaces in Eq.(\ref{sp:V}) have no eigenvalues in common.
Hence we have
\begin{prop}
Assume $l_0, l_1, l_2, l_3 \in \Zint +1/2 $ and $l_0+ l_1 + l_2 + l_3 \in 2\Zint +1$.
Then any two of the four equations $P^{(0)} (E)=0$, $P^{(1)} (E)=0$, $P^{(2)} (E)=0$, $P^{(3)} (E)=0$ have no common solutions.
In other words, if one of the singularities $\{ 0,\omega _1, \omega _2, \omega _3 \}$ is apparent, then the other three singularities are non-apparent.
\end{prop}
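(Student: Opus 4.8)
The plan is to reduce the statement entirely to the algebraic dictionary supplied by Proposition \ref{prop:lplQE}, together with the eigenvalue-disjointness fact recorded immediately before the statement. First I would note that the hypotheses $l_0, l_1, l_2, l_3 \in \Zint + 1/2$ and $l_0 + l_1 + l_2 + l_3 \in 2\Zint + 1$ force, through Eq.(\ref{eq:l'alpha1}), the integrality $l'_0, l'_1, l'_2, l'_3 \in \Zint$ (as already asserted in Proposition \ref{prop:ellipinttrasfingap}). This is exactly the regime in which the space $V$ of Eq.(\ref{sp:V}) decomposes as the direct sum of its four summands $U_{\ldots}$ and in which $H^{(l'_0,l'_1,l'_2,l'_3)}$ preserves each summand, so that the cited result of \cite{Tak1} applies.

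Next I would invoke Proposition \ref{prop:lplQE} (ii) and (iii): each polynomial $P^{(i)}(E)$ is precisely the characteristic polynomial $\det_{U}(E \cdot 1 - H^{(l'_0,l'_1,l'_2,l'_3)})$ of the operator restricted to one of the four summands $U$ of $V$, where the correspondence $i \mapsto U$ is the one dictated by the parity of $l'_0 + l'_1 + l'_2 + l'_3$. Consequently the zero set of $P^{(i)}$ equals the set of eigenvalues of $H^{(l'_0,l'_1,l'_2,l'_3)}$ acting on that summand.

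The core of the argument is then immediate. The result of \cite{Tak1} cited just before the statement asserts that, for $l'_0, l'_1, l'_2, l'_3 \in \Zint$, any two of the four summands have no eigenvalue of $H^{(l'_0,l'_1,l'_2,l'_3)}$ in common. Since $P^{(i)}$ and $P^{(j)}$ for $i \neq j$ are the characteristic polynomials on two distinct summands, their zero sets are disjoint, so $P^{(i)}(E) = 0$ and $P^{(j)}(E) = 0$ have no common solution. Finally I would translate this back into geometric language via the defining property of $P^{(i)}$ recalled between Eq.(\ref{eq:expell}) and Proposition \ref{prop:exprnonlogsolell}: $P^{(i)}(E_0) = 0$ is equivalent to the singularity $x = \omega_i$ being non-logarithmic at $E_0$. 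Hence if one singularity among $\{0, \omega_1, \omega_2, \omega_3\}$ is non-logarithmic, then $P^{(j)}(E_0) \neq 0$ for the three remaining indices $j$, i.e. the other three singularities are logarithmic.

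The only point requiring care, and the place where I expect essentially all of the bookkeeping to lie, is the parity-dependent identification in Proposition \ref{prop:lplQE} of which $P^{(i)}$ corresponds to which summand $U$; once that dictionary is pinned down the disjointness of eigenvalues transfers verbatim to disjointness of roots, and no analytic difficulty remains, since the spectral nondegeneracy has already been established in \cite{Tak1}.
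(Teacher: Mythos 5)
Your proposal matches the paper's own (very short) argument: the proposition is stated as an immediate consequence of the identification of each $P^{(i)}(E)$ with the characteristic polynomial of $H^{(l'_0,l'_1,l'_2,l'_3)}$ on one of the four summands in Eq.(\ref{sp:V}) (Proposition \ref{prop:lplQE} (ii), (iii)) together with the result of \cite{Tak1} that no two of those summands share an eigenvalue. The reasoning and the ingredients you invoke are exactly those the paper uses.
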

Under the assumptions and notations in Proposition \ref{prop:lplQE}, we have $\deg _E Q(E)= \deg _E P^{(0)}(E) + \deg _E P^{(1)}(E) + \deg _E P^{(2)}(E) + \deg _E P^{(3)}(E) = |l_0 +1/2 |+|l_1 +1/2 |+|l_2 +1/2 |+|l_3 +1/2 |$.
Hence the genus of the curve $\Gamma \! : \nu ^2=-Q(E)$ for $l'_0, l'_1 , l'_2 ,l'_3 (\in \Zint )$ is obtained by applying Proposition \ref{prop:nonlog-polynell}, setting $\alpha '_0 =-l'_0$ (resp. $\alpha '_0 =l'_0 +1$) for the case that  $l'_0 + l'_1 + l'_2 +l'_3 $ is even (resp. odd).
\begin{prop} \label{prop:genus}
Assume $l'_0, l'_1 , l'_2 ,l'_3 \in \Zint $.
Let $g$ be the genus of the curve $\Gamma \! : \nu ^2=-Q(E)$.\\
(i) If $l'_0+ l'_1 + l'_2 +l'_3 $ is even, then
\begin{align}
& g= \frac{1}{2} \left( \left| \frac{l '_0+l'_1+l'_2+l'_3}{2}  \right| + \left| \frac{l'_0+l'_1-l'_2-l'_3}{2} \right|  \right. \\
& \qquad \qquad \qquad \qquad \left. + \left| \frac{l'_0-l'_1+l'_2-l'_3}{2} \right| + \left| \frac{l'_0-l'_1-l'_2+l'_3}{2} \right| \right) . \nonumber
\end{align}
(ii) If $l'_0+ l'_1 + l'_2 +l'_3 $ is odd, then
\begin{align}
& g= \frac{1}{2} \left( \left| \frac{-l '_0+l'_1+l'_2+l'_3+1}{2}  \right| + \left| \frac{l'_0-l'_1+l'_2+l'_3+1}{2} \right|  \right. \\
& \qquad \qquad \qquad \qquad \left. + \left| \frac{l'_0+l'_1-l'_2+l'_3+1}{2} \right| + \left| \frac{l'_0+l'_1+l'_2-l'_3+1}{2} \right| -1 \right) . \nonumber
\end{align}
\end{prop}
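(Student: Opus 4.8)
The plan is to convert the determination of $g$ into a dimension count for the space $V$ of Eq.(\ref{sp:V}). First I would assemble the chain $\dim V = \deg_E P(E) = \deg_E Q(E) = 2g+1$: here $P(E)=\det_V(E\cdot 1 - H^{(l'_0,l'_1,l'_2,l'_3)})$ is monic of degree $\dim V$, $P(E)=Q(E)$ by Proposition \ref{thm:dist}, and $Q(E)$ is monic of degree $2g+1$ by the remark following Eq.(\ref{const}) together with $g=\deg_E c_0(E)$ from Proposition \ref{prop:prod}. Thus $g=(\dim V - 1)/2$, and the whole problem reduces to the evaluation of $\dim V$.

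Next I would compute the dimension of a single summand $U_{\alpha_0,\alpha_1,\alpha_2,\alpha_3}$ of Eq.(\ref{sp:V}). By its definition it equals $V_{\alpha_0,\alpha_1,\alpha_2,\alpha_3}$ when $\sum_i\alpha_i/2\in\Zint_{\leq 0}$, equals $V_{1-\alpha_0,1-\alpha_1,1-\alpha_2,1-\alpha_3}$ when $\sum_i\alpha_i/2\in\Zint_{\geq 2}$, and is $\{0\}$ otherwise; combined with $\dim V_{\beta'_0,\beta'_1,\beta'_2,\beta'_3}=-\sum_i\beta'_i/2+1$ from Proposition \ref{findim}, a short check of the three regimes yields the uniform formula $\dim U_{\alpha_0,\alpha_1,\alpha_2,\alpha_3}=|\sum_i\alpha_i/2-1|$ in every case. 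Substituting the four index-tuples of Eq.(\ref{sp:V}) then turns $\dim V$ into a sum of four absolute values of the half-integer combinations of $l'_0,l'_1,l'_2,l'_3$ appearing in the statement.

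For the odd case every one of the four tuples has $\sum_i\alpha_i/2-1=-n_j$, where $n_0,\dots,n_3$ are exactly the four quantities of part (ii), so each summand contributes $|n_j|$; hence $\dim V=\sum_j|n_j|$ and $g=\frac{1}{2}(\sum_j|n_j|-1)$, which is part (ii) with no further work. The even case is almost identical: the three tuples obtained by raising a pair of the entries again give $|m_1|,|m_2|,|m_3|$ for the quantities $m_1,m_2,m_3$ of part (i), but the distinguished tuple $(-l'_0,-l'_1,-l'_2,-l'_3)$ has $\sum_i\alpha_i/2=-m_0$ with $m_0=(l'_0+l'_1+l'_2+l'_3)/2$, so its dimension is $|m_0+1|$ rather than $|m_0|$.

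The main obstacle is precisely this asymmetric summand, since $|m_0+1|$ equals the value $|m_0|+1$ demanded by part (i) only when $m_0\geq 0$. To dispose of it I would use the invariance of $H^{(l'_0,l'_1,l'_2,l'_3)}$ under $l'_i\leftrightarrow -l'_i-1$ recorded after Proposition \ref{thm:dist}: because $\Xi$, $Q$ and $g$ depend only on the operator, I may flip an even number of the $l'_i$ — which preserves the parity of $l'_0+l'_1+l'_2+l'_3$, hence the even/odd dichotomy — so as to arrange $m_0\geq 0$. One checks that such a flip always exists, flipping all four sending $m_0\mapsto -m_0-2$ (settling $m_0\leq -2$) while a suitable pair-flip handles the borderline value $m_0=-1$. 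With $m_0\geq 0$ the distinguished summand contributes $m_0+1=|m_0|+1$, giving $\dim V=|m_0|+|m_1|+|m_2|+|m_3|+1$ and therefore part (i). What then remains is the routine verification that the four values of $\sum_i\alpha_i/2$ are the stated combinations and that the normalization leaves the three symmetric summands untouched.
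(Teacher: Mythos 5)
Your overall strategy is sound and is, in substance, the route the paper itself takes: the chain $2g+1=\deg_E Q(E)=\deg_E P(E)=\dim V$, together with the decomposition of $V$ into the four summands of Eq.(\ref{sp:V}), is exactly what underlies the paper's derivation via $Q(E)=P^{(0)}(E)P^{(1)}(E)P^{(2)}(E)P^{(3)}(E)$ and $\deg_E P^{(i)}(E)=|l_i+1/2|$, since each $|l_i+1/2|$ is the dimension of one $U$-summand. Your uniform formula $\dim U_{\alpha_0,\alpha_1,\alpha_2,\alpha_3}=|\sum_i\alpha_i/2-1|$ is correct, and your treatment of the odd case (ii) is complete as written.

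The gap is the normalization step in the even case. Your count correctly gives $\dim V=|m_0+1|+|m_1|+|m_2|+|m_3|$ with $m_0=(l'_0+l'_1+l'_2+l'_3)/2$, hence $g=\frac{1}{2}\left(|m_0+1|+|m_1|+|m_2|+|m_3|-1\right)$, and you need $|m_0+1|=|m_0|+1$, i.e.\ $m_0\ge 0$. But the right-hand side of (i) is not invariant under the flips you invoke, so proving the identity for normalized parameters does not prove it for the original ones: the full flip $l'_i\mapsto -l'_i-1$ preserves $|m_1|,|m_2|,|m_3|$ but sends $m_0\mapsto -m_0-2$, replacing $|m_0|$ by $|m_0|-2$ when $m_0\le -2$, while a pair flip such as $l'_0,l'_1\mapsto -l'_0-1,-l'_1-1$ sends $(m_0,m_1,m_2,m_3)$ to $(-m_1-1,-m_0-1,-m_3,-m_2)$ and scrambles $|m_0|,|m_1|$. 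Concretely, for $(l'_0,l'_1,l'_2,l'_3)=(-3,-1,0,0)$ your own dimension count gives $\dim V=1+2+1+1=5$, so $g=2$, whereas the displayed formula in (i) evaluates to $\frac{1}{2}(2+2+1+1)=3$; the two agree only when $m_0\ge 0$ (e.g.\ under the standard normalization $l'_i\ge 0$, which the statement leaves implicit). What your argument actually establishes, and should simply be recorded as such, is $g=\frac{1}{2}\left(|m_0+1|+|m_1|+|m_2|+|m_3|-1\right)$, which reduces to (i) precisely when $l'_0+l'_1+l'_2+l'_3\ge 0$.
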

Note that the expression in Proposition \ref{prop:genus} is different from the one in \cite[Proposition 3.3]{Tak3}.

\begin{exa}
For the case $l'_0=l'_1=l'_2=l'_3=0$, Eq.(\ref{InoEF00}) is written as $(d^2/dx^2 +E) f(x)=0$, a basis of solutions can be written as $e^{ \kappa x}$, $e^{-\kappa x}$ for the case $E \neq 0$ by writing $E=-\kappa ^2 $.
Hence we have $\mbox{\rm tr} M'_{2\omega _k} =e^{2\kappa \omega _k} +e^{-2\kappa \omega _k}$ $(k=1,3)$, where $M' _{2\omega _k}$ is a monodromy matrix of solutions of Eq.(\ref{InoEF00}) for the case $l'_0=l'_1=l'_2=l'_3=0$.
There exists a non-zero periodic (resp. anti-periodic) solution with respect to the period $2\omega _1$, if and only of $E$ can be written as $E= \pi^2 n^2/ \omega _1 ^2$ (resp. $E= \pi^2 (2n+1)^2/ (2\omega _1 )^2$) for some $n \in \Zint _{\geq 0}$.
 
We apply an integral transformation of Theorem \ref{thm:ellipinttras0} for the case $\alpha '_0=1$, $\alpha '_1=\alpha '_2=\alpha '_3=0$.
By replacing the contour integral $I_i$ by twice of the integral from $-x+2\omega _i $ to $x$ and setting $E=-\kappa ^2$, it follows from Eq.(\ref{eq:Hintell}) that the function 
\begin{align}
 f(x)&= \left( \prod _{i=1}^3 (\wp (x) -e_i) \right) ^{1/4}\int ^{x}_{-x+2\omega _i} \frac{e^{\tilde{\kappa }\xi }\sigma (x)\sigma (\xi  )}{\sqrt{\sigma (x-\xi )\sigma (x+\xi )}}d\xi 
\end{align}
is a solution of Eq.(\ref{InoEF}) for the case $l_0=1/2$, $l_1=l_2=l_3=-1/2$ for $i \in \{0,1,2,3 \}$, which reproduces the result in \cite{TakI}.
By Corollary \ref{thmcor:pp} we have $\mbox{\rm tr} M_{2\omega _k} =e^{2\kappa \omega _k} +e^{-2\kappa \omega _k}$ $(k=1,3)$, where $M _{2\omega _k}$ is a monodromy matrix of solutions of Eq.(\ref{InoEF00}) for the case $l_0=1/2$, $l_1=l_2=l_3=-1/2$.
It follows from Corollary \ref{cor:pp} that there exists a non-zero periodic (resp. anti-periodic) solution with respect to the period $2\omega _1$, if and only of $E$ can be written as $E= \pi^2 n^2/ \omega _1 ^2$ (resp. $E= \pi^2 (2n+1)^2/ (2\omega _1 )^2$) for some $n \in \Zint _{\geq 0}$.
As a sequel, if $\omega _1 \in \Rea _{>0}$ and $\omega _3 \in \sqrt{-1} \Rea _{\neq 0}$, then the spectrum of the operator $H^{(1/2,-1/2,-1/2,-1/2)}$ (see Eq.(\ref{Ino})) with respect to the interval $[0,\omega _1]$ can be expressed as $L^2 ([0,\omega _1]) = \{ \pi^2 n^2/ (2\omega _1 )^2 \: | \: n\in \Zint _{\geq 0} \}$, which reproduces the result by Ruijsenaars which was presented at the Bonn conference in 2008 (see \cite{Rui}).
Note that Heun's equation for the case $l_0=1/2$, $l_1=l_2=l_3=-1/2$ was previously studied by Valent \cite{Val} to understand an eigenvalue problem related to certain birth and death processes.
\end{exa}
\begin{exa}
We apply Proposition \ref{prop:ellipinttrasfingap} to the case $l_0=3/2$, $l_1=l_2=l_3=1/2$.
By setting $\alpha _0= 5/2$, we have $l'_0=-3$, $l'_1=l'_2=l'_3=0$ and $\eta = 1/2$ in Proposition \ref{prop:ellipinttras}.
Let $M_{2\omega _k}$ $(k=1,3)$ be a monodromy matrix of solutions of Eq.(\ref{InoEF0}) with respect to the shift $x \rightarrow x+2\omega _k$ for the case $l_0=3/2$, $l_1=l_2=l_3=1/2$.
Since $H^{(-3,0,0,0)} =H^{(2,0,0,0)}$, it follows from Eqs.(\ref{eq:HE2000}), (\ref{eq:monodHEint}) that 
\begin{align}
& {\rm tr} M_{2\omega _k} = 2\cos  \left( -\frac{1}{2} \int_{\sqrt{3g_2}}^{E}\frac{ \omega _k (2\tilde{E}^2-3g_2)-6\eta _k \tilde{E}  }{\sqrt{(\tilde{E}^2-3g_2) \prod _{i=1}^3 (\tilde{E}-3e_i)}} d\tilde{E}\right) ,
\end{align}
and it follows from Corollary \ref{cor:pp} that for each $k\in \{ 1,3 \}$ there exists a non-zero solution $f_k(x,E)$ of Eq.(\ref{InoEF}) for the case $l_0=3/2$, $l_1=l_2=l_3=1/2$ such that 
\begin{align}
& f_k (x+2\omega _k,E)= f_k (x,E)  \exp \left( -\frac{1}{2} \int_{\sqrt{3g_2}}^{E}\frac{ \omega _k (2\tilde{E}^2-3g_2)-6\eta _k \tilde{E}  }{\sqrt{-(\tilde{E}^2-3g_2) \prod _{i=1}^3 (\tilde{E}-3e_i)}} d\tilde{E}\right) .
\end{align}
We also have 
\begin{align}
& {\rm tr} M_{2\omega _k} = 2\cos \left( \sqrt{-1} (2\omega _k ( \zeta (\alpha ) + \kappa ) -2\eta _k \alpha ) \right) ,
\end{align}
where $\alpha $ and $\kappa $ are defined by
\begin{align}
& \wp( \alpha )= e_1 -\frac{(E-3e_1)(E+6e_1)^2}{9(E^2-3g_2)}, \quad \kappa =\frac{2}{3}\sqrt{\frac{-\prod _{i=1}^3 (\tilde{E}-3e_i)}{E^2-3g_2}}.
\end{align}
It follows from Proposition \ref{prop:lplQE} that the singularity $x=0$ (resp. $x=\omega _1, \omega _2, \omega _3$) for Eq.(\ref{InoEF0}) on the case $l_0=3/2$, $l_1=l_2=l_3=1/2$ is apparent if and only if $E= \pm \sqrt{3g_2}$ (resp. $E=3e_1, 3e_2, 3e_3 $).

By setting $\alpha _0= -3/2$, we have $l'_0=-1$, $l'_1=l'_2=l'_3=-2$, which can be replaced by $l'_0=0$, $l'_1=l'_2=l'_3=1$. 
The case $(l'_0, l'_1, l'_2 ,l'_3 )=(0,1,1,1)$ is isomonodromic to the case $(l'_0, l'_1, l'_2 ,l'_3 )=(2,0,0,0)$, and the two cases are linked by the generalized Darboux transformation in \cite{Tak5}.
\end{exa}

\section{Summary and concluding remarks}
In this paper, we investigated correspondences of special solutions of Heun's differential equation (\ref{Heun}) and the second-order linear differential equation $D_{y_1}(\theta _0, \theta _1, \theta _t, \theta _{\infty}; \lambda ,\mu )$ (Eq.(\ref{eq:linP6})) by Euler's integral transformation.
Namely, polynomial-type solutions are connected to the solutions such that one of the regular singularities is apparent.
On the monodromy, the trace of a product of the local monodromy matrices is essentially preserved by the Euler's transformation (see Theorem \ref{thm:monodm}), and it is written more clearly on the elliptical representation of Heun's equation (see Theorem \ref{thm:ellipinttras0}).
The monodromy of the elliptical representation of Heun's equation (\ref{InoEF0}) in the case $l_0, l_1, l_2, l_3 \in \Zint +1/2 $ and $l_0+ l_1 + l_2 + l_3 \in 2\Zint +1$ can be calculated by using the results on finite-gap integration.

The results of this paper would also be valid for confluent families of Heun's equation (see \cite{Ron}) and the linear differential equation related with the other Painlev\'e equations. They should be presented clearly in a near future.

{\bf Acknowledgments}
The author thanks Professor S. N. M. Ruijsenaars for discussions and for sending a draft of his paper.
The author was supported by the Grant-in-Aid (No. 19740089, 22740107, 26400122) from the Japan Society for the Promotion of Science.

\appendix

\section{Local expansions and the proof of Propositions \ref{prop:nonzero}, \ref{prop:spansp0} and Theorems \ref{thm:nonlog-polynHeun}, \ref{thm:nonlog-polyn}} \label{sec:locexp}

We investigate local expansions of solutions of a second-order linear differential equation about a regular singularity and the image of the expansion mapped by an integral transformation.

We assume that the function $y(w)$ is a solution of a second-order linear differential equation about a regular singularity $w=p (\neq \infty )$,
and the exponents of the second-order linear differential equation at $w=p$ are $0$ and $\theta _p$.

Then the function $y(w)$ can be expanded as
\begin{equation}
y (w)= C ^{\langle p \rangle} f^{\langle p \rangle}(w)+ D^{\langle p \rangle} g^{\langle p \rangle}(w), \quad (C^{\langle p \rangle},D^{\langle p \rangle} \in \Cplx ),
\label{eq:ywfg}
\end{equation}
such that
\begin{align}
& f^{\langle p \rangle}(w)= \left\{
\begin{array}{ll}
\displaystyle \sum _{j=0} ^{\infty } c^{(p)} _j (w-p)^{j}, & \theta _p \not \in \Zint _{\geq 0} \\
\displaystyle (w-p) ^{\theta _p} \sum _{j=0} ^{\infty } c^{(p)} _j (w-p)^{j}, & \theta _p \in \Zint _{\geq 0}, \\
\end{array} 
\right. \label{eq:expansfpgp} \\
& g^{\langle p \rangle}(w)= \left\{
\begin{array}{ll}
\displaystyle (w-p) ^{\theta _p}  \sum _{j=0} ^{\infty } \tilde{c}^{(p)} _j (w-p)^{j}, & \theta _p \not \in \Zint \\
\displaystyle (w-p) ^{\theta _p} \left( \sum _{j=0} ^{\infty } \tilde{c}^{(p)} _j (w-p)^{j} \right) + A^{\langle p \rangle} f^{\langle p \rangle} (w) \log (w-p) , & \theta _p \in \Zint _{\leq -1} \\
\displaystyle \left( \sum _{j=0} ^{\infty } \tilde{c}^{(p)} _j (w-p)^{j} \right) + A^{\langle p \rangle} f^{\langle p \rangle} (w) \log (w-p) , & \theta _p \in \Zint _{\geq 0} ,\\
\end{array}
\right.  \nonumber
\end{align}
where $c^{(p)} _0= \tilde{c}^{(p)} _0=1$.
The function $f^{\langle p \rangle}(w)$ is holomorphic about $w=p$.
The function $g^{\langle p \rangle}(w)$ is branching about $w=p$, if $\theta _p \not \in \Zint _{\neq 0}$ or $A^{\langle p \rangle} \neq 0$.
If $\theta _p \in \Zint _{\neq 0}$ and $A^{\langle p \rangle} = 0$, the singularity $w=p$ is apparent

We now describe a criterion that the singularity $w=p$ is apparent for the case  $\theta _p \in \Zint _{\neq 0}$.
We denote the differential equation which the function $y(w)$ satisfies by 
\begin{equation}
\frac{d^2y}{dw^2} +\left( \sum _{i=0}^{\infty } r_i (w-p) ^{i-1} \right) \frac{dy}{dw} +\left( \sum _{i=0}^{\infty } s_i (w-p)^{i-2} \right) y=0.
\label{eq:DEwp}
\end{equation}
Let $F(\xi) =\xi^2 +(p_0-1) \xi +q_0 = \xi (\xi -\theta _p)$ be the characteristic polynomial about $w=p$.
If the function $y= w^{\rho } (\sum _{i=0}^{\infty} c_i z^i)$ $(c_0=1)$ satisfies Eq.(\ref{eq:DEwp}), then we have $F(\rho )=0$ and
\begin{equation}
F(\rho + n) c_n +\sum _{i=1}^n \{ (n-i+\rho )r_i +s_i \} c_{n-i} =0.
\label{eq:rho}
\end{equation}
If $F(\rho + n) \neq 0$ for all $n \in \Zint _{\geq 1}$, then the coefficients $c_n$ are determined recursively. 
In particular, the coefficients $c_n$ are determined recursively for the case $\theta  _p \not \in \Zint$.
We consider the case $\theta _p \in \Zint _{\geq 1}$.
Set $\rho =0$.
The coefficients $c_1 , \dots c_{\theta _p -1}$ are determined recursively.
We substitite $n= \theta _p$ in Eq.(\ref{eq:rho}).
Then 
\begin{equation}
\sum _{i=1}^{\theta _p} \{ (\theta _p-i )r_i +s_i \} c_{\theta _p-i} =0,
\label{eq:rho0}
\end{equation}
and it gives an eqivalent condition to that the singularity $w=p$ is apparent (i.e. $A^{\langle p \rangle} = 0$).
A condition of apparency of the singularity $w=p$ for the case $\theta _p \in \Zint _{\leq -1}$ is given by Eq.(\ref{eq:rho}) for the case $\rho = \theta _p$ and $n=-\theta _p$.

We investigate the local expansion of the function $\int _{[\gamma _{z} ,\gamma _p]} y(w) (z-w)^{\kappa } dw $ about $w=p$ for the case $\kappa  \not \in \Zint $.
Set 
\begin{align}
& d_{\alpha , \beta } =
\left\{
\begin{array}{ll}
\displaystyle (e^{2\pi \sqrt{-1} \alpha }-1)(e^{2\pi \sqrt{-1} \beta }-1) \frac{\Gamma (\alpha )\Gamma (\beta )}{\Gamma (\alpha +\beta )}, & \alpha \not \in \Zint ,\\
\displaystyle 2\pi \sqrt{-1} (e^{2\pi \sqrt{-1} \beta }-1) \frac{(-1)^{\alpha } \Gamma (\beta )}{(-\alpha ) ! \:\Gamma (\alpha +\beta )}, & \alpha \in \Zint _{\leq 0} , \\
\displaystyle 2\pi \sqrt{-1} (e^{2\pi \sqrt{-1} \beta }-1) \frac{\Gamma (\alpha )\Gamma (\beta )}{\Gamma (\alpha +\beta )}, & \alpha \in \Zint _{\geq 1}.
\end{array} \right. 
\label{eq:dalbe}
\end{align}
If $\beta \not \in \Zint $,
then $d_{\alpha , \beta } \neq 0 \Leftrightarrow \alpha + \beta \not \in \Zint _{\leq 0}$ and we have
\begin{align}
& \int _{[\gamma _{1} ,\gamma _0]} s^{\alpha -1} (1-s)^{\beta -1} ds = \left\{
\begin{array}{ll}
d_{\alpha , \beta } & \alpha \not \in \Zint _{\geq 1}\\
0 & \alpha \in \Zint _{\geq 1},
\end{array} \right. \label{eq:beta}\\
& \int _{[\gamma _{1} ,\gamma _0]} s^{n -1} (1-s)^{\beta -1} (\log s ) ds = d_{n , \beta },
\nonumber
\end{align}
for $n \in \Zint_{\geq 1}$.
For the function $y(w)$ in Eq.(\ref{eq:ywfg}), we have
\begin{align}
 \langle [\gamma _{z} ,\gamma _p] , y \rangle  = \int _{[\gamma _{z} ,\gamma _p]} y(w) (z-w)^{\kappa } dw \qquad \qquad \qquad \qquad \qquad \qquad \qquad \qquad 
\label{eq:gzgpexpa} \\
 = \left\{ 
\begin{array}{ll}
\displaystyle D^{\langle p \rangle}(z-p) ^{\theta _p +\kappa +1}  \sum _{j=0} ^{\infty } \tilde{c}^{(p)}_j d_{j+\theta _p +1, \kappa +1} (z-p)^{j}, & \displaystyle \theta _p \not \in \Zint ,\quad \quad \quad \; \atop{\displaystyle \theta _p +\kappa  \not \in \Zint _{\leq -2}}\\
\displaystyle D^{\langle p \rangle}\sum _{j=0} ^{\infty } \tilde{c}^{(p)}_{j-\kappa -\theta _p -1} d_{j-\kappa , \kappa +1} (z-p)^{j}, & \displaystyle \theta _p \not \in \Zint ,\quad \quad \quad \; \atop{\displaystyle \theta _p +\kappa  \in \Zint _{\leq -2}} \\
\displaystyle D^{\langle p \rangle}(z-p) ^{\theta _p +\kappa +1} \left\{ \displaystyle \sum _{j=0} ^{-\theta _p- 1} \tilde{c}^{(p)} _j d_{j+\theta _p +1, \kappa +1}(z-p)^{j} \quad \quad \quad \quad \atop{\displaystyle + A^{\langle p \rangle} \sum _{j=-\theta _p} ^{\infty } c^{(p)} _{j+\theta _p} d_{j+\theta _p +1, \kappa +1} (z-p)^{j} } \right\} , & \displaystyle \theta _p \in \Zint _{\leq -1} , \; \atop{\displaystyle \theta _p +\kappa  \not \in \Zint _{\leq -2}}\\
\displaystyle D^{\langle p \rangle} A^{\langle p \rangle} (z-p) ^{\theta _p +\kappa +1} \sum _{j=0} ^{\infty } c^{(p)} _j d_{j+ \theta _p +1, \kappa +1} (z-p)^{j} , & \displaystyle \theta _p \in \Zint _{\geq 0} , \quad \atop{\displaystyle \theta _p +\kappa  \not \in \Zint _{\leq -2} },
\end{array}
\right. \nonumber
\end{align}
by applying the transformation $w=p+(z-p)s$.
Hence
\begin{equation}
 \langle [\gamma _{z} ,\gamma _p] , y \rangle ^{\gamma _p} = e^{2\pi \sqrt{-1} (\theta _p +\kappa )} \langle [\gamma _{z} ,\gamma _p] , y \rangle .
\end{equation}
If $\theta _p +\kappa \in \Zint  $, then the function $ \langle [\gamma _{z} ,\gamma _p] , y \rangle  $ is holomorphic about $z=p$.
Under the assumption $\kappa \not \in \Zint$, the function $\langle [\gamma _{z} ,\gamma _p] , y \rangle  $ is identically zero for any function $y (w)$ written as Eq.(\ref{eq:ywfg}), if and only if $\theta _p \in \Zint _{\geq 0}$ and the singularity $w=p$ is apparent (i.e. $A^{\langle p \rangle} =0$), or $\theta _p +\kappa \in \Zint _{\leq -2} $ and the function $g^{\langle p \rangle}(w)$ in Eq.(\ref{eq:expansfpgp}) is a product of $(w-p) ^{\theta _p }$ and a non-zero polynomial of degree no more than $-\theta _p -\kappa -2$ (i.e. $\tilde{c}^{(p)}_{j}=0$ for $ j \geq -\theta _p -\kappa -1$).
By putting  $\kappa = \kappa _2 -1 $, (resp.  $\theta _p=1-\epsilon ' _p$ $(p=0,1,t)$, $\kappa = -\eta $), we obtain Proposition \ref{prop:nonzero} (i).
If $\theta _p \in \Zint _{\leq -1}$, $\kappa \not \in \Zint$ and the singularity $w=p$ is apparent, then $A^{\langle p \rangle} =0$ and the function $\langle [\gamma _{z} ,\gamma _p] , y \rangle $ is a product of $ (z-p) ^{\theta _p +\kappa +1}$ and a polynomial of degree no more than $-\theta _p-1$. 

Let us consider the local expansion about $w=\infty$.
We assume that the function $y(w)$ is a solution of a second-order linear differential equation about a regular singularity $w= \infty $,
and that the exponents of the second-order linear differential equation at $w=\infty $ are $\theta _{\infty } ^{(1)}$ and $\theta _{\infty} ^{(2)}$.
Then any solution $y (w)$ can be written as 
\begin{equation}
y (w)= C ^{\langle \infty \rangle} f^{\langle \infty \rangle}(w)+ D^{\langle \infty \rangle} g^{\langle \infty \rangle}(w), \quad (C^{\langle \infty  \rangle},D^{\langle \infty  \rangle} \in \Cplx ),
\label{eq:yinftyCD}
\end{equation}
such that
\begin{align}
& f^{\langle \infty \rangle}(w)= \left\{
\begin{array}{ll}
\displaystyle (1/w) ^{\theta _{\infty} ^{(2)}} \sum _{j=0} ^{\infty } c^{(\infty )} _j (1/w)^{j}, & \theta _{\infty } ^{(1)} -\theta _{\infty} ^{(2)}  \not \in \Zint _{\geq 0} \\
\displaystyle (1/w) ^{\theta _{\infty } ^{(1)}} \sum _{j=0} ^{\infty } c^{(\infty )} _j (1/w)^{j}, & \theta _{\infty } ^{(1)} -\theta _{\infty} ^{(2)} \in \Zint _{\geq 0} ,\\
\end{array} 
\right. \label{eq:expansfinftyginfty} \\
& g^{\langle \infty \rangle}(w)= \left\{
\begin{array}{ll}
\displaystyle (1/w) ^{\theta _{\infty } ^{(1)}}\sum _{j=0} ^{\infty } \tilde{c}^{(\infty )} _j (1/w)^{j}, & \theta _{\infty } ^{(1)} -\theta _{\infty} ^{(2)} \not \in \Zint \\
\displaystyle (1/w) ^{\theta _{\infty } ^{(1)}}\left( \sum _{j=0} ^{\infty } \tilde{c}^{(\infty )} _j (1/w)^{j} \right) + A^{\langle p \rangle} f^{\langle p \rangle} (w) \log (1/w) , & \theta _{\infty } ^{(1)} -\theta _{\infty} ^{(2)} \in \Zint _{\leq -1} \\
\displaystyle (1/w) ^{\theta _{\infty} ^{(2)} } \left( \sum _{j=0} ^{\infty } \tilde{c}^{(\infty )} _j (1/w)^{j} \right) + A^{\langle p \rangle} f^{\langle p \rangle} (w) \log (1/w),  & \theta _{\infty } ^{(1)} -\theta _{\infty} ^{(2)} \in \Zint _{\geq 0} ,\\
\end{array}
\right. \nonumber
\end{align}
where $c^{(\infty )} _0= \tilde{c}^{(\infty )} _0=1$.

We investigate the local expansion of the function $\int _{[\gamma _{z} ,\gamma _{\infty }]} y(w) (z-w)^{\theta _{\infty} ^{(2)} -2} dw $ about $w=\infty $ for the case $\theta _{\infty} ^{(2)} -1 \not \in \Zint $.
Since 
\begin{align}
& \int _{[\gamma _{z} ,\gamma _{\infty }]} (1/w) ^{\theta _{\infty} ^{(2)} -1 +\alpha } (z-w)^{\theta _{\infty} ^{(2)} -2} dw = e^{\pi \sqrt{-1} (\theta _{\infty} ^{(2)} -1)} (1/z)^{\alpha } d_{\alpha ,\theta _{\infty} ^{(2)} -1}, \\
& \int _{[\gamma _{z} ,\gamma _{\infty }]} (1/w) ^{\theta _{\infty} ^{(2)} -1 +n } (z-w)^{\theta _{\infty} ^{(2)} -2} (\log (1/w) ) dw = e^{\pi \sqrt{-1} (\theta _{\infty} ^{(2)} -1)} (1/z)^{n} d_{n ,\theta _{\infty} ^{(2)} -1}, \nonumber
\end{align}
for $n \in \Zint _{\geq 1}$, we have
\begin{align}
e^{\pi \sqrt{-1} (1-\theta _{\infty} ^{(2)}) }  \langle [\gamma _{z} ,\gamma _{\infty }] , y \rangle  = \qquad \qquad \qquad \qquad \qquad \qquad \qquad \qquad \qquad \qquad \label{eq:gzginftyexpa} \\
 \left\{ 
\begin{array}{ll}
\displaystyle D^{\langle \infty \rangle} (1/z) ^{\theta _{\infty } ^{(1)} -\theta _{\infty } ^{(2)} +1}  \sum _{j=0} ^{\infty } \tilde{c}^{({\infty })}_j d_{j+\theta _{\infty } ^{(1)} -\theta _{\infty } ^{(2)} +1, \theta _{\infty} ^{(2)} -1} (1/z)^{j}, & \displaystyle \theta _{\infty } ^{(1)} -\theta _{\infty } ^{(2)} \not \in \Zint ,\; \atop{\displaystyle \theta _{\infty } ^{(1)} \not \in \Zint _{\leq 0}}\\
\displaystyle D^{\langle \infty \rangle} (1/z) ^{-\theta _{\infty} ^{(2)}+2} \sum _{j=0} ^{\infty } \tilde{c}^{({\infty })}_{j-\theta _{\infty } ^{(1)}+1} d_{j -\theta _{\infty} ^{(2)} +2, \theta _{\infty} ^{(2)} -1} (1/z)^{j}, & \displaystyle \theta _{\infty } ^{(1)} -\theta _{\infty } ^{(2)} \not \in \Zint , \; \atop{\displaystyle \theta _{\infty } ^{(1)} \in \Zint _{\leq 0}} \\
\displaystyle D^{\langle \infty \rangle} (1/z) ^{\theta _{\infty } ^{(1)} -\theta _{\infty } ^{(2)} +1} \left\{ \displaystyle \sum _{j=0} ^{-\theta _{\infty } ^{(1)} +\theta _{\infty } ^{(2)} -1} \tilde{c}^{({\infty })} _j d_{j+\theta _{\infty } ^{(1)} -\theta _{\infty } ^{(2)} +1, \theta _{\infty} ^{(2)} -1}(1/z)^{j} \quad \quad \atop{\displaystyle + A^{\langle {\infty } \rangle} \! \! \! \! \! \! \! \! \! \! \sum _{j=-\theta _{\infty } ^{(1)} +\theta _{\infty } ^{(2)} } ^{\infty } \! \! \! \! \! \! \! \! \! c^{({\infty })} _{j+\theta _{\infty } ^{(1)} -\theta _{\infty } ^{(2)}} d_{j+\theta _{\infty } ^{(1)} -\theta _{\infty } ^{(2)} +1, \theta _{\infty} ^{(2)} -1} (1/z)^{j} } \right\} , & \displaystyle \theta _{\infty } ^{(1)} -\theta _{\infty } ^{(2)} \in \Zint _{\leq -1},\; \atop{\displaystyle \theta _{\infty } ^{(1)} \not \in \Zint _{\leq 0}}\\
\displaystyle D^{\langle \infty \rangle} A^{\langle {\infty } \rangle} (1/z) ^{\theta _{\infty } ^{(1)} -\theta _{\infty } ^{(2)} +1} \sum _{j=0} ^{\infty } c^{({\infty })} _j d_{j+\theta _{\infty } ^{(1)} -\theta _{\infty } ^{(2)} +1, \theta _{\infty} ^{(2)} -1} (1/z)^{j} , & \displaystyle \theta _{\infty } ^{(1)} -\theta _{\infty } ^{(2)} \in \Zint _{\geq 0} ,\; \atop{\displaystyle \theta _{\infty } ^{(1)} \not \in \Zint _{\leq 0}} .\\
\end{array}
\right. \nonumber
\end{align}
Hence
\begin{equation}
 \langle [\gamma _{z} ,\gamma _{\infty }] , y \rangle ^{\gamma _{\infty }} = e^{2\pi \sqrt{-1} (\theta _{\infty } ^{(1)} -\theta _{\infty } ^{(2)})} \langle [\gamma _{z} ,\gamma _{\infty }] , y \rangle .
\end{equation}
Under the assumption $\theta _{\infty} ^{(2)} \not \in \Zint$, the function $\langle [\gamma _{z} ,\gamma _{\infty }] , y \rangle  $ is identically zero for any function $y (w)$ written as in Eq.(\ref{eq:yinftyCD}), if and only if $\theta _{\infty } ^{(1)} -\theta _{\infty } ^{(2)}\in \Zint _{\geq 0}$ and the singularity $w={\infty }$ is apparent (i.e. $A^{\langle {\infty } \rangle} =0$), or $\theta _{\infty } ^{(1)} \in \Zint _{\leq 0} $ and the function $g^{\langle \infty  \rangle}(w)$ in Eq.(\ref{eq:expansfinftyginfty}) is a non-zero polynomial in the variable $w$ of degree $-\theta _{\infty } ^{(1)}$ (i.e. $\tilde{c}^{({\infty })}_{j}=0$ for $j \geq 1-\theta _{\infty } ^{(1)}$).
By putting $\theta _{\infty} ^{(1)}=\kappa _1$, $\theta _{\infty} ^{(2)}=\kappa _2 +1$ (resp. $\theta _{\infty} ^{(1)}=\alpha +\beta -2\eta +1$, $\theta _{\infty} ^{(2)}=2-\eta $), we obtain Proposition \ref{prop:nonzero} (ii).
If $\theta _{\infty } ^{(1)} -\theta _{\infty } ^{(2)} +1 \in \Zint _{\leq 0}$ and the singularity $w={\infty }$ is apparent, then $A^{\langle {\infty } \rangle} =0$ and the function $\langle [\gamma _{z} ,\gamma _{\infty }] , y \rangle $ is a polynomial in the variable $z$ of degree $-\theta _{\infty } ^{(1)} +\theta _{\infty } ^{(2)} -1$. 

We investigate a sufficient condition that the functions $\langle [\gamma _{z} ,\gamma _0] , y \rangle $, $\langle [\gamma _{z} ,\gamma _1] , y \rangle $, $\langle [\gamma _{z} ,\gamma _t] , y \rangle $ span the two-dimensional space of solutions of $D_{y_1}(\tilde{\theta }_0, \tilde{\theta }_1, \tilde{\theta }_t, \tilde{\theta }_{\infty} ; \tilde{\lambda } ,\tilde{\mu } )$ (resp. Eq.(\ref{Heun02})) for some solution $y(w)$ of $D_{y_1}(\theta _0, \theta _1, \theta _t,\theta _{\infty}; \lambda ,\mu )$ (resp. Eq.(\ref{Heun01})) for the case $\kappa _2 \not \in \Zint $ (resp. $\eta \not \in \Zint $).
\begin{prop} \label{prop:spansp}
Assume that $\kappa _2 \not \in \Zint $ (resp. $\eta \not \in \Zint $), there exists a branching solution of $D_{y_1}(\theta _0, \theta _1, \theta _t,\theta _{\infty}; \lambda ,\mu )$ (resp. Eq.(\ref{Heun01})) for each singularity $w=0,1,t$ (i.e. $\theta _p \not \in \Zint $ (resp. $\epsilon ' _p \not \in \Zint $) or $A^{\langle {p} \rangle} \neq 0$ for $p=0,1,t$), the differential equation $D_{y_1}(\theta _0, \theta _1, \theta _t,\theta _{\infty}; \lambda ,\mu )$ (resp. Eq.(\ref{Heun01})) does not have a solution written as a product of $(w-p) ^{\theta _p} $  (resp. $(w-p) ^{1- \epsilon ' _p}$ ) and a non-zero polynomial on the case $\kappa _2 +\theta _p \in \Zint _{\leq -1}$ (resp. $2-\eta -\epsilon ' _p =1-\epsilon _p \in \Zint _{\leq -1}$, $\epsilon _p \in \Zint _{\geq 2}$) for each $p \in \{0,1,t \}$, and the differential equation $D_{y_1}(\tilde{\theta }_0, \tilde{\theta }_1, \tilde{\theta }_t, \tilde{\theta }_{\infty} ; \tilde{\lambda } ,\tilde{\mu } )$ (resp. Eq.(\ref{Heun02})) does not have a solution written as a product of $z^{\kappa _2 + \theta _0} (z-1) ^{\kappa _2 + \theta _1}(z-t) ^{\kappa _2 + \theta _t}$ (resp. $z^{1 -\epsilon _0} (z-1) ^{1-\epsilon _1}(z-t) ^{1-\epsilon _t}$) and a non-zero polynomial.
Then there exists a solution $y(w)$ of the differential equation $D_{y_1} (\theta _0 , \theta _1 , \theta _t ,\theta _{\infty}; \lambda ,\mu )$ (resp. Eq.(\ref{Heun01})) such that the functions $\langle [\gamma _{z} ,\gamma _0] , y \rangle $, $\langle [\gamma _{z} ,\gamma _1] , y \rangle $, $\langle [\gamma _{z} ,\gamma _t] , y \rangle $ span the two-dimensional space of solutions of the differential equation $D_{y_1}(\tilde{\theta }_0, \tilde{\theta }_1, \tilde{\theta }_t, \tilde{\theta }_{\infty} ; \tilde{\lambda } ,\tilde{\mu } )$ (resp. Eq.(\ref{Heun02})), $\langle [\gamma _{z} ,\gamma _0] , y \rangle \neq 0$, $\langle [\gamma _{z} ,\gamma _1] , y \rangle \neq 0$ and $\langle [\gamma _{z} ,\gamma _t] , y \rangle \neq 0$.
\end{prop}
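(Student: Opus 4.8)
The plan is to study, for each $p\in\{0,1,t\}$, the linear functional $T_p\colon y\mapsto \langle[\gamma_z,\gamma_p],y\rangle$ on the two-dimensional solution space $W$ of $D_{y_1}(\theta_0,\theta_1,\theta_t,\theta_{\infty};\lambda,\mu)$ (resp. Eq.(\ref{Heun01})), and to exhibit a single $y$ whose three images are nonzero and span the solution space of the transformed equation. First I would show that each $T_p$ is not the zero map. Writing $y=C^{\langle p\rangle}f^{\langle p\rangle}+D^{\langle p\rangle}g^{\langle p\rangle}$ as in Eq.(\ref{eq:ywfg}), the local expansion Eq.(\ref{eq:gzgpexpa}) shows that $\langle[\gamma_z,\gamma_p],y\rangle$ depends on $y$ only through the branching coefficient $D^{\langle p\rangle}$ (the holomorphic part $f^{\langle p\rangle}$ integrates to zero by Eq.(\ref{eq:gzgidecom})). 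Hence, whenever $T_p\not\equiv0$, one has $T_p(y)=D^{\langle p\rangle}(y)\,\phi_p(z)$ for a fixed nonzero solution $\phi_p$ of the transformed equation, so that $\ker T_p=V_p$ is exactly the one-dimensional space of solutions holomorphic at $w=p$. That $T_p\not\equiv0$ is precisely the negation of the two alternatives in Proposition \ref{prop:nonzero}(i): the hypothesis that a branching solution exists at $w=p$ (i.e. $\theta_p\notin\Zint$ or $A^{\langle p\rangle}\neq0$; resp. the corresponding condition on $\epsilon'_p$) rules out the first alternative, while the hypothesis that $D_{y_1}(\theta_0,\theta_1,\theta_t,\theta_{\infty};\lambda,\mu)$ (resp. Eq.(\ref{Heun01})) has no solution of the form $(w-p)^{\theta_p}\times(\text{nonzero polynomial})$ in the case $\kappa_2+\theta_p\in\Zint_{\le-1}$ rules out the second.

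Next I would choose $y$. Since $W$ is two-dimensional over the infinite field $\Cplx$, the union $V_0\cup V_1\cup V_t$ of the three lines cannot exhaust $W$, so there is a solution $y$ with $D^{\langle p\rangle}(y)\neq0$ for every $p\in\{0,1,t\}$; for this $y$ each image $u_p:=T_p(y)$ is a nonzero scalar multiple of $\phi_p$, which gives the three non-vanishing assertions simultaneously. It then remains to prove that $u_0,u_1,u_t$ span the two-dimensional solution space of the transformed equation; since there are three of them this amounts to showing that $\phi_0,\phi_1,\phi_t$ are not all proportional.

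The heart of the argument is this last point, which I would prove by contradiction. In the generic branch of Eq.(\ref{eq:gzgpexpa}) (the nonvanishing of the leading coefficient $d_{\alpha,\beta}$ being guaranteed by $\kappa_2\notin\Zint$, resp. $\eta\notin\Zint$) the solution $\phi_p$ carries the nontrivial exponent $\tilde\theta_p=\kappa_2+\theta_p$ (resp. $1-\epsilon_p$) at $z=p$. If $\phi_0,\phi_1,\phi_t$ were all proportional to a single solution $\phi$, then $\phi$ would carry this nontrivial exponent simultaneously at each of $z=0,1,t$; dividing $\phi$ by $z^{\tilde\theta_0}(z-1)^{\tilde\theta_1}(z-t)^{\tilde\theta_t}$ yields a function single-valued on $\Cplx\setminus\{0,1,t\}$, holomorphic and nonvanishing at each of $0,1,t$, and — since $\gamma_0\gamma_1\gamma_t\gamma_{\infty}\sim\mathrm{id}$ forces non-branching at $\infty$ — meromorphic at $\infty$, hence a polynomial (this is just the mechanism of Proposition \ref{prop:monoredpolynsol}(i)). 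Thus $\phi=z^{\kappa_2+\theta_0}(z-1)^{\kappa_2+\theta_1}(z-t)^{\kappa_2+\theta_t}h(z)$ (resp. $z^{1-\epsilon_0}(z-1)^{1-\epsilon_1}(z-t)^{1-\epsilon_t}h(z)$) with $h$ a nonzero polynomial, contradicting the final hypothesis of the proposition. Therefore two of $\phi_0,\phi_1,\phi_t$ are independent, the corresponding $u_p$ are independent, and $\langle[\gamma_z,\gamma_0],y\rangle$, $\langle[\gamma_z,\gamma_1],y\rangle$, $\langle[\gamma_z,\gamma_t],y\rangle$ span the two-dimensional solution space.

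The main obstacle I expect is the bookkeeping in the degenerate integer branches. Establishing that $\phi_p$ has leading exponent exactly $\tilde\theta_p$ rather than the trivial exponent $0$ requires running through all four branches of Eq.(\ref{eq:gzgpexpa}) and checking which $d_{\alpha,\beta}$ vanish; in particular, when $\kappa_2+\theta_p\in\Zint_{\le-1}$ the transform is holomorphic at $z=p$, so the clean identification of $\phi$ with the forbidden product form must be carried out with a mixed exponent assignment (some exponents $\tilde\theta_q$, some $0$) and verified uniformly in both the $D_{y_1}$ and the Heun settings, and across the sub-cases $\theta_p\in\Zint$ that the hypotheses only partially exclude. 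I would organize these cases by the integrality and sign of $\theta_p$ and of $\kappa_2+\theta_p$, mirroring exactly the appendix's treatment of Proposition \ref{prop:nonzero}, the genericity hypotheses of the proposition being precisely calibrated so that each potential degeneracy is ruled out by one of its clauses.
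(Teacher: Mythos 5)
Your proposal is correct and follows essentially the same route as the paper's proof: non-vanishing of each $T_p$ via the negation of the two alternatives in Proposition \ref{prop:nonzero}(i), a single $y$ making all three transforms non-zero (the paper picks $c_0y^{(0)}+c_1y^{(1)}+c_ty^{(t)}$, which is the same "three lines cannot cover a plane" observation), and a contradiction from proportionality by dividing out $z^{\theta_0+\kappa+1}(z-1)^{\theta_1+\kappa+1}(z-t)^{\theta_t+\kappa+1}$ to land on the forbidden polynomial-product solution. The case bookkeeping you flag at the end is handled in the paper exactly as you anticipate, by reading off all branches of Eq.(\ref{eq:gzgpexpa}).
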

\begin{proof}
Set $\kappa = \kappa _2 -1$ (resp. $\kappa = -\eta $ and $\epsilon '_p =1 -\theta _p$ $(p = 0,1,t)$).
If $\langle [\gamma _{z} ,\gamma _p] , y \rangle =0$ $(p \in \{0,1,t \} )$ for all solutions of $D_{y_1}(\theta _0, \theta _1, \theta _t,\theta _{\infty}; \lambda ,\mu )$ (resp. Eq.(\ref{Heun01})). Then it follows from Proposition \ref{prop:nonzero} (i) that $\theta _p \in \Zint $ and the singularity $w=p$ is apparent, or $\theta _p +\kappa  +1  \in \Zint _{\leq -1} $ and the differential equation $D_{y_1}(\theta _0, \theta _1, \theta _t,\theta _{\infty}; \lambda ,\mu )$ (resp. Eq.(\ref{Heun01})) has a solution of the form which is a product of $(w-p) ^{\theta _p }$ and a non-zero polynomial of degree no more than $-\theta _p -\kappa -2$.
Hence it follows from the assumptions of Proposition \ref{prop:spansp} that $\langle [\gamma _{z} ,\gamma _p] , y^{(p)}\rangle  \neq 0$ for some solution $y^{(p)} (w)$ for each $p \in \{0,1,t \}$.
By setting $y(w)=c_0 y^{(0)}(w) +c_1 y^{(1)}(w) +c_t y^{(t)}(w)$ and choosing constants $c_0, c_1 ,c_t$ appropriately, we have $\langle [\gamma _{z} ,\gamma _p] , y\rangle  \neq 0$ for all $p \in \{0,1,t\}$.
Assume that the functions $\langle [\gamma _{z} ,\gamma _0] , y \rangle $, $\langle [\gamma _{z} ,\gamma _1] , y \rangle $, $\langle [\gamma _{z} ,\gamma _t] , y \rangle $ do not span the space of solutions of $D_{y_1}(\tilde{\theta }_0, \tilde{\theta }_1, \tilde{\theta }_t, \tilde{\theta }_{\infty} ; \tilde{\lambda } ,\tilde{\mu } )$ (resp. Eq.(\ref{Heun02})).
Then $\langle [\gamma _{z} ,\gamma _0] , y \rangle =d\langle [\gamma _{z} ,\gamma _1] , y \rangle =d'\langle [\gamma _{z} ,\gamma _t] , y \rangle $ for some $d \neq 0$ and $d' \neq 0$.
Since $\langle [\gamma _{z} ,\gamma _0] , y \rangle $ satisfies the differential equation $D_{y_1}(\tilde{\theta }_0, \tilde{\theta }_1, \tilde{\theta }_t, \tilde{\theta }_{\infty} ; \tilde{\lambda } ,\tilde{\mu } )$ (resp. Eq.(\ref{Heun02})), $\langle [\gamma _{z} ,\gamma _0] , y \rangle $ is locally holomorphic in $\Cplx \setminus \{0,1,t \}$, and it follows from Eq.(\ref{eq:gzgpexpa}) that the function $z^{-\theta _0-\kappa -1} (z-1)^{-\theta _1-\kappa -1}(z-t)^{-\theta _t-\kappa -1}\langle [\gamma _{z} ,\gamma _0] , y \rangle $ is holomorphic in $\Cplx $, and the singurality $z=\infty $ is regular at most and apparent.
Hence $z^{-\theta _0-\kappa -1} (z-1)^{-\theta _1-\kappa -1}(z-t)^{-\theta _t-\kappa -1 }\langle [\gamma _{z} ,\gamma _0] , y \rangle $ is a polynomial, and $\langle [\gamma _{z} ,\gamma _0] , y \rangle =z^{\theta _0+\kappa +1} (z-1)^{\theta _1+\kappa +1}(z-t)^{\theta _t+\kappa +1} h(z)$ for some polynomial $h(z)$.
But this contradicts the assumptions of the proposition.
\end{proof}
\begin{cor} \label{cor:spansp} (Proposition \ref{prop:spansp0})
There exists a solution $y(w)$ of $D_{y_1}(\theta _0, \theta _1, \theta _t,\theta _{\infty}; \lambda ,\mu )$ (resp. Eq.(\ref{Heun01})) such that  $\langle [\gamma _{z} ,\gamma _0] , y \rangle \neq 0$, $\langle [\gamma _{z} ,\gamma _1] , y \rangle \neq 0$, $\langle [\gamma _{z} ,\gamma _t] , y \rangle \neq 0$ and the functions $\langle [\gamma _{z} ,\gamma _0] , y \rangle $, $\langle [\gamma _{z} ,\gamma _1] , y \rangle $, $\langle [\gamma _{z} ,\gamma _t] , y \rangle $ span the two-dimensional space of solutions of $D_{y_1}(\tilde{\theta }_0, \tilde{\theta }_1, \tilde{\theta }_t, \tilde{\theta }_{\infty} ; \tilde{\lambda } ,\tilde{\mu } )$ (resp. Eq.(\ref{Heun02})), 
if $\kappa _2 \not \in \Zint$ and $\theta _p , \tilde{\theta }_p \not \in \Zint$ for all $p \in \{ 0,1,t, \infty \}$ (resp. $\eta , \epsilon _0 , \epsilon _1, \epsilon _t, \alpha -\beta , \epsilon _0 ',\epsilon _1',\epsilon _t ' ,\alpha '-\beta ' \not \in \Zint$).
\end{cor}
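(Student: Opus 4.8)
The plan is to deduce this corollary from the more detailed Proposition \ref{prop:spansp} by checking that the strong non-integrality hypotheses stated here imply the four structural hypotheses of that proposition. Throughout set $\kappa = \kappa _2 -1$ (resp. $\kappa = -\eta$), and recall from Eq.(\ref{eq:tlamtmu}) (resp. Eq.(\ref{eq:mualbe})) that $\tilde{\theta }_p = \kappa _2 +\theta _p$ (resp. the dual exponent at $w=p$ is $1-\epsilon '_p$). Three of the four hypotheses of Proposition \ref{prop:spansp} will be immediate or vacuous under the present assumptions, and only one requires real work.

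First, since $\theta _p \not\in \Zint$ (resp. $\epsilon '_p \not\in \Zint$) for $p=0,1,t$, the second local solution $g^{\langle p\rangle}(w)$ in Eq.(\ref{eq:expansfpgp}) is genuinely branching, so a branching solution exists at each of $w=0,1,t$. Next, the hypothesis forbidding a solution of the form $(w-p)^{\theta _p}$ times a non-zero polynomial in the case $\kappa _2 +\theta _p \in \Zint _{\leq -1}$ (resp. $\epsilon _p \in \Zint _{\geq 2}$) is vacuous, because $\kappa _2 +\theta _p = \tilde{\theta }_p \not\in \Zint$ (resp. $\epsilon _p \not\in \Zint$) is assumed, so the relevant integrality case never arises.

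The substantive step is the final hypothesis: that the transformed equation $D_{y_1}(\tilde{\theta }_0,\tilde{\theta }_1,\tilde{\theta }_t,\tilde{\theta }_\infty ;\tilde{\lambda },\tilde{\mu })$ (resp. Eq.(\ref{Heun02})) admits no solution of the form $z^{\tilde{\theta }_0}(z-1)^{\tilde{\theta }_1}(z-t)^{\tilde{\theta }_t}h(z)$ (resp. $z^{1-\epsilon _0}(z-1)^{1-\epsilon _1}(z-t)^{1-\epsilon _t}h(z)$) with $h$ a non-zero polynomial. Suppose such a solution existed. Its leading behaviour as $z\to\infty$ is $z^{-(\tilde{\theta }_0+\tilde{\theta }_1+\tilde{\theta }_t)-\deg h}$, so the index $-(\tilde{\theta }_0+\tilde{\theta }_1+\tilde{\theta }_t)-\deg h$ must coincide with one of the indicial roots at $z=\infty$, namely $\tilde{\kappa }_1$ or $\tilde{\kappa }_2+1$. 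A direct computation from Eq.(\ref{eq:tlamtmu}) yields the identities $\tilde{\kappa }_2 = -\kappa _2$ and $\tilde{\kappa }_1 = \theta _\infty$, and matching the index forces $\deg h = \tilde{\kappa }_2 = -\kappa _2$ or $\deg h = \tilde{\kappa }_1 -1 = \theta _\infty -1$. Since $\kappa _2 \not\in \Zint$ and $\theta _\infty \not\in \Zint$, no non-negative integer value of $\deg h$ is possible, a contradiction. In the Heun case the indicial roots at $z=\infty$ are $\alpha$ and $\beta$, so the same argument forces $\deg h \in \{\alpha -2,\beta -2\}$; here one uses $\eta \in \{\alpha ,\beta\}$ with $\eta \not\in \Zint$ together with $\alpha '-\beta '\not\in \Zint$, which through $\{\alpha ',\beta '\}=\{2-\eta ,\alpha +\beta -2\eta +1\}$ forces the remaining one of $\alpha ,\beta$ to be non-integral, so that $\alpha ,\beta \not\in \Zint$ and again no admissible $\deg h$ exists.

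With all four hypotheses of Proposition \ref{prop:spansp} verified, the existence of the required $y(w)$ together with the non-vanishing and spanning properties follow at once, which is precisely the assertion of Proposition \ref{prop:spansp0}. I expect the main obstacle to be this last step, the exclusion of the degenerate polynomial-type solution of the transformed equation, since it is the only place where the transformation formulas Eq.(\ref{eq:tlamtmu}) and Eq.(\ref{eq:mualbe}) must be invoked to translate the abstract non-existence hypothesis of Proposition \ref{prop:spansp} into the explicit non-integrality conditions assumed in the corollary.
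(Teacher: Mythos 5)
Your proposal is correct and follows essentially the same route as the paper: reduce to Proposition \ref{prop:spansp} by noting that the branching hypothesis follows from $\theta_p\not\in\Zint$ (resp. $\epsilon'_p\not\in\Zint$), that the polynomial-type hypothesis at $w=p$ is vacuous since $\tilde{\theta}_p\not\in\Zint$ (resp. $\epsilon_p\not\in\Zint$), and that a solution $z^{\tilde{\theta}_0}(z-1)^{\tilde{\theta}_1}(z-t)^{\tilde{\theta}_t}h(z)$ of the transformed equation is excluded by matching the exponent at $z=\infty$ (the paper cites Proposition \ref{prop:monoredpolynsol} (ii) for exactly this index computation, arriving at the same contradiction $\kappa_2\in\Zint$ or $\theta_\infty\in\Zint$, resp. $\alpha$ or $\beta\in\Zint$). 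Your explicit derivation of $\alpha,\beta\not\in\Zint$ from $\eta\not\in\Zint$ and $\alpha'-\beta'\not\in\Zint$ in the Heun case is a detail the paper leaves implicit, and it is handled correctly.
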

\begin{proof}
It follows from the fact that $\theta _0, \theta _1, \theta _t \not \in \Zint $ (resp. $\epsilon _0 ',\epsilon _1',\epsilon _t '\not \in \Zint$) that there exists a branching solution of $D_{y_1}(\theta _0, \theta _1, \theta _t,\theta _{\infty}; \lambda ,\mu )$ (resp. Eq.(\ref{Heun01})).
If there exists a solution of $D_{y_1}(\tilde{\theta }_0, \tilde{\theta }_1, \tilde{\theta }_t, \tilde{\theta }_{\infty} ; \tilde{\lambda } ,\tilde{\mu } )$ (resp. Eq.(\ref{Heun02})) that can be written as $z^{\kappa _2 + \theta _0} (z-1) ^{\kappa _2 + \theta _1}(z-t) ^{\kappa _2+ \theta _t } h(z)$ (resp. $z^{1-\epsilon _0} (z-1) ^{1 -\epsilon _1}(z-t) ^{1 -\epsilon _t } h(z)$) for some non-zero polynomial $h(z)$,
it follows from Proposition \ref{prop:monoredpolynsol} (ii) that
$\theta _{\infty } +(\kappa _2+\theta _0+\kappa _2+\theta _1+\kappa _2+\theta _t) = -\deg h(z) \in \Zint _{\leq 0}$ or $-\kappa _2 +1 +(\kappa _2+\theta _0+\kappa _2+\theta _1+\kappa _2+\theta _t)  = -\deg h(z) \in \Zint _{\leq 0}$ (resp. $\alpha + (3- \epsilon _0-\epsilon _1-\epsilon _t)= -\deg h(z) \in \Zint _{\leq 0}$ or $\beta + (3- \epsilon _0-\epsilon _1-\epsilon _t)= -\deg h(z) \in \Zint _{\leq 0}$), i.e. $\kappa _2 \in \Zint _{\leq 0}$ or $\theta _{\infty } \in \Zint _{\geq 1}$ (resp. $2-\beta \in \Zint _{\leq 0}$ or $2-\alpha \in \Zint _{\leq 0}$), which contradicts the assumption of the corollary.
The condition $\tilde{\theta }_p \in \Zint _{\leq -1}$ (resp. $1-\epsilon _p \in \Zint _{\leq -1}$) for $p =0,1,t$ is covered in the assumption of the corollary.
Thus, the assumption of Proposition \ref{prop:spansp} (i) follows from the assumption of the corollary, and the corollary is obtained by applying Proposition \ref{prop:spansp} (i).
\end{proof}

We derive the following proposition which is used to prove Theorem \ref{thm:nonlog-polynHeun}.
\begin{prop} \label{prop:nonlog-polynHeun}
Let $a,b,c$ be elements of $\{0,1,t \}$ such that $a \neq b \neq c \neq a$ and $\eta , \alpha ,\beta ,\epsilon _0 ,\epsilon _1 ,\epsilon _t,  \alpha ',\beta ',\epsilon '_0 ,\epsilon '_1 ,\epsilon '_t$ be the parameters defined in Eq.(\ref{eq:mualbe}) or Eq.(\ref{eq:mualbe18}).\\
(i) If $\epsilon ' _a \in \Zint _{\geq 2}$, $\eta \not \in \Zint$ and the singularity $w=a$ of Eq.(\ref{Heun01}) is apparent, then there exists a non-zero solution of Eq.(\ref{Heun02}) which can be written as $(z-a) ^{1- \epsilon _a}  h(z) $ where $h(z)$ is a polynomial of degree no more than $\epsilon ' _a -2$.
Moreover if $\alpha ' ,\beta ' \not \in \Zint$, then $\deg _E h(z)= \epsilon ' _a -2$.\\
(ii) If $\epsilon ' _a \in \Zint _{\leq 0}$, $\eta  \not \in \Zint$, the singularity $w=a$ of Eq.(\ref{Heun01}) is apparent and there do not exist any non-zero solutions of Eq.(\ref{Heun01}) written in the form $(w-b)^{\alpha _b}(w-c)^{\alpha _c} p(w)$, where $p(w)$ is a polynomial and $(\alpha _b, \alpha _c)=(0,0)$, $( 1-\epsilon ' _b, 0)$ or $(0,1-\epsilon ' _c )$, then there exists a non-zero solution of Eq.(\ref{Heun02}) which can be written as $(z-b) ^{1-\epsilon  _b}(z-c) ^{1-\epsilon  _c}  h(z) $ where $h(z)$ is a polynomial.
Moreover if $\alpha ' ,\beta ' \not \in \Zint$, then $\deg h(z)= -\epsilon ' _a$.\\
(iii) If $\epsilon _a \in \Zint _{\geq 2}$, $\eta \not \in \Zint$, there exists a non-zero solution of Eq.(\ref{Heun01}) which can be written as $(w-a) ^{1-\epsilon ' _a}h(w)$, where $h(w)$ is a polynomial and there do not exist any non-zero solutions of Eq.(\ref{Heun02}) written as polynomials in $z$, then the singularity $z=a$ of Eq.(\ref{Heun02}) is apparent.\\
(iv) If $\epsilon _a \in \Zint _{\leq 0}$, $\eta , \epsilon ' _b ,\epsilon ' _c \not \in \Zint$, there exists a non-zero solution of Eq.(\ref{Heun01}) written as a product of $(w-b) ^{ 1-\epsilon ' _b}(w-c) ^{1-\epsilon ' _c}$ and a polynomial, and there do not exist any non-zero solutions of Eq.(\ref{Heun02}) written as a product of $z^{1-\epsilon _0}(z-1)^{1-\epsilon _1}(z-t)^{1-\epsilon _t}$ and a polynomial, then the singularity $z=a$ of Eq.(\ref{Heun02}) is apparent.\\
(v) If $\alpha +\beta -\eta \in \Zint _{\leq 0}$, $\eta \not \in \Zint$ and the singularity $w=\infty $ of Eq.(\ref{Heun01}) is apparent, then there exists a non-zero solution of Eq.(\ref{Heun02}) which can be written as a polynomial of degree $\eta -\alpha -\beta $.\\
(vi) If $\alpha +\beta -\eta \in \Zint _{\geq 2}$, $\eta \not \in \Zint$, the singularity $w=\infty $ of Eq.(\ref{Heun01}) is apparent and there do not exist any non-zero solution of Eq.(\ref{Heun01}) written as $w^{\alpha _0} (w-1)^{\alpha _1} (w-t)^{\alpha _t} p(w)$ such that $p(w)$ is a polynomial and $(\alpha _0, \alpha _1, \alpha _t)=(1-\epsilon '  _0,0,0),$ $(0,1-\epsilon ' _1,0)$ or $(0,0,1-\epsilon ' _t)$, then there exists a non-zero solution of Eq.(\ref{Heun02}) which can be written as $z ^{1-\epsilon _0 } (z-1) ^{1-\epsilon _1} (z-t) ^{1-\epsilon _t}  h(z)$ where $h(z)$ is a polynomial of degree $\alpha +\beta -\eta -2$.\\
(vii) If $\alpha +\beta -2\eta \in \Zint _{\leq -1}$, $\eta \not \in \Zint$, there exists a non-zero solution of Eq.(\ref{Heun01}) which is written as a polynomial and there do not exist any non-zero solutions of Eq.(\ref{Heun02}) which are written as $(1/z)^{\eta }p(1/z)$ where $p(1/z) $ is a polynomial in $1/z$, then the singularity $z=\infty $ of Eq.(\ref{Heun02}) is apparent.\\
(viii) If $\alpha +\beta -2\eta \in \Zint _{\geq 1}$, $\eta , \epsilon ' _0 , \epsilon ' _1 ,\epsilon ' _t \not \in \Zint$, there exists a non-zero solution of Eq.(\ref{Heun01}) which can be written as a product of $w^{1-\epsilon ' _0} (w-1) ^{1-\epsilon ' _1 }(w-t) ^{1-\epsilon ' _t } $ and a polynomial, then the singularity $z=\infty $ of Eq.(\ref{Heun02}) is apparent.
\end{prop}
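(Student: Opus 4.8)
The plan is to run all eight parts through one mechanism. Throughout I set $\kappa=-\eta$, take $\theta_p=1-\epsilon'_p$ for the finite singularities $p\in\{0,1,t\}$ of Eq.(\ref{Heun01}) and record the exponents $\theta_\infty^{(1)}=\alpha+\beta-2\eta+1$, $\theta_\infty^{(2)}=2-\eta$ at $w=\infty$. By Proposition \ref{prop:Heunint} each Euler transform $\langle[\gamma_z,\gamma_p],v\rangle$ of a solution $v$ of Eq.(\ref{Heun01}) solves Eq.(\ref{Heun02}), and its germ at the relevant singularity is given explicitly by (\ref{eq:gzgpexpa}) at a finite point and by (\ref{eq:gzginftyexpa}) at $z=\infty$; under Eq.(\ref{eq:mualbe}) the leading exponent $\theta_p+\kappa+1$ equals $1-\epsilon_p$, matching the exponents of Eq.(\ref{Heun02}). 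The decisive dichotomy is whether the two exponents of Eq.(\ref{Heun02}) at the distinguished singularity are congruent modulo $\Zint$: at a finite point this is governed by whether $\epsilon_a\in\Zint$, and at $\infty$ by the identity $(\eta-\alpha)(\eta-\beta)=0$, which forces $\alpha\equiv\beta\pmod{\Zint}$ exactly in the backward cases (vii),(viii). I would treat the four $\infty$-cases (v)--(viii) in complete parallel with their finite counterparts, reading (\ref{eq:gzginftyexpa}) in place of (\ref{eq:gzgpexpa}) and using the $\gamma_\infty$-forms of the cycle-deformation relations of Section \ref{sec:MonInttr}, obtained from $\gamma_\infty\sim(\gamma_t\gamma_1\gamma_0)^{-1}$.

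Parts (i) and (v) are direct. Since $\epsilon'_a\in\Zint_{\geq2}$ gives $\theta_a\in\Zint_{\leq-1}$ while $\eta\notin\Zint$ keeps $\epsilon_a\notin\Zint$, Proposition \ref{prop:nonzero}(i) shows the transform along the natural cycle is not identically zero; the non-branching hypothesis gives $A^{\langle a\rangle}=0$, and the appropriate line of (\ref{eq:gzgpexpa}) then exhibits $\langle[\gamma_z,\gamma_a],v\rangle$ as $(z-a)^{1-\epsilon_a}$ times a polynomial of degree at most $\epsilon'_a-2$. Reading (\ref{eq:gzginftyexpa}) the same way produces the degree-$(\eta-\alpha-\beta)$ polynomial solution of part (v). For the sharp degree assertions I would check that the leading coefficient does not vanish; this reduces to non-vanishing of certain $\Gamma$-factors from (\ref{eq:dalbe}) and of a Frobenius coefficient of $v$, which persists exactly when $\alpha',\beta'\notin\Zint$.

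Parts (ii) and (vi) are the forward cases in which the natural cycle annihilates the transform. Here $\epsilon'_a\in\Zint_{\leq0}$ (resp. $\alpha+\beta-\eta\in\Zint_{\geq2}$) together with the non-branching hypothesis lands us in the branch of Proposition \ref{prop:nonzero} for which $\langle[\gamma_z,\gamma_a],v\rangle\equiv0$ (resp. $\langle[\gamma_z,\gamma_\infty],v\rangle\equiv0$) for every $v$. Feeding this vanishing into the monodromy identity (\ref{eq:gzgbga}) (resp. its $\gamma_\infty$-analogue) shows that each complementary transform $\langle[\gamma_z,\gamma_q],v\rangle$ is an eigenvector of the local monodromy at the distinguished singularity. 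Because the two exponents there are incongruent modulo $\Zint$ --- one of them $0$ (resp. the integer among $\alpha,\beta$), the other $1-\epsilon_a$ (resp. $\eta$) and non-integral --- that eigenspace is one-dimensional, so all the complementary transforms are proportional to a single solution spanning a one-dimensional monodromy-invariant subspace. Proposition \ref{prop:monoredpolynsol} then identifies this solution with the asserted $(z-b)^{1-\epsilon_b}(z-c)^{1-\epsilon_c}h(z)$ (resp. $z^{1-\epsilon_0}(z-1)^{1-\epsilon_1}(z-t)^{1-\epsilon_t}h(z)$), and its non-vanishing together with the exact degree of $h$ follow from the excluded-solution hypotheses and one more application of (\ref{eq:gzgpexpa}).

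Parts (iii), (iv), (vii), (viii) are the backward cases, and here I expect the main work. The hypothesised polynomial-type solution of Eq.(\ref{Heun01}) is, by Proposition \ref{prop:nonzero}, exactly a configuration forcing $\langle[\gamma_z,\gamma_a],v\rangle\equiv0$ (resp. $\langle[\gamma_z,\gamma_\infty],v\rangle\equiv0$) for all $v$, so (\ref{eq:gzgbga}) again renders every complementary transform an eigenvector of the local monodromy at the distinguished singularity. This time the two exponents are congruent modulo $\Zint$ (integer exponents at a finite point, or both $\equiv\eta$ at $\infty$), so that local monodromy is a scalar times a unipotent; provided the complementary transforms span the whole two-dimensional solution space of Eq.(\ref{Heun02}), the unipotent part is trivial and the monodromy is scalar, which is precisely the non-logarithmic condition. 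The crux is therefore this spanning, which I would obtain by the argument of Proposition \ref{prop:spansp} with the annihilated cycle deleted: the excluded-solution hypotheses (for instance, no polynomial solution, or no solution of the product form $z^{1-\epsilon_0}(z-1)^{1-\epsilon_1}(z-t)^{1-\epsilon_t}$ times a polynomial, of Eq.(\ref{Heun02}), together with the stated non-integralities of $\epsilon'_b,\epsilon'_c$) are exactly what prevent the image of the transform from collapsing to a line. I anticipate the genuine difficulty to be this bookkeeping: matching each exclusion in the statement to the precise non-degeneracy needed for spanning, and deriving the $\gamma_\infty$-versions of the relations of Section \ref{sec:MonInttr}; the remaining case analysis is routine tracking within (\ref{eq:gzgpexpa}) and (\ref{eq:gzginftyexpa}).
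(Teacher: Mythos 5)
Your framework and your treatment of the forward cases track the paper closely: (i) and (v) are indeed read off from the lines of Eq.~(\ref{eq:gzgpexpa}) and Eq.~(\ref{eq:gzginftyexpa}) with $\kappa=-\eta$, and (ii), (vi) follow the paper's route (vanishing of the distinguished transform via Proposition \ref{prop:nonzero}, hence $\langle[\gamma_z,\gamma_b],y\rangle^{\gamma_a}=\langle[\gamma_z,\gamma_b],y\rangle$ from Eq.~(\ref{eq:gzgbga}), linear dependence of the complementary transforms because the exponents at the distinguished point are incongruent mod $\Zint$, identification of the resulting common eigenvector by its local expansions, and the excluded-solution hypotheses to dispose of the degenerate subcase where a complementary transform vanishes identically).

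The backward cases contain genuine gaps. First, your uniform premise --- that the hypothesised polynomial-type solution ``is, by Proposition \ref{prop:nonzero}, exactly a configuration forcing $\langle[\gamma_z,\gamma_a],v\rangle\equiv0$ for all $v$'' --- fails in (iv) and (viii): there $\epsilon_a\in\Zint_{\leq 0}$ (resp.\ the analogous exponent condition at $\infty$), so neither alternative of Proposition \ref{prop:nonzero} holds and the transform is \emph{not} identically zero on the solution space; only the specific product-form solution $y$, being holomorphic at $w=a$, is annihilated, and one must exploit that $y$ is simultaneously a monodromy eigenvector for $\gamma_b$ and $\gamma_c$ to get $\langle[\gamma_z,\gamma_a],y^{\gamma_b}\rangle=0$ and hence the fixed-point property for the two transforms of that single $y$. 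Even in (iii) and (vii) the premise is unjustified: Proposition \ref{prop:nonzero} requires the polynomial factor to have degree at most $\epsilon_a-2$, whereas Proposition \ref{prop:monoredpolynsol}(ii) only forces $\deg h\in\{\epsilon_a-2,\ \epsilon_a+\eta-\alpha-\beta-1\}$, and the second value is not excluded by the hypotheses of the appendix version of (iii). Second, your ``crux'' --- obtaining the spanning by a pruned Proposition \ref{prop:spansp} --- is exactly the step you do not supply, and the excluded-solution hypotheses of (iii) and (vii) are not of the right shape to deliver it. The paper does not argue this way for (iii) and (vii) at all: it proves the \emph{contrapositive of the dual statement}, namely that if $w=a$ is logarithmic for Eq.~(\ref{Heun01}) and Eq.~(\ref{Heun01}) has no polynomial solution, then $A^{\langle a\rangle}\neq0$ and the holomorphic Frobenius series has infinitely many nonzero coefficients, so by the third line of Eq.~(\ref{eq:gzgpexpa}) the (one-dimensional) exponent-$(1-\epsilon_a)$ family of Eq.~(\ref{Heun02}) at $z=a$ is not of polynomial type; the duality of Eqs.~(\ref{eq:mualbe}) and (\ref{eq:mualbe18}) then converts this into (iii), and the no-polynomial-solution hypothesis is used precisely there rather than as a spanning condition. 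Your sketch as written would therefore not close (iii), (iv), (vii), (viii) without substantial repair.
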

\begin{proof}
Set $\epsilon ' _p=1-\theta _p$, $\epsilon _p=1-\tilde{\theta }_p$ $(p=0,1,t)$.
Then $\tilde{\theta }_p = \theta _p- \eta +1$.
We apply local expansions in this appendix by setting $\kappa =- \eta $.

To prove (i), it follows from Proposition \ref{prop:nonlog-polyn01} that it remains to show that if $\alpha ' ,\beta ' \not \in \Zint$, then $\deg _E h(z)= \epsilon ' _a -2$.
If there exists a non-zero solution of Eq.(\ref{Heun02}) which is written as $(z-a) ^{1- \epsilon _a}  h(z) $ where $h(z)$ is a polynomial, then it follows from Proposition \ref{prop:monoredpolynsol} (ii) that $\deg _E h(z)= -\alpha -1 + \epsilon _a$ or $ -\beta -1 + \epsilon _a$, i.e. $\deg _E h(z)= -\eta -1 + \epsilon _a = \epsilon ' _a -2$ or $ -(\alpha + \beta -\eta )-1 + \epsilon _a = 2\eta -\alpha - \beta + \epsilon ' _a -2$.
If $\alpha ' ,\beta ' \not \in \Zint$, then we have $2\eta -\alpha - \beta  \not \in \Zint$ and $\deg _E h(z)= \epsilon ' _a -2$.

If $\theta _a \in \Zint _{\geq 1}$, $\eta  \not \in \Zint$ and the singularity $w=a$ of Eq.(\ref{Heun01}) is apparent, then it follows from Proposition \ref{prop:nonzero} (i) that $\langle [\gamma _{z} ,\gamma _a] , y \rangle=0 $ for all solutions $y(w)$ of Eq.(\ref{Heun01}).
Combining this result with Eq.(\ref{eq:gzgbga}) and a similar equality, we have 
\begin{align}
& \langle [\gamma _{z} ,\gamma _b] , y \rangle ^{\gamma _a}=\langle [\gamma _{z} ,\gamma _b] , y \rangle, \quad \langle [\gamma _{z} ,\gamma _c] , y \rangle ^{\gamma _a}=\langle [\gamma _{z} ,\gamma _c] , y \rangle .
\label{eq:bcgatriv}
\end{align}
If $\langle [\gamma _{z} ,\gamma _b] , y \rangle$, $\langle [\gamma _{z} ,\gamma _c] , y \rangle $ are linearly independent for some solution $y(w)$ of Eq.(\ref{Heun01}), then it follows from Eq.(\ref{eq:bcgatriv}) that the monodromy matrix about $z=a$ is a unit and the exponents of Eq.(\ref{Heun02}) at $z=a$ are integers.
Hence $\tilde{\theta }_a= \theta _a - \eta  +1 \in \Zint$, and this contradicts $\eta  \not \in \Zint$.
Therefore $\langle [\gamma _{z} ,\gamma _b] , y \rangle$, $\langle [\gamma _{z} ,\gamma _c] , y \rangle $ are linearly dependent for any solution $f(w)$.
If $\langle [\gamma _{z} ,\gamma _b] , y \rangle \neq 0$ for some solution $y(w)$ and $\langle [\gamma _{z} ,\gamma _c] , y \rangle \neq 0$ for some solution $y(w)$, then there exists a solution $y(w)$ of Eq.(\ref{Heun01}) such that $\langle [\gamma _{z} ,\gamma _c] , y \rangle =d' \langle [\gamma _{z} ,\gamma _b] , y \rangle \neq 0$ for some constant $d' \neq 0$.
It follows from local expansions (Eq.(\ref{eq:gzgpexpa})) for the case $p=b,c$, Eq.(\ref{eq:bcgatriv}) and the condition $\tilde{\theta }_a \not \in \Zint $ that the function $h(z)= (z-b)^{-\tilde{\theta }_b}(z-c)^{-\tilde{\theta }_c} \langle [\gamma _{z} ,\gamma _b] , y \rangle$ is holomorphic in $\Cplx $.
Hence $h(z)$ is a non-branching function in $\Cplx \cup \{ \infty \}$ which may have a pole at $z=\infty $, and Eq.(\ref{Heun02}) has a non-zero solution $(z-b)^{\tilde{\theta }_b}(z-c)^{\tilde{\theta }_c} h(z)$, where $h(z)$ is a polynomial. 
Let $k$ be the degree of $h(z)$.
It follows from Proposition \ref{prop:monoredpolynsol} (ii) that  $-k-\tilde{\theta }_b-\tilde{\theta }_c =\eta $ or $- \eta +\alpha +\beta$, and by applying the relation $\tilde{\theta }_a +\tilde{\theta }_b +\tilde{\theta }_b +\alpha +\beta =2$ we have $k=\tilde{\theta }_a +\alpha +\beta -\eta -2= \theta _a -2\eta +\alpha +\beta -1$ or $k=\tilde{\theta }_a +\eta -2 =\theta _a-1$.
Hence, if $\alpha ' ,\beta' \not \in \Zint $, then $- 2\eta +\alpha +\beta \not \in \Zint $ and we have $\deg h(z)=\theta _a -1 =-\epsilon _a '$.
If $\langle [\gamma _{z} ,\gamma _b] , y \rangle = 0$ for all solutions $y(w)$, 
then it follows from Proposition \ref{prop:nonzero} (i) that $\theta _b \in \Zint _{\geq 0}$ and the singularity $w=b$ is apparent or $\epsilon _b \in \Zint _{\geq 2} $ and the Eq.(\ref{Heun01}) has a solution of the form which is a product of $(w-b) ^{\theta _b}$ and a non-zero polynomial of degree no more than $\epsilon _b -2$.
For the case $\theta _b \in \Zint _{\geq 0}$ and the singularity $w=b$ is apparent, by taking a solution $y(w)$ of Eq.(\ref{Heun01}) which is holomorphic at $w=c$, the function $y(w)$ is holomorphic on the points $w=a,b,c$ and it is a polynomial in $w$, because the point $w=\infty $ is an apparent singularity.
Hence we have a polynomial solution $y(w)$ of Eq.(\ref{Heun01}).
By combining this result with a similar statement for the case $\langle [\gamma _{z} ,\gamma _c] , y \rangle = 0$ for all solutions $y(w)$,
it follows that if $\theta _a \in \Zint _{\geq 0}$, the singularity $w=a$ is apparent and $\langle [\gamma _{z} ,\gamma _b] , y \rangle =0$ or $\langle [\gamma _{z} ,\gamma _c] , y \rangle = 0$ for all solutions $y(w)$, then there exists a non-zero solution of Eq.(\ref{Heun01}) which can be written as $(w-b)^{\alpha _b}(w-c)^{\alpha _c} p(w)$ where $p(w)$ is a polynomial and $(\alpha _b, \alpha _c)=(0,0)$, $(\theta _b, 0)$ or $(0,\theta _c )$.
Therefore we obtain (ii).

We show that if $\theta _a (=1-\epsilon '_a ) \in \Zint _{\leq 0}$, $\eta  \not \in \Zint$, 
there exists a logarithmic solution of Eq.(\ref{Heun01}) about $w=a$ and there do not exist any non-zero solutions of Eq.(\ref{Heun01}) written as a polynomial, then there do not exist any non-zero solution of Eq.(\ref{Heun02}) written as $(z-a)^{1-\epsilon _a} p(z)$ such that $p(z)$ is a polynomial.
We write a logarithmic solution of Eq.(\ref{Heun01}) as in Eqs.(\ref{eq:ywfg}), (\ref{eq:expansfpgp}).
Then $D^{\langle a \rangle} \neq 0 $, $A^{\langle a \rangle} \neq 0 $ and it follows from the absence of a non-zero polynomial solution of Eq.(\ref{Heun01}) that $\forall K \in \Zint$, $\exists j \in \Zint _{\geq K}$ such that $c_j^{(a)} \neq 0$.
A solution $\langle [\gamma _{z} ,\gamma _a] , y \rangle  $ of Eq.(\ref{Heun02}) can be written as Eq.(\ref{eq:gzgpexpa}) for the case $\theta _a \in \Zint _{\leq 0}$, $\theta _a +\kappa  +1 \not \in \Zint _{\leq -1}$, and it cannot be written as $(z-a)^{\theta _a - \eta +1} p(z)$ such that $p(z)$ is a polynomial because $A^{\langle a \rangle} \neq 0 $ and $\forall K \in \Zint$, $\exists j \in \Zint _{\geq K}$ such that $c_j^{(a)} \neq 0$.
Since $(1-\epsilon _a = )\tilde{\theta }_a =  \theta _a - \eta +1 \not \in \Zint$, the space of solutions of Eq.(\ref{Heun02}) that are written as $(z-a)^{\tilde{\theta }_a} h(z)$ such that $h(z)$ is holomorphic about $z=a$ is one-dimensional.
Hence there does not exist a non-zero solution of Eq.(\ref{Heun02}) written as $(z-a)^{1-\epsilon _a} p(z)$ such that $p(z)$ is a polynomial.
It follows from the duality of the parameters $(\epsilon _0, \epsilon _1, \epsilon _t, \eta )$ and $(\epsilon '_0, \epsilon '_1, \epsilon '_t, \eta ')$ in Eqs.(\ref{eq:mualbe}), (\ref{eq:mualbe18}) that we obtain (iii) for the case $1- \epsilon _a \in \Zint _{\leq 0}$ by contraposition.

We show that if $\eta  \not \in \Zint$, $\tilde{\theta }_a \in \Zint _{\geq 1}$, $\theta _b ,\theta _c \not \in \Zint$,
there exists a logarithmic solution of Eq.(\ref{Heun02}) about $z=a$,
there exists a non-zero solution of Eq.(\ref{Heun01}) written as $(w-b)^{\theta _b} (w-c)^{\theta _c} h(w)$ such that $h(w)$ is a polynomial,
and there do not exist any non-zero solutions of Eq.(\ref{Heun02}) written as $(z-a)^{\tilde{\theta }_a} (z-b)^{\tilde{\theta }_b} (z-c)^{\tilde{\theta }_c} \tilde{p}_0(z)$ where $\tilde{p}_0(z)$ is a polynomial,
then we have a contradiction.
Assume that there exists a non-zero solution of Eq.(\ref{Heun01}) written as $y(w)=(w-b)^{\theta _b} (w-c)^{\theta _c} h(w)$ such that $h(w)$ is a polynomial,
Then the functions $\langle [\gamma _{z} ,\gamma _b] , y \rangle  $, $\langle [\gamma _{z} ,\gamma _c] , y \rangle $ are solutions of Eq.(\ref{Heun02}) and they are non-zero, which follows from $\theta _b ,\theta _c \not \in \Zint$ and Eq.(\ref{eq:gzgpexpa}).
Since it has been shown that $y ^{\gamma _b} =e^{2\pi \sqrt{-1} \theta _b}y$, $y ^{\gamma _c} =e^{2\pi \sqrt{-1}\theta _c}y$ and $\langle [\gamma _{z} ,\gamma _a] , y \rangle  =0$, we have $\langle [\gamma _{z} ,\gamma _b] , y \rangle ^{\gamma _a} =\langle [\gamma _{z} ,\gamma _b] , y \rangle$, $\langle [\gamma _{z} ,\gamma _c] , y \rangle ^{\gamma _a} =\langle [\gamma _{z} ,\gamma _c] , y \rangle$.
If $\langle [\gamma _{z} ,\gamma _b] , y \rangle  $, $\langle [\gamma _{z} ,\gamma _c] , y \rangle $ are linearly independent, the monodromy matrix about $z=a$ is a unit, and this contradicts the existence of a logarithmic solution.
Hence $\langle [\gamma _{z} ,\gamma _b] , y \rangle  $, $\langle [\gamma _{z} ,\gamma _c] , y \rangle $ are linearly dependent.
It follows from a similar argument to the proof of (ii) that there exists a non-zero solution $\tilde{y}(z)$ of Eq.(\ref{Heun02}) written as $\tilde{y}(z)=(z-b)^{\tilde{\theta }_b} (z-c)^{\tilde{\theta }_c} \tilde{p}(z)$ such that $\tilde{p}(z)$ is a polynomial.
Since $\tilde{\theta }_a \in \Zint _{\geq 0}$ and there exists a logarithmic solution of Eq.(\ref{Heun02}) about $z=a$, it follows from Eq.(\ref{eq:expansfpgp}) that $y(z)$ can be expressed as $y(w)=(z-a)^{\tilde{\theta }_a} (z-b)^{\tilde{\theta }_b} (z-c)^{\tilde{\theta }_c} \tilde{p}_0(z)$ such that $\tilde{p}_0(z)$ is a polynomial, and we have a contradiction.
Hence we obtain that if $\eta  \not \in \Zint$, $\tilde{\theta }_a \in \Zint _{\geq 0}$, $\theta _b ,\theta _c \not \in \Zint$,
there exists a non-zero solution of Eq.(\ref{Heun01}) written as $(w-b)^{\theta _b} (w-c)^{\theta _c} h(w)$ such that $h(w)$ is a polynomial,
and there do not exist any non-zero solutions of Eq.(\ref{Heun02}) written in the form $(z-a)^{\tilde{\theta }_a} (z-b)^{\tilde{\theta }_b} (z-c)^{\tilde{\theta }_c} \tilde{p}_0(z)$, where $\tilde{p}_0(z)$ is a polynomial,
then the singularity $z=a$ of Eq.(\ref{Heun02}) is apparent.
Therefore we have (iv).

We show (v) and (vi).
We apply the expansions in Eqs.(\ref{eq:expansfinftyginfty}), (\ref{eq:gzginftyexpa}) by setting $\theta _{\infty} ^{(1)}= \alpha +\beta -2\eta +1$ and $\theta _{\infty} ^{(2)}= 2- \eta$.
If $\alpha +\beta -\eta  =\theta _{\infty} ^{(1)}-\theta _{\infty} ^{(2)} + 1 \in \Zint _{\leq 0}$, $\eta  \not \in \Zint$ and the singularity $w={\infty }$ of Eq.(\ref{Heun01}) is apparent, then $\theta _{\infty} ^{(1)} \not \in \Zint _{\leq 0}$ and the function $\langle [\gamma _{z} ,\gamma _{\infty }] , y \rangle $ in Eq.(\ref{eq:gzginftyexpa}) is a product of $ (1/z) ^{\alpha +\beta -\eta }$ and a polynomial in the variable $1/z$ of degree no more than $\eta -\alpha -\beta $, and it satisfies Eq.(\ref{Heun02}).
Hence there exists a solution of Eq.(\ref{Heun02}) which is a polynomial in $z$ of degree no more than $\eta -\alpha -\beta $.
If there exists a solution of Eq.(\ref{Heun02}) which is a polynomial in $z$, then the degree of the polynomial is $-\alpha $ or $-\beta $, i.e., $-\eta (\not \in \Zint)$ or $\eta -\alpha -\beta (\in \Zint)$.
Therefore we have (v).

If $- \eta +\alpha +\beta  = \theta _{\infty} ^{(1)}-\theta _{\infty} ^{(2)} \in \Zint _{\geq 0}$, $\eta  \not \in \Zint$ and the singularity $w=\infty $ of Eq.(\ref{Heun01}) is apparent, then it follows from Proposition \ref{prop:nonzero} (ii) and a similar argument to obtain Eq.(\ref{eq:bcgatriv}) that $\langle [\gamma _{z} ,\gamma _\infty ] , y \rangle=0 $ for all solutions $y(w)$ of Eq.(\ref{Heun01}) and 
\begin{align}
& \langle [\gamma _{z} ,\gamma _p] , y \rangle ^{\gamma _{\infty}}=e^{2\pi \sqrt{-1} \eta } \langle [\gamma _{z} ,\gamma _p] , y \rangle, \quad p=0,1,t.
\label{eq:bcginftytriv} 
\end{align}
If $\langle [\gamma _{z} ,\gamma _a] , y \rangle$, $\langle [\gamma _{z} ,\gamma _b] , y \rangle $ $(a,b \in \{0,1,t \}$, $a\neq b)$ are linearly independent for some solution $y(w)$ of Eq.(\ref{Heun01}), it follows from Eq.(\ref{eq:bcginftytriv}) that the monodromy matrix of Eq.(\ref{Heun02}) about $z=\infty$ is scalar,
the difference between the exponents of Eq.(\ref{Heun02}) at $z = \infty $ (i.e. $\theta _{\infty } ^{(1)} -\theta _{\infty } ^{(2)}+1$ and $2-\theta _{\infty } ^{(2)}$) is an integer, and this contradicts $\theta _{\infty} ^{(1)}-\theta _{\infty} ^{(2)} \in \Zint $ and $ \eta =2- \theta _{\infty} ^{(2)} \not \in \Zint$.
Therefore $\langle [\gamma _{z} ,\gamma _a] , y \rangle$, $\langle [\gamma _{z} ,\gamma _b] , y \rangle $ are linearly dependent for any solution $y(w)$ of Eq.(\ref{Heun01}) and $a,b \in \{0,1,t\}$ such that $a\neq b$.
If there exists a solution $y^{(p)}(w)$ of Eq.(\ref{Heun01}) such that $\langle [\gamma _{z} ,\gamma _p] , y ^{(p)} \rangle \neq 0$ for each $p \in \{0,1,t\}$, then there exists a solution $y(w)$ of Eq.(\ref{Heun01}) such that $\langle [\gamma _{z} ,\gamma _p] , y \rangle \neq 0$ for any $p \in \{0,1,t \}$ by setting $y(w)=c_0y^{(0)}(w)+c_1y^{(1)}(w)+c_ty^{(t)}(w)$ and choosing $c_0, c_1, c_t$ appropriately.
It follows from  $\langle [\gamma _{z} ,\gamma _0] , y \rangle =d \langle [\gamma _{z} ,\gamma _1] , y \rangle =d' \langle [\gamma _{z} ,\gamma _t] , y \rangle \neq 0$ for some constants $d,d' \neq 0$.
It is shown that the function $z^{-\tilde{\theta }_0}(z-1)^{-\tilde{\theta }_1}(z-t)^{-\tilde{\theta }_t} \langle [\gamma _{z} ,\gamma _0] , y \rangle$ is holomorphic in $\Cplx $, and Eq.(\ref{Heun02}) has a non-zero solution $z^{\tilde{\theta }_0}(z-1)^{\tilde{\theta }_1}(z-t)^{\tilde{\theta }_t} h(z)$ where $h(z)$ is a polynomial. 
Let $k$ be the degree of $h(z)$.
It follows from Proposition \ref{prop:monoredpolynsol} (ii) that $-k +\alpha +\beta -2= \eta $ or $\alpha +\beta -\eta $.
Since $\eta \not \in \Zint$, $\deg h(z)=\alpha +\beta  -\eta -2$.
If $\langle [\gamma _{z} ,\gamma _p] , y \rangle = 0$ for all solutions $y(w)$ and some $p \in \{0,1,t\}$,
then there exists a solution of Eq.(\ref{Heun01}) which can be expressed as a product of $(w-p)^{\theta _p}$ and a polynomial, or $\theta _p \in \Zint _{\geq 1}$ and there are no logarithmic solutions about $w=p$.
Assume that  $\theta _p \in \Zint _{\geq 1}$ and there are no logarithmic solutions about $w=p$.
Let $p' \in \{0,1,t \}$ such that $p' \neq p$ and $y(w)$ be a  solution of Eq.(\ref{Heun01}) which is holomorphic at $w=p'$.
Then $y^{\gamma _{p'}}(w)=y(w) $, $y^{\gamma _{p}}(w)=y(w) $.
Since the singularity $w=\infty $ is apparent, we have $y^{\gamma _{\infty }}(w)=e^{2\pi \sqrt{-1} \theta _{\infty }^{(2)}}y(w) =e^{-2\pi \sqrt{-1} \eta }y(w) $ and it follows that $y^{\gamma _{p''}}(w)=e^{2\pi \sqrt{-1} \eta }y(w) $ $(p'' \in \{ 0,1,t\},$ $p\neq p''\neq p' )$,
and then, since $e^{2\pi \sqrt{-1} \eta } \neq 1$, that $y^{\gamma _{p''}}(w)=e^{2\pi \sqrt{-1} \theta _{p''}}y(w) $.
Hence the function $y(w)$ can be expressed as $y(w)=(w-p'') ^{\theta _{p''}} h(w)$ such that $h(w)$ is a polynomial, which follows from the monodromy of $y(w)$.
Therefore if $\langle [\gamma _{z} ,\gamma _p] , y \rangle = 0$ for all solution $y(w)$ and some $p \in \{0,1,t\}$ then there exists a solution $y(w)$ of Eq.(\ref{Heun01}) such that $y(w) =w^{{\alpha}_0}(w-1)^{{\alpha}_1}(w-t)^{\alpha_t} h(w)$, $h(w)$ is a polynomial and $(\alpha _0, \alpha _1, \alpha _t)=(\theta _0,0,0),$ $(0,\theta _1,0)$ or $(0,0,\theta _t)$, and we have (vi).

We show (vii) and (viii).
We apply the expansions in Eqs.(\ref{eq:expansfinftyginfty}), (\ref{eq:gzginftyexpa}) by setting $\theta _{\infty} ^{(1)}= \alpha +\beta -2\eta +1$ and $\theta _{\infty} ^{(2)}= 2- \eta$.
We show that if $\eta  (=2-\eta '=2- \theta _{\infty} ^{(2)}) \not \in \Zint$, $\alpha +\beta -\eta (=\alpha ' +\beta ' -2\eta ' +1= \theta _{\infty} ^{(1)}-\theta _{\infty} ^{(2)} + 1)\in \Zint _{\leq 0}$, 
there exists a logarithmic solution of Eq.(\ref{Heun01}) about $w={\infty }$ and there do not exist any non-zero solutions of Eq.(\ref{Heun01}) written as a product of $(1/w)^{\eta '}(=(1/w)^{\theta _{\infty} ^{(2)}})$ and a polynomial in $1/w$, then there do not exist any non-zero solutions of Eq.(\ref{Heun02}) written as a polynomial.
We write a solution of Eq.(\ref{Heun01}) as in Eqs.(\ref{eq:yinftyCD}), (\ref{eq:expansfinftyginfty}).
Then $D^{\langle {\infty } \rangle} \neq 0 $, $A^{\langle {\infty } \rangle} \neq 0 $ and $\forall K \in \Zint$, $\exists j \in \Zint _{\geq K}$ such that $c_j^{({\infty })} \neq 0$ in Eq.(\ref{eq:expansfinftyginfty}).
It can be shown as in the proof of (iii) that the function $\langle [\gamma _{z} ,\gamma _{\infty }] , y \rangle $ can be written as in Eq.(\ref{eq:gzginftyexpa}) for the case $\theta _{\infty} ^{(1)}-\theta _{\infty} ^{(2)} \in \Zint _{\leq -1}$, $\theta _{\infty} ^{(1)} \not \in \Zint _{\leq 0}$, and it is not written as $(1/z)^{\theta _{\infty} ^{(1)}-\theta _{\infty} ^{(2)} +1 } p(1/z)$ such that $p(z)$ is a polynomial, and
it follows from $\theta _{\infty} ^{(1)}-\theta _{\infty} ^{(2)} + 1-(2-\theta _{\infty} ^{(2)} ) \not \in \Zint$ that
there do not exist any non-zero solutions of Eq.(\ref{Heun02}) written as $(1/z)^{\theta _{\infty} ^{(1)}-\theta _{\infty} ^{(2)} + 1 } p(1/z)$ such that $p(z)$ is a polynomial.
If there exists a non-zero solution of Eq.(\ref{Heun02}) written as a polynomial $h(z)$, then it follows from Proposition \ref{prop:monoredpolynsol} (ii) that $\deg h(z) =\alpha +\beta -\eta  $ or $\eta $.
Because $\eta  \not \in \Zint$, $\deg h(z) =\alpha +\beta -\eta = \theta _{\infty} ^{(1)}-\theta _{\infty} ^{(2)} + 1$  and $h(z)$ can be written as $(1/z)^{\theta _{\infty} ^{(1)}-\theta _{\infty} ^{(2)} + 1} p(1/z)$ where $p(z)$ is a polynomial of degree no more than $\theta _{\infty} ^{(1)}-\theta _{\infty} ^{(2)} + 1$.
Therefore we obtain the result that there do not exist any non-zero solutions of Eq.(\ref{Heun02}) written as a polynomial.
It follows from the duality of the parameters $(\epsilon _0, \epsilon _1, \epsilon _t, \eta )$ and $(\epsilon '_0, \epsilon '_1, \epsilon '_t, \eta ')$ in Eqs.(\ref{eq:mualbe}), (\ref{eq:mualbe18}) that we obtain (vii) by contraposition.

We show that if $\eta , \theta _0, \theta _1, \theta _t  \not \in \Zint$, $\alpha +\beta -2\eta (=\theta _{\infty} ^{(1)}-\theta _{\infty} ^{(2)} + 1-(2-\theta _{\infty} ^{(2)} ))\in \Zint _{\geq 1}$,
there exists a logarithmic solution of Eq.(\ref{Heun02}) about $z=\infty $,
and there do not exist any non-zero solutions of Eq.(\ref{Heun01}) written as $w^{\theta _0} (w-1)^{\theta _1} (w-t)^{\theta _t} p(w)$ such that $p(w)$ is a polynomial,
then we have a contradiction.
Assume that there exists a non-zero solution $y(z) $ of Eq.(\ref{Heun01}) written as $y(w)=w^{\theta _0} (w-1)^{\theta _1} (w-t)^{\theta _t } p(w)$ such that $p(w)$ is a polynomial.
Then the exponent of $y(w)$ at $w=\infty $ is $\theta _{\infty} ^{(1)}+ \theta _{\infty} ^{(2)} -\deg p(w) -2$ and the function $y(w)$ can be expressed as $f^{\langle \infty \rangle } (w)$ in Eq.(\ref{eq:yinftyCD}) for the case $\theta _{\infty} ^{(1)}- \theta _{\infty} ^{(2)} \not \in \Zint$,
Hence $\langle [\gamma _{z} ,\gamma _{\infty }] , y \rangle  =0$ and we have $\langle [\gamma _{z} ,\gamma _p] , y \rangle ^{\gamma _{\infty }} =e^{2\pi \sqrt{-1} \eta }\langle [\gamma _{z} ,\gamma _p] , y \rangle$ $(p=0,1,t) $.
Since there exists a logarithmic solution about $z=\infty$, any two of $\langle [\gamma _{z} ,\gamma _0] , y \rangle $, $\langle [\gamma _{z} ,\gamma _1] , y \rangle $, $\langle [\gamma _{z} ,\gamma _t] , y \rangle $ are linearly dependent (see the proof of (iv)) and it follows from $\theta _p \not \in \Zint $ $(p=0,1,t )$ that 
there exists a solution $y(z)$ of Eq.(\ref{Heun02}) written as $z^{\tilde{\theta }_0} (z-1)^{\tilde{\theta }_1} (z-t)^{\tilde{\theta }_t} p(z)$ such that $p(z)$ is a polynomial.
Then we have $\deg p(z)=\eta -2$ or $\alpha +\beta -\eta -2$ and this contradicts $\eta  \not \in  \Zint$ and $\alpha +\beta - 2\eta \in \Zint$.
Hence if $\alpha +\beta -2\eta \in \Zint _{\geq 1}$, $\eta , \theta _0 , \theta _1 ,\theta _t \not \in \Zint$, there exists a non-zero solution of Eq.(\ref{Heun01}) which can be written as a product of $w^{\theta _0} (w-1) ^{\theta _1 }(w-t) ^{\theta _t } $ and a polynomial, then the singularity $z=\infty $ of Eq.(\ref{Heun02}) is apparent.
Therefore we have (viii).
\end{proof}
Theorem \ref{thm:nonlog-polynHeun} (i), (v), (viii) follows from Proposition \ref{prop:nonlog-polynHeun} (i), (v), (viii).

We show Theorem \ref{thm:nonlog-polynHeun} (ii).
Assume that there exists a non-zero solution of Eq.(\ref{Heun01}) which is written as $p(w)$ (resp. $(w-p)^{1- \epsilon ' _p} p(w)$) where $p(w)$ is a polynomial.
It follows from Proposition \ref{prop:monoredpolynsol} (ii) that $\deg p(w)=-\alpha '$ or $-\beta '$ (resp. $\deg p(w)=\epsilon '_p  -\alpha '-1$ or $\epsilon '_p  -\beta '-1$).
Thus $\alpha ' \in \Zint _{\leq 0}$ or $\beta ' \in \Zint _{\leq 0}$ (resp. $\epsilon '_p  -\alpha ' \in \Zint _{\geq 1}$ or $\epsilon '_p  -\beta ' \in \Zint _{\geq 1}$).
Therefore, if $\alpha' ,\beta ' \not \in \Zint$ (resp. $\epsilon '_p -\alpha ' , \epsilon '_p -\beta ' \not \in \Zint $) then there do not exist any  non-zero solutions of Eq.(\ref{Heun01}) written in the form $p(w)$ (resp. $(w-p)^{1- \epsilon ' _p} p(w)$) where $p(w)$ is a polynomial.
It follows from $\alpha' ,\beta ' \not \in \Zint$ that $\eta ' \not \in \Zint$ and  $\eta \not \in \Zint$.
If $\epsilon '_a \in \Zint $ and $\epsilon _b =\epsilon '_b -\eta '+1 \not \in \Zint $ (resp. $\epsilon _c \not \in \Zint $), then $\epsilon '_c -(\alpha ' +\beta '-\eta ')=- \epsilon ' _a- \epsilon '_b +\eta ' +1 \not \in \Zint $ (resp. $\epsilon '_b -(\alpha ' +\beta '-\eta ') \not \in \Zint $).
By combining with Proposition \ref{prop:nonlog-polynHeun} (ii), we have Theorem \ref{thm:nonlog-polynHeun} (ii).

We show Theorem \ref{thm:nonlog-polynHeun} (iii) and (iv).
It follows from $\alpha ,\beta \not \in \Zint$ that $\eta \not \in \Zint$.
If there exists a non-zero solution of Eq.(\ref{Heun02}) which is written as $p(z)$ (resp. $z^{1-\epsilon _0} (z-1)^{1-\epsilon _1} (z-t)^{1-\epsilon _t} p(z)$) where $p(z)$ is a polynomial, then $\deg p(z)=-\alpha $ or $-\beta $ (resp. $\deg p(z)=\alpha -2$ or $\beta -2$).
Hence if $\alpha, \beta \not \in \Zint$, then there do not exist any non-zero solutions of Eq.(\ref{Heun02}) written as a polynomial nor as a product of $z^{1-\epsilon _0} (z-1)^{1-\epsilon _1} (z-t)^{1-\epsilon _t} p(z)$ and a polynomial.
By combining with Proposition \ref{prop:nonlog-polynHeun} (iii), (iv), we have Theorem \ref{thm:nonlog-polynHeun} (iii) and (iv).

If $\alpha +\beta -\eta = \alpha '+\beta '-2\eta ' +1 \in \Zint$ and $\epsilon _p \not \in \Zint$, then $\epsilon ' _p -\eta ' = \epsilon _p -1 \not \in \Zint $ and $\epsilon ' _p -(\alpha ' +\beta '- \eta ')=\epsilon ' _p -\eta ' +( \alpha '+\beta '-2\eta ' ) \not \in \Zint $.
Hence $\epsilon '_p -\alpha ' , \epsilon '_p -\beta ' \not \in \Zint $ and there do not exist any  non-zero solutions of Eq.(\ref{Heun01}) written in the form $(w-p)^{1- \epsilon ' _p} p(w)$, where $p(w)$ is a polynomial.
Hence we have Theorem \ref{thm:nonlog-polynHeun} (vi) by combining with Proposition \ref{prop:nonlog-polynHeun} (vi).

If there exists a non-zero solution of Eq.(\ref{Heun02}), written as $(1/z)^{\eta }p(1/z)$ where $p(1/z) $ is a polynomial in $1/z$, then the exponent of the function $(1/z)^{\eta }p(1/z)$ is $-\eta -\deg_{1/z} p(1/z)$ and $-\eta -\deg_{1/z} p(1/z)= 0$ or $\epsilon _0$.
Hence if $\eta ,\epsilon '_0 \not \in \Zint$, then there do not exist any non-zero solutions of Eq.(\ref{Heun02}) written as $(1/z)^{\eta }p(1/z)$ where $p(1/z) $ is a polynomial in $1/z$.
By combining with Proposition \ref{prop:nonlog-polynHeun} (vii), we have Theorem \ref{thm:nonlog-polynHeun} (vii).
Thus Theorem \ref{thm:nonlog-polynHeun} is proved.

The following proposition concerning solutions of $D_{y_1}(\theta _0, \theta _1, \theta _t,\theta _{\infty}; \lambda ,\mu )$ and $D_{y_1}(\tilde{\theta }_0, \tilde{\theta }_1, \tilde{\theta }_t, \tilde{\theta }_{\infty} ; \tilde{\lambda } ,\tilde{\mu } )$ is proved similarly to Proposition \ref{prop:nonlog-polynHeun}.
\begin{prop} \label{prop:nonlog-polyn}
Let $a,b,c$ be elements of $\{0,1,t \}$ such that $a \neq b \neq c \neq a$.
Assume that $\lambda , \tilde{\lambda} \not \in \{ 0,1,t,\infty \}$.\\
(i) If $\theta _a \in \Zint _{\leq -1}$, $\kappa _2 \not \in \Zint$ and the singularity $w=a$ of the differential equation $D_{y_1}(\theta _0, \theta _1, \theta _t,\theta _{\infty}; \lambda ,\mu )$ in the variable $w$ is apparent, then there exists a non-zero solution $\tilde{y} (z)$ of the differential equation $D_{y_1}(\tilde{\theta }_0, \tilde{\theta }_1, \tilde{\theta }_t, \tilde{\theta }_{\infty} ; \tilde{\lambda } ,\tilde{\mu } )$ in the variable $z$ which can be written as $(z-a) ^{\tilde{\theta }_a}  h(z) $ where $h(z)$ is a polynomial of degree no more than $-\theta _a-1$.
Moreover if $\kappa _1 \not \in \Zint$, then $\deg _E h(z)= -\theta _a-1$.
\\
(ii) If $\theta _a \in \Zint _{\geq 0}$, $\kappa _2 \not \in \Zint$, the singularity $w=a$ of the differential equation $D_{y_1}(\theta _0, \theta _1, \theta _t,\theta _{\infty}; \lambda ,\mu )$ is apparent and there do not exist any non-zero solutions of $D_{y_1}(\theta _0, \theta _1, \theta _t,\theta _{\infty}; \lambda ,\mu )$ written in the form $(w-b)^{\alpha _b}(w-c)^{\alpha _c} p(w)$, where $p(w)$ is a polynomial and $(\alpha _b, \alpha _c)=(0,0)$, $(\theta _b, 0)$ or $(0,\theta _c )$, then there exists a non-zero solution of the differential equation $D_{y_1}(\tilde{\theta }_0, \tilde{\theta }_1, \tilde{\theta }_t, \tilde{\theta }_{\infty} ; \tilde{\lambda } ,\tilde{\mu } )$ which can be written as $(z-b) ^{\tilde{\theta }_b}(z-c) ^{\tilde{\theta }_c}  h(z) $, where $h(z)$ is a polynomial, and for the case $\kappa _1 \not \in \Zint$ we have $\deg h(z)= \theta _a$.\\
(iii) If $\tilde{\theta }_a \in \Zint _{\leq 0}$, $\kappa _{2 } \not \in \Zint$, there exists a non-zero solution of $D_{y_1}(\theta _0, \theta _1, \theta _t,\theta _{\infty}; \lambda ,\mu )$ which can be written as $(w-a) ^{\theta _a}h(w)$ where $h(w)$ is a polynomial and there do not exist any non-zero solutions of $D_{y_1}(\tilde{\theta }_0, \tilde{\theta }_1, \tilde{\theta }_t, \tilde{\theta }_{\infty} ; \tilde{\lambda } ,\tilde{\mu } )$ written as a polynomial in $z$, then the singularity $z=a$ of $D_{y_1}(\tilde{\theta }_0, \tilde{\theta }_1, \tilde{\theta }_t, \tilde{\theta }_{\infty} ; \tilde{\lambda } ,\tilde{\mu } )$ is apparent.\\
(iv) If $\tilde{\theta } _a \in \Zint _{\geq 1}$, $\kappa _{2 } , \theta _b, \theta _c \not \in \Zint$, there exists a non-zero solution of $D_{y_1}(\theta _0, \theta _1, \theta _t,\theta _{\infty}; \lambda ,\mu )$ which can be written as a product of $(w-b) ^{\theta _b}(w-c) ^{\theta _c}$ and a polynomial and there do not exist any solutions of $D_{y_1}(\tilde{\theta }_0, \tilde{\theta }_1, \tilde{\theta }_t, \tilde{\theta }_{\infty} ; \tilde{\lambda } ,\tilde{\mu } )$ written as a product of $z^{\tilde{\theta }_0}(z-1)^{\tilde{\theta }_1}(z-t)^{\tilde{\theta }_t}$ and a polynomial, then the singularity $z=a$ of $D_{y_1}(\tilde{\theta }_0, \tilde{\theta }_1, \tilde{\theta }_t, \tilde{\theta }_{\infty} ; \tilde{\lambda } ,\tilde{\mu } )$ is apparent.\\
(v) If $\theta _{\infty } \in \Zint _{\leq 0}$, $\kappa _2 \not \in \Zint$ and the singularity $w=\infty $ of $D_{y_1}(\theta _0, \theta _1, \theta _t,\theta _{\infty}; \lambda ,\mu )$ is apparent, then there exists a non-zero solution of $D_{y_1}(\tilde{\theta }_0, \tilde{\theta }_1, \tilde{\theta }_t, \tilde{\theta }_{\infty} ; \tilde{\lambda } ,\tilde{\mu } )$ which can be written as a polynomial of degree $-\theta _{\infty }$.\\
(vi) If $\theta _{\infty } \in \Zint _{\geq 1}$, $\kappa _2 \not \in \Zint$, the singularity $w=\infty $ of $D_{y_1}(\theta _0, \theta _1, \theta _t,\theta _{\infty}; \lambda ,\mu )$ is apparent and there do not exist any non-zero solutions of $D_{y_1}(\theta _0, \theta _1, \theta _t,\theta _{\infty}; \lambda ,\mu )$ written as $w^{\alpha _0} (w-1)^{\alpha _1} (w-t)^{\alpha _t} p(w)$ such that $p(w)$ is a polynomial and $(\alpha _0, \alpha _1, \alpha _t)=(\theta _0,0,0),$ $(0,\theta _1,0)$ or $(0,0,\theta _t)$, then there exists a non-zero solution of $D_{y_1}(\tilde{\theta }_0, \tilde{\theta }_1, \tilde{\theta }_t, \tilde{\theta }_{\infty} ; \tilde{\lambda } ,\tilde{\mu } )$ which can be written as $z ^{\tilde{\theta }_0 } (z-1) ^{\tilde{\theta }_1} (z-t) ^{\tilde{\theta }_t}  h(z)$, where $h(z)$ is a polynomial of degree $\theta _{\infty }-1$.\\
(vii) If $\kappa _1 \in \Zint _{\leq 0}$, $\kappa _2 \not \in \Zint$, there exists a non-zero solution of $D_{y_1}(\theta _0, \theta _1, \theta _t,\theta _{\infty}; \lambda ,\mu )$ written as a polynomial and there do not exist any non-zero solutions of $D_{y_1}(\tilde{\theta }_0, \tilde{\theta }_1, \tilde{\theta }_t, \tilde{\theta }_{\infty} ; \tilde{\lambda } ,\tilde{\mu } )$ written in the form $(1/z)^{-\kappa _2 +1}p(1/z)$, where $p(1/z) $ is a polynomial in $1/z$, then the singularity $z=\infty $ of $D_{y_1}(\tilde{\theta }_0, \tilde{\theta }_1, \tilde{\theta }_t, \tilde{\theta }_{\infty} ; \tilde{\lambda } ,\tilde{\mu } )$ is apparent.\\
(viii) If $\kappa _1 \in \Zint _{\geq 1}$, $\kappa _2 , \theta _0 ,\theta _1 ,\theta _t \not \in \Zint$, there exists a non-zero solution of $D_{y_1}(\theta _0, \theta _1, \theta _t,\theta _{\infty}; \lambda ,\mu )$ written as a product of $w^{\theta _0} (w-1) ^{\theta _1 }(w-t) ^{\theta _t } $ and a polynomial, then the singularity $z=\infty $ of $D_{y_1}(\tilde{\theta }_0, \tilde{\theta }_1, \tilde{\theta }_t, \tilde{\theta }_{\infty} ; \tilde{\lambda } ,\tilde{\mu } )$ is apparent.
\end{prop}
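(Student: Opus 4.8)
The plan is to run the argument in exact parallel with the proof of Proposition \ref{prop:nonlog-polynHeun}, exploiting the dictionary between the two parameter conventions. First I would set $\kappa = \kappa_2 - 1$ and record the local data: the exponents at $p \in \{0,1,t\}$ are $0$ and $\theta_p$, the exponents at infinity are $\theta_\infty^{(1)} = \kappa_1$ and $\theta_\infty^{(2)} = \kappa_2 + 1$, and $\tilde{\theta}_p = \kappa_2 + \theta_p$. Under this dictionary the eight items (i)--(viii) match the eight items of Proposition \ref{prop:nonlog-polynHeun} term by term: the role of $\epsilon'_p$ is played by $1 - \theta_p$, that of $\epsilon_p$ by $1 - \tilde{\theta}_p$, and the hypothesis $\eta \not\in \Zint$ becomes $\kappa_2 \not\in \Zint$. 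Hence the same chain of reasoning applies verbatim, only with the local-expansion data Eqs.(\ref{eq:gzgpexpa}), (\ref{eq:gzginftyexpa}) read off for these exponents.

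For the \emph{forward} items (i), (ii), (v), (vi) I would argue exactly as in the Heun case. Item (i) is immediate from Proposition \ref{prop:nonlog-polyn01}, and the sharp degree $\deg_E h(z) = -\theta_a - 1$ follows from Proposition \ref{prop:monoredpolynsol}(ii), since the competing candidate degree at $z = \infty$ is non-integral precisely when $\kappa_1 \not\in \Zint$. For (ii) and (vi) the crux is the same dichotomy: Proposition \ref{prop:nonzero}(i) (resp. (ii)) shows $\langle [\gamma_z,\gamma_a], y \rangle \equiv 0$ (resp. $\langle [\gamma_z,\gamma_\infty], y \rangle \equiv 0$); then the cycle-monodromy identities of Section \ref{sec:MonInttr} (the analogues of Eqs.(\ref{eq:bcgatriv}), (\ref{eq:bcginftytriv})) force the two surviving integrals to transform trivially (resp. by $e^{2\pi\sqrt{-1}\kappa_2}$) around the relevant singularity. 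Were they linearly independent the monodromy there would be scalar, contradicting $\kappa_2 \not\in \Zint$; so they are proportional, and I reconstruct a global solution of the stated product form $(z-b)^{\tilde{\theta}_b}(z-c)^{\tilde{\theta}_c} h(z)$ (resp. $z^{\tilde{\theta}_0}(z-1)^{\tilde{\theta}_1}(z-t)^{\tilde{\theta}_t} h(z)$), reading off its degree from Proposition \ref{prop:monoredpolynsol}(ii).

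For the \emph{backward} items (iii), (iv), (vii), (viii) I would proceed by contraposition. The local expansion Eq.(\ref{eq:gzgpexpa}) (resp. Eq.(\ref{eq:gzginftyexpa})) in the resonant integer range shows that, given a logarithmic solution at the relevant singularity and the absence of the excluded polynomial-type solutions, the pushed-forward integral cannot be written as a product of power factors times a polynomial; combined with $\kappa_2 \not\in \Zint$ this rules out a logarithmic solution of $D_{y_1}(\tilde{\theta}_0, \tilde{\theta}_1, \tilde{\theta}_t, \tilde{\theta}_{\infty}; \tilde{\lambda}, \tilde{\mu})$ there. The analogue of the parameter duality used in the Heun proof is the involution of Proposition \ref{prop:Nov} applied to the inverse transformation, which interchanges $\theta_p \leftrightarrow \tilde{\theta}_p$ and $\kappa_2 \leftrightarrow -\kappa_2$ and swaps the two equations, so each backward item is obtained from the corresponding forward item by contraposition under this involution.

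The main obstacle I expect lies in the backward direction, specifically in deducing the linear dependence of $\langle [\gamma_z,\gamma_b], y \rangle$ and $\langle [\gamma_z,\gamma_c], y \rangle$ from the vanishing of the third integral and then pinning down the precise global shape of the reconstructed solution. This requires the same delicate separation of the holomorphic and branching parts of the local expansions as in Proposition \ref{prop:nonlog-polynHeun}, now carried out with the exponents $(\kappa_1,\, \kappa_2 + 1)$ at infinity in place of $(\alpha + \beta - 2\eta + 1,\, 2 - \eta)$. Once the degree-counting via Proposition \ref{prop:monoredpolynsol}(ii) is set up with these exponents, each item closes exactly as its Heun counterpart.
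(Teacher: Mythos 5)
Your proposal is correct and coincides with the paper's own treatment: the paper disposes of this proposition with the single remark that it is proved similarly to Proposition \ref{prop:nonlog-polynHeun}, and the dictionary you set up ($\kappa=\kappa_2-1$, exponents $0,\theta_p$ at $p\in\{0,1,t\}$ and $\kappa_1,\kappa_2+1$ at infinity, $\epsilon'_p\leftrightarrow 1-\theta_p$, $\epsilon_p\leftrightarrow 1-\tilde{\theta}_p$, $\eta\not\in\Zint\Leftrightarrow\kappa_2\not\in\Zint$, with the involution $\theta_p\leftrightarrow\tilde{\theta}_p$, $\kappa_2\leftrightarrow-\kappa_2$ playing the role of the duality of Eqs.~(\ref{eq:mualbe}), (\ref{eq:mualbe18})) is exactly the intended translation of that proof. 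The only cosmetic slip is that the scalar in the analogue of Eq.~(\ref{eq:bcginftytriv}) should be $e^{-2\pi\sqrt{-1}\kappa_2}$ (since $\eta=1-\kappa_2$) rather than $e^{2\pi\sqrt{-1}\kappa_2}$, which is immaterial because only the scalarity of the monodromy at the relevant singularity is used.
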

Theorem \ref{thm:nonlog-polyn} follows from Proposition \ref{prop:nonlog-polyn}.

\end{document}